\newtheorem*{rep@theorem}{\rep@title}
\newcommand{\newreptheorem}[2]{%
\newenvironment{rep#1}[1]{%
 \def\rep@title{#2 \ref{##1}}%
 \begin{rep@theorem}}%
 {\end{rep@theorem}}}
\newtheorem{theorem}{Theorem}[section]
\newtheorem{corollary}[theorem]{Corollary}
\newtheorem*{corollary*}{Corollary}
\newtheorem{lemma}[theorem]{Lemma}
\newtheorem{proposition}[theorem]{Proposition}
\theoremstyle{definition}
\newtheorem{definition}[theorem]{Definition}
\theoremstyle{definition}
\newtheorem{example}[theorem]{Example}
\theoremstyle{definition}
\newtheorem{remark}[theorem]{Remark}
\theoremstyle{remark}
\newtheorem*{notation*}{Notation}
\def\Z{\mathbb{Z}}
\def\D{\mathbb{D}}
\def\N{\mathbb{N}}
\def\C{\mathbb{C}}
\def\Q{\mathbb{Q}}
\def\R{\mathbb{R}}
\def\SS{\mathbb{S}}
\def\I{\mathbb{I}}
\def\BC{B_{com}}
\newcommand{\BONE}{\mathds 1}
\def\g{\mathfrak{g}}
\def\t{\mathfrak{t}}
\def\ti{\mathfrak{t}}
\def\s{\mathfrak{s}}
\def\x{\mathbf{x}}
\def\H{\mathcal{H}}
\def\K{\mathcal{K}}
\def\M{\mathcal{M}}
\def\U{\mathcal{U}}
\def\colim{\displaystyle\mathop{\textup{colim}}}
\def\hocolim{\displaystyle\mathop{\textup{hocolim}}}
\DeclareMathOperator{\Hom}{\textup{Hom}}
\def\Im{\textup{Im}}
\def\Ker{\textup{Ker}}
\def\Rep{\textup{Rep}}
\def\Ob{\textup{Ob}}
\def\Ad{\textup{Ad}}
\def\Mor{\textup{Mor}}
\def\op{\textup{op}}
\newcommand{\co}{\colon\thinspace}
\begin{document}

\title[On the second homotopy group of spaces of commuting elements] 
{On the second homotopy group of spaces of commuting elements in Lie groups}

\author[A.~Adem]{Alejandro Adem}
\address{Department of Mathematics,
University of British Columbia, Vancouver BC V6T 1Z2, Canada}
\email{adem@math.ubc.ca}

\author[J.~M.~G\'omez]{Jos\'e Manuel G\'omez}
\address{ Escuela de Matem\'aticas\\
Universidad Nacional de Colombia sede Medell\'in, 
Medell\'in, Colombia }       
\email{jmgomez0@unal.edu.co}

\author[S.~Gritschacher]{Simon Gritschacher}
\address{Department of Mathematical Sciences \\
University of Copenhagen, Copenhagen, Denmark }       
\email{gritschacher@math.ku.dk}

\begin{abstract}
Let $G$ be a compact connected Lie group and $n\geqslant 1$ an integer. 
Consider the space of ordered commuting $n$-tuples in $G$, $\Hom(\Z^n,G)$, and its quotient under the adjoint action, $\Rep(\Z^n,G):=\Hom(\Z^n,G)/G$. In this article we study and in many cases compute the homotopy groups $\pi_2(\Hom(\Z^n,G))$.
For $G$ simply--connected and simple we show that $\pi_2(\Hom(\Z^2,G))\cong \Z$ and $\pi_2(\Rep(\Z^2,G))\cong \Z$,
and that on these groups the quotient map $\Hom(\Z^2,G)\to \Rep(\Z^2,G)$ induces multiplication by the Dynkin index of $G$. More generally we show that if $G$ is simple and $\Hom(\Z^2,G)_{\BONE}\subseteq \Hom(\Z^2,G)$ is the path--component of the trivial homomorphism, then $H_2(\Hom(\Z^2,G)_{\BONE};\Z)$ is an extension of the Schur multiplier of $\pi_1(G)^2$ by $\Z$.
We apply our computations to prove that if $B_{com}G_{\BONE}$ is the classifying space for commutativity at the identity component, then $\pi_4(B_{com}G_{\BONE})\cong \Z\oplus\Z$, and we construct examples of 
non-trivial transitionally commutative structures on the trivial principal $G$-bundle 
over the sphere $\SS^{4}$.
\end{abstract}

\maketitle

\section{Introduction}

Suppose that $G$ is a compact connected Lie group. For an integer $n\geqslant 1$ let $\Hom(\Z^{n},G)$ be the space of ordered commuting $n$-tuples in 
$G$ endowed with the subspace topology as a subset of $G^{n}$. The group $G$ acts by conjugation on 
$\Hom(\Z^{n},G)$ and the space of representations $\Rep(\Z^{n},G):=\Hom(\Z^{n},G)/G$ can be identified with the moduli space of isomorphism classes of flat connections on principal $G$-bundles over the torus $(\SS^{1})^{n}$.

When  $n=2$ or $n=3$ these moduli spaces appear naturally in quantum field theories such as
Yang–Mills and Chern–Simons theories. Motivated by this a systematic study of the spaces 
$\Rep(\Z^{n},G)$ was initiated by Borel, Friedman and Morgan in \cite{BFM} 
and also by Kac and Smilga in \cite{KS}. In both of these papers the authors showed that 
the representation spaces $\Rep(\Z^{n},G)$ for $n=2,3$ can be described in terms of 
the root system associated to a choice of maximal torus in $G$.  If $G$ is simply--connected and simple, then $\Rep(\Z^2,G)$ can be furthermore identified with the moduli space of semistable principal bundles over an elliptic curve with structure group the complexification of $G$, and this is known to be a weighted projective space \cite{BS79, FMW, Laszlo, Looijenga}. On the other hand, in \cite{AC}
Adem and Cohen started a systematic study of the spaces of homomorphisms $\Hom(\Z^{n},G)$  
from the point of view of homotopy theory. Since then a variety of authors have studied 
these spaces using techniques from geometry and homotopy theory. See for example 
\cite{ACG,Baird,BLR,GPS,GH19,PS,RS,STG}, 
among others. 

The spaces of ordered commuting pairs $\Hom(\Z^2, G)$ turn out to have a suprisingly complicated structure; their integral homology is  not known for $G=SU(m)$ when $m>2$, and torsion can appear at primes which divide the order of the Weyl group. From the point of view of algebraic geometry, 
$\Hom(\Z^2,G)$ can be identified with $\Lambda (G)$, the inertia stack of $G$ with the adjoint action. In this context $ \Rep(\Z^2,G)$ can be regarded as the coarse moduli space of the stack, and as the local isotropy groups are not finite, the geometry can be rather intricate. In this article we describe the second homology group of the principal path--component $\Hom(\Z^2,G)_{\BONE}$ as an extension of the Schur multiplier of $\pi_1(G)^2$.

\begin{theorem}[Theorem \ref{thm:mainpairs2}] \label{thm:mainpairs2intro}
Suppose that $G$ is a semisimple compact connected Lie group. Then there is an extension
\[
0\to \Z^s \to H_2(\Hom(\Z^2,G)_{\BONE};\Z) \to H_2(\pi_1(G)^2;\Z)\to 0\,,
\]
where $s\geqslant 0$ is the number of simple factors in the Lie algebra of $G$.
\end{theorem}

Suppose that $G$ is simply--connected and simple. As $\Rep(\Z^2,G)$ is a weighted projective space one has $\pi_2(\Rep(\Z^2,G))\cong \Z$. The preceding theorem will be deduced from a calculation of $\pi_2(\Hom(\Z^2,G))$. One of the fundamental results in Lie group theory is that $\pi_2(G)=0$, and $\pi_3(G)\cong\Z$. There is a canonical 3-dimensional integral cohomology class that can be realized through a group homomorphism $SU(2)\to G$. In this paper we obtain the rather surprising result that for commuting pairs there is a canonical class which now appears in dimension two.

\begin{theorem}[Theorem \ref{thm:mainpairs}] \label{thm:mainpairsintro}
Let $G$ be a simply--connected and simple compact Lie group. Then
\[
\pi_2(\Hom(\Z^2,G)) \cong \Z\,,
\]
and on this group the quotient map $\Hom(\Z^2,G)\to \Rep(\Z^2,G)$ induces multiplication by the Dynkin index $\textnormal{lcm}\{n_0^\vee,\dots,n^\vee_r\}$ where 
$n_0^\vee,\dots,n_r^\vee \geqslant 1$ are the coroot integers of $G$.
\end{theorem}

The Dynkin index of 
$G$ can be defined as the greatest common divisor of the degrees of $\pi_3(G)\to \pi_3(SU(N))$ for all representations $G\to SU(N)$. The values of the Dynkin index of $G$ are explicitly computed in \cite[Proposition 4.7]{KN97} and \cite[Proposition 2.6]{Laszlo-Sorger1997} and agree with the expressions in terms of coroot integers which are tabulated below.

In Theorem \ref{thm:su2represents} we show that any embedding $SU(2)\to G$ corresponding to a long root of $G$ induces an isomorphism $\pi_2(\Hom(\Z^2,SU(2)))\cong \pi_2(\Hom(\Z^2,G))$. 

\begin{table}[b]
\centering
\def\arraystretch{1.5}
\begin{tabular}{|c||c|c|c|c|c|c|c|c|} \hline
$G$ & $SU(m)$ & $Spin(k)$ & $Sp(l)$ & $E_6$ & $E_7$ & $E_8$ & $F_4$ & $G_2$ \\ \hline \hline
coroot integers & $1$ & $1,2$ & $1$ & $1,2,3$ & $1,2,3,4$ & $1,2,3,4,5,6$ & $1,2,3$ & $1,2$ \\ \hline
lcm & $1$ & $2$ & $1$ & $6$ & $12$ & $60$ & $6$ & $2$ \\ \hline
\end{tabular}
\vspace{10pt}
\caption{The set of coroot integers and their least common multiple for the simple simply--connected 
compact Lie groups ($m\geqslant 1$, $k\geqslant 7$, $l\geqslant 2$).} \label{table:coroot}
\end{table}

We note that Theorem \ref{thm:mainpairsintro} effectively computes $\pi_2(\Hom(\Z^2,G)_{\BONE})$ when $G$ is the group of complex or real points of any reductive algebraic group. Indeed, the main result of \cite{PS} asserts that $\Hom(\Z^2,G)_{\BONE}$ deformation retracts onto $\Hom(\Z^2,K)_{\BONE}$ where $K$ is a maximal compact subgroup of $G$. So we may assume that $G$ is a compact connected Lie group. From the standard classification theorems we know that $G\cong \tilde{G}/L$, where $\tilde{G}
=(\SS^1)^k\times G_1\times\dots\times G_s$ is a product of a torus and simply--connected simple 
compact Lie groups $G_1,\dots , G_s$, and $L$ is a finite subgroup in the center of $\tilde{G}$. By \cite[Lemma 2.2]{Goldman} the quotient map $\tilde{G}\to G$ induces a 
covering map $\Hom(\Z^2,\tilde{G})\to \Hom(\Z^2,G)_{\BONE}$. From Theorem \ref{thm:mainpairsintro} we then deduce that
\[
\pi_{2}(\Hom(\Z^{2},G)_{\BONE})\cong 
\pi_{2}(\Hom(\Z^{2},G_{1}))\times\cdots\times \pi_{2}(\Hom(\Z^{2},G_{s}))\cong \Z^{s}.
\]

\begin{corollary}
Let $G$ be the component of the identity of the group of complex or real points of a reductive algebraic group, which we assume is defined over $\R$ in the latter case. Then 
\[
\pi_2(\Hom(\Z^2,G)_{\BONE})\cong \Z^{s}\,,
\]
where $s\geqslant 0$ is the number of simple factors in the Lie algebra of $G$.
\end{corollary}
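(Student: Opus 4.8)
The plan is to reduce to the case of a compact connected Lie group and then quote Theorem~\ref{thm:mainpairs}; this is the chain of reductions already sketched in the paragraph preceding the statement, so the argument is short. First I would invoke the main result of \cite{PS}: if $G$ is the identity component of the complex or real points of a reductive algebraic group (assumed defined over $\R$ in the real case) and $K\leqslant G$ is a maximal compact subgroup, then $\Hom(\Z^2,G)_{\BONE}$ deformation retracts onto $\Hom(\Z^2,K)_{\BONE}$, so the two spaces are homotopy equivalent. Since $K$ is a compact connected Lie group whose Lie algebra has the same number $s$ of simple factors as that of $G$, it suffices to prove the statement when $G$ is itself compact and connected.

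So assume $G$ is compact connected. By the classification there is a finite central subgroup $L\subseteq\tilde G$ with $G\cong\tilde G/L$, where $\tilde G=(\SS^1)^k\times G_1\times\cdots\times G_s$ and the $G_i$ are simply--connected simple compact Lie groups. By \cite[Lemma~2.2]{Goldman} the quotient $\tilde G\to G$ induces a covering map $\Hom(\Z^2,\tilde G)\to\Hom(\Z^2,G)_{\BONE}$; as $\Hom(\Z^2,\tilde G)$ is connected (it is a product of the torus $\Hom(\Z^2,(\SS^1)^k)\cong(\SS^1)^{2k}$ with the spaces $\Hom(\Z^2,G_i)$, each of which is connected because $G_i$ is simply--connected), this covering induces an isomorphism on $\pi_2$. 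Using that $\Hom(\Z^2,-)$ carries products to products and that $\pi_2$ of a torus vanishes, I obtain
\[
\pi_2(\Hom(\Z^2,G)_{\BONE})\;\cong\;\pi_2(\Hom(\Z^2,\tilde G))\;\cong\;\prod_{i=1}^{s}\pi_2(\Hom(\Z^2,G_i)),
\]
and Theorem~\ref{thm:mainpairs} identifies each factor on the right with $\Z$. Hence $\pi_2(\Hom(\Z^2,G)_{\BONE})\cong\Z^{s}$.

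Since every ingredient is already available, there is no genuine obstacle here; what remains is purely bookkeeping. One should check that the deformation retraction of \cite{PS} and the covering projection of \cite[Lemma~2.2]{Goldman} both send the trivial homomorphism to the trivial homomorphism, so that the distinguished path--component $(\,\cdot\,)_{\BONE}$ is respected at each step, and that $\Hom(\Z^2,G_i)=\Hom(\Z^2,G_i)_{\BONE}$ for $G_i$ simply--connected so that Theorem~\ref{thm:mainpairs} applies verbatim. All of the real difficulty is concentrated in Theorem~\ref{thm:mainpairs}, which is proved independently.
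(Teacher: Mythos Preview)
Your argument is correct and follows exactly the route taken in the paper: the corollary is not given a separate proof there, but is deduced in the paragraph preceding its statement by the same chain of reductions---Pettet--Souto to pass to a maximal compact, the covering $\Hom(\Z^2,\tilde G)\to\Hom(\Z^2,G)_{\BONE}$ from Goldman's lemma to pass to the simply--connected cover, and then Theorem~\ref{thm:mainpairs} on each simple factor. Your additional remarks (connectedness of $\Hom(\Z^2,\tilde G)$, preservation of the $\BONE$-component) are the right sanity checks and are implicit in the paper's discussion.
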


For the groups $SU(m)$  and $Sp(k)$ we extend our computations to commuting $n$-tuples for $n>2$. 

\begin{theorem}[Theorem \ref{thm:generalsecondhomology}] \label{thm:generalsecondhomologyintro}
Suppose that $G$ is $SU(m)$ or $Sp(m)$ with $m\geqslant 1$. For every $n\geqslant 1$ 
the quotient map $\Hom(\Z^{n},G)\to \Rep(\Z^{n},G)$ induces an isomorphism 
\[
\pi_{2}(\Hom(\Z^{n},G))\cong \pi_{2}(\Rep(\Z^{n},G))\, .
\]
\end{theorem}

We then calculate $\pi_2(\Rep(\Z^n,G))$ and obtain the following result.

\begin{theorem}[Theorem \ref{thm:casesymplecticandunitary}] \label{thm:casesymplecticandunitaryintro}
Let $n\geqslant 1$. Then
\begin{itemize}
\item[(i)] for every $m\geqslant 3$ there is an isomorphism
\[
\pi_{2}(\Hom(\Z^{n},SU(m))) \cong \Z^{\binom{{n}}{{2}}}\,,
\]
and the standard inclusion $SU(m)\to SU(m+1)$ induces an isomorphism 
\[
\pi_2(\Hom(\Z^n,SU(m))) \xrightarrow{\cong} \pi_2(\Hom(\Z^n,SU(m+1)))\,,
\]
\item[(ii)] for every $k\geqslant 1$ there is an isomorphism
\[
\pi_2(\Hom(\Z^n,Sp(k))) \cong \Z^{\binom{{n}}{{2}}}\oplus (\Z/2)^{2^{n}-1-n-\binom{{n}}{{2}}}\,,
\]
and the standard inclusion $Sp(k)\to Sp(k+1)$ induces an isomorphism 
\[
\pi_2(\Hom(\Z^n,Sp(k)))\xrightarrow{\cong} \pi_2(\Hom(\Z^n,Sp(k+1)))\, .
\]
\end{itemize}
\end{theorem}

By the same reasoning as above, using the covering space $S^1\times SU(m)\to U(m)$ we observe that for every $m,n\geqslant 1$ the inclusion map 
$SU(m)\to U(m)$ induces an isomorphism 
\[
\pi_{2}(\Hom(\Z^{n},SU(m)))\cong \pi_{2}(\Hom(\Z^{n},U(m)))
\]
so that the preceding theorem also covers the case of the unitary groups. 

For spaces of commuting pairs in $Spin(m)$ we also explore the behavior of $\pi_2$ with respect to stabilization. 

\begin{theorem}[Theorem \ref{thm:spinstability}] \label{thm:spinstabilityintro}
For all $m\geqslant 5$ the standard map $Spin(m)\to Spin(m+1)$ induces an isomorphism
\[
\pi_2(\Hom(\Z^2,Spin(m)))\xrightarrow{\cong} \pi_2(\Hom(\Z^2,Spin(m+1)))\, .
\]
\end{theorem}

Along the way,  in Theorem \ref{thm:stabilityrepspin} we prove an integral homology stability result for the moduli spaces $\Rep(\Z^2,Spin(m))$.

A motivation for the computations provided in this article is the construction of 
non-trivial transitionally commutative structures  (TC structures) on a trivial principal 
$G$-bundle. For a Lie group $G$ the classifying bundle for commutativity, $E_{com}G\to B_{com}G$, 
is a principal $G$--bundle constructed out of the spaces of ordered commuting 
$n$-tuples in $G$ for all
$n\geqslant 0$ (see Section \ref{TC structures} for the definition). In this setting, the analogue of Steenrod's homotopy classification of bundles is as follows: the space $B_{com}G$ classifies 
principal $G$-bundles that come equipped with a TC structure. A TC structure on a principal $G$-bundle 
$p\co E\to X$ is a choice of a lifting $\tilde{f}\co X\to B_{com}G$, up to homotopy, of the classifying map 
$f\co X\to BG$ of the bundle $p\co E\to X$. Therefore, the same underlying bundle can admit different 
TC structures. In what follows we focus on the associated bundle for the component of the identity
$E_{com}G_{\BONE}\to B_{com}G_{\BONE}\to BG$.

In the classical setting, the computation $\pi_3(G)\cong \Z$ implies that $\pi_4(BG)\cong \Z$ and this can be used to construct non--trivial principal $G$--bundles over the 4--sphere $\SS^4$. Our main
result in the commutative context (see Corollary \ref{cor:pi4}) is the following
computation for any  simply--connected simple compact Lie group $G$:
\[
\pi_4(E_{com}G_{\BONE})\cong \Z\, \textnormal{ and }\, \pi_4(B_{com}G_{\BONE})\cong
\pi_4( E_{com}G_{\BONE})\oplus \pi_4(BG)\cong \Z\oplus \Z\, .
\]

We provide explicit generators for these groups. Moreover we construct examples of non-trivial TC structures on the trivial principal $G$-bundle over the 
sphere $\SS^{4}$. As a by-product of this construction we obtain geometric representatives for the generators 
of the reduced commutative $K$-theory group $\widetilde{K}_{com}(\SS^{4})$. \medskip

This article is organized as follows. In Section \ref{sec:cohomology} we present a cohomology computation with the goal of determining the rank of $\pi_2(\Hom(\Z^n,G)_{\BONE})$. In Section 
\ref{SectionBredon} we set up the spectral sequence for the homology of the homotopy
orbit space
$EG\times_G \Hom(\Z^{n},G)_{\BONE}$, which will be the main device used for our computations. In Section \ref{sec:sumspm} we calculate $\pi_2$ of the space of ordered commuting $n$-tuples in $G=SU(m)$ and $G=Sp(k)$. 
Section \ref {sec:pairs} begins with a review of some known facts concerning centralizers of pairs of 
commuting elements. Then we prove some of the main technical results of the paper, eventually arriving at the calculation of $\pi_2(\Hom(\Z^2,G))$. In Section \ref{sec:stability} we study the 
stability behavior for spaces of commuting pairs in the Spin groups. In 
Section \ref{sec:rolesu2} we study the distinguished role that the group $SU(2)$ plays in the computation 
of the groups $\pi_{2}(\Hom(\Z^{2},G))$. Finally, in Section \ref{TC structures} we provide 
geometric interpretations for the results obtained in this article in terms of transitionally 
commutative principal $G$-bundles.

\section{Cohomological computations}\label{sec:cohomology}

Let $G$ be a compact connected Lie group, let $T\leqslant G$ be a fixed maximal torus, and let $W=N(T)/T$ be the Weyl group. We begin by recalling some known facts concerning the path--components and the fundamental group of $\Hom(\Z^n,G)$ which will be used in the sequel. 

In general, $\Hom(\Z^n,G)$ is not path--connected. If $\pi_{1}(G)$ is torsion free, however, then $\Hom(\Z^2,G)$ is path--connected, because the centralizer of any element of $G$ is connected (see \cite[IX  \S 5.3 Corollary 1]{Bourbaki}). In fact, classical theorems of Borel \cite{Bo60} show that $\Hom(\Z^2,G)$ is path--connected if and only if $\pi_1(G)$ is torsion free, and for $n\geqslant 3$ the space $\Hom(\Z^n,G)$ is path--connected if and only if $H_\ast(G;\Z)$ is torsion free. For example, this is the case for $G=SU(m)$, $G=U(l)$ and $G=Sp(k)$.

When $\Hom(\Z^{n},G)$ is not path--connected, we denote by $\Hom(\Z^{n},G)_{\BONE}$ the path--connected 
component that contains the trivial homomorphism $\BONE\co \Z^n\to G$, equivalently the tuple $\BONE=(1_G,\dots,1_G)$. Then $\Hom(\Z^{n},G)_{\BONE}$ consists precisely of those $n$-tuples that are contained in some maximal torus of $G$. For simply--connected $G$ this follows, because the rank of the centralizer of a commuting $n$-tuple is locally constant (see \cite[Corollary 2.3.2]{BFM}). For general $G$ it follows by the same argument applied to the universal cover of $G$ using \cite[Lemma 2.2]{Goldman}. Consequently,
\[
\Rep(\Z^n,G)_\BONE:=\Hom(\Z^n,G)_\BONE/G \cong T^n/W\,,
\]
where on the right hand side the Weyl group acts diagonally on $T^n$. All of our statements will concern the path-components $\Hom(\Z^n,G)_{\BONE}$ and $\Rep(\Z^n,G)_{\BONE}$.

The fundamental group of $\Hom(\Z^n,G)_{\BONE}$ is also well-understood. By \cite[Theorem 1.1]{GPS} the inclusion $\Hom(\Z^n,G)_{\BONE}\subseteq G^n$ induces an isomorphism
\[
\pi_1(\Hom(\Z^n,G)_{\BONE})\cong \pi_1(G)^n\, .
\]
In particular, $\Hom(\Z^n,G)_{\BONE}$ is simply--connected if $G$ is. In this situation also $\Rep(\Z^n,G)_{\BONE}$ is simply--connected by \cite[Theorem 1.1]{BLR}. We will use these facts frequently without mention.

Now we turn our attention to the rational homology of $\Hom(\Z^n,G)_{\BONE}$. As explained below the rational cohomology ring of $\Hom(\Z^n,G)_{\BONE}$ admits a well known description as a ring of $W$-invariants. A closed formula for the Poincar{\'e} series of $\Hom(\Z^n,G)_{\BONE}$ was derived in \cite{RS} (see Remark \ref{rem:poincare}). For a fixed homological degree $i\in \N$, however, determination of the rank of $H_i(\Hom(\Z^n,G)_{\BONE};\Q)$ can pose a combinatorial challenge. In this section we present a calculation of $H_2(\Hom(\Z^n,G)_{\BONE};\Q)$. This determines the rank of $\pi_2(\Hom(\Z^n,G)_{\BONE})$ as an abelian group.

To describe the rational cohomology of $\Hom(\Z^n,G)_{\BONE}$ consider the (surjective) map 
\begin{alignat*}{1}
G\times T^{n}&\to \Hom(\Z^{n},G)_{\BONE}\\
(g,t_{1},\dots,t_{n})&\mapsto (gt_{1}g^{-1},\dots,gt_{n}g^{-1})\, .
\end{alignat*}
It is invariant under the action of the normalizer $N(T)$ on $G$ by right translation and on $T^n$ diagonally by conjugation. Since $G\times_{N(T)} T^n\cong G/T\times_W T^n$, we obtain an induced map
\[
G/T\times_{W}T^{n}\to \Hom(\Z^{n},G)_{\BONE}\, .
\]
Baird showed in \cite[Theorem 4.3]{Baird} that if $k$ is a field in which $|W|$ is invertible, then this map induces an isomorphism
\[
H^\ast(\Hom(\Z^n,G)_{\BONE};k)\cong H^\ast(G/T\times_W T^n;k)\cong H^\ast(G/T\times T^n;k)^W\, .
\]

\begin{theorem}\label{2nd homology}
Suppose that the compact connected Lie group $G$ is simple. Then
\[ 
H_{2}(\Hom(\Z^{n},G)_{\BONE};\Q)\cong \Q^{\binom{n}{2}}.
\]
\end{theorem}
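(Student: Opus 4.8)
The plan is to use Baird's theorem, quoted just above, to reduce the computation of $H_2(\Hom(\Z^n,G)_{\BONE};\Q)$ to the computation of $H^2(G/T\times T^n;\Q)^W$, and then to identify this invariant subspace explicitly using the representation theory of the Weyl group. First I would write the K\"unneth decomposition
\[
H^2(G/T\times T^n;\Q)\cong H^2(G/T;\Q)\oplus \left(H^1(G/T;\Q)\otimes H^1(T^n;\Q)\right)\oplus H^2(T^n;\Q)\,,
\]
which is compatible with the $W$-action (trivial on the $T^n$-tensor factors only through the diagonal conjugation action on $H^\ast(T^n)$). Since $G$ is simple, $G/T$ has no odd cohomology, so the middle term vanishes; hence the problem splits as $H^2(G/T;\Q)^W\oplus (H^2(T^n;\Q))^W$.

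Next I would handle the two summands. For the first, $H^2(G/T;\Q)\cong \t^\ast$ (the reflection representation of $W$ over $\Q$, via the characteristic classes of the line bundles associated to the weights), and since $G$ is simple this is an irreducible nontrivial $W$-representation, so $H^2(G/T;\Q)^W=0$. For the second, $H^1(T^n;\Q)\cong (\t^\ast)^{\oplus n}$ as a $W$-module, so
\[
H^2(T^n;\Q)\cong \Lambda^2\!\left((\t^\ast)^{\oplus n}\right)\cong \Lambda^2(\t^\ast)^{\oplus n}\oplus (\t^\ast\otimes\t^\ast)^{\oplus\binom{n}{2}}\,.
\]
Now $\Lambda^2(\t^\ast)$ contains no trivial summand (for an irreducible real reflection representation $V$ one has $\dim (\Lambda^2 V)^W=0$, since $V\cong V^\ast$ forces the invariants of $V\otimes V$ to be one-dimensional, spanned by the symmetric Killing form), while $(\t^\ast\otimes\t^\ast)^W$ is one-dimensional, again spanned by the Killing form. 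Counting gives $\dim H^2(T^n;\Q)^W=\binom{n}{2}$, and combining with $H^2(G/T;\Q)^W=0$ yields $\dim H^2(\Hom(\Z^n,G)_{\BONE};\Q)=\binom{n}{2}$. Dualizing (rational homology is dual to rational cohomology) gives the statement.

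The main obstacle is making the $W$-representation-theoretic input airtight: specifically, verifying that $\dim(\t^\ast\otimes\t^\ast)^W=1$ and $\dim(\Lambda^2\t^\ast)^W=0$ for the Weyl group of a simple Lie algebra. This follows from Schur's lemma once one knows $\t^\ast$ is absolutely irreducible as a $\Q W$-module and self-dual; absolute irreducibility of the reflection representation of an irreducible finite Coxeter group is standard, and self-duality is automatic since the representation is realized over $\R$ with an invariant inner product (the Killing form), which moreover lies in the symmetric part, forcing the alternating invariants to vanish. A secondary point to be careful about is the vanishing of the odd cohomology of $G/T$, which holds because $G/T$ has a CW structure with only even-dimensional cells (the Bruhat decomposition), and the absence of a trivial summand in $H^2(G/T;\Q)\cong\t^\ast$, which again uses irreducibility. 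None of these steps is deep, but they need $G$ simple (as opposed to merely semisimple) in an essential way — precisely the point where the count $\binom{n}{2}$ would otherwise be multiplied by the number of simple factors.
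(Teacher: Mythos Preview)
Your proposal is correct and follows essentially the same approach as the paper: reduce via Baird's theorem and K\"unneth to computing $H^2(G/T;\Q)^W$ and $H^2(T^n;\Q)^W$, then use the irreducibility of the reflection representation $\t^\ast$ of $W$. The only differences are in how the three key facts are justified---the paper computes $\dim(\t^\ast\otimes\t^\ast)^W=1$ via character inner products rather than Schur's lemma, obtains $H^2(G/T;\Q)^W=0$ from the regular-representation structure of $H^\ast(G/T)$ rather than from $H^2(G/T)\cong\t^\ast$, and shows $H^2(T;\Q)^W=0$ geometrically via $T/W\cong A/\pi_1(G)$ rather than via your symmetric/alternating split---but these are equivalent routes to the same conclusions.
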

\begin{proof}
Because of the universal coefficient theorem, we may as well use complex coefficients throughout this proof. Let $W$ act diagonally on $H^{1}(T;\C)\otimes H^{1}(T;\C)$. As a first step we are going to prove that $(H^{1}(T;\C)\otimes H^{1}(T;\C))^{W}\cong \C$. Let $\ti$ be the Lie algebra of $T$ and $\ti_{\C}$ its 
complexification. 
There is an isomorphism of $W$-representations $H^{1}(T;\C)\cong \ti^{*}_{\C}$, where 
$\ti_{\C}^{*}$ denotes the dual of $\ti_{\C}$. Therefore 
$(H^{1}(T;\C)\otimes H^{1}(T;\C))^{W}\cong (\ti_{\C}^{*}\otimes \ti_{\C}^{*})^{W}$. 
To complete the first step we need to prove that 
$(\ti_{\C}^{*}\otimes \ti_{\C}^{*})^{W}$ is a complex vector space of dimension $1$. 
Note that $\text{dim}_{\C}(\ti_{\C}^{*}\otimes \ti_{\C}^{*})^{W}$ 
is the number of times that the trivial representation appears in  
the $W$-representation $\ti_{\C}^{*}\otimes \ti_{\C}^{*}$. 
This number is given by 
\[
\text{dim}_{\C}(\ti_{\C}^{*}\otimes \ti_{\C}^{*})^{W}=\left<\chi_{\ti_{\C}^{*}}^{2},1\right>
=\frac{1}{|W|}\sum_{w\in W}\overline{\chi_{\ti_{\C}^{*}}(w)}^{2}\,,
\]
i.e., the Frobenius-Schur indicator of the character $\chi_{\ti_{\C}^*}$ of the $W$-representation $\ti_{\C}^{*}$. Here $\left<\cdot,\cdot\right>$ denotes the usual inner product of characters. 
As $\ti_{\C}^{*}$ is (the complexification of) a real $W$-representation we have $\chi_{\ti_{\C}^{*}}(w)\in \R$. Moreover, 
$\ti_{\C}^{*}$ is irreducible by \cite[Proposition 14.31]{FultonHarris} since $\g_{\C}$, the complexification of the Lie algebra of $G$, is simple. Therefore,
\[
\text{dim}_{\C}(\ti_{\C}^{*}\otimes \ti_{\C}^{*})^{W}
=\left<\chi_{\ti_{\C}^{*}}^{2},1\right>
=\left<\chi_{\ti_{\C}^{*}},\chi_{\ti_{\C}^{*}}\right>
=1.
\]

Next we prove that  $H^{2}(\Hom(\Z^{n},G)_{\BONE};\C)\cong \C^{\binom{n}{2}}$. 
With complex coefficients there are isomorphisms
\[
H^{2}(\Hom(\Z^{n},G)_{\BONE};\C)\cong H^{2}(G/T\times_{W}T^{n};\C)
\cong H^{2}(G/T\times T^{n};\C)^{W}.
\]
Using the K{\"u}nneth theorem we obtain a $W$-equivariant isomorphism
\[
H^{2}(G/T\times T^{n};\C)\cong H^{2}(G/T;\C)\oplus 
(H^{1}(G/T;\C)\otimes H^{1}(T^{n};\C))\oplus  H^{2}(T^{n};\C).
\]
The flag variety $G/T$ is simply--connected and thus $H^{1}(G/T;\C)=0$. 
Furthermore, as an ungraded ring, $H^{*}(G/T;\C)$ is the regular $W$-representation
and the only copy of the trivial representation 
is in degree $0$. This implies that $H^{2}(G/T;\C)^{W}=0$. Therefore, 
\[
H^{2}(G/T\times T^{n};\C)^{W}\cong H^{2}(T^{n};\C)^{W}.
\]
By \cite[IX \S 5.2 Corollary 1]{Bourbaki}
the space $T/W$ is homeomorphic to $A/\pi_1(G)$, where $A$ is the closure of a Weyl alcove. Now $\pi_1(G)$ is finite as $G$ is simple, and $A$ is contractible, hence
$H^{2}(T;\C)^{W}\cong H^{2}(T/W;\C) \cong H^2(A/\pi_1(G);\C)\cong H^2(A;\C)^{\pi_1(G)}=0$. 
Using this and the K{\"u}nneth theorem we obtain an isomorphism 
\[
H^{2}(T^{n};\C)^{W}\cong \bigoplus^{\binom{n}{2}}(H^{1}(T;\C)\otimes H^{1}(T;\C))^{W}
\cong \C^{\binom{n}{2}}.
\]
Putting everything together we conclude that 
$H^{2}(\Hom(\Z^{n},G)_{\BONE};\C)\cong \C^{\binom{n}{2}}$. The universal coefficient theorem now finishes the proof.
\end{proof}

\begin{corollary} \label{cor:rank}
Let $G$ be a simple 
compact connected Lie group. Then 
$\pi_{2}(\Hom(\Z^{n},G)_{\BONE})$ has rank $\binom{n}{2}$ as an abelian group.
\end{corollary}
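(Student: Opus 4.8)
The plan is to reduce to the case where $G$ is simply--connected, where the statement follows at once from the Hurewicz theorem and Theorem~\ref{2nd homology}. Indeed, if $G$ is simply--connected then $\Hom(\Z^n,G)_{\BONE}$ is simply--connected, so Hurewicz gives $\pi_2(\Hom(\Z^n,G)_{\BONE})\cong H_2(\Hom(\Z^n,G)_{\BONE};\Z)$, an abelian group whose rank is $\dim_{\Q}H_2(\Hom(\Z^n,G)_{\BONE};\Q)=\binom{n}{2}$ by Theorem~\ref{2nd homology}.

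For general simple $G$, let $\tilde G$ be its universal cover, which is again a simply--connected simple compact Lie group, and let $Z=\ker(\tilde G\to G)\cong\pi_1(G)$, a finite central subgroup. The group $Z^n$ acts freely on $\Hom(\Z^n,\tilde G)$ by coordinatewise translation. Since $Z$ lies in every maximal torus of $\tilde G$, this action preserves $\Hom(\Z^n,\tilde G)_{\BONE}$, and I would next verify that the resulting map $\Hom(\Z^n,\tilde G)_{\BONE}\to\Hom(\Z^n,G)_{\BONE}$ exhibits the target as the orbit space $\Hom(\Z^n,\tilde G)_{\BONE}/Z^n$; here one uses that for both $H=\tilde G$ and $H=G$ the component $\Hom(\Z^n,H)_{\BONE}$ is exactly the set of tuples lying in some maximal torus, together with the fact that the preimage in $\tilde G$ of a maximal torus of $G$ is the corresponding maximal torus of $\tilde G$. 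This is \cite[Lemma~2.2]{Goldman} for $n=2$, and the same argument goes through for all $n$. It follows that $\Hom(\Z^n,\tilde G)_{\BONE}\to\Hom(\Z^n,G)_{\BONE}$ is a covering map with simply--connected total space, hence induces an isomorphism
\[
\pi_2(\Hom(\Z^n,\tilde G)_{\BONE})\xrightarrow{\ \cong\ }\pi_2(\Hom(\Z^n,G)_{\BONE})\, ,
\]
and the left--hand side has rank $\binom{n}{2}$ by the simply--connected case applied to $\tilde G$.

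All the real work is in Theorem~\ref{2nd homology}; the points requiring a little care are the identification of $\Hom(\Z^n,\tilde G)_{\BONE}$ as the universal cover of $\Hom(\Z^n,G)_{\BONE}$ for arbitrary $n$, and the fact, recorded above from \cite[Theorem~1.1]{GPS}, that $\Hom(\Z^n,\tilde G)_{\BONE}$ is simply--connected so that Hurewicz applies. I do not expect either to be a genuine obstacle. Note that Theorem~\ref{2nd homology} applies to $G$ itself as well, but because $\Hom(\Z^n,G)_{\BONE}$ need not be simply--connected this does not by itself determine the rank of $\pi_2$, which is why the detour through the universal cover is needed.
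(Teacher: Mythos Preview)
Your proof is correct and follows essentially the same route as the paper: pass to the universal cover $\tilde{G}$ via the covering map $\Hom(\Z^n,\tilde{G})_{\BONE}\to\Hom(\Z^n,G)_{\BONE}$ (which the paper also attributes to \cite[Lemma~2.2]{Goldman}), then apply Hurewicz using \cite[Theorem~1.1]{GPS} and finish with Theorem~\ref{2nd homology}. Your write-up is slightly more explicit about why one cannot simply apply Theorem~\ref{2nd homology} directly to $G$, but the underlying argument is identical.
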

\begin{proof}
As explained in the introduction $\pi_2(\Hom(\Z^n,G)_{\BONE})\cong \pi_2(\Hom(\Z^n,\tilde{G})_{\BONE})$, where $\tilde{G}$ is the universal covering group of $G$. It is compact as $G$ is simple. Now $\Hom(\Z^{n},\tilde{G})_{\BONE}$ is simply--connected by \cite[Theorem 1.1]{GPS}. Therefore, by Hurewicz' theorem, we obtain   
\[
\pi_{2}(\Hom(\Z^{n},\tilde{G})_{\BONE})\cong H_{2}(\Hom(\Z^{n},\tilde{G})_{\BONE};\Z)\,.
\] The assertion follows from 
Theorem \ref{2nd homology}.
\end{proof}

\begin{remark} \label{rem:poincare}
Let $G$ be a simple compact connected Lie group. We can view the Weyl group $W$ 
as a reflection group on $\ti$ and every element $w\in W$ as a linear transformation $w:\ti\to \ti$. Let 
$d_{1}, d_{2},\dots , d_{r}$ be the characteristic degrees of $W$. 
By \cite[Theorem 1.1]{RS} the Poincar\'e series of $\Hom(\Z^{n},G)_{\BONE}$ is 
given by 					
\[
P(\Hom(\Z^{n},G)_{\BONE})(t)=\frac{\prod_{i=1}^{r}(1-t^{2d_{i}})}{|W|}
\left(\sum_{w\in W}\frac{\text{det}(1+tw)^{n}}{\text{det}(1-t^{2}w)}\right).
\]
Our Theorem \ref{2nd homology} shows that the coefficient of $t^{2}$ in this polynomial is precisely 
$\binom{n}{2}$. Moreover, as $G$ is simple $\pi_1(G)$ is finite. Then $\pi_1(\Hom(\Z^n,G)_{\BONE})\cong \pi_1(G)^n$ is finite too, so
\[
P(\Hom(\Z^{n},G)_{\BONE})(t)=1+{\binom{n}{2}}t^{2}+t^{3}q(t)\,,
\]
where $q(t)$ is some polynomial with non-negative integer coefficients.  
\end{remark}

\section{The Bredon spectral sequence for $\Hom(\Z^n,G)_{\BONE}$}\label{SectionBredon}

In this section we will analyze the Bredon spectral sequence associated to the 
$G$-space $\Hom(\Z^n,G)_{\BONE}$.

\subsection{The set-up} We begin with an observation:

\begin{lemma}\label{equihom1}
Let $G$ be a simply--connected compact Lie group. There is an isomorphism
\[
\pi_2(\Hom(\Z^n,G)_{\BONE})\cong  H_{2}(EG\times_G\Hom(\Z^n,G)_{\BONE};\Z).
\]
\end{lemma}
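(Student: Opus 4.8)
The plan is to compare the space $\Hom(\Z^n,G)_{\BONE}$ with its Borel construction $EG\times_G\Hom(\Z^n,G)_{\BONE}$ via the fibration
\[
\Hom(\Z^n,G)_{\BONE}\to EG\times_G\Hom(\Z^n,G)_{\BONE}\to BG\,,
\]
and to analyze the associated homotopy long exact sequence together with the Hurewicz theorem. Since $G$ is simply--connected, $BG$ is $2$--connected, so $\pi_i(BG)=0$ for $i\leqslant 2$ and $\pi_3(BG)\cong \pi_2(G)=0$ (as $\pi_2(G)=0$ for any Lie group), while $\pi_4(BG)\cong\pi_3(G)$. Also $\Hom(\Z^n,G)_{\BONE}$ is simply--connected by \cite[Theorem 1.1]{GPS}. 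Feeding this into the long exact sequence of the fibration, the segment $\pi_3(BG)\to \pi_2(\Hom(\Z^n,G)_{\BONE})\to \pi_2(EG\times_G\Hom(\Z^n,G)_{\BONE})\to \pi_2(BG)$ reads $0\to \pi_2(\Hom(\Z^n,G)_{\BONE})\to \pi_2(EG\times_G\Hom(\Z^n,G)_{\BONE})\to 0$, so the projection induces an isomorphism on $\pi_2$. Moreover the Borel construction is simply--connected: its $\pi_1$ sits in $\pi_1(\Hom(\Z^n,G)_{\BONE})\to \pi_1(EG\times_G\Hom(\Z^n,G)_{\BONE})\to \pi_1(BG)=0$ with the first group trivial.

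Having established that $X:=EG\times_G\Hom(\Z^n,G)_{\BONE}$ is simply--connected with $\pi_2(X)\cong\pi_2(\Hom(\Z^n,G)_{\BONE})$, I would invoke the Hurewicz theorem to get $\pi_2(X)\cong H_2(X;\Z)$, and then combine this with the isomorphism $\pi_2(\Hom(\Z^n,G)_{\BONE})\cong H_2(\Hom(\Z^n,G)_{\BONE};\Z)$, which likewise follows from Hurewicz applied to the simply--connected space $\Hom(\Z^n,G)_{\BONE}$. Chaining the isomorphisms
\[
\pi_2(\Hom(\Z^n,G)_{\BONE})\cong \pi_2(X)\cong H_2(X;\Z)
\]
gives exactly the claimed identity. (Alternatively one can read the same conclusion off the Serre spectral sequence of the fibration in low degrees, where the relevant $E_2$ terms $H_p(BG;H_q)$ vanish for $0<p\leqslant 2$, but the homotopy exact sequence argument is cleaner.)

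The only genuinely substantive inputs are the two cited structural facts — $\pi_2(G)=0$ for a compact Lie group $G$, and the simple connectivity and $\pi_1$--computation for $\Hom(\Z^n,G)_{\BONE}$ from \cite{GPS} — and there is no real obstacle beyond assembling them; the argument is a routine diagram chase once the connectivity of $BG$ is in hand. If anything, the one point to be careful about is confirming that the path--component $\Hom(\Z^n,G)_{\BONE}$, rather than all of $\Hom(\Z^n,G)$, is what appears in the Borel construction on the right, but this is automatic since $G$ is connected and hence preserves path--components of $\Hom(\Z^n,G)$, so $EG\times_G\Hom(\Z^n,G)_{\BONE}$ is itself a union of path--components of $EG\times_G\Hom(\Z^n,G)$ and the fibration above is well defined.
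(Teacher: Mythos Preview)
Your proof is correct and follows essentially the same route as the paper: the fibration $\Hom(\Z^n,G)_{\BONE}\to EG\times_G\Hom(\Z^n,G)_{\BONE}\to BG$, the $3$-connectedness of $BG$ (which you decompose into $\pi_i(BG)=0$ for $i\leqslant 2$ and $\pi_3(BG)\cong\pi_2(G)=0$), the simple connectivity of $\Hom(\Z^n,G)_{\BONE}$ from \cite{GPS}, the homotopy long exact sequence, and Hurewicz. The paper's argument is slightly terser but otherwise identical.
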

\begin{proof}
Since $G$ is assumed to be simply--connected, $BG$ is 3-connected, and 
$\Hom(\Z^n,G)_{\BONE}$ is simply--connected by \cite[Theorem 1.1]{GPS}. 
The long exact sequence of homotopy groups associated to the fibration sequence
\[
\Hom(\Z^n,G)_{\BONE}\to EG\times_G \Hom(\Z^n,G)_{\BONE}\to BG
\]
then implies that $EG\times_G \Hom(\Z^n,G)_{\BONE}$ is simply--connected, and 
that $\pi_2(\Hom(\Z^n,G)_{\BONE})\cong \pi_2(EG\times_G \Hom(\Z^n,G)_{\BONE})$. 
The result follows now from the Hurewicz theorem.
\end{proof}

As a consequence, the homotopy group $\pi_2(\Hom(\Z^n,G)_{\BONE})$ can be 
computed from the Borel equivariant homology of $\Hom(\Z^n,G)_{\BONE}$. Now if $X$ is a $G$-CW-complex, then the skeletal 
filtration of $X$ gives rise to an Atiyah-Hirzebruch style spectral sequence
\[
E^2_{p,q}=H^G_p(X;\mathcal{H}_q)\Longrightarrow H_{p+q}(EG\times_G X;\Z)\,,
\]
where $H^{G}_{p}(X;\H_{q})$ denotes Bredon homology with respect to the covariant coefficient system
\[
G/K\mapsto \mathcal{H}_q(G/K):=H_q(BK;\Z)\, ,
\]
for $K$ a closed subgroup of $G$. For Bredon homology in the setting of topological groups we refer the reader to the paper by Willson \cite{willson}. The spectral sequence is a special case of \cite[Theorem 3.1]{willson}. We will refer to it as the \emph{Bredon spectral sequence}.

\medskip

If $G$ is a compact Lie group, then $\Hom(\Z^n,G)$ may be viewed as an affine 
$G$-variety, and thus the underlying space admits a $G$-CW-structure by 
\cite[Theorem 1.3]{ParkSuh}. In fact, if $G$ is simply--connected, it is not too 
difficult to construct a $G$-CW-structure on $\Hom(\Z^n,G)_{\BONE}$ directly, and 
we will do this in Lemma \ref{lem:cw} (see also Remark \ref{rem:cwgeneral}). 
Thus, taking $X=\Hom(\Z^n,G)_{\BONE}$ the spectral sequence we will study takes the form
\begin{equation} \label{eq:bredonss}
E^2_{p,q}=H^G_p(\Hom(\Z^n,G)_{\BONE};\mathcal{H}_q) 
\Longrightarrow H_{p+q}(EG\times_G \Hom(\Z^n,G)_{\BONE};\Z)\, .
\end{equation}
Notice that when $q=0$ then $\mathcal{H}_q=\underline{\Z}$ is the constant coefficient system. Therefore,
\[
E^2_{p,0}=H^G_{p}(\Hom(\Z^n,G)_{\BONE};\underline{\Z})\cong H_p(\Rep(\Z^n,G)_{\BONE};\Z)\, .
\]

We will be interested not only in calculating the homotopy group $\pi_2(\Hom(\Z^n,G)_{\BONE})$, 
but also in describing the effect of the quotient map
\[
\pi\co \Hom(\Z^n,G)_{\BONE}\to \Rep(\Z^n,G)_{\BONE}
\]
on $\pi_2$. By Lemma \ref{equihom1}, this map can be identified with the one induced on $H_2$ by the projection
\[
EG\times_G\Hom(\Z^n,G)_{\BONE}\to \Hom(\Z^n,G)_{\BONE}/G= \Rep(\Z^n,G)_{\BONE}
\]
from the homotopy orbit to the strict orbit. In the spectral sequence this corresponds to the composite
\[
H_2(EG\times_G\Hom(\Z^n,G)_{\BONE};\Z)\to E^\infty_{2,0}\hookrightarrow 
E^2_{2,0}\cong H_2(\Rep(\Z^n,G)_{\BONE};\Z)\, .
\]

The case $n=2$ is an interesting special case; it is known that if $G$ is simply--connected and simple, then $\Rep(\Z^2,G)$ can be identified naturally with the moduli space of semistable principal bundles over an elliptic curve with structure group the complexification of $G$, and this moduli space is a weighted projective space $\mathbb{CP}(\mathbf{n}^\vee)$, where 
$\mathbf{n}^\vee=(n_0^\vee,\dots,n_r^\vee)$ is the tuple of coroot integers of $G$ 
(see Section \ref{sec:componentgroup} for the definition of coroot integers and Section 
\ref{sec:quotients} for the definition of a weighted projective space). This follows from the work of various authors \cite{BS79,FMW,Laszlo,Looijenga,NS,Ra}.

Our proofs will be self-contained; in Proposition \ref{prop:weightedprojective} we will prove independently, and by elementary topological methods, that $\Rep(\Z^2,G)$ is homotopy equivalent to $\mathbb{CP}(\mathbf{n}^\vee)$.

The homology of weighted projective spaces was computed by Kawasaki \cite{K73} from which one 
obtains, for simple simply--connected $G$ of rank $r$,
\[
H_p(\Rep(\Z^2,G);\Z)\cong \begin{cases} \Z & \textnormal{if }p\leqslant 2r \textnormal{ is even}, 
\\ 0 & \textnormal{otherwise.} \end{cases}
\]
Figure \ref{figure:e2page} depicts the spectral sequence in this situation. 
We will be interested only in the part of the spectral sequence that is relevant for 
homological degree two.

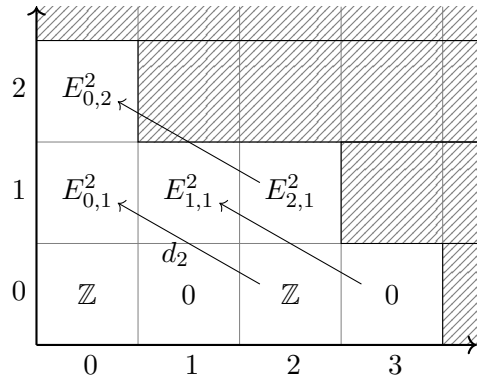
\begin{figure}[b]
\begin{tikzpicture}[scale =0.9]
\small

\clip (-0.5, -0.5) rectangle (6.5, 5); 
\draw[step=1.5, gray, very thin] (0,0) grid (7, 5);

\draw[pattern=north east lines, pattern color=gray] (0,4.5) rectangle (7,6);
\draw[pattern=north east lines, pattern color=gray] (1.5,3) rectangle (7,4.5);
\draw[pattern=north east lines, pattern color=gray] (4.5,1.5) rectangle (7,3);
\draw[pattern=north east lines, pattern color=gray] (6,0) rectangle (7,1.5);

\draw[->, thick](0,0)--(6.5, 0); 
\draw[->, thick](0,0)--(0, 5); 

\draw (0.75, 0.75) node[anchor=center]{$\Z$};
\draw (2.25, 0.75) node[anchor=center]{$0$};
\draw (3.75, 0.75) node[anchor=center]{$\Z$};
\draw (5.25, 0.75) node[anchor=center]{$0$};

\draw (0.75, 2.25) node[anchor=center]{$E^2_{0,1}$};
\draw (2.25, 2.25) node[anchor=center]{$E^2_{1,1}$};
\draw (3.75, 2.25) node[anchor=center]{$E^2_{2,1}$};

\draw (0.75, 3.75) node[anchor=center]{$E^2_{0,2}$};

\foreach \x in {0,1, 2, 3}
\draw (0.8+3*\x/2,0) node[anchor=north] {$\x$};
\foreach \y in {0,1, 2}
\draw (0,0.8+3*\y/2) node[anchor=east] {$\y$};

\draw (2.05, 1.35) node[anchor=center]{$d_{2}$};

\draw[->] (3.3,0.9)-- (1.2, 2.1);
\draw[->] (3.3,2.4)-- (1.2, 3.6);
\draw[->] (4.8,0.9)-- (2.7, 2.1);

\end{tikzpicture}
\caption{A portion of the $E^2$-page of the Bredon spectral sequence in the 
case of commuting pairs in a simple, simply--connected, compact Lie group of 
rank $\geqslant 1$.}
\label{figure:e2page}
\end{figure}

\subsection{First consequences}

Going back to the general situation, we will prove that $E^2_{0,2}$ vanishes in certain cases. 
For this we will use the following lemma.

\begin{lemma}\label{computation spectral sequence}
Suppose that $G$ is a compact Lie group and $X$ is a path--connected $G$-CW complex that has a 
basepoint $x_{0}$ fixed by $G$.  Assume that $\M$ is a covariant coefficient 
system such that $\M(G/G)=0$ and for every morphism $G/H\to G/K$ in the orbit category the induced map $\M(G/H)\to \M(G/K)$ is surjective. 
Then $H^{G}_{0}(X;\M)=0$.
\end{lemma}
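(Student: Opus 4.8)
The plan is to compute $H^G_0(X;\M)$ directly from the Bredon chain complex and show that its first differential is surjective. Recall that for a $G$-CW complex with equivariant cells $G/H_\sigma\times D^{n_\sigma}$ and a covariant coefficient system $\M$ one has $C^G_n(X;\M)=\bigoplus_{\dim\sigma=n}\M(G/H_\sigma)$, and $H^G_0(X;\M)=\operatorname{coker}\bigl(d_1\colon C^G_1(X;\M)\to C^G_0(X;\M)\bigr)$; so it suffices to show that for each $0$-cell $\sigma$ the summand $\M(G/H_\sigma)$ lies in $\operatorname{im}(d_1)$. First I would arrange, refining the $G$-CW structure if necessary (which does not affect Bredon homology), that the fixed point $x_0$ is a $0$-cell; its isotropy group is then $G$, so it contributes the summand $\M(G/G)=0$ to $C^G_0(X;\M)$.

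Next I would feed in the topology via the orbit space. As $X$ is path-connected, so is the CW complex $X/G$ (whose cells are the images of the equivariant cells of $X$); hence its $1$-skeleton is a connected graph and any $0$-cell of $X/G$ is joined to $\bar x_0$ by a finite edge-path. Each $1$-cell of $X/G$ is the image of an equivariant cell $G/K\times D^1$, and restricting the attaching map to the two endpoints of $D^1$ yields two $G$-maps from the orbit $G/K$ to the $0$-skeleton, each with image a single orbit $G/H$; these are morphisms $G/K\to G/H$ in the orbit category and therefore induce surjections on $\M$ by hypothesis. The component of $d_1$ on the summand of $C^G_1(X;\M)$ indexed by this $1$-cell is, up to sign, the difference of these two induced maps, with values in the summands of $C^G_0(X;\M)$ attached to its two endpoints.

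The heart of the argument is then a transport along a path. Given a $0$-cell $\sigma_0$ with isotropy $H_0$ and $m\in\M(G/H_0)$, choose an edge-path in $(X/G)^{(1)}$ through $0$-cells $\sigma_0,\sigma_1,\dots,\sigma_l=x_0$. For the first edge, with induced surjection $a\colon\M(G/K_1)\twoheadrightarrow\M(G/H_0)$ at the $\sigma_0$-end and induced map $b\colon\M(G/K_1)\to\M(G/H_{\sigma_1})$ at the $\sigma_1$-end, pick $x$ with $a(x)=m$; then modulo $\operatorname{im}(d_1)$ the element $m$, viewed in the $\sigma_0$-summand of $C^G_0(X;\M)$, is identified with $b(x)$ in the $\sigma_1$-summand. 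Iterating, and using surjectivity at the near end of each successive edge to solve for a preimage, I transport $m$ modulo $\operatorname{im}(d_1)$ into the summand $\M(G/H_{\sigma_l})=\M(G/G)=0$. Hence $m\in\operatorname{im}(d_1)$, and since $C^G_0(X;\M)=\bigoplus_\sigma\M(G/H_\sigma)$ over $0$-cells we conclude $\operatorname{im}(d_1)=C^G_0(X;\M)$, i.e.\ $H^G_0(X;\M)=0$.

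I expect no conceptual difficulty here: path-connectivity of $X$ lets one slide any class in $C^G_0(X;\M)$ edge by edge into the $\M$-trivial summand indexed by the global fixed point. The only points needing care are the bookkeeping ones — identifying the first Bredon differential precisely enough to see that the two attaching maps of every $1$-cell are honest orbit-category morphisms (so that the surjectivity hypothesis applies edge by edge), and the routine reduction making $x_0$ a $0$-cell of orbit type $G/G$.
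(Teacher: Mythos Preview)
Your proposal is correct and follows essentially the same approach as the paper: refine the CW structure so that $x_0$ is a $0$-cell of orbit type $G/G$, then use path-connectedness of the orbit space to connect any $0$-cell to $x_0$ by an edge-path, and exploit the surjectivity hypothesis edge by edge to transport any class in $C^G_0(X;\M)$ into the trivial summand $\M(G/G)=0$, thereby showing $d_1$ is surjective. The paper phrases the last step as an explicit telescoping sum $d_1(y_n+\cdots+y_1)=z$ rather than ``transport modulo $\operatorname{im}(d_1)$'', but this is only a cosmetic difference.
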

\begin{proof}
After passing to an appropriate subdivision we may assume without loss 
of generality that the $G$-CW complex structure has a $0$-dimensional 
cell corresponding to the basepoint $x_{0}$. As this basepoint 
is fixed by $G$ this cell must be of the form $G/G$. 

By definition $H^{G}_{p}(X;\M)=H_{p}(C_{*}(X;\M))$. Here 
the group $C_{i}(X;\M)$ of cellular $i$-chains is defined by 
\[
C_{i}(X;\M)=\bigoplus_{\sigma\in S_{i}(X)} \M(G/G_{\sigma}),
\]
where $S_{i}(X)$ is an indexing set of all
$i$-dimensional $G$-cells of $X$, and $G_\sigma$ is the isotropy group of $\sigma\in S_i(X)$. To prove the lemma we are going to show that the differential 
\[
d_{1}\co \bigoplus_{\tau\in S_{1}(X)} \M(G/G_{\tau})\to 
\bigoplus_{\sigma\in S_{0}(X)} \M(G/G_{\sigma})
\]
is surjective. Let us first recall the definition of $d_1$. Suppose that $\tau\in S_1(X)$ and let $f_\tau\co G/G_\tau\times \partial D^1\to X^{(0)}$ be the attaching map of $\tau$. Write $\partial D^1=\{0,1\}$. Suppose that $f_\tau$ restricts to $G$-maps $f_{\tau,0}\co G/G_{\tau}\times \{0\}\to G/G_{\sigma_0}\times D^0$ and $f_{\tau,1}\co G/G_{\tau}\times \{1\} \to G/G_{\sigma_1}\times D^0$ for $\sigma_i\in S_0(X)$, $i=0,1$. Given $y\in \mathcal{M}(G/G_\tau)$, then
\[
d_1(y)=\mathcal{M}(f_{\tau,1})(y)-\mathcal{M}(f_{\tau,0})(y)\,.
\]
Now suppose that $\sigma\in S_{0}(X)$ and $z\in \M(G/G_{\sigma})$. We must show that $z\in \text{Im}(d_{1})$. As $X$ is assumed to be path--connected, we can find $\sigma_0,\dots,\sigma_n \in S_0(X)$ and $\tau_1,\dots,\tau_n\in S_1(X)$ such that the attaching map $f_{\tau_k}$ restricts to $f_{\tau_k,0}\co G/G_{\tau_k}\times \{0\}\to G/G_{\sigma_{k-1}}\times D^0$ and $f_{\tau_k,1}\co G/G_{\tau_k}\times \{1\}\to G/G_{\sigma_k}\times D^0$, and such that $\sigma_n=\sigma$ and $\sigma_0$ is the $G$-cell corresponding to the $G$-fixed basepoint $x_0$. By hypothesis $\mathcal{M}(f_{\tau_k,i})$ is surjective for every $k=1,\dots,n$ and $i=0,1$. Therefore, we find $y_{n}\in \M(G/G_{\tau_{n}})$ such that $\mathcal{M}(f_{\tau_n,1})(y_{n})=z_n:=z$, and $d_{1}(y_{n})=z_n-z_{n-1}$ for some $z_{n-1}\in \M(G/G_{\sigma_{n-1}})$. Continuing like this we find for every $k=1,\dots,n$ elements $y_{k}\in \M(G/G_{\tau_{k}})$ and $z_{k-1}\in \mathcal{M}(G/G_{\sigma_{k-1}})$ such that 
$\mathcal{M}(f_{\tau_k,1})(y_{k})=z_{k}$ and $d_{1}(y_{k})=z_{k}-z_{k-1}$. 
Since $\M(G/G_{\sigma_{0}})=\M(G/G)=0$, we have that $z_0=0$ and thus $d_1(y_1)=z_1$. Then
\[
d_{1}(y_{n}+y_{n-1}+\cdots+ y_{1})
=(z_{n}-z_{n-1})+(z_{n-1}-z_{n-2})+\cdots +(z_{2}-z_{1})+z_1=z_{n}=z.
\] 
This proves that $d_{1}$ is surjective. 
\end{proof}

In addition, we must make an assumption about the components of the isotropy groups of 
$\Hom(\Z^n,G)_{\BONE}$ under the conjugation action by $G$. Suppose that $\underline{x}=(x_1,\dots,x_n)\in \Hom(\Z^n,G)_{\BONE}$ is an $n$-tuple of commuting elements. Then the isotropy group of $\underline{x}$ is
\[
G_{\underline{x}}=Z_G(\underline{x})\,,
\]
i.e., the centralizer of the subset $\{x_1,\dots,x_n\}\subseteq G$. The following fact is explained in \cite[~Example~2.4]{AG1}.

\begin{lemma} \label{lem:connectedisotropy}
Let $G$ be $SU(m)$ or $Sp(m)$ for some $m\geqslant 1$, and let $n\geqslant 1$. Then for every $\underline{x}\in \Hom(\Z^n,G)_{\BONE}$ the centralizer $Z_G(\underline{x})$ is connected.
\end{lemma}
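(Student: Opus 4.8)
The plan is to simultaneously diagonalize the commuting tuple over $\C$ and to read off the centralizer explicitly as a product of classical groups, each of which is visibly connected. Since $H_\ast(SU(m);\Z)$ and $H_\ast(Sp(m);\Z)$ are torsionfree, $\Hom(\Z^n,G)$ is path--connected and coincides with $\Hom(\Z^n,G)_{\BONE}$, so it is enough to treat an arbitrary commuting tuple $\underline{x}=(x_1,\dots,x_n)$.

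For $G=SU(m)$ the $x_i$ are commuting unitary operators on $\C^m$ and hence have a common orthogonal eigenspace decomposition $\C^m=\bigoplus_{j=1}^{\ell}V_j$ into joint eigenspaces. Any operator commuting with all the $x_i$ preserves each $V_j$, so the centralizer of $\{x_1,\dots,x_n\}$ in $U(m)$ is $\prod_{j=1}^{\ell}U(V_j)$ and $Z_{SU(m)}(\underline{x})$ is the kernel of $\mu\co\prod_{j}U(V_j)\to U(1)$, $(g_j)\mapsto\prod_j\det(g_j)$. As $\mu$ is a surjective homomorphism of Lie groups it is a fibre bundle with fibre $\ker\mu$, and in the long exact homotopy sequence of $\ker\mu\to\prod_jU(V_j)\to U(1)$ the map $\pi_1\big(\prod_jU(V_j)\big)\to\pi_1(U(1))$ is surjective, since already a single factor $U(V_{j_0})$ with $V_{j_0}\neq 0$ maps onto $\pi_1(U(1))$ via $\det$. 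Because $\prod_jU(V_j)$ is connected, this forces $\pi_0(\ker\mu)$ to be trivial, i.e.\ $Z_{SU(m)}(\underline{x})$ is connected.

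For $G=Sp(m)$ I would work through the standard inclusion $Sp(m)\hookrightarrow SU(2m)$, which identifies $Sp(m)$ with the group of unitary operators on $\C^{2m}$ commuting with a fixed quaternionic structure $J$, an antilinear isometry with $J^2=-\mathrm{id}$. Diagonalizing the $x_i$ over $\C$ gives $\C^{2m}=\bigoplus_\lambda V_\lambda$, where $\lambda=(\lambda_1,\dots,\lambda_n)$ runs over the joint eigenvalue tuples, with $x_i$ acting as $\lambda_i$ on $V_\lambda$; since each $x_i$ commutes with the antilinear map $J$ one has $JV_\lambda=V_{\bar\lambda}$ for $\bar\lambda=(\bar\lambda_1,\dots,\bar\lambda_n)$. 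A unitary $g$ commuting with all $x_i$ preserves each $V_\lambda$, and commuting in addition with $J$ forces $g|_{V_{\bar\lambda}}=J\,g|_{V_\lambda}\,J^{-1}$. Hence $Z_{Sp(m)}(\underline{x})$ splits as a product over the pairs $\{\lambda,\bar\lambda\}$: a pair with $\lambda\neq\bar\lambda$ contributes $U(V_\lambda)$ (here $g|_{V_\lambda}$ is arbitrary and determines $g|_{V_{\bar\lambda}}$), while a pair with $\lambda=\bar\lambda$ (all $\lambda_i=\pm1$) contributes the unitary operators of $V_\lambda$ commuting with the quaternionic structure $J|_{V_\lambda}$, that is a copy of $Sp\big(\tfrac12\dim_\C V_\lambda\big)$. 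As this is a product of groups of the form $U(k)$ and $Sp(k)$, all of which are connected, $Z_{Sp(m)}(\underline{x})$ is connected.

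The argument is essentially bookkeeping once the simultaneous diagonalization is in place; the only point requiring a small additional argument is the $SU(m)$ case, where passing from $U(m)$ to $SU(m)$ imposes a determinant-one condition and one has to verify that the resulting codimension-one subgroup of $\prod_jU(V_j)$ remains connected — this is the mild obstacle, handled by the homotopy exact sequence above. These facts are standard and are also recorded in \cite[Example 2.4]{AG1}.
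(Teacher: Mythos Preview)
Your proof is correct; the paper does not give its own argument for this lemma but simply refers to \cite[Example~2.4]{AG1}, and what you have written is precisely the standard simultaneous-diagonalization computation underlying that reference (you even cite it yourself at the end). So there is no divergence in approach---you have just spelled out the details the paper chose to outsource.
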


In general, $Z_{G}(\underline{x})$ may not be connected. For example, 
when $G=Spin(m)$ with $m\geqslant 7$ there exist pairs $(x,y)\in \Hom(\Z^2,G)$ such that $\pi_0(Z_G(x,y))\cong \Z/2$. The following result is a special case of 
\cite[Corollary 7.5.3]{BFM} and a proof will be given in Section \ref{sec:componentgroup}.

\begin{lemma} \label{lem:componentsisotropy}
Let $G$ be a simply--connected and simple compact Lie group. For every 
$(x,y)\in \Hom(\Z^2,G)_{\BONE}$ the group $\pi_0(Z_G(x,y))$ is a finite cyclic group (of order at most $6$).
\end{lemma}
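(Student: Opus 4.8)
The plan is to express $\pi_0(Z_G(x,y))$ in terms of the fundamental group of the centralizer of the single element $x$, and then to read that fundamental group off the affine Dynkin diagram of $G$. Since $G$ is simply--connected, the centralizer $H:=Z_G(x)$ is connected by \cite[IX \S 5.3 Corollary 1]{Bourbaki}, and $Z_G(x,y)=Z_H(y)$ because $y$ commutes with $x$. Thus it suffices to show that the finite group $\pi_0(Z_H(y))$ is cyclic of order at most $6$.

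First I would embed $\pi_0(Z_H(y))$ into $\pi_1(H)$. Let $q\co\widetilde H\to H$ be the universal cover; it is a product of a torus and a simply--connected semisimple compact group, with finite central kernel $F\cong\pi_1(H)$. Fix a lift $\widetilde y\in\widetilde H$ of $y$. Element centralizers in $\widetilde H$ are connected (again by \cite{Bourbaki}), so $Z_{\widetilde H}(\widetilde y)$ is connected; as $q$ is a local isomorphism, $Z_{\widetilde H}(\widetilde y)$ and $Z_H(y)$ have the same Lie algebra, and hence $q$ carries $Z_{\widetilde H}(\widetilde y)$ onto $Z_H(y)^{\circ}$. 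Because $F$ is central, the assignment $h\mapsto\widetilde h\,\widetilde y\,\widetilde h^{-1}\widetilde y^{-1}$ is independent of the chosen lift $\widetilde h$ of $h$ and defines a homomorphism $Z_H(y)\to F$ whose kernel is precisely $Z_H(y)^{\circ}$. Therefore $\pi_0(Z_G(x,y))=\pi_0(Z_H(y))$ injects into $\pi_1(Z_G(x))$.

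Next I would compute the torsion of $\pi_1(Z_G(x))$. After conjugating $x$ into the closure of the fundamental alcove, the Borel--de Siebenthal theorem shows that the root system of $H=Z_G(x)$ has a base consisting of a proper subset $\{\alpha_i : i\in I\}$ of the affine simple roots $\alpha_0,\alpha_1,\dots,\alpha_r$ of $G$; consequently $\pi_1(H)\cong Q^{\vee}/\langle\alpha_i^{\vee} : i\in I\rangle$, where the coroot lattice $Q^{\vee}$ has $\Z$--basis $\alpha_1^{\vee},\dots,\alpha_r^{\vee}$. If $0\notin I$ this quotient is free. If $0\in I$, set $J=\{1,\dots,r\}\setminus I\neq\emptyset$; using the affine relation $\alpha_0^{\vee}=-\sum_{i=1}^{r}n_i^{\vee}\alpha_i^{\vee}$ (with $n_0^{\vee}=1$) one rewrites the quotient as $\Z^{J}/\langle(n_j^{\vee})_{j\in J}\rangle$, the free abelian group on the index set $J$ modulo the single vector $(n_j^{\vee})_{j\in J}$, whose torsion subgroup is cyclic of order $\gcd\{n_j^{\vee} : j\in J\}$. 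Since $\pi_0(Z_G(x,y))$ is finite it lands in the torsion subgroup of $\pi_1(H)$, and $\gcd\{n_j^{\vee} : j\in J\}\leqslant\max_{1\leqslant j\leqslant r}n_j^{\vee}\leqslant 6$ by Table~\ref{table:coroot}, the bound $6$ being attained only for $G=E_8$; for types $A$ and $C$ the gcd is $1$, consistent with Lemma~\ref{lem:connectedisotropy}. The steps I expect to require the most care are the two standard but technical ones: that the commutator map above is genuinely a well--defined homomorphism with kernel exactly $Z_H(y)^{\circ}$, and the precise form of the Borel--de Siebenthal reduction used to identify $\pi_1(Z_G(x))$ with the lattice quotient above.
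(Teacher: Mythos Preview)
Your proposal is correct and follows essentially the same route as the paper, which packages your two steps as Lemma~\ref{lem:pi0inpi1} (the commutator map giving an injection $\pi_0(Z_G(x,y))\hookrightarrow\pi_1(DZ_G(x))$) and Lemma~\ref{lem:pi1derived} (the lattice computation showing this is cyclic of order $\gcd\{n_j^\vee:j\notin I\}\leqslant 6$). One harmless slip: the universal cover of $H=Z_G(x)$ is $\R^k\times K'$ rather than a torus times $K'$, so the kernel $F\cong\pi_1(H)$ is generally not finite---but your argument is unaffected, since you correctly observe that the finite group $\pi_0(Z_H(y))$ must land in the torsion subgroup of $\pi_1(H)$.
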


We can now prove:

\begin{proposition} \label{prop:vanishinge02}
Suppose that either $n=2$, or that $n\geqslant 2$ and $G$ is $SU(m)$ or $Sp(m)$ for some 
$m\geqslant 1$. Then $E^2_{0,2}=H_0^G(\Hom(\Z^n,G)_{\BONE};\mathcal{H}_2)= 0$. 
\end{proposition}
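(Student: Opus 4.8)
The plan is to obtain this from Lemma~\ref{computation spectral sequence}, applied to $X=\Hom(\Z^{n},G)_{\BONE}$ with its $G$-CW structure (which exists by \cite[Theorem 1.3]{ParkSuh}, or by Lemma~\ref{lem:cw}), with the $G$-fixed basepoint $\BONE=(1_{G},\dots,1_{G})$, whose isotropy group is all of $G$, and with the covariant coefficient system $\M=\mathcal{H}_{2}$, $G/K\mapsto H_{2}(BK;\Z)$. Since $X$ is path--connected by definition of the component, the lemma will yield $E^{2}_{0,2}=H_{0}^{G}(X;\mathcal{H}_{2})=0$ as soon as we verify (i) that $\mathcal{H}_{2}(G/G)=H_{2}(BG;\Z)=0$, and (ii) that for every morphism $G/H\to G/K$ in the orbit category of $X$ the induced map $H_{2}(BH;\Z)\to H_{2}(BK;\Z)$ is surjective. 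After conjugating (conjugation inducing an isomorphism on $H_{2}(B-;\Z)$) condition (ii) concerns inclusions $H\subseteq K$ of isotropy groups of $X$, i.e. of centralizers $Z_{G}(\underline x)$ of commuting $n$-tuples contained in maximal tori.

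Condition (i) is immediate: in all cases of the statement $G$ is simply connected — for $SU(m)$ and $Sp(m)$ classically, and for $n=2$ we are in the setting of Lemma~\ref{lem:componentsisotropy}, where $G$ is simply connected and simple — so $BG$ is $3$-connected and $H_{2}(BG;\Z)=0$.

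For condition (ii) I would argue in two steps. First, every centralizer $Z_{G}(\underline x)$ has full rank in $G$, since it contains a maximal torus of $G$; hence a maximal torus $S$ of $H=Z_{G}(\underline x)$ is a maximal torus of $G$, and, being contained in $K=Z_{G}(\underline y)$, also a maximal torus of $K_{0}$. As a maximal torus of a connected compact Lie group surjects onto its fundamental group, both $\pi_{1}(S)\to\pi_{1}(H_{0})$ and $\pi_{1}(S)\to\pi_{1}(K_{0})$ are onto, whence $\pi_{1}(H_{0})\to\pi_{1}(K_{0})$ is onto. Second, to pass from identity components to $H_{2}(B-;\Z)$ I would use the fibration $BK_{0}\to BK\to B\pi_{0}(K)$: on the line $p+q=2$ of its Serre spectral sequence one has $E^{2}_{1,1}=H_{1}(B\pi_{0}(K);H_{1}(BK_{0};\Z))=0$ and $E^{2}_{2,0}=H_{2}(B\pi_{0}(K);\Z)=0$ — using here that $\pi_{0}(K)$ is trivial in the $SU(m),Sp(m)$ case by Lemma~\ref{lem:connectedisotropy} and finite cyclic in the case $n=2$ by Lemma~\ref{lem:componentsisotropy} — while $E^{2}_{0,2}=H_{2}(BK_{0};\Z)_{\pi_{0}(K)}\cong\pi_{1}(K_{0})_{\pi_{0}(K)}$ can only receive a differential from $H_{3}(B\pi_{0}(K);\Z)$. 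Hence $H_{2}(BK;\Z)$ is a quotient of $\pi_{1}(K_{0})_{\pi_{0}(K)}$, naturally in $K$. Combining the two steps, surjectivity of $\pi_{1}(H_{0})\to\pi_{1}(K_{0})$ forces surjectivity of the induced map of coinvariants, and hence of $H_{2}(BH;\Z)\to H_{2}(BK;\Z)$, completing (ii).

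The main obstacle is the identification of $H_{2}(BK;\Z)$ with a quotient of $\pi_{1}(K_{0})_{\pi_{0}(K)}$, and this is exactly where the hypotheses on $n$ and $G$ enter: for an arbitrary finite group $F$ the Schur multiplier $H_{2}(BF;\Z)$ need not vanish, and this is what breaks down once $\pi_{0}$ of a centralizer is allowed to be, say, $\Z/2\times\Z/2$ — the situation excluded precisely by taking $n=2$ (component groups cyclic by Lemma~\ref{lem:componentsisotropy}) or $G=SU(m),Sp(m)$ (centralizers connected by Lemma~\ref{lem:connectedisotropy}). By contrast, the surjectivity $\pi_{1}(H_{0})\to\pi_{1}(K_{0})$ is a soft consequence of the full-rank property of centralizers and needs no hypothesis beyond compactness of $G$.
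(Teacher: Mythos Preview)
Your proof is correct and follows essentially the same route as the paper: apply Lemma~\ref{computation spectral sequence} with $\M=\mathcal{H}_2$, reduce the surjectivity check to inclusions $H_0\leqslant K_0$ of identity components via the Serre spectral sequence of $BK_0\to BK\to B\pi_0(K)$ (using that $\pi_0(K)$ is cyclic so $H_2(\pi_0(K);\Z)=0$), and then invoke the full-rank property of centralizers to get $\pi_1(H_0)\twoheadrightarrow\pi_1(K_0)$. The paper's argument and yours are the same in substance; the only cosmetic difference is that the paper records the vanishing of $\tilde{E}^2_{2,1}$ explicitly before concluding that only $d_3$ can hit $E^2_{0,2}$, which you leave implicit.
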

\begin{proof}
For ease of notation we denote $\Hom(\Z^n,G)_{\BONE}$ simply by $X$. Let $H$ and $K$ be 
isotropy groups of $X$ and assume that $gHg^{-1}\leqslant K$ for some $g\in G$. 
If we can show that the map
\[
\H_2(G/H)=H_{2}(BH;\Z)\to H_{2}(BK;\Z)=\H_2(G/K)
\]
induced by conjugation and inclusion is 
surjective, then Lemma \ref{computation spectral sequence} finishes the proof, for 
$x_{0}=(1_{G},\dots, 1_{G})\in X$ is fixed by the conjugation action and 
$\H_2(G/G)=H_{2}(BG;\Z)=0$ since $BG$ is $3$-connected.

Factoring the map through the isomorphism induced by $H\to gHg^{-1}$, it suffices to prove 
surjectivity in the case where $H\leqslant K$ and $H_2(BH;\Z)\to H_2(BK;\Z)$ is induced by 
the inclusion of $H$ into $K$.

To see this we consider first the inclusion of the identity components $H_0\leqslant K_0$. 
There is a torus $T\leqslant H_0$ which is maximal for both $H_0$ and $K_0$ as both groups 
have maximal rank. By \cite[Theorem V 7.1]{BD} the maps 
$\pi_{1}(T)\to \pi_{1}(H_{0})$ and $\pi_{1}(T)\to \pi_{1}(K_{0})$ induced by the inclusions 
are both surjective. This in turn implies that the map $\pi_{1}(H_{0})\to \pi_{1}(K_{0})$ 
induced by the inclusion is surjective. From the natural isomorphisms
\[
H_{2}(BH_{0};\Z)\cong \pi_{2}(BH_{0})\cong \pi_{1}(H_{0})\,,
\]
and the corresponding ones for $K_0$, we derive that $H_{2}(BH_{0};\Z) \to H_{2}(BK_{0};\Z)$ 
is surjective.

Surjectivity of $H_2(BH;\Z)\to H_2(BK;\Z)$ follows then, because there is a commutative 
diagram with exact rows
\[
\xymatrix{
H_2(BH_0;\Z)_{\pi_0(H)} \ar[r] \ar[d] &  H_2(BH;\Z) \ar[d] \ar[r] & 0 \ar@{=}[d] \\
H_2(BK_0;\Z)_{\pi_0(K)} \ar[r] &  H_2(BK;\Z) \ar[r] & 0 
}
\]
resulting from the Serre exact sequence of the homotopy fiber sequence
\[
BK_{0}\to BK\to B\pi_{0}(K)\,,
\]
and the corresponding one for $H$. In more detail, consider the Serre spectral sequence 
\[
\tilde{E}^{2}_{p,q}=H_{p}(\pi_{0}(K);H_{q}(BK_{0};\Z))\Longrightarrow H_{p+q}(BK;\Z)\,.
\]
As $BK_0$ is path--connected and simply--connected, $H_0(BK_0;\Z)\cong \Z$ is the trivial 
$\pi_0(K)$-module and $H_1(BK_0;\Z)= 0$. In particular, both $\tilde{E}^{2}_{1,1}$ and 
$\tilde{E}_{2,1}^2$ vanish.

Now observe that $\pi_0(K)$ is a cyclic group; if $n=2$, then this is Lemma \ref{lem:componentsisotropy}, and 
if $G$ is either $SU(m)$ or $Sp(m)$, then $K$ is connected by Lemma \ref{lem:connectedisotropy}. As a consequence, we have that $H_2(\pi_0(K);\Z)= 0$ and, therefore, $\tilde{E}^{2}_{2,0}=0$. Furthermore, we can identify $\tilde{E}_{0,2}^{2}$ with the 
coinvariants $H_{2}(BK_{0};\Z)_{\pi_{0}(K)}$. The vanishing of $\tilde{E}_{2,1}^{2}$ implies 
that $\tilde{E}_{0,2}^{3}=\tilde{E}_{0,2}^{2}$. It follows that
\[
\tilde{E}^{4}_{0,2}= H_{2}(BK_{0};\Z)_{\pi_{0}(K)}/\Im(d_{3}\co\tilde{E}_{3,0}^{3}\to \tilde{E}_{0,2}^{3})\,.
\] 
For degree reasons, the differentials from or to $\tilde{E}^{r}_{0,2}$ are zero when $r\geqslant 4$, 
which implies that $\tilde{E}^{\infty}_{0,2}=\tilde{E}^{4}_{0,2}$. As a result, there is an exact sequence
\[
0\to \Im(d_{3}\co\tilde{E}_{3,0}^{3}\to \tilde{E}_{0,2}^{3}) 
\to H_{2}(BK_{0};\Z)_{\pi_{0}(K)} \to H_2(BK;\Z)\to 0\,,
\]
and a corresponding exact sequence for $H$. The diagram is obtained by naturality of the Serre 
spectral sequence.
\end{proof}

As a consequence of Proposition \ref{prop:vanishinge02} we obtain the following theorem, which was stated in the introduction in terms of $\pi_2$ rather than $H_2$ (the two statements being equivalent by the Hurewicz theorem).

\begin{theorem}\label{thm:generalsecondhomology}
Suppose that $G$ is $SU(m)$ or $Sp(m)$ with $m\geqslant 1$. For every $n\geqslant 1$ 
the quotient map $\pi\co \Hom(\Z^{n},G)\to \Rep(\Z^{n},G)$ induces an isomorphism 
\[
H_{2}(\Hom(\Z^{n},G);\Z)\cong H_{2}(\Rep(\Z^{n},G);\Z)\, .
\]
\end{theorem}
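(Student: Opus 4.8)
The plan is to read the isomorphism off the Bredon spectral sequence \eqref{eq:bredonss}, using that for $G=SU(m)$ and $G=Sp(m)$ all isotropy groups of $\Hom(\Z^n,G)$ are connected. Write $X:=\Hom(\Z^n,G)$. Since $H_\ast(G;\Z)$ is torsion-free for these two families, $X$ is path--connected (as recalled in Section \ref{sec:cohomology}), so $X=\Hom(\Z^n,G)_{\BONE}$ and $\Rep(\Z^n,G)=\Rep(\Z^n,G)_{\BONE}$; moreover $X$ is simply connected because $G$ is, by \cite[Theorem 1.1]{GPS}. By Lemma \ref{equihom1} together with the Hurewicz theorem,
\[
H_2(X;\Z)\cong\pi_2(X)\cong H_2(EG\times_G X;\Z),
\]
and, as spelled out in Section \ref{SectionBredon}, under this identification the map $\pi_\ast\colon H_2(X;\Z)\to H_2(\Rep(\Z^n,G);\Z)$ induced by the quotient becomes the edge homomorphism
\[
H_2(EG\times_G X;\Z)\twoheadrightarrow E^\infty_{2,0}\hookrightarrow E^2_{2,0}\cong H_2(\Rep(\Z^n,G);\Z).
\]
So it suffices to show that \eqref{eq:bredonss} collapses onto the $q=0$ row in total degree $2$.

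The crucial point is that the entire $q=1$ row of \eqref{eq:bredonss} vanishes. Every isotropy group of the $G$-space $X$ has the form $Z_G(\underline{x})$ for some $\underline{x}\in X$, and by Lemma \ref{lem:connectedisotropy} such a centralizer is connected; hence every term $\mathcal{H}_1(G/G_\sigma)=H_1(BG_\sigma;\Z)$ in the Bredon chain complex $C_\ast(X;\mathcal{H}_1)$ is zero, so $E^2_{p,1}=0$ for all $p$. Combined with $E^2_{0,2}=0$, which is Proposition \ref{prop:vanishinge02}, the only contributions to the associated graded of the filtration on $H_2(EG\times_G X;\Z)$ are $E^\infty_{2,0}$, $E^\infty_{1,1}$ and $E^\infty_{0,2}$; the last two are subquotients of $E^2_{1,1}=0$ and $E^2_{0,2}=0$ and hence vanish. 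Finally $E^\infty_{2,0}=E^2_{2,0}$ for degree reasons: the outgoing $d_2$ from bidegree $(2,0)$ lands in $E^2_{0,1}=0$, the higher outgoing differentials land in bidegrees with negative first coordinate, and every incoming differential originates in a bidegree with negative second coordinate. Therefore the edge homomorphism is an isomorphism $H_2(EG\times_G X;\Z)\cong H_2(\Rep(\Z^n,G);\Z)$, which is exactly the assertion.

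I do not anticipate a genuine obstacle here: the substantive input is Proposition \ref{prop:vanishinge02} and the connectedness of centralizers in $SU(m)$ and $Sp(m)$ recorded in Lemma \ref{lem:connectedisotropy}, and once those are in place the rest is formal spectral--sequence bookkeeping. The only points requiring a little care are the identification of $\pi_\ast$ with the edge homomorphism — already carried out in Section \ref{SectionBredon} — and the verification that no hidden differential or extension problem in \eqref{eq:bredonss} can disturb the collapse in total degree $2$, which the bidegree analysis above settles.
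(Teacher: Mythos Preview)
Your proposal is correct and follows essentially the same argument as the paper: both use the Bredon spectral sequence, invoke Lemma~\ref{lem:connectedisotropy} to kill the $q=1$ row, invoke Proposition~\ref{prop:vanishinge02} to kill $E^2_{0,2}$, and conclude that the edge homomorphism is an isomorphism. The only difference is presentational---you spell out the identification via Lemma~\ref{equihom1} and Hurewicz a bit more explicitly than the paper does.
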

\begin{proof}
Let $X:=\Hom(\Z^{n},G)$ and consider the Bredon spectral sequence
\[
E^{2}_{p,q}=H^{G}_{p}(X;\H_{q})\Longrightarrow H_{p+q}(EG\times_G X;\Z)\,.
\]
As pointed out before, the map we aim to show is an isomorphism is identified with the composite
\[
H_2(EG\times_G X;\Z)\to E_{2,0}^\infty \hookrightarrow E_{2,0}^2\, .
\]
By Lemma \ref{lem:connectedisotropy}, $G$ acts on $X$ with connected isotropy groups, hence $E^{2}_{p,1}=H^G_{p}(X;\H_1)=0$ 
for all $p\geqslant 0$. This implies that $d_2\co E^2_{2,0}\to E^2_{0,1}$ is 
trivial, so that $E^{\infty}_{2,0}\cong E^{2}_{2,0}$. Now $E^2_{0,2}$ vanishes by Proposition 
\ref{prop:vanishinge02}, whence the only possibly non-zero term of total degree two on the 
$E^\infty$-page is $E^\infty_{2,0}$. Consequently, the projection 
$H_2(EG\times_G X;\Z)\to E^{\infty}_{2,0}$ is an isomorphism.
\end{proof}

As a result, the computation of $\pi_2(\Hom(\Z^n,G))$ for $G=SU(m)$ or $G=Sp(m)$ is reduced to 
the calculation of $H_2(\Rep(\Z^n,G);\Z)$. This will be adressed in Section \ref{sec:sumspm}.

\medskip

For the case of commuting pairs we record an intermediate result in the following lemma.

\begin{lemma} \label{lem:pretheorem}
Let $G$ be a simply--connected and simple compact Lie group. Then
\[
\pi_{2}(\Hom(\Z^{2},G))\cong \Z\oplus E^2_{1,1}\,,
\]
where $E^2_{1,1}=H_1^G(\Hom(\Z^2,G);\mathcal{H}_1)$ is a finite group. Moreover, the image of
\[
\pi_\ast\co \pi_2(\Hom(\Z^2,G))\to \pi_2(\Rep(\Z^2,G))
\]
is a finite index subgroup $d\Z\subseteq \Z$ where $d$ equals the order of 
$E^2_{0,1}=H_0^G(\Hom(\Z^2,G);\mathcal{H}_1)$.
\end{lemma}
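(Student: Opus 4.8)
The idea is to read off the claimed statements from the Bredon spectral sequence \eqref{eq:bredonss} in the case $n=2$, exactly as was done for $SU(m)$ and $Sp(m)$ in the proof of Theorem~\ref{thm:generalsecondhomology}, but now keeping track of the extra term coming from the possibly disconnected centralizers. Throughout write $X=\Hom(\Z^2,G)=\Hom(\Z^2,G)_{\BONE}$ (path--connected since $G$ is simply--connected). By Lemma~\ref{equihom1} we must analyze $H_2(EG\times_G X;\Z)$, and by the discussion preceding Figure~\ref{figure:e2page} we know $E^2_{p,0}\cong H_p(\Rep(\Z^2,G);\Z)$, which is $\Z$ for $p=0,2$ (using $\operatorname{rank} G\geqslant 1$). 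The only terms on the $E^2$-page of total degree $\leqslant 2$ that can contribute are $E^2_{2,0}\cong\Z$, $E^2_{1,1}$, and $E^2_{0,2}$; and $E^2_{0,2}=0$ by Proposition~\ref{prop:vanishinge02}.

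The key steps are as follows. First, by Corollary~\ref{cor:rank} the group $\pi_2(\Hom(\Z^2,G))$ has rank $\binom{2}{2}=1$, so it is $\Z$ plus a finite group; thus $E^2_{1,1}$ — being, by the above, the only other contributor to $H_2(EG\times_G X;\Z)$ besides $E^2_{2,0}\cong\Z$ — must be finite, and in fact $\pi_2(\Hom(\Z^2,G))\cong H_2(EG\times_G X;\Z)$ is built out of $E^\infty_{2,0}$ and $E^\infty_{1,1}$. Second, I would argue that there is no extension problem and no interference from differentials in the relevant range: the only differential into $E^r_{1,1}$ is $d_2\colon E^2_{3,0}\to E^2_{1,1}$, and the only differential out of $E^r_{2,0}$ is $d_2\colon E^2_{2,0}\to E^2_{0,1}$. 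Since $E^2_{3,0}\cong H_3(\Rep(\Z^2,G);\Z)=0$ (odd degree), $E^\infty_{1,1}=E^3_{1,1}=E^2_{1,1}/\operatorname{Im}(d_2\colon E^3_{2,1}\to\dots)$ — wait, more carefully, the differentials hitting or leaving $E^r_{1,1}$ for $r\geqslant 2$ are $d_r\colon E^r_{1+r,1-r+1}\to E^r_{1,1}$ and $d_r\colon E^r_{1,1}\to E^r_{1-r,r}$; for $r=2$ the source is $E^2_{3,0}=0$ and the target is $E^2_{-1,2}=0$, and for $r\geqslant 3$ both vanish for degree reasons, so $E^\infty_{1,1}=E^2_{1,1}$. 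Likewise $E^\infty_{2,0}=E^2_{2,0}/\operatorname{Im}(\text{nothing})$ modulo the image of $d_2\colon E^2_{2,0}\to E^2_{0,1}$ being a quotient, but since we are computing the bottom row we get $E^\infty_{2,0}=\operatorname{Ker}(d_2\colon E^2_{2,0}\to E^2_{0,1})$, a subgroup of $\Z$, hence $d\Z$ for some $d\geqslant 1$ (it is nonzero because the rank of $H_2(EG\times_G X;\Z)$ is $1$ and $E^2_{1,1}$ is finite). The $\Z$-summand then splits off because $E^\infty_{2,0}=d\Z\cong\Z$ is free, giving $\pi_2(\Hom(\Z^2,G))\cong\Z\oplus E^2_{1,1}$.

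Third, for the statement about $\pi_\ast$: by the discussion preceding Figure~\ref{figure:e2page}, $\pi_\ast\colon\pi_2(\Hom(\Z^2,G))\to\pi_2(\Rep(\Z^2,G))\cong H_2(\Rep(\Z^2,G);\Z)\cong\Z$ is identified with the edge composite $H_2(EG\times_G X;\Z)\twoheadrightarrow E^\infty_{2,0}\hookrightarrow E^2_{2,0}\cong\Z$, whose image is exactly $E^\infty_{2,0}=\operatorname{Ker}(d_2\colon E^2_{2,0}\to E^2_{0,1})=d\Z$. It remains to show $d$ equals the order of $E^2_{0,1}=H_0^G(X;\H_1)$. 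For this I would invoke the vanishing of $E^2_{0,2}$ together with the fact that the only differential landing in $E^r_{0,1}$ is $d_2\colon E^2_{2,0}\to E^2_{0,1}$ (the other, $d_3\colon E^3_{3,-1}\to\dots$, has zero source), while the only differential leaving $E^r_{0,1}$ lands in $E^r_{-2,2}=0$; hence $E^\infty_{0,1}=E^2_{0,1}/\operatorname{Im}(d_2)$. But $E^\infty_{0,1}$ is a subquotient of $H_1(EG\times_G X;\Z)=H_1(BG;\Z)=0$ since $EG\times_G X$ is simply--connected (as noted in the proof of Lemma~\ref{equihom1}). Therefore $d_2\colon E^2_{2,0}\to E^2_{0,1}$ is surjective, so $E^2_{0,1}$ is finite (being a quotient of $\Z$) of order $d=[\Z:\operatorname{Ker} d_2]$, which is exactly the claimed $d$.

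**Main obstacle.** The routine bookkeeping of differentials is easy; the genuine content is the input that $E^2_{0,2}=0$ (Proposition~\ref{prop:vanishinge02}) and the finiteness/structure of the target $\Rep(\Z^2,G)$, both of which are already established in the excerpt. The one point requiring a little care is ensuring that $E^\infty_{2,0}$ is genuinely nonzero — equivalently that $d<\infty$ — which follows from Corollary~\ref{cor:rank} (rank $1$) forcing $E^2_{1,1}$ to be finite and hence $E^\infty_{2,0}$ to carry the free rank; and the splitting of the short exact sequence $0\to E^\infty_{1,1}\to H_2(EG\times_G X;\Z)\to E^\infty_{2,0}\to 0$, which is automatic because $E^\infty_{2,0}\cong\Z$ is free. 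So there is no real obstruction beyond assembling these observations.
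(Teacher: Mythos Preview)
Your approach is the same as the paper's---run the Bredon spectral sequence and read off the answer from the terms $E^2_{2,0}$, $E^2_{1,1}$, $E^2_{0,1}$, using $E^2_{0,2}=0$---and the bookkeeping with differentials, the identification of $\pi_\ast$ with the edge map, and the splitting argument are all fine.

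There is, however, a small but genuine circularity in your finiteness argument. You deduce that $E^2_{1,1}$ is finite from the fact that $\pi_2(X)$ has rank~$1$ (Corollary~\ref{cor:rank}), and then use finiteness of $E^2_{1,1}$ to conclude that $E^\infty_{2,0}$ carries the free rank, hence $d<\infty$. But the rank computation alone does not rule out the scenario $E^\infty_{2,0}=0$ and $\operatorname{rank}E^2_{1,1}=1$: the equation $\operatorname{rank}E^\infty_{1,1}+\operatorname{rank}E^\infty_{2,0}=1$ with $E^\infty_{2,0}\leqslant\Z$ is perfectly consistent with this. So step~2 and step~3 of your argument lean on one another.

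The paper avoids this by observing directly that the coefficient system $\mathcal{H}_1$ takes values in \emph{finite} abelian groups: for any compact isotropy group $K$ one has $H_1(BK;\Z)\cong\pi_0(K)^{\mathrm{ab}}$, which is finite since $K$ is compact. Combined with compactness of $\Hom(\Z^2,G)$ (finitely many $G$-cells), this makes every $E^2_{p,1}$ a finitely generated torsion group, hence finite. With this in hand the rest of your argument goes through unchanged, and Corollary~\ref{cor:rank} is not needed.
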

\begin{proof}
In the Bredon spectral sequence for $\Hom(\Z^2,G)$ we have that $E^2_{2,0}\cong \Z$ (see Figure \ref{figure:e2page}), and $E^{2}_{0,2}=0$ by Proposition \ref{prop:vanishinge02}. Because $EG\times_G \Hom(\Z^2,G)$ is simply--connected, we must have $E^\infty_{0,1}= 0$. The only way this can happen is by $d_2\co E^2_{2,0}\to E^2_{0,1}$ being surjective.

As all isotropy groups of $\Hom(\Z^2,G)$ are compact, the coefficient system $\mathcal{H}_1$ 
takes values in finite abelian groups. Since $\Hom(\Z^2,G)$ is compact, 
$E^2_{0,1}=H_0^G(\Hom(\Z^2,G);\mathcal{H}_1)$ is finitely generated. Together this 
implies that $E^2_{0,1}$ is finite. Therefore, the subgroup $E^\infty_{2,0}\leqslant E^2_{2,0}$ is $\ker(d_2)\cong d\Z$, 
where $d$ is the order of the cyclic group $E^2_{0,1}$.

Finally, $E^{\infty}_{1,1}\cong E^2_{1,1}$ because $E^2_{3,0}=0$. Since $E^{\infty}_{2,0}=\ker(d_2)\cong \Z$ is free, we 
have that $\pi_2(\Hom(\Z^2,G))\cong E^\infty_{2,0}\oplus E^{2}_{1,1}$.
\end{proof}

The groups $H_p^G(\Hom(\Z^2,G);\H_1)$, $p\geqslant 0$ will be calculated in Section \ref{sec:pairs}.

\section{Commuting $n$-tuples in $SU(m)$ and $Sp(k)$} \label{sec:sumspm}

In this section we focus on the cases $G=SU(m)$ and $G=Sp(k)$. Since $\Hom(\Z^n,G)$ and $\Rep(\Z^n,G)$ are path--connected for all $n\geqslant 1$, we will omit the subscript $\mathds{1}$ in this section.

\begin{theorem}\label{thm:casesymplecticandunitary}
Let $n\geqslant 1$. Then
\begin{itemize}
\item[(i)] for every $m\geqslant 3$ there is an isomorphism
\[
\pi_{2}(\Hom(\Z^{n},SU(m))) \cong \Z^{\binom{{n}}{{2}}}\,,
\]
and the standard inclusion $SU(m)\to SU(m+1)$ induces an isomorphism 
\[
\pi_2(\Hom(\Z^n,SU(m))) \xrightarrow{\cong} \pi_2(\Hom(\Z^n,SU(m+1)))\,,
\]
\item[(ii)] for every $k\geqslant 1$ there is an isomorphism
\[
\pi_2(\Hom(\Z^n,Sp(k))) \cong \Z^{\binom{{n}}{{2}}}\oplus (\Z/2)^{2^{n}-1-n-\binom{{n}}{{2}}}\,,
\]
and the standard inclusion $Sp(k)\to Sp(k+1)$ induces an isomorphism 
\[
\pi_2(\Hom(\Z^n,Sp(k)))\xrightarrow{\cong} \pi_2(\Hom(\Z^n,Sp(k+1)))\, .
\]
\end{itemize}
\end{theorem}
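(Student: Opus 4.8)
By Theorem \ref{thm:generalsecondhomology}, for $G=SU(m)$ or $G=Sp(k)$ the quotient map induces an isomorphism $\pi_2(\Hom(\Z^n,G))\cong \pi_2(\Rep(\Z^n,G))$, and since both spaces are path--connected and (by the cited results on $\Rep(\Z^n,G)_{\BONE}=T^n/W$ for simply--connected $G$, together with \cite[Theorem 1.1]{BLR}) simply--connected, the Hurewicz theorem reduces everything to computing $H_2(\Rep(\Z^n,G);\Z)=H_2(T^n/W;\Z)$. So the plan is: (1) identify $T^n/W$ explicitly for the two families; (2) compute its second integral homology; (3) check the effect of stabilization.

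\emph{Step 1: models for $T^n/W$.} For $G=SU(m)$ the maximal torus is $T=\{(z_1,\dots,z_m)\in (\SS^1)^m : \prod z_i=1\}$ with $W=\Sigma_m$ acting by permutation, so $T^n/W=(T^n)/\Sigma_m$. It is convenient to pass to $U(m)$: since $T_{U(m)}=(\SS^1)^m$ and $T_{U(m)}^n/\Sigma_m\cong \mathrm{Sym}^m((\SS^1)^n)$ is the $m$-fold symmetric product of the $n$-torus, and the covering $\SS^1\times SU(m)\to U(m)$ gives $\pi_2(\Hom(\Z^n,SU(m)))\cong \pi_2(\Hom(\Z^n,U(m)))$ (as noted in the introduction), we are reduced to computing $H_2(\mathrm{Sym}^m((\SS^1)^n);\Z)$. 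The homology of symmetric products of a space is classical (Dold--Thom / Nakaoka): $H_2(\mathrm{Sym}^m(X);\Z)$ stabilizes for $m\geqslant$ small bound and equals a functor of $H_*(X;\Z)$ in low degrees. For $X=(\SS^1)^n$ one has $H_0=\Z$, $H_1=\Z^n$, $H_2=\Z^{\binom{n}{2}}$, all free, so $H_2(\mathrm{Sym}^m(X);\Z)\cong H_2(X)\oplus \mathrm{Sym}^2(H_1(X))/(\text{stuff})$ — more precisely, in the stable range $H_2(\mathrm{Sym}^\infty X;\Z)=H_2(X)\oplus \Lambda^2 H_1(X) \oplus (\text{divided square of }H_1)$ contributions; for a free $H_1$ of rank $n$ the exterior square $\Lambda^2$ already has rank $\binom{n}{2}$ and the symmetric part contributes a further $\binom{n+1}{2}$, but only the part surviving in $\mathrm{Sym}^m$ for $m\geqslant 3$ is relevant. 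The claim $\Z^{\binom{n}{2}}$ says the symmetric-square contribution dies; one sees this because on $\mathrm{Sym}^m(\SS^1\times\cdots)$ the "square of a $1$-cycle" is a boundary once $m\geqslant 2$ (the standard fact that $H_2(\mathrm{Sym}^m \SS^1)=0$ for all $m$). I would assemble this carefully using the Künneth-type splitting of $H_*(\mathrm{Sym}^m(X\times Y))$ and the known $H_*(\mathrm{Sym}^m \SS^1)$; this pins down both the isomorphism $\pi_2(\Hom(\Z^n,SU(m)))\cong\Z^{\binom n2}$ for $m\geqslant 3$ and, by the Dold--Thom stability, the stabilization statement $SU(m)\to SU(m+1)$.

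\emph{Step 2: the symplectic case.} Here $T_{Sp(k)}=(\SS^1)^k$ but the Weyl group is the hyperoctahedral group $W=(\Z/2)^k\rtimes \Sigma_k$, the sign changes acting by $z_i\mapsto z_i^{\pm1}$ on each circle. Thus $T^n/W$ is a quotient of $((\SS^1)^n)^k$ first by the diagonal $\Sigma_k$ and then by the $(\Z/2)^k$ acting coordinatewise as the inversion $\iota$ on each copy of $(\SS^1)^n$. Equivalently $T^n/W=\mathrm{Sym}^k\big((\SS^1)^n/\iota\big)$, where $\iota$ is the simultaneous inversion $t\mapsto t^{-1}$ on the $n$-torus. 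So I would (a) compute $H_*((\SS^1)^n/\iota;\Z)$ in degrees $\leqslant 2$ — this is the quotient of the $n$-torus by the involution $x\mapsto -x$, whose fixed points are the $2^n$ elements of order $\leqslant 2$; a transfer argument shows $H_*((\SS^1)^n/\iota;\Z[1/2])=H_*((\SS^1)^n;\Z)^{\iota}$, and since $\iota$ acts by $(-1)^j$ on $H_j$, the rational $H_1$ vanishes and rational $H_2$ has rank $\binom n2$; the $2$-torsion must be analyzed separately, e.g. via the Borel/equivariant spectral sequence or a direct CW model, and this is where the $(\Z/2)^{2^n-1-n-\binom n2}$ comes from (the $2^n$ fixed points, minus the rank contributions that cancel). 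Then (b) feed this into the symmetric-product homology again. The stabilization $Sp(k)\to Sp(k+1)$ is once more Dold--Thom stability of $\mathrm{Sym}^k$.

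\emph{Main obstacle.} The genuinely hard part is pinning down the $2$-torsion in $H_2(T^n/W;\Z)$ for $Sp(k)$: the exponent and precise rank $2^n-1-n-\binom n2$ of $(\Z/2)$-summands. The rational part is a routine character/transfer computation (and is forced anyway by Theorem \ref{2nd homology} when a simple factor is present), but the integral answer requires either a careful CW-structure on $(\SS^1)^n/\iota$ keeping track of the $2^n$ cone points, or a Cartan--Leray spectral sequence for the $(\Z/2)^k$-action with mod $2$ and integral coefficients, combined with the universal coefficient / Bockstein bookkeeping through the symmetric product. I would organize this by first settling $k=1$ (where $T^n/W=(\SS^1)^n/\iota$ directly) by an explicit cell count, verifying $H_2=\Z^{\binom n2}\oplus(\Z/2)^{2^n-1-n-\binom n2}$, and then showing the inclusion $Sp(1)\hookrightarrow Sp(k)$ already induces an isomorphism on $\pi_2(\Hom(\Z^n,-))$ via the symmetric-product stability, so that the single base case propagates to all $k$.
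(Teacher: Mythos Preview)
Your approach for $Sp(k)$ is essentially the paper's: identify $\Rep(\Z^n,Sp(k))\cong SP^k\big((\SS^1)^n/\iota\big)$, use that $(\SS^1)^n/\iota$ is simply--connected with $H_2\cong\Z^{\binom{n}{2}}\oplus(\Z/2)^{2^n-1-n-\binom{n}{2}}$ (this is exactly \cite[Proposition~6.9]{ACG}, so your ``main obstacle'' is handled there), and then invoke Dold--Thom together with Steenrod's homology splitting of symmetric products to conclude and to get stability.

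For $SU(m)$, however, there is a genuine gap. After passing to $U(m)$ you claim the problem reduces to $H_2(\mathrm{Sym}^m((\SS^1)^n);\Z)$, but $\mathrm{Sym}^m((\SS^1)^n)$ has $\pi_1\cong\Z^n$, so Hurewicz does \emph{not} identify $\pi_2$ with $H_2$ here. In fact $SP^\infty((\SS^1)^n)\simeq K(\Z^n,1)\times K(\Z^{\binom{n}{2}},2)\times\cdots$, so by K\"unneth
\[
H_2\big(SP^\infty((\SS^1)^n);\Z\big)\;\cong\;H_2(K(\Z^n,1))\oplus H_2(K(\Z^{\binom{n}{2}},2))\;\cong\;\Z^{\binom{n}{2}}\oplus\Z^{\binom{n}{2}},
\]
which is twice too large; your attempt to kill the ``symmetric-square contribution'' was really an attempt to repair this discrepancy, but it cannot succeed at the level of $H_2$. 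The paper avoids this by staying with $\pi_2$: it uses \cite[Proposition~4.4]{LR15} to identify $\Rep(\Z^n,SU(m))$ with the universal cover of $SP^m((\SS^1)^n)$, so $\pi_2(\Rep(\Z^n,SU(m)))\cong\pi_2(SP^m((\SS^1)^n))$, and then Dold--Thom gives $\pi_2(SP^\infty((\SS^1)^n))\cong H_2((\SS^1)^n)\cong\Z^{\binom{n}{2}}$. Stability for $m\geqslant 3$ is obtained from the (less standard) fact that $\pi_k(SP^m X)\to\pi_k(SP^{m+1}X)$ is an isomorphism once $m>k$ \cite{Tripathyd}; Nakaoka--Steenrod homology stability alone would not suffice here because of the nontrivial $\pi_1$.
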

\begin{proof}
Let us first consider the quaternionic unitary groups $Sp(k)$ with $k\geqslant 1$. 
Because of Theorem \ref{thm:generalsecondhomology} it suffices to calculate 
$H_{2}(\Rep(\Z^{n},Sp(k));\Z)$. By \cite[Proposition 6.3]{ACG} there is 
a homeomorphism $\Rep(\Z^{n},Sp(k))\cong SP^{k}((\SS^{1})^{n}/(\Z/2))$, 
where $SP^k$ denotes the $k$-th symmetric power. Here $\Z/2$ acts by complex 
conjugation on $\SS^{1}\subseteq \C$ and diagonally on $(\SS^{1})^{n}$. Under this 
identification the inclusion  $Sp(k)\to Sp(k+1)$ given by block sum with a $1\times 1$ 
identity matrix induces the natural 
inclusion $SP^{k}((\SS^{1})^{n}/(\Z/2))\to SP^{k+1}((\SS^{1})^{n}/(\Z/2))$. 
For the space $(\SS^{1})^{n}/(\Z/2)$ we know by 
\cite[Proposition 6.9]{ACG} that 
\[
H_{1}((\SS^{1})^{n}/(\Z/2);\Z)=0 \ \text{ and } \
H_{2}((\SS^{1})^{n}/(\Z/2);\Z)\cong \Z^{\binom{{n}}{{2}}}\oplus 
(\Z/2)^{2^{n}-1-n-\binom{{n}}{{2}}}\,.
\]
This together with the theorem of Dold and Thom imply that $SP^{\infty}((\SS^{1})^{n}/(\Z/2))$ 
is a simply--connected space, and 
\[
H_{2}(SP^{\infty}((\SS^{1})^{n}/(\Z/2));\Z)\cong \pi_{2}(SP^{\infty}((\SS^{1})^{n}/(\Z/2)))
\cong \Z^{\binom{{n}}{{2}}}\oplus (\Z/2)^{2^{n}-1-n-\binom{{n}}{{2}}}.
\] 
By Steenrod's splitting \cite[Section 22]{Steenrod} the map 
$SP^{k}((\SS^{1})^{n}/(\Z/2))\to SP^{k+1}((\SS^{1})^{n}/(\Z/2))$ is split injective in homology 
for every $k\geqslant 1$. As the groups $H_{2}(SP^{k}((\SS^{1})^{n}/(\Z/2));\Z)$ for $k=1$ and 
$k=\infty$ agree, each of the maps
\begin{equation*}
H_{2}(SP^{k}((\SS^{1})^{n}/(\Z/2));\Z)\to H_{2}(SP^{k+1}((\SS^{1})^{n}/(\Z/2));\Z)
\end{equation*}
must be an isomorphism. This proves part (ii) of the theorem.

Consider now the special unitary group $SU(m)$.  By \cite[Proposition 4.4]{LR15} the space 
$\Rep(\Z^n,SU(m))$ is homotopy equivalent to the universal cover of 
$\Rep(\Z^n,U(m))$. There is a homeomorphism $\Rep(\Z^n,U(m))\cong SP^m((\mathbb{S}^1)^n)$, 
so that we obtain
\[
\pi_2(\Rep(\Z^n,SU(m)))\cong \pi_2(SP^m((\mathbb{S}^1)^n))\, .
\]
By the Dold-Thom theorem, there is an isomorphism
\[
\pi_2(SP^\infty((\SS^1)^n))\cong H_2((\SS^1)^n;\Z)\cong \Z^{\binom{n}{2}}\, .
\]
To prove part (i) of the theorem it suffices to show that for every $m\geqslant 3$ the natural 
map $SP^m((\SS^1)^n)\to SP^{\infty}((\SS^1)^n)$ induces an isomorphism of second homotopy groups. 
But this follows at once from the unpublished preprint \cite[Section 8]{Tripathyd}, 
where it is shown that for a connected based CW complex $X$ and every $k\geqslant 0$ the map 
$\pi_k(SP^m(X))\to \pi_k(SP^{m+1}(X))$ is an isomorphism whenever $m> k$.
\footnote{For the particular case $X=(\mathbb{S}^1)^n$ this stabilization result can 
also be proved directly by constructing a suitable CW-structure on $SP^m((\mathbb{S}^1)^n)$. However, to avoid digression, we chose not to include a proof of this fact.}
\end{proof}

\begin{remark} \label{rem:symplecticandunitary}
As $SU(2)\cong Sp(1)$ also the case of $SU(2)$ is covered by Theorem \ref{thm:casesymplecticandunitary}. In the case of commuting pairs the inclusion 
$SU(2)\hookrightarrow SU(3)$ induces an isomorphism 
$\pi_2(\Hom(\Z^2,SU(2)))\cong \pi_2(\Hom(\Z^2,SU(3)))$. This follows from Theorem 
\ref{thm:generalsecondhomology} and the fact that the induced map of 
representation spaces $\Rep(\Z^2,SU(2))\to \Rep(\Z^2,SU(3))$ can be identified with the standard 
inclusion $\mathbb{CP}^1\subseteq \mathbb{CP}^2$ as shown in \cite[Lemma 5.4]{LR15}.
\end{remark}

\section{Commuting pairs in a simple and simply--connected group} \label{sec:pairs}

Throughout this section $G$ will denote a simply--connected and simple compact Lie group 
of rank $r\geqslant 1$, unless stated otherwise. In this case $\Hom(\Z^2,G)$ is path--connected and simply--connected, hence we will drop the subscript $\mathds{1}$ from the notation. One objective of this section is to prove Theorem \ref{thm:mainpairsintro}, which asserts that $\pi_2(\Hom(\Z^2,G))\cong \Z$ and on $\pi_2$ the map $\Hom(\Z^2,G)\to \Rep(\Z^2,G)$ induces multiplication by the Dynkin index of $G$.

In view of Lemma \ref{lem:pretheorem} the proof amounts to a 
calculation of $H_p^G(\Hom(\Z^2,G);\mathcal{H}_1)$ for $p=0,1$. Note that, by Lemma 
\ref{lem:componentsisotropy}, $\pi_0(H)$ is abelian as $H$ ranges over the isotropy groups of 
$\Hom(\Z^2,G)$. Thus,
\[
\H_1(G/Z_G(x,y))=H_1(BZ_G(x,y);\Z)\cong \pi_0(Z_G(x,y))\,,
\]
for all $(x,y)\in \Hom(\Z^2,G)$. This justifies introducing the coefficient system
\[
\underline{\pi}_0\co G/Z_G(x,y)\mapsto \pi_0(Z_G(x,y))\, ,
\]
which we will use from now on instead of $\H_1$.

\subsection{The component group $\pi_0(Z_G(x,y))$} \label{sec:componentgroup}

In order to calculate the homology groups $H_\ast^G(\Hom(\Z^2,G);\underline{\pi}_0)$ we must 
acquire a good understanding of the coefficient system $\underline{\pi}_0$. Thus, our first goal 
is to describe the group of components of the centralizer $Z_G(x,y)$ of a pair 
$(x,y)\in \Hom(\Z^2,G)$. The description will be in terms of the fundamental group of the derived 
group $DZ_G(x)$ (Lemma \ref{lem:pi0inpi1}). This description can be found in \cite{BFM}, but 
since the proofs may not be easy to find we include them here for convenience.

We begin by explaining some of our notation regarding root systems and the fundamental alcove. Let $\g_{\C}$ be the complexified Lie algebra of $G$. 
As a Cartan subalgebra of $\g_{\C}$ we choose $\t_{\C}$, the complexification of the Lie algebra 
$\t$ of the maximal torus $T$. We will work with the real roots of the root system associated with 
$(\g_{\C},\t_{\C})$. Thus the roots are $\R$-valued functionals
$\alpha:\t   \to \R$ such that the weight space
\[
\g_{\alpha}=\{Z\in \g_{\C}~|~ [H,Z]=2\pi i\alpha(H)Z \text{ for all } H\in \t_{\C}\}
\]
is non-trivial. Choose a $W$-invariant inner product $\langle\cdot,\cdot\rangle$ on $\t$. 
For each root $\alpha\in \t^\ast$ we can find a unique element $h_{\alpha}\in \t$ such that 
$\alpha(H)=\langle H,h_{\alpha}\rangle$ for every $H\in \t$. Define 
$\alpha^{\vee}=2h_{\alpha} /\langle h_{\alpha},h_{\alpha}\rangle \in \t$.  
The element $\alpha^{\vee}$ is called the coroot associated to 
the root $\alpha$ and is independent of the choice of inner product $\langle\cdot,\cdot\rangle$. 
Let  $\Delta=\{\alpha_{1},\dots,\alpha_{r}\}$ 
be a fixed set of simple roots for $(\g_\C,\t_\C)$ and $\alpha_{0}$ the lowest root. We can write 
$-\alpha_{0}^{\vee}$ in the form
\[
-\alpha_{0}^{\vee}=n_{1}^{\vee}\alpha_{1}^{\vee}+n_{2}^{\vee}\alpha_{2}^{\vee}
+\cdots+n_{r}^{\vee}\alpha_{r}^{\vee}
\]
for unique integers $n_{1}^{\vee},n_{2}^{\vee},\dots,n_{r}^{\vee}\geqslant 1$ which we call the 
\emph{coroot integers} of $G$. It will be convenient to set $n_{0}^{\vee}:=1$. The coroot integers 
$n_0^\vee,\dots,n_r^\vee$ can be found, for instance, in the appendix of \cite{BFM} and they are also listed in Table \ref{table:coroot}.

Within $\t$ we find the coroot lattice $Q^\vee$ defined as the $\Z$-span of 
$\{\alpha_1^\vee,\dots,\alpha_r^\vee\}\subseteq \t$, and the integral lattice 
$\Lambda:=\ker(\exp\co \t\to T)$ which contains $Q^\vee$. In general, these two lattices 
determine the fundamental group of $G$ by the well known formula
\[
\pi_1(G)\cong \Lambda/Q^\vee\,,
\]
see \cite[IX \S 4.9 Theorem 2(b)]{Bourbaki}. Since we assume that $G$ is simply--connected, 
we have that $\Lambda=Q^\vee$.

Let $A=A(\Delta)\subseteq \t$ be the closed alcove contained in the closed Weyl chamber determined 
by $\Delta$ and that contains $0\in \t$. The alcove $A$ is an $r$-dimensional simplex supported by 
the hyperplanes $\{\alpha_j=0\}$ for $1\leqslant j\leqslant r$ and the affine hyperplane 
$\{\alpha_{0}=-1\}$. (If $G$ were semisimple, then $A$ would be the product of the alcoves of its 
simple factors.)  As explained in \cite[IX \S 5.2 Corollary 1]{Bourbaki}, since $G$ is compact and 
simply--connected, the exponential map induces a homeomorphism 
\[
A\xrightarrow{\cong} T/W\, .
\]
In particular, every 
$x\in G$ is conjugate to an element of the form $\exp(\tilde{x})$ for a uniquely determined 
$\tilde{x}\in A$. Thus, for the purpose of describing $Z_G(x)$ we may assume that 
$x=\exp(\tilde{x})$ for some $\tilde{x}\in A$. In this case we choose $T\leqslant Z_G(x)$ as a maximal 
torus for $Z_G(x)$.

Let $\tilde{\Delta}:=\Delta\cup \{\alpha_0\}$ be the extended set of simple roots. Abusing 
notation slightly, we will sometimes regard $\tilde{\Delta}$ simply as the set of indices 
$\{0,1,\dots,r\}$ of the extended set of simple roots. Let 
$\tilde{\Delta}(x)\subseteq \tilde{\Delta}$ be the proper subset defined by
\begin{equation} \label{eq:deltax}
\tilde{\Delta}(x):=\{ \alpha\in\tilde{\Delta}\mid \tilde{x} 
\textnormal{ lies in the wall of }A\textnormal{ determined by }\alpha\}\, .
\end{equation}
Let $Q^\vee(x)\leqslant Q^\vee$ be the sublattice of the coroot lattice spanned by 
$\{\alpha^\vee_i \mid i \in \tilde{\Delta}(x)\}$. Then $\tilde{\Delta}(x)$ is a system of simple 
roots for $Z_G(x)$ relative to $T$, and $Q^\vee(x)$ is the corresponding coroot lattice, see for 
example \cite[V.2 Proposition 2.3(ii)]{BD}. As $Z_G(x)$ has the same integral lattice as $G$ we have
\begin{equation}  \label{eq:pi1zgx}
\pi_1(Z_G(x))\cong \Lambda/ Q^\vee(x)\cong Q^\vee/Q^\vee(x)\, .
\end{equation}
The following lemma is a combination of Corollary 3.1.3 and Proposition 7.6.1 of \cite{BFM}. 
A proof of the first part may be found in \cite[Theorem 1]{KS}.

\begin{lemma} \label{lem:pi1derived}
The fundamental group of $DZ_G(x)$ is isomorphic to the torsion subgroup of $Q^\vee/Q^\vee(x)$ 
and is a cyclic group of order
\[
n^\vee(x):=\textnormal{gcd}\{n_i^\vee\mid i\in \tilde{\Delta}\backslash \tilde{\Delta}(x)\}\,.
\] 
A representative in $Q^\vee$ for the generator of $\pi_1(DZ_G(x))$ is
\[
\zeta(x):= \frac{1}{n^\vee(x)} \sum_{i\in\tilde{\Delta}\backslash 
\tilde{\Delta}(x)} n^\vee_i \alpha_i^\vee \, .
\]
Moreover, if $x'=\exp(\tilde{x}')$ is such that $\tilde{x}'\in A$ and $Z_G(x)\leqslant Z_G(x')$, 
then the inclusion $Z_G(x)\to Z_G(x')$ induces an injection $\pi_1(DZ_G(x))\to \pi_1(DZ_G(x'))$ sending
\[
\zeta(x)\mapsto \frac{n^\vee(x')}{n^\vee(x)} \zeta(x')\,.
\]
\end{lemma}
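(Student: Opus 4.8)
The plan is to work entirely in the Lie algebra $\t$ and use the combinatorial description of root data recalled above, following \cite[Chapter 3 and 7]{BFM}. First I would recall that $\tilde\Delta(x)$ is a proper subset of the extended Dynkin diagram $\tilde\Delta$ and that $\tilde\Delta(x)$ is a set of simple roots for the (connected, maximal rank) group $Z_G(x)$ relative to $T$, with corresponding coroot lattice $Q^\vee(x)=\Z\langle \alpha_i^\vee : i\in\tilde\Delta(x)\rangle$, as stated via \cite[V.2 Proposition 2.3]{BD}. Since $G$ is simply--connected the integral lattice of $G$ is $Q^\vee$, and because $Z_G(x)$ shares this integral lattice, $\pi_1(Z_G(x))\cong Q^\vee/Q^\vee(x)$. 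The derived group $DZ_G(x)$ is the connected subgroup whose Lie algebra is the semisimple part, and its fundamental group is identified with the \emph{torsion subgroup} of $\pi_1(Z_G(x))$: indeed $Z_G(x)$ is isogenous to $DZ_G(x)\times Z(Z_G(x))_0$, and the torus factor contributes the free part of $Q^\vee/Q^\vee(x)$ while $DZ_G(x)$ contributes exactly the torsion. So the first task is purely lattice theory: compute the torsion of $Q^\vee/Q^\vee(x)$.

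The key computation is the following. The relation $-\alpha_0^\vee = \sum_{i=1}^r n_i^\vee\alpha_i^\vee$ with $n_0^\vee:=1$ gives the single linear dependence $\sum_{i\in\tilde\Delta} n_i^\vee\alpha_i^\vee = 0$ among the extended simple coroots, and this is (up to scalar) the \emph{only} dependence since $\{\alpha_1^\vee,\dots,\alpha_r^\vee\}$ is a basis of $\t$. Writing $J:=\tilde\Delta\setminus\tilde\Delta(x)$, the lattice $Q^\vee(x)$ is spanned by $\{\alpha_i^\vee: i\notin J\}$, and I would compute $Q^\vee/Q^\vee(x)$ by choosing a convenient $\Z$-basis. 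Because the $\alpha_i^\vee$ for $i\in\tilde\Delta\setminus\{0\}$ form a basis of $Q^\vee$ and $\alpha_0^\vee=-\sum_{i\ge 1}n_i^\vee\alpha_i^\vee$, one sees that $Q^\vee/Q^\vee(x)$ is generated by the images of $\alpha_i^\vee$, $i\in J\setminus\{0\}$, together with the single relation coming from $\alpha_0^\vee\in Q^\vee(x)$ when $0\notin J$ — or no extra relation when $0\in J$. A short Smith-normal-form argument then shows that the torsion subgroup is cyclic of order $\gcd\{n_i^\vee : i\in J\}=:n^\vee(x)$, and that a generator is represented by the vector $\zeta(x)=\frac{1}{n^\vee(x)}\sum_{i\in J}n_i^\vee\alpha_i^\vee$. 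The point is that $n^\vee(x)\,\zeta(x)=\sum_{i\in J}n_i^\vee\alpha_i^\vee \equiv -\sum_{i\notin J}n_i^\vee\alpha_i^\vee \in Q^\vee(x)$ by the fundamental relation, so $\zeta(x)$ is torsion of order dividing $n^\vee(x)$; conversely a Bézout combination of the $n_i^\vee$, $i\in J$, realizes $\zeta(x)$ modulo $Q^\vee(x)$ from any torsion element, giving the exact order and cyclicity. For the first part one can alternatively just cite \cite[Theorem 1]{KS}.

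For the final, functorial statement, suppose $x'=\exp(\tilde x')\in A$ with $Z_G(x)\leqslant Z_G(x')$. Geometrically this forces $\tilde x$ to lie in a face of $A$ contained in the closure of the face containing $\tilde x'$, hence $\tilde\Delta(x')\subseteq\tilde\Delta(x)$, equivalently $J':=\tilde\Delta\setminus\tilde\Delta(x')\supseteq J$. The inclusion of centralizers induces $Q^\vee(x)\subseteq Q^\vee(x')$, hence a map $Q^\vee/Q^\vee(x)\to Q^\vee/Q^\vee(x')$; restricting to torsion subgroups gives $\pi_1(DZ_G(x))\to\pi_1(DZ_G(x'))$. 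Injectivity follows because a torsion class dies only if its representative already lies in $Q^\vee(x')$, but a representative of the generator of the torsion of $Q^\vee/Q^\vee(x)$ has nonzero $\alpha_i^\vee$-coefficients precisely for $i\in J$, and one checks (again using that the only dependence among extended coroots is the fundamental one, now restricted to indices in $J'$) that such a vector lies in $Q^\vee(x')=\Z\langle\alpha_i^\vee: i\notin J'\rangle$ only if it is zero in $Q^\vee/Q^\vee(x')$ forces the class to have been zero already — more directly, one computes both torsion subgroups explicitly and sees the induced map is multiplication by $n^\vee(x')/n^\vee(x)$ on the chosen generators $\zeta(x)\mapsto \tfrac{n^\vee(x')}{n^\vee(x)}\zeta(x')$, which is injective since $n^\vee(x')\mid n^\vee(x)$ (as $J\subseteq J'$ implies $\gcd$ over the larger set divides the $\gcd$ over the smaller one) and $\zeta(x')$ has order exactly $n^\vee(x')$. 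Writing $\zeta(x)$ and $\zeta(x')$ in the basis $\{\alpha_i^\vee\}_{i\ge1}$ and reducing modulo $Q^\vee(x')$ makes the scalar $n^\vee(x')/n^\vee(x)$ appear by direct comparison of coefficients. I expect the main obstacle to be bookkeeping the two cases $0\in J$ versus $0\notin J$ cleanly in the Smith-normal-form step, and making the ``only dependence is the fundamental one'' argument fully rigorous when passing to sub-diagrams; everything else is routine linear algebra over $\Z$.
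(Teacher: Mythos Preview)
Your computation of the torsion subgroup of $Q^\vee/Q^\vee(x)$ is essentially the paper's argument: present $Q^\vee$ as the free abelian group on $\{\alpha_i^\vee : i\in\tilde\Delta\}$ modulo the single relation $\sum_i n_i^\vee\alpha_i^\vee=0$, kill the generators indexed by $\tilde\Delta(x)$, and observe that $\zeta(x)$ is primitive so it extends to a basis. That part is fine.

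The functoriality paragraph, however, has the inclusion reversed. If $Z_G(x)\leqslant Z_G(x')$ with both $\tilde x,\tilde x'\in A$, then $\tilde x'$ lies in \emph{more} walls than $\tilde x$ (larger centralizer means more root subgroups, hence more simple roots in $\tilde\Delta(\,\cdot\,)$), so the correct conclusion is $\tilde\Delta(x)\subseteq\tilde\Delta(x')$, equivalently $J\supseteq J'$. Geometrically, the face containing $\tilde x'$ is a face of the one containing $\tilde x$, not the other way round. With the correct inclusion one gets $n^\vee(x)\mid n^\vee(x')$ (gcd over the larger set $J$ divides gcd over the subset $J'$), which is exactly what is needed for $n^\vee(x')/n^\vee(x)$ to be an integer and for $\Z/n^\vee(x)\to\Z/n^\vee(x')$ to be an injection. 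Your claim $n^\vee(x')\mid n^\vee(x)$ would make the displayed scalar a proper fraction in general. Note also that your assertion $Q^\vee(x)\subseteq Q^\vee(x')$ is correct but is inconsistent with your reversed inclusion $\tilde\Delta(x')\subseteq\tilde\Delta(x)$; once you fix the direction, the rest of your argument (reduce $\zeta(x)$ modulo $Q^\vee(x')$ by dropping the terms indexed by $\tilde\Delta(x')\setminus\tilde\Delta(x)$ and recognise $\tfrac{n^\vee(x')}{n^\vee(x)}\zeta(x')$) goes through exactly as in the paper.
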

\begin{proof}
By \cite[IX \S 4.6 Corollary 3]{Bourbaki} the inclusion $DZ_G(x)\to Z_G(x)$ induces an isomorphism of 
$\pi_1(DZ_G(x))$ onto the torsion subgroup of $\pi_1(Z_G(x))\cong Q^\vee/Q^\vee(x)$.

Writing the coroot lattice as 
$Q^\vee=(\bigoplus_{i\in\tilde{\Delta}}\Z\langle \alpha_i^\vee\rangle)/\Z\langle 
\sum_{i\in\tilde{\Delta}}n_i^\vee\alpha_i^\vee\rangle$, we may write $Q^\vee/Q^\vee(x)$ in the form
\[
Q^\vee/Q^\vee(x)\cong \left(\bigoplus_{i\in \tilde{\Delta}\backslash 
\tilde{\Delta}(x)}\Z\langle \alpha_i^\vee\rangle \right) / \Z\langle\sum_{i\in\tilde{\Delta}
\backslash\tilde{\Delta}(x)}n_i^\vee \alpha^\vee_i\rangle =  
\left(\bigoplus_{i\in \tilde{\Delta}\backslash \tilde{\Delta}(x)}\Z\langle \alpha_i^\vee\rangle \right) 
/ \Z\langle n^\vee(x)\zeta(x)\rangle\, .
\]
Since $\zeta(x)$ is a linear combination of 
$\{\alpha^\vee_i \mid i\in\tilde{\Delta}\backslash\tilde{\Delta}(x)\}$ 
with coprime coefficients, it can be completed to a basis of 
$\bigoplus_{i\in\tilde{\Delta}\backslash\tilde{\Delta}(x)}\Z\langle \alpha_i^\vee\rangle$. In this 
basis it becomes obvious that 
$Q^\vee/Q^\vee(x)\cong \Z^{|\tilde{\Delta}\backslash\tilde{\Delta}(x)|-1}\oplus \Z/n^\vee(x)$, 
where $\Z/n^\vee(x)$ is generated by the image of $\zeta(x)$.

For the second part notice that $Z_G(x)\leqslant Z_G(x')$ implies that every wall of $A$ 
containing $\tilde{x}$ also contains $\tilde{x}'$, hence $\tilde{\Delta}(x)\subseteq \tilde{\Delta}(x')$. 
Therefore, $Q^\vee(x)\leqslant Q^\vee(x')$ and by naturality of the isomorphism (\ref{eq:pi1zgx}) 
the map $\pi_1(Z_G(x))\to \pi_1(Z_G(x'))$ corresponds to the projection $
Q^\vee/Q^\vee(x)\to Q^\vee/Q^\vee(x')$. On the torsion subgroups this projection maps
\[
\zeta(x)\mapsto \frac{1}{n^\vee(x)}\sum_{i\in\tilde{\Delta}
\backslash\tilde{\Delta}(x')}n_i^\vee\alpha_i^\vee=\frac{n^\vee(x')}{n^\vee(x)}\zeta(x')\,,
\]
which is an injection $\Z/n^\vee(x)\to \Z/n^\vee(x')$.
\end{proof}

Let $(x,y)\in \Hom(\Z^2,G)$ and let $\tilde{y}$ denote a lift of $y\in Z_G(x)$ in the universal 
covering group $\widetilde{Z_G(x)}$. The component group $\pi_0(Z_G(x,y))$ may be described as follows.

\begin{lemma} \label{lem:pi0inpi1}
For $z\in Z_G(x,y)$ let $[z]\in \pi_0(Z_G(x,y))$ denote the path component determined by $z$. 
Let $\tilde{z}$ be any lift of $z$ in $\widetilde{Z_G(x)}$. Then the map
\[
\delta_y \co \pi_0(Z_G(x,y)) \to \pi_1(DZ_G(x)) \leqslant Z(\widetilde{DZ_G(x)})
\]
defined by $\delta_y([z])=[\tilde{y},\tilde{z}]$ is an injective group homomorphism.
\end{lemma}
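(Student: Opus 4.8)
The plan is to show that $\delta_y$ is well-defined, that it lands in $\pi_1(DZ_G(x))$ viewed inside the center of $\widetilde{DZ_G(x)}$, that it is a homomorphism, and finally that it is injective. First I would check well-definedness: if $\tilde z$ and $\tilde z'$ are two lifts of $z$, they differ by an element of $\pi_1(Z_G(x))\leqslant Z(\widetilde{Z_G(x)})$, which is central, so $[\tilde y,\tilde z]=[\tilde y,\tilde z']$; similarly changing $\tilde y$ by a central element does not affect the commutator. If $z,z'$ lie in the same path component of $Z_G(x,y)$, then connecting them by a path and lifting shows the commutator $[\tilde y,\tilde z]$ varies continuously in the \emph{discrete} group $\pi_1(Z_G(x))$, hence is constant; here one uses that $[\tilde y,\tilde z]$ is always central (see below), so it lies in $\pi_1(Z_G(x))$. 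Thus $\delta_y$ depends only on $[z]\in\pi_0(Z_G(x,y))$.

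Next I would verify that the commutator $[\tilde y,\tilde z]$ actually lies in $\pi_1(DZ_G(x))\leqslant Z(\widetilde{DZ_G(x)})$. Let $K=Z_G(x)$ with universal cover $\widetilde K$ and covering group $\ker(\widetilde K\to K)\cong\pi_1(K)$. Since $y,z\in K$ commute, their images in $K$ have trivial commutator, so $[\tilde y,\tilde z]\in\ker(\widetilde K\to K)=\pi_1(K)$, which is central in $\widetilde K$. Now write $\widetilde K\cong V\times\widetilde{DK}$ where $V$ is a vector group (the universal cover of the central torus) and $\widetilde{DK}$ is the universal cover of the derived group; the commutator $[\tilde y,\tilde z]$ has trivial $V$-component because $V$ is abelian and central, so $[\tilde y,\tilde z]\in\widetilde{DK}$. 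Combining, $[\tilde y,\tilde z]\in\pi_1(K)\cap\widetilde{DK}$, and by \cite[IX \S 4.6 Corollary 3]{Bourbaki} (already invoked in Lemma \ref{lem:pi1derived}) this intersection is exactly the torsion subgroup of $\pi_1(K)$, which is $\pi_1(DK)\leqslant Z(\widetilde{DK})$.

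For the homomorphism property I would use the standard commutator identity. Given $z,z'\in Z_G(x,y)$ with lifts $\tilde z,\tilde z'$, note $\widetilde{zz'}=\tilde z\tilde z'$ is a lift of $zz'$, and
\[
[\tilde y,\tilde z\tilde z']=[\tilde y,\tilde z]\cdot\bigl(\tilde z[\tilde y,\tilde z']\tilde z^{-1}\bigr)=[\tilde y,\tilde z]\cdot[\tilde y,\tilde z']\,,
\]
since $[\tilde y,\tilde z']$ is central in $\widetilde{DK}$ (hence fixed by conjugation). Therefore $\delta_y([z][z'])=\delta_y([z])\delta_y([z'])$, so $\delta_y$ is a group homomorphism, and the target is abelian since $\pi_1(DK)$ is (being a subgroup of a center), which is consistent with $\pi_0(Z_G(x,y))$ being abelian as noted after Lemma \ref{lem:componentsisotropy}.

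The main obstacle is injectivity: I need to show that if $[\tilde y,\tilde z]=1$ in $\widetilde K$, then $z$ lies in the identity component of $Z_G(x,y)$. The condition $[\tilde y,\tilde z]=1$ says $\tilde z$ commutes with $\tilde y$ in $\widetilde K$, i.e. $\tilde z\in Z_{\widetilde K}(\tilde y)$. Since $\widetilde K$ is simply connected and $\tilde y\in\widetilde K$, its centralizer $Z_{\widetilde K}(\tilde y)$ is connected (centralizers of elements in simply connected compact Lie groups are connected, by \cite[IX \S 5.3 Corollary 1]{Bourbaki}). The covering $\widetilde K\to K$ restricts to a map $Z_{\widetilde K}(\tilde y)\to Z_K(y)=Z_G(x,y)$ whose image is contained in the identity component $Z_G(x,y)_\BONE$ because $Z_{\widetilde K}(\tilde y)$ is connected. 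Hence $z$, being the image of $\tilde z\in Z_{\widetilde K}(\tilde y)$, lies in $Z_G(x,y)_\BONE$, so $[z]=1$. This proves $\delta_y$ is injective and completes the argument.
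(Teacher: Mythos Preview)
Your proof is correct and follows essentially the same strategy as the paper: both arguments identify $\delta_y$ with the connecting map coming from the central extension $1\to\pi_1(Z_G(x))\to\widetilde{Z_G(x)}\to Z_G(x)\to 1$, and both deduce injectivity from the fact that $Z_{\widetilde{Z_G(x)}}(\tilde y)$ is connected because $\widetilde{Z_G(x)}$ is simply--connected. The paper packages this as the exact sequence of $\langle\tilde y\rangle$--fixed points of the covering sequence, whereas you spell out each step directly; the only substantive difference is that the paper concludes the image lies in the torsion subgroup $\pi_1(DZ_G(x))$ from finiteness of the compact domain $\pi_0(Z_G(x,y))$, while you argue more intrinsically via the decomposition $\widetilde K\cong V\times\widetilde{DK}$ to see the commutator has trivial $V$--component.
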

\begin{proof}
We follow \cite[Section 7.3]{BFM}. Consider the universal covering sequence
\[
1\to Q^\vee/Q^\vee(x) \to \widetilde{Z_G(x)}\to Z_G(x)\to 1\, .
\]
The sequence is acted upon by the cyclic group $\langle \tilde{y}\rangle$ through conjugation 
by $\tilde{y}$ on $\widetilde{Z_G(x)}$ and by $y$ on $Z_G(x)$, leaving invariant the central 
subgroup $Q^\vee/Q^\vee(x)$. Passing to fixed points and noting that 
$Z_G(x)^{\langle \tilde{y}\rangle}\cong Z_G(x,y)$ yields the exact sequence
\[
1\to Q^\vee/Q^\vee(x)\to \widetilde{Z_G(x)}^{\langle\tilde{y}\rangle} \to 
Z_G(x,y)\xrightarrow{\delta} Q^\vee/Q^\vee(x)\,,
\]
where the connecting homomorphism $\delta$ is defined by 
$\delta(z)=\tilde{z}^{\tilde{y}}\tilde{z}^{-1}=[\tilde{y},\tilde{z}]$.

Now $\widetilde{Z_G(x)}^{\langle\tilde{y}\rangle}$ is connected, by \cite[IX \S 5.3 Corollary 1]{Bourbaki}, 
because it is the centralizer of $\tilde{y}$ in the simply--connected group $\widetilde{Z_G(x)}$. 
Therefore, $\widetilde{Z_G(x)}^{\langle\tilde{y}\rangle}$ maps to the identity component of $Z_G(x,y)$. 
Since $Q^\vee/Q^\vee(x)$ is discrete, and by exactness, $\delta$ descends to an injective map 
$\delta_y\co \pi_0(Z_G(x,y))\to Q^\vee/Q^\vee(x)$.

To finish the proof, note that $\pi_0(Z_G(x,y))$ is finite, since $Z_G(x,y)$ is compact, so 
$\delta_y$ factors through the torsion subgroup of $Q^\vee/Q^\vee(x)$. By Lemma \ref{lem:pi1derived}, 
the latter is identified with $\pi_1(DZ_G(x))$.
\end{proof}

Lemma \ref{lem:componentsisotropy} is now immediate:

\begin{proof}[Proof of Lemma \ref{lem:componentsisotropy}]
Let $(x,y)\in \Hom(\Z^2,G)$. By Lemmas \ref{lem:pi1derived} and \ref{lem:pi0inpi1}, $\pi_0(Z_G(x,y))$ 
is a subgroup of a cyclic group of order $n^\vee(x)$. A look at the coroot diagrams in the appendix of 
\cite{BFM} (or at Table \ref{table:coroot}) shows that $1\leqslant n^\vee(x) \leqslant 6$. 
But then $\pi_0(Z_G(x,y))$ must also be cyclic, and of order at most six.
\end{proof}

For later use we record a further consequence of the preceding lemmas, a special case of 
\cite[Corollary 7.6.2]{BFM}.

\begin{lemma} \label{lem:pi0injective}
Let $x=\exp(\tilde{x})$ and $x'=\exp(\tilde{x}')$ for some $\tilde{x},\tilde{x}'\in A$, 
and let $y,y'\in T$. Suppose that $Z_G(x)\leqslant Z_G(x')$ and $Z_G(x,y)\leqslant Z_G(x',y')$. 
Then the map $\pi_0(Z_G(x,y))\to \pi_0(Z_G(x',y'))$ induced by the inclusion is injective.
\end{lemma}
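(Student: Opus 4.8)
The plan is to realise the map $\bar\iota\co\pi_0(Z_G(x,y))\to\pi_0(Z_G(x',y'))$ induced by the inclusion as the top edge of a square
\[
\xymatrix{
\pi_0(Z_G(x,y))\ar[r]^-{\bar\iota}\ar[d]_-{\delta_y} & \pi_0(Z_G(x',y'))\ar[d]^-{\delta_{y'}}\\
\pi_1(DZ_G(x))\ar[r]^-{\iota_\ast} & \pi_1(DZ_G(x'))
}
\]
whose other three maps are injective: the vertical maps are the injections $\delta_y,\delta_{y'}$ of Lemma~\ref{lem:pi0inpi1}, and $\iota_\ast$ is the injection $\pi_1(DZ_G(x))\hookrightarrow\pi_1(DZ_G(x'))$ of Lemma~\ref{lem:pi1derived}, available because $Z_G(x)\leqslant Z_G(x')$. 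If the square commutes, then $\delta_{y'}\circ\bar\iota=\iota_\ast\circ\delta_y$ is a composite of injections, whence $\bar\iota$ is injective and we are done.

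To analyse commutativity I would first rewrite both composites as commutators inside $\widetilde{Z_G(x')}$. Since $\widetilde{Z_G(x)}$ is simply--connected, the inclusion lifts to an identity--preserving homomorphism $\phi\co\widetilde{Z_G(x)}\to\widetilde{Z_G(x')}$, and on $\pi_1$ it is the natural map, which on torsion subgroups is precisely $\iota_\ast$. Hence, for $z\in Z_G(x,y)$ and lifts $\tilde y,\tilde z$ in $\widetilde{Z_G(x)}$, one has $\iota_\ast(\delta_y([z]))=\phi([\tilde y,\tilde z])=[\phi(\tilde y),\phi(\tilde z)]$, a commutator of lifts of $y$ and of $z$ in $\widetilde{Z_G(x')}$, while $\delta_{y'}(\bar\iota([z]))=[\tilde y',\phi(\tilde z)]$ for a lift $\tilde y'$ of $y'$ (the value being independent of the chosen lifts). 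Since $z$ commutes with $y$ and also with $y'$ — the latter because $z\in Z_G(x,y)\subseteq Z_G(x',y')$ — these two central elements differ by $[\,\widetilde{y(y')^{-1}}\,,\,\phi(\tilde z)\,]$. Thus the square commutes if and only if a lift of $y(y')^{-1}$ commutes with a lift of $z$ in $\widetilde{Z_G(x')}$, for every $z\in Z_G(x,y)$.

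To establish this I would use both hypotheses to factor the inclusion. From $Z_G(x,y)\leqslant Z_G(x',y')$ it follows that $y'\in Z(Z_G(x,y))$: every element of $Z_G(x,y)$ commutes with $y'$, and $y'$ itself lies in $Z_G(x,y)$ since it commutes with $x$ and $y$; likewise $x'\in Z(Z_G(x,y))$. Together with the obvious $y\in Z(Z_G(x,y))$ this yields inclusions
\[
Z_G(x,y)\ \subseteq\ Z_G(x,y')\ \subseteq\ Z_G(x',y'),
\]
where the first uses $y'\in Z(Z_G(x,y))$ and the second uses $Z_G(x)\leqslant Z_G(x')$. It therefore suffices to treat two cases. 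In the ``common second coordinate'' case — inclusions $Z_G(x,y)\subseteq Z_G(x',y)$ with $Z_G(x)\leqslant Z_G(x')$ — the square above has both vertical maps equal to $\delta_y$ for the \emph{same} $y$, so the lift--compatibility of the previous paragraph shows that it commutes on the nose ($\iota_\ast\delta_y$ and $\delta_y\bar\iota$ both equal $[\phi(\tilde y),\phi(\tilde z)]$), and injectivity follows.

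The remaining ``common first coordinate'' case $Z_G(x,y)\subseteq Z_G(x,y')$ is the main obstacle. Here I must show that a lift of $y(y')^{-1}$ commutes with a lift of $z$ in $\widetilde{Z_G(x)}$ for every $z\in Z_G(x,y)$. The extra input is that $y(y')^{-1}$ is \emph{central} in $Z_G(x,y)$ — both $y$ and $y'$ are — so that $Z_G(x,y)\subseteq Z_G(x,y(y')^{-1})$; applying the fixed--point argument from the proof of Lemma~\ref{lem:pi0inpi1} to the conjugation action of $y(y')^{-1}$ on the universal covering sequence of $Z_G(x)$ reduces the claim to showing that $z$ lies in the identity component of $Z_G(x,y(y')^{-1})$, equivalently that the map $\pi_0(Z_G(x,y))\to\pi_0(Z_G(x,y(y')^{-1}))$ is trivial. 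I would prove this last point by a careful component analysis of centralizers of closed subgroups of $T$ inside $Z_G(x)$, using once more the identity $Z_G(x,y)=Z_G(x,y,y')$ and the cyclicity of the component groups involved (Lemma~\ref{lem:componentsisotropy}); this is the step where the structural bookkeeping of \cite[\S 7]{BFM} is genuinely needed. With the common--first--coordinate case settled, the square commutes in general and the diagram chase completes the proof.
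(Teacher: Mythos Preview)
Your strategy is exactly the paper's: build the square with $\delta_y$, $\delta_{y'}$ and the inclusion-induced maps, and deduce injectivity of $\bar\iota$ from injectivity of the other three edges together with commutativity. The paper dispatches commutativity in one phrase (``by naturality of the connecting homomorphism''); you correctly recognise this as the real content and unwind it to the question of whether a lift of $y(y')^{-1}$ in $\widetilde{Z_G(x')}$ commutes with $\phi(\tilde z)$ for every $z\in Z_G(x,y)$. Your factorisation $Z_G(x,y)\subseteq Z_G(x,y')\subseteq Z_G(x',y')$ is valid, and the common-second-coordinate step is genuinely ``naturality'' and goes through as you say.

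The gap is the common-first-coordinate step, which you defer to unspecified bookkeeping from \cite{BFM}. This is not a matter of bookkeeping: under the stated hypotheses alone the claim is false. Take $x'=x$ and $y'=1$. Then $Z_G(x)\leqslant Z_G(x')$ and $Z_G(x,y)\leqslant Z_G(x',y')=Z_G(x)$ hold automatically, $y(y')^{-1}=y$, and your deferred claim reads ``every $z\in Z_G(x,y)$ lies in the identity component of $Z_G(x,y)$'', i.e.\ $\pi_0(Z_G(x,y))=0$ --- contradicted by any pair with disconnected centraliser (such pairs exist already in $Spin(7)$). Equivalently the square fails to commute: $\delta_{y'}=\delta_1\equiv 0$ while $\iota_\ast\circ\delta_y=\delta_y$ need not vanish. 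So neither your argument nor the paper's one-line version proves the lemma as literally stated; some additional hypothesis relating $y$ and $y'$ (present in the actual applications, where $y=wb(\tau)$ and $y'=wy_0$ for $y_0\in\bar\tau$) must be used, or a different argument is needed.
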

\begin{proof}
By naturality of the connecting homomorphism, there is a commutative diagram
\[
\xymatrix{
\pi_0(Z_G(x,y)) \ar[r]^-{\delta_y} \ar[d] & \pi_1(DZ_G(x)) \ar[d] \\
\pi_0(Z_G(x',y')) \ar[r]^-{\delta_{y'}} & \pi_1(DZ_G(x'))
}
\]
in which the left hand vertical map is induced by the inclusion $Z_G(x,y)\leqslant Z_G(x',y')$, 
and the one on the right is induced by the inclusion $Z_G(x)\to Z_G(x')$. The assertion follows, 
because $\delta_y$ is injective by Lemma \ref{lem:pi0inpi1}, and $\pi_1(DZ_G(x))\to \pi_1(DZ_G(x'))$ 
is injective by Lemma \ref{lem:pi1derived}.
\end{proof}

\subsection{Equivariant cell structure} \label{sec:equivariant}

We will now describe $\Hom(\Z^2,G)$ as a $G$-equivariant CW-complex. This will enable us to compute 
the $p$-localization of $H_\ast^G(\Hom(\Z^2,G);\underline{\pi}_0)$ as the homology of a certain 
$G$-subcomplex of $\Hom(\Z^2,G)$, see Corollary \ref{cor:decomposition}.

The $G$-CW-structure on $\Hom(\Z^2,G)$ is obtained from the simplicial structure of the Weyl alcove 
$A$ as follows. Recall that $\Rep(\Z^2,G)\cong T^2/W$ and $T/W\cong A$. Let
\[
p_i\co \Hom(\Z^2,G)\to A\,,\quad i=1,2
\]
be the composition of the quotient map $\pi\co \Hom(\Z^2,G)\to \Rep(\Z^2,G)$ and the projection 
onto the $i$-th component $T^2/W \to A$. Let $\mathscr{F}_n$, $n=0,\dots,r$, denote the set of 
$n$-dimensional faces of $A$.

$A$ has a standard CW-structure whose set of $n$-cells is $\mathscr{F}_n$. Let $(A\times A)^{(n)}$ 
be the $n$-skeleton of $A\times A$ in the product CW-structure. Define
\[
\Hom(\Z^2,G)^{(n)}:=(p_1\times p_2)^{-1}((A\times A)^{(n)})\, .
\]
This defines an increasing sequence of $G$-spaces
\begin{equation} \label{eq:gcw}
\Hom(\Z^2,G)^{(0)} \subseteq \Hom(\Z^2,G)^{(1)}\subseteq \cdots \subseteq 
\Hom(\Z^2,G)^{(2r)}=\Hom(\Z^2,G)
\end{equation}
such that $\Hom(\Z^2,G)^{(n)}$ is obtained from $\Hom(\Z^2,G)^{(n-1)}$ by attaching a set of 
equivariant $n$-cells, as we now explain.

\begin{notation*}
To simplify the notation, we identify $A$ with a subset of $T$ without making the exponential map 
explicit. For a face $\sigma\subseteq A$ we let $b(\sigma)\in \sigma$ denote its barycenter. 
Since the centralizer $Z_G(x)$ of some $x\in G$ equals the stabilizer of $x$ under the conjugation 
action of $G$ on itself, we may write $G_x$ instead of $Z_G(x)$. Moreover, since a face 
$\sigma\subseteq A$ is pointwise fixed if and only if $b(\sigma)$ is fixed, we shall write 
$G_{\sigma}$ for $G_{b(\sigma)}$. Similarly, we write
\begin{flushleft}
\def\arraystretch{1.2}
\begin{tabular}{rl}
$W_{\sigma}$ & for the isotropy group of $b(\sigma)\in T$ under the action of $W$, \\
$G_{(x,y)}$ & for the centralizer $Z_G(x,y)$, \\
$G_{(\sigma,w\tau)}$ & for $G_{(b(\sigma),wb(\tau))}$ where $w\in W$ and $\sigma,\tau$ are faces of $A$.
\end{tabular}
\end{flushleft}
\end{notation*}

For each pair of faces $\sigma,\tau$ of $A$, let $\mathscr{C}(\sigma,\tau)$ denote a complete set of 
representatives for the double cosets $W_{\sigma}\backslash W /W_{\tau}$. For $n\geqslant 0$ the 
indexing set $J_n$ of the $G$-$n$-cells is
\[
J_n=\bigsqcup_{\substack{(\sigma,\tau)\in \mathscr{F}_i\times \mathscr{F}_j \\ i+j=n}} 
\mathscr{C}(\sigma,\tau)\,.
\]

The $G$-$n$-cells $\{e^n_\alpha\mid \alpha \in J_n\}$ are built from the faces of the alcove $A$ 
in the following fashion. Given $\alpha=(\sigma,\tau,w)\in J_n$, the closed $G$-$n$-cell 
$e_\alpha^n\subseteq \Hom(\Z^2,G)$ is of the form 
$e^n_{\alpha}=\phi^{n}_\alpha(G/G_{(\sigma,w \tau)}\times \sigma\times \tau)$ where the characteristic 
map $\phi_\alpha^n$ is given by
\begin{alignat*}{2}
&& \phi_\alpha^n\co G/G_{(\sigma,w\tau)}\times \sigma\times \tau & \to \Hom(\Z^2,G) \\
&& (gG_{(\sigma,w\tau)},x,y) & \mapsto (x,wy)^g\,.
\end{alignat*}
Here the superscript $g$ indicates simultaneous conjugation by $g$.

In the definition of $\phi^n_\alpha$ it must be checked that the right hand side is independent of 
the choice of representative for the coset $gG_{(\sigma,w\tau)}$. To see this notice that 
$G_\sigma\leqslant G_x$ and $G_{w\tau}=G_\tau^{\tilde{w}}\leqslant G_y^{\tilde{w}}=G_{wy}$ where 
$\tilde{w}\in N_G(T)$ is a lift of $w\in W$. Therefore,
\begin{equation} \label{eq:isotropyboundary}
G_{(\sigma,w\tau)}=G_\sigma \cap G_{w\tau}\leqslant G_x\cap G_{wy}=G_{(x,wy)}\,,
\end{equation}
showing that $\phi^n_\alpha$ is well defined.

\begin{lemma} \label{lem:cw}
The filtration (\ref{eq:gcw}) is a $G$-CW-structure on $\Hom(\Z^2,G)$ whose set of $G$-$n$-cells is 
$\{e_\alpha^n\mid \alpha\in J_n\}$.
\end{lemma}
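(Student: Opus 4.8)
The plan is to verify directly that the filtration \eqref{eq:gcw} satisfies the axioms of a $G$-CW-complex, with the cells $\{e_\alpha^n\}$ as described. The starting point is the product CW-structure on $A\times A$, whose $n$-cells are the open faces $\mathring{\sigma}\times\mathring{\tau}$ with $\dim\sigma+\dim\tau=n$. Pulling back along $p_1\times p_2\colon\Hom(\Z^2,G)\to A\times A$ gives the subspaces $\Hom(\Z^2,G)^{(n)}$, and the crux is to identify $(p_1\times p_2)^{-1}(\mathring\sigma\times\mathring\tau)$ equivariantly. Here I would use that $T/W\cong A$ restricts to a homeomorphism from each open face $\mathring\sigma$ onto a single $W$-orbit type stratum, so that the preimage of $\mathring\sigma$ in $T$ is $W\cdot\mathring\sigma\cong W/W_\sigma\times\mathring\sigma$ (disjoint translates, since $W_\sigma$ is precisely the stabilizer of every point of $\mathring\sigma$). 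Consequently the preimage of $\mathring\sigma\times\mathring\tau$ in $T^2$ is $\bigsqcup_{(w_1,w_2)} w_1\mathring\sigma\times w_2\mathring\tau$, and descending along $G\times_{N(T)}T^2\to\Hom(\Z^2,G)_{\BONE}$ one collapses the $W$-action to double cosets: the $G$-orbits through this stratum are indexed by $W_\sigma\backslash W/W_\tau$, i.e.\ by $\mathscr{C}(\sigma,\tau)$, and the orbit corresponding to $w\in\mathscr{C}(\sigma,\tau)$ is $G$-homeomorphic to $G/G_{(\sigma,w\tau)}\times\mathring\sigma\times\mathring\tau$ via $\phi^n_\alpha$. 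This is the main computation and the place where one must be careful: one has to check that $\phi^n_\alpha$ restricted to the open cell is injective (which comes down to the fact that $w$ is a genuine double-coset representative, together with the observation that if $(x,wy)$ and $(x,w'y')$ with $x=\exp\tilde x\in\exp\mathring\sigma$, $y,y'\in\exp\mathring\tau$ are $G$-conjugate then $\tilde x=\tilde x'$, $\tilde y$ and $\tilde y'$ are $W$-conjugate, and the conjugating Weyl element lies in $W_\sigma$ on the left and $W_\tau$ on the right), and that its image is exactly one $G$-orbit of $(p_1\times p_2)^{-1}(\mathring\sigma\times\mathring\tau)$. The well-definedness on cosets was already recorded in \eqref{eq:isotropyboundary}.

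With the open cells identified, I would next check the attaching behaviour. The characteristic map $\phi^n_\alpha$ is defined on the closed cell $G/G_{(\sigma,w\tau)}\times\sigma\times\tau$; its restriction to the boundary $G/G_{(\sigma,w\tau)}\times\partial(\sigma\times\tau)$ lands in $\Hom(\Z^2,G)^{(n-1)}$ because $\partial(\sigma\times\tau)$ is a union of products of lower-dimensional faces of $A$. One must verify that on each boundary face $\sigma'\times\tau'$ (with $\sigma'\subseteq\sigma$, $\tau'\subseteq\tau$, not both equal) the map factors through one of the characteristic maps $\phi^{n-1}_{\alpha'}$ up to the canonical projection $G/G_{(\sigma,w\tau)}\to G/G_{(\sigma',w'\tau')}$; this is a matter of rewriting $w$ in terms of a $W_{\sigma'}\backslash W/W_{\tau'}$ representative and using $G_{(\sigma,w\tau)}\leqslant G_{(\sigma',w'\tau')}$, which holds because passing to a face of $A$ can only enlarge the pointwise stabilizer. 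Then $\Hom(\Z^2,G)^{(n)}$ is obtained from $\Hom(\Z^2,G)^{(n-1)}$ by attaching the $e^n_\alpha$ along these maps, i.e.\ it is the pushout of $\bigsqcup_\alpha G/G_{(\sigma,w\tau)}\times S^{n-1}\leftarrow\bigsqcup_\alpha G/G_{(\sigma,w\tau)}\times\partial D^n\hookrightarrow\Hom(\Z^2,G)^{(n-1)}$ — here I use that the pair $(\sigma\times\tau,\partial(\sigma\times\tau))$ is homeomorphic to $(D^n,S^{n-1})$. Finally $\Hom(\Z^2,G)^{(2r)}=\Hom(\Z^2,G)$ since $(A\times A)^{(2r)}=A\times A$ and $p_1\times p_2$ is surjective onto $A\times A$ (every point of $T^2$ is $W$-conjugate into $A\times A$ after separate reduction, using $T/W\cong A$ coordinatewise — this is why the construction uses the two projections $p_i$ rather than a single quotient).

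The main obstacle, as indicated above, is the bookkeeping in the identification of the open strata: one must pin down exactly how the diagonal $W$-action on $T^2$ interacts with the non-diagonal product CW-structure on $A\times A$, and confirm that the orbits of $G$ on $(p_1\times p_2)^{-1}(\mathring\sigma\times\mathring\tau)$ are in bijection with $W_\sigma\backslash W/W_\tau$ with the stated isotropy $G_{(\sigma,w\tau)}=G_\sigma\cap G_{w\tau}$. Everything else — the boundary compatibility, the pushout description, exhaustion — is then a routine consequence of the simplicial combinatorics of the alcove together with the standard fact (used repeatedly in the excerpt, e.g.\ via \cite[IX \S 5.2 Corollary 1]{Bourbaki}) that $\exp$ induces $A\xrightarrow{\cong}T/W$ and that $W_\sigma$ is the common stabilizer of all points in the relative interior of $\sigma$. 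I would also remark that the same argument, applied to the universal cover, yields a $G$-CW-structure on $\Hom(\Z^n,G)_{\BONE}$ for general $n$, which is the content alluded to in Remark \ref{rem:cwgeneral}.
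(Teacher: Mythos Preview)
Your proposal is correct and follows essentially the same route as the paper's proof: both identify the open strata $(p_1\times p_2)^{-1}(\mathring\sigma\times\mathring\tau)$ as disjoint unions of orbits indexed by $W_\sigma\backslash W/W_\tau$ with isotropy $G_{(\sigma,w\tau)}$, verify injectivity of $\phi^n_\alpha$ on the open cell via the double-coset uniqueness, and conclude that the canonical map from the pushout to $\Hom(\Z^2,G)^{(n)}$ is a bijection (hence a homeomorphism by compact/Hausdorff). The paper carries this out as a direct bijectivity check on the pushout, whereas you phrase it as a stratification-then-attaching argument; the only superfluous step in your outline is the verification that each boundary face factors through a specific lower characteristic map, which is not needed for the $G$-CW axioms (landing in the $(n-1)$-skeleton suffices).
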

\begin{proof}
Set $\Hom(\Z^2,G)^{(-1)}:=\emptyset$ and assume $n\geqslant 0$. Let $P$ denote the pushout of
\[
\bigsqcup_{(\sigma,\tau,w)\in J_n} G/G_{(\sigma,w\tau)}\times\sigma\times\tau 
\longleftarrow \bigsqcup_{(\sigma,\tau,w)\in J_n}  G/G_{(\sigma,w\tau)}\times 
\partial(\sigma\times \tau) \xrightarrow{\sqcup_{J_n}f^n_\alpha}  \Hom(\Z^2,G)^{(n-1)}
\]
where the attaching maps $\{f^n_\alpha \mid \alpha\in J_n\}$ arise as the restriction of the 
characteristic maps $\{\phi^n_\alpha \mid \alpha\in J_n\}$ to the boundary of the complex 
$\sigma\times \tau$. We must show that the map
\[
h\co P\to \Hom(\Z^2,G)^{(n)}
\]
induced by the characteristic maps and the inclusion 
$\Hom(\Z^2,G)^{(n-1)}\hookrightarrow \Hom(\Z^2,G)^{(n)}$ is a homeomorphism. In fact, as $P$ is 
compact and $\Hom(\Z^2,G)^{(n)}$ is Hausdorff, it is enough to show $h$ is a bijection.

Clearly, the image of $h$ contains $\Hom(\Z^2,G)^{(n-1)}\subseteq \Hom(\Z^2,G)^{(n)}$. 
Now suppose that $(z_1,z_2)\in \Hom(\Z^2,G)^{(n)}\backslash \Hom(\Z^2,G)^{(n-1)}$. Since $G$ is 
assumed simply--connected, there exists $g\in G$ such that $(z_1,z_2)=(x,w'y)^g$ for some $w'\in W$ 
and uniquely determined $x,y\in A$. Furthermore, there are unique faces $\sigma\in\mathscr{F}_i$ 
and $\tau\in \mathscr{F}_{n-i}$ such that $(x,y)$ is in the relative interior of $\sigma\times \tau$. 
Now suppose that $w\in \mathscr{C}(\sigma,\tau)$ represents the double coset determined by $w'$. 
Then $w'=awb$ for $a\in W_{\sigma}$ and $b\in W_{\tau}$. Thus, $(x,w'y)=(x,wy)^{\tilde{a}}$ where 
$\tilde{a}\in N_G(T)$ is a lift of $a$. Now 
$\phi^n_{(\sigma,\tau,w)}\co (g\tilde{a}G_{(\sigma,w\tau)},x,y)\mapsto (z_1,z_2)$ showing that 
$h$ is surjective.

Now suppose that $p,p'\in P$ and $h(p)=h(p')$. If either $p$ or $p'$ is represented by an element 
of $\Hom(\Z^2,G)^{(n-1)}$ then so is the other. It follows that $p=p'$ as 
$\Hom(\Z^2,G)^{(n-1)}\hookrightarrow \Hom(\Z^2,G)^{(n)}$ is injective.

If neither $p$ nor $p'$ lifts to $\Hom(\Z^2,G)^{(n-1)}$, then $h(p)$ and $h(p')$ each lie in 
the image of a charactersitic map. This means that there are 
$(\sigma,\tau,w),(\sigma',\tau',w')\in J_n$ and $g,g'\in G$ such that $h(p)=(x,wy)^g$ and 
$h(p')=(x',w'y')^{g'}$ for some $(x,y)\in\textnormal{int}(\sigma\times \tau)$ and 
$(x',y')\in\textnormal{int}(\sigma'\times \tau')$ (where $\textnormal{int}$ denotes the 
relative interior of a cell). Then
\[
(x,wy)\equiv (x',w'y') \textnormal{ modulo }W\,,
\]
which implies that $x=x'$ and $y=y'$ (by projecting to $A\times A$), and further that $\sigma=\sigma'$ 
and $\tau=\tau'$ as every point of $A\times A$ lies in the relative interior of a unique cell. Let 
$w''\in W$ be such that $(x,wy)=(w''x,w''w'y)$. Then $w''\in W_{\sigma}$ and $w^{-1}w''w'\in W_{\tau}$. 
This implies that $w\in \mathscr{C}(\sigma,\tau)$ and $w'\in \mathscr{C}(\sigma,\tau)$ represent 
the same double coset, hence $w=w'$. Finally, $(x,wy)^g=(x,wy)^{g'}$ implies that $g\equiv g'$ 
modulo $G_{(\sigma,w\tau)}$, and therefore $p=p'$. It follows that $h$ is injective.
\end{proof}

\begin{remark} \label{rem:cwgeneral}
In the same way, one can construct a $G$-CW-structure on $\Hom(\Z^k,G)_{\BONE}$ for any $k\geqslant 1$. 
The $G$-$n$-cells are then indexed over $k$-tuples 
$(\sigma_1,\dots,\sigma_k)\in \mathscr{F}_{i_1}\times \cdots \times \mathscr{F}_{i_k}$ such that 
$\sum_j i_j=n$ and a complete set of representatives $\mathscr{C}(\sigma_1,\dots,\sigma_k)$ for the 
$W$-orbits of the diagonal $W$-set $W/W_{\sigma_1}\times \cdots \times W/W_{\sigma_k}$. 
(When $k=2$ the latter reduces to a set of representatives for the double cosets 
$W_{\sigma_1}\backslash W / W_{\sigma_2}$.) This, in fact, gives the indexing set of the $n$-cells 
in a (non-equivariant) CW-structure on $T^k/W$. A proof analogous to that of Lemma \ref{lem:cw} 
then shows that this CW-structure can be lifted to a $G$-equivariant CW-structure on 
$\Hom(\Z^k,G)_{\BONE}$.
\end{remark}

Let $\pi_0(G_{(x,y)})_{(p)}=\pi_0(G_{(x,y)})\otimes_{\Z} \Z_{(p)}$ denote the localization of the 
abelian group $\pi_0(G_{(x,y)})$ (see Lemma \ref{lem:componentsisotropy}) at $p$. Relative to the 
CW-structure just described we have:
\begin{lemma} \label{lem:subcomplex}
Let $p$ be a prime. The subspace $X_G(p)\subseteq \Hom(\Z^2,G)$ defined by
\[
X_G(p):=\{(x,y)\in \Hom(\Z^2,G)\mid \pi_0(G_{(x,y)})_{(p)}\neq 0\}
\]
is a $G$-subcomplex of $\Hom(\Z^2,G)$.
\end{lemma}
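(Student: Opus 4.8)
The plan is to verify that $X_G(p)$ satisfies the three conditions characterizing a $G$-subcomplex of the $G$-CW-complex built in Lemma \ref{lem:cw}: it is $G$-invariant, it is a union of open $G$-cells, and it contains the closure of each open $G$-cell it meets. Invariance is immediate, since conjugation by $g\in G$ carries $G_{(x,y)}$ isomorphically onto $G_{(x,y)^g}$ and hence does not change $\pi_0(G_{(x,y)})_{(p)}$. For the union-of-open-cells condition I would first observe that membership in $X_G(p)$ is constant on the relative interior of each $G$-cell: if $\alpha=(\sigma,\tau,w)\in J_n$ and $(z_1,z_2)=(x,wy)^g$ with $x\in\textnormal{int}(\sigma)$ and $y\in\textnormal{int}(\tau)$, then $G_x=G_\sigma$ and $G_{wy}=G_{w\tau}$, so $G_{(z_1,z_2)}=gG_{(\sigma,w\tau)}g^{-1}$ and therefore $\pi_0(G_{(z_1,z_2)})_{(p)}\cong \pi_0(G_{(\sigma,w\tau)})_{(p)}$. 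Hence $\textnormal{int}(e^n_\alpha)\subseteq X_G(p)$ when $\pi_0(G_{(\sigma,w\tau)})_{(p)}\neq 0$ and $\textnormal{int}(e^n_\alpha)\cap X_G(p)=\emptyset$ otherwise; in particular $X_G(p)$ is the union of those open cells $\textnormal{int}(e^n_\alpha)$ with $\pi_0(G_{(\sigma,w\tau)})_{(p)}\neq 0$.

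The heart of the matter is the closure condition: if $\pi_0(G_{(\sigma,w\tau)})_{(p)}\neq 0$, then the whole closed cell $e^n_\alpha$ lies in $X_G(p)$. A point of $e^n_\alpha$ has the form $(x,wy)^g$ with $x$ in the closed face $\sigma$ and $y$ in the closed face $\tau$, and by (\ref{eq:isotropyboundary}) there is an inclusion $G_{(\sigma,w\tau)}\leqslant G_{(x,wy)}$. Since $b(\sigma),x\in A$ and $wb(\tau),wy\in T$, Lemma \ref{lem:pi0injective} applies — with first coordinates $b(\sigma),x$ and second coordinates $wb(\tau),wy$ — and shows that this inclusion induces an \emph{injection} $\pi_0(G_{(\sigma,w\tau)})\hookrightarrow \pi_0(G_{(x,wy)})$. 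Localization at $p$ is exact, so $\pi_0(G_{(\sigma,w\tau)})_{(p)}\hookrightarrow \pi_0(G_{(x,wy)})_{(p)}$ remains injective; as the source is nonzero, so is the target, whence $(x,wy)^g\in X_G(p)$. Thus $e^n_\alpha\subseteq X_G(p)$, and combining this with the previous paragraph, $X_G(p)$ contains the closure of every open cell it meets. Together with $G$-invariance this shows that $X_G(p)$ is a $G$-subcomplex.

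I expect the only genuine obstacle to be controlling $\pi_0(G_{(x,y)})$ as $(x,y)$ moves from the interior of a $G$-cell to its boundary: there the centralizer $G_{(x,y)}$ jumps up, and a priori its group of components could behave erratically. The role of Lemma \ref{lem:pi0injective} (a special case of \cite[Corollary 7.6.2]{BFM}) is precisely to guarantee that this jump induces an injection on $\pi_0$, and it is this injectivity — preserved by localization at $p$ — that prevents a nonzero $p$-local component group from vanishing along the boundary. All the remaining ingredients (constancy of the isotropy type on open cells, the inclusion (\ref{eq:isotropyboundary}), and exactness of localization) are routine given the explicit $G$-CW-structure.
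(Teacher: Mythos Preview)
Your proposal is correct and follows essentially the same approach as the paper: both arguments observe that $X_G(p)$ is a union of open $G$-cells (you spell this out in more detail, while the paper simply asserts it is clear), and both establish the closure condition by invoking (\ref{eq:isotropyboundary}) to obtain $G_{(\sigma,w\tau)}\leqslant G_{(x,wy)}$ and then applying Lemma \ref{lem:pi0injective} to deduce injectivity on $\pi_0$, which survives $p$-localization.
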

\begin{proof}
It is clear that $X_G(p)$ is a union of open $G$-cells. We must show that it is also a union of 
closed $G$-cells. Let $\alpha=(\sigma,\tau,w)\in J_n$ and write
\[
\partial e^n_\alpha:=e^n_\alpha\cap \Hom(\Z^2,G)^{(n-1)}\,,\quad\; 
\textnormal{int}(e^n_\alpha):=e^n_\alpha\backslash \partial e^n_\alpha\,.
\]
Suppose that $\textnormal{int}(e^n_\alpha)\subseteq X_G(p)$. This means that 
$\pi_0(G_{(\sigma,w\tau)})_{(p)}\neq 0$. We are going to show that 
$\partial e^n_\alpha\subseteq X_G(p)$.

Suppose that $(z_1,z_2)\in \partial e^n_\alpha$. Then there is $g\in G$ such that 
$(z_1,z_2)=(x,wy)^g$ where $(x,y)\in \partial(\sigma\times \tau)$. In particular, 
$\pi_0(G_{(z_1,z_2)})\cong \pi_0(G_{(x,wy)})$.

As noted in (\ref{eq:isotropyboundary}) we have that $G_\sigma\leqslant G_x$ and 
$G_{(\sigma,w\tau)}\leqslant G_{(x,wy)}$. By Lemma \ref{lem:pi0injective}, the map 
$\pi_0(G_{(\sigma,w\tau)})\to \pi_0(G_{(x,wy)})$ induced by the inclusion is injective. 
The map remains injective after $p$-localization, hence $\pi_0(G_{(x,wy)})_{(p)}\neq 0$. 
But this implies that $(z_1,z_2)\in X_G(p)$, which finishes the proof.
\end{proof}

Let us write $(\underline{\pi}_0)_{(p)}$ for the coefficient system obtained by localizing 
$\underline{\pi}_0$ objectwise at $p$.

\begin{corollary} \label{cor:decomposition}
Let $\mathcal{P}$ denote the set of primes which divide at least one coroot integer of $G$. 
Then there is an isomorphism
\[
H_\ast^G(\Hom(\Z^2,G);\underline{\pi}_0)\cong 
\bigoplus_{p\in \mathcal{P}} H_\ast^G(X_G(p);(\underline{\pi}_0)_{(p)})\, .
\]
\end{corollary}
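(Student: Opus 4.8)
The plan is to argue at the level of Bredon chain complexes. Write $C_\ast = C_\ast(\Hom(\Z^2,G);\underline{\pi}_0)$ for the cellular chain complex computing $H_\ast^G(\Hom(\Z^2,G);\underline{\pi}_0)$ with respect to the $G$-CW-structure of Lemma~\ref{lem:cw}, so that $C_n = \bigoplus_{\alpha\in J_n}\pi_0(G_{(\sigma,w\tau)})$ for $\alpha = (\sigma,\tau,w)$. The first step is to observe that every coefficient group $\pi_0(G_{(\sigma,w\tau)}) = \pi_0(Z_G(b(\sigma),wb(\tau)))$ is finite with order divisible only by primes in $\mathcal{P}$: by Lemmas~\ref{lem:pi1derived} and~\ref{lem:pi0inpi1} it injects into $\pi_1(DZ_G(b(\sigma)))$, which is cyclic of order $n^\vee(b(\sigma))$, a gcd of coroot integers and so a divisor of some individual coroot integer. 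Since the $G$-CW-structure is finite ($\Hom(\Z^2,G)$ is compact), each $C_n$ is then a finite abelian group whose order involves only primes in $\mathcal{P}$, and the primary decomposition of finite abelian groups — which is respected by every homomorphism — yields a splitting of chain complexes $C_\ast \cong \bigoplus_{p\in\mathcal{P}} C_\ast\otimes_\Z \Z_{(p)}$. As $\Z_{(p)}$ is flat, taking homology gives $H_\ast^G(\Hom(\Z^2,G);\underline{\pi}_0)\cong \bigoplus_{p\in\mathcal{P}} H_\ast^G(\Hom(\Z^2,G);\underline{\pi}_0)\otimes\Z_{(p)}$.

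The second step is to identify, for each $p\in\mathcal{P}$, the summand $C_\ast\otimes\Z_{(p)}$ with the Bredon chain complex $C_\ast(X_G(p);(\underline{\pi}_0)_{(p)})$ of the $G$-subcomplex $X_G(p)$ from Lemma~\ref{lem:subcomplex}. The key observation is that the $G$-cells of $\Hom(\Z^2,G)$ contained in $X_G(p)$ are exactly those $e^n_\alpha$ with $\pi_0(G_{(\sigma,w\tau)})_{(p)}\neq 0$: the open cell of $e^n_\alpha$ consists of tuples $(x,wy)^g$ with $(x,y)$ in the relative interior of $\sigma\times\tau$, and for such a tuple the isotropy is $G_{(x,wy)} = G_\sigma\cap G_{w\tau} = G_{(\sigma,w\tau)}$, so whether the open cell — and hence, by Lemma~\ref{lem:subcomplex}, the closed cell — lies in $X_G(p)$ is governed precisely by $\pi_0(G_{(\sigma,w\tau)})_{(p)}$. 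Consequently, in each degree
\[
C_n\otimes\Z_{(p)} = \bigoplus_{\alpha\in J_n}\pi_0(G_{(\sigma,w\tau)})_{(p)} = \bigoplus_{\substack{\alpha\in J_n \\ e^n_\alpha\subseteq X_G(p)}}\pi_0(G_{(\sigma,w\tau)})_{(p)} = C_n(X_G(p);(\underline{\pi}_0)_{(p)})\,,
\]
the middle equality because the summands indexed by cells not in $X_G(p)$ vanish $p$-locally. The differentials match: the Bredon differential of a $G$-subcomplex is the restriction of the ambient one, while localizing the ambient differential coincides with forming the differential with respect to the localized coefficient system, both being given by the same formula in terms of inclusions of isotropy groups and incidence numbers of attaching maps. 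Summing over $p\in\mathcal{P}$ and passing to homology then yields the claimed isomorphism.

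Most of the real content has already been extracted in the earlier lemmas — injectivity of the maps $\pi_0(G_{(\sigma,w\tau)})\to\pi_0(G_{(x,wy)})$ on boundary cells (Lemma~\ref{lem:pi0injective}), which makes $X_G(p)$ a subcomplex (Lemma~\ref{lem:subcomplex}), and the identification of $\pi_0(Z_G(x,y))$ with a subgroup of $\pi_1(DZ_G(x))$ (Lemmas~\ref{lem:pi1derived}, \ref{lem:pi0inpi1}). The only point requiring a little care is the bookkeeping in the second step: verifying that $C_\ast(X_G(p);(\underline{\pi}_0)_{(p)})$ is literally equal to $C_\ast\otimes\Z_{(p)}$, not merely abstractly isomorphic, and that the two descriptions of the differential agree. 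Everything else is formal.
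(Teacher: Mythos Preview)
Your proof is correct and follows essentially the same route as the paper: split the coefficient system $\underline{\pi}_0$ into its $p$-primary pieces (using Lemmas~\ref{lem:pi1derived} and~\ref{lem:pi0inpi1} to see only primes in $\mathcal{P}$ occur), then observe that the inclusion $C_\ast(X_G(p);(\underline{\pi}_0)_{(p)})\hookrightarrow C_\ast(\Hom(\Z^2,G);(\underline{\pi}_0)_{(p)})$ is an isomorphism because the extra summands vanish $p$-locally by the very definition of $X_G(p)$. The paper phrases the first step as a splitting of coefficient systems rather than of chain groups, but this is only a cosmetic difference.
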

\begin{proof}
We know from Lemma \ref{lem:componentsisotropy} that $\underline{\pi}_0$ takes values in 
finite abelian groups. Hence, it splits as a direct sum of its localizations at the various primes.

On the other hand, we derive from Lemmas \ref{lem:pi1derived} and \ref{lem:pi0inpi1} that 
$(\underline{\pi}_0)_{(p)}$ is trivial unless $p$ divides a coroot integer of $G$. Therefore, 
$\underline{\pi}_0\cong \bigoplus_{p\in\mathcal{P}} (\underline{\pi}_0)_{(p)}$.

The corollary is now a consequence of Lemma \ref{lem:subcomplex}; by definition of $X_G(p)$, 
the inclusion of the chain complex for $H_\ast(X_G(p);(\underline{\pi}_0)_{(p)})$ into the chain 
complex for $H_\ast(\Hom(\Z^2,G);(\underline{\pi}_0)_{(p)})$ is an isomorphism.
\end{proof}

To understand why the decomposition of Corollary \ref{cor:decomposition} is useful, we must take a 
closer look at the coefficient system $(\underline{\pi}_0)_{(p)}$.

\begin{lemma} \label{lem:equivarianthomologyquotient}
Let $p\in \mathcal{P}$. Suppose that $G\notin\{E_7,E_8\}$ or that $p>2$. Then
\[
H_\ast^G(X_G(p);(\underline{\pi}_0)_{(p)}) \cong H_\ast(R_G(p);\Z/p)
\]
where $R_G(p):=X_G(p)/G$.
\end{lemma}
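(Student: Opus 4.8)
The claim is that, for $p\in\mathcal P$ and $G\notin\{E_7,E_8\}$ (or $p>2$ in those two cases), the Bredon homology $H_\ast^G(X_G(p);(\underline\pi_0)_{(p)})$ agrees with the ordinary homology $H_\ast(R_G(p);\Z/p)$ of the orbit space $R_G(p)=X_G(p)/G$ with constant $\Z/p$-coefficients. The idea is that on the subcomplex $X_G(p)$ the coefficient system $(\underline\pi_0)_{(p)}$ is as close to constant as it can be: by Lemma~\ref{lem:pi0injective} the maps induced by cell inclusions are injective, and by Lemmas~\ref{lem:pi1derived} and~\ref{lem:pi0inpi1} each value $\pi_0(G_{(x,y)})_{(p)}$ is a cyclic $p$-group. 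The plan is to show that, under the stated hypotheses, every value of $(\underline\pi_0)_{(p)}$ on a cell of $X_G(p)$ is exactly $\Z/p$, and that the structure maps of the coefficient system are all the identity on $\Z/p$; then the Bredon chain complex $C_\ast^G(X_G(p);(\underline\pi_0)_{(p)})$ is literally the cellular chain complex of $R_G(p)$ with $\Z/p$-coefficients.

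**Key steps.**
First I would recall from Corollary~\ref{cor:decomposition} and its proof that the Bredon chains in degree $n$ are $\bigoplus_{\alpha\in J_n,\ \mathrm{int}(e^n_\alpha)\subseteq X_G(p)}\pi_0(G_{(\sigma,w\tau)})_{(p)}$, with differential assembled from the face maps of $A\times A$ twisted by the coefficient-system maps $(\underline\pi_0)_{(p)}(f^n_{\alpha,i})$. Since $X_G(p)/G$ inherits the induced CW-structure with one $n$-cell for each such $\alpha$, the cellular chains of $R_G(p)$ with $\Z/p$-coefficients are $\bigoplus_\alpha \Z/p$ in the same indexing, with the analogous differential but with all coefficient maps equal to $\pm\mathrm{id}$. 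So the reduction is to: (a) $\pi_0(G_{(\sigma,w\tau)})_{(p)}\cong\Z/p$ for every cell of $X_G(p)$, and (b) every structure map $\pi_0(G_{(\sigma,w\tau)})_{(p)}\to\pi_0(G_{(x,wy)})_{(p)}$ appearing in a face of $X_G(p)$ is an isomorphism.

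For (a): by Lemma~\ref{lem:pi0inpi1}, $\pi_0(G_{(x,y)})_{(p)}$ injects into $(\pi_1(DZ_G(x)))_{(p)}$, which by Lemma~\ref{lem:pi1derived} is cyclic of order the $p$-part of $n^\vee(x)=\gcd\{n_i^\vee : i\in\tilde\Delta\setminus\tilde\Delta(x)\}$. Inspecting the coroot integers in Table~\ref{table:coroot}: away from $E_7$, $E_8$ no coroot integer is divisible by $p^2$ for any prime $p$ (they are $1,2,3,4,5,6$ only in $E_7,E_8$; elsewhere $1,2,3$), so $n^\vee(x)_{(p)}\in\{1,p\}$, forcing $\pi_0(G_{(x,y)})_{(p)}$ to be $0$ or $\Z/p$; for $E_7,E_8$ the hypothesis $p>2$ rules out $p^2\mid 4$, and $9,25\nmid$ any coroot integer there either, so again $n^\vee(x)_{(p)}\in\{1,p\}$. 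By definition of $X_G(p)$ the cells with $\mathrm{int}(e^n_\alpha)\subseteq X_G(p)$ are precisely those with $\pi_0(G_{(\sigma,w\tau)})_{(p)}\neq 0$, hence $\cong\Z/p$. For (b): a face map of a cell of $X_G(p)$ lands again in $X_G(p)$ by Lemma~\ref{lem:subcomplex}, so both source and target are $\Z/p$; the map is injective by Lemma~\ref{lem:pi0injective}, and an injective homomorphism $\Z/p\to\Z/p$ is an isomorphism. Putting this together, the two chain complexes are identified degreewise with matching differentials (up to signs on generators, which do not affect the isomorphism type of homology), giving $H_\ast^G(X_G(p);(\underline\pi_0)_{(p)})\cong H_\ast(R_G(p);\Z/p)$.

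**Main obstacle.**
The genuine content is step (a) together with the bookkeeping that makes the identification of chain complexes honest. The delicate point is the exclusion of $E_7$, $E_8$ at $p=2$: there $n^\vee(x)$ can equal $4$, so $\pi_0(G_{(x,y)})_{(2)}$ could a priori be $\Z/4$, and then the Bredon chains are genuinely not $\Z/2$-vector spaces and the argument breaks — this is exactly why the hypothesis is imposed. I would double-check, case by case against the coroot diagrams in the appendix of \cite{BFM}, that no other coroot integer is a prime power with exponent $\geqslant 2$ outside this situation, so that $n^\vee(x)_{(p)}$ is squarefree in all allowed cases. A secondary subtlety is to confirm that the face maps of the CW-structure of Lemma~\ref{lem:cw} descend to the standard cellular structure on the quotient $R_G(p)=X_G(p)/G$ and that, after the identification of each cell group with $\Z/p$, the twisted Bredon differential becomes the ordinary cellular differential; this is routine but must be stated carefully, since it is where the injectivity from Lemma~\ref{lem:pi0injective} is converted into "the coefficient system is constant on $X_G(p)$."
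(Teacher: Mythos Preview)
Your plan is on the right track and matches the paper's strategy in outline: both arguments reduce to showing that, on the cells of $X_G(p)$, the coefficient system $(\underline\pi_0)_{(p)}$ is the constant system $\underline{\Z/p}$. Your step (a) is fine and is exactly what the paper does. The gap is in step (b) and the sentence that follows it.

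Knowing that every structure map $\pi_0(G_{(\sigma,w\tau)})_{(p)}\to\pi_0(G_{(\sigma',w'\tau')})_{(p)}$ is an isomorphism is \emph{not} enough to conclude that the Bredon chain complex agrees with the cellular chain complex of $R_G(p)$ with $\Z/p$-coefficients. An automorphism of $\Z/p$ is multiplication by an arbitrary unit in $(\Z/p)^\times$, not merely $\pm 1$ as your parenthetical suggests; for $p=3$ or $p=5$ (which do occur, in $E_6,E_7,E_8,F_4$ and in $E_8$ respectively) these units are nontrivial. What you have so far is a coefficient system that is ``locally constant'' in the sense that all maps are isomorphisms, but such a system can have monodromy and need not be naturally isomorphic to $\underline{\Z/p}$. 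The resulting chain complex would then compute homology with a twisted $\Z/p$-coefficient system on $R_G(p)$, not ordinary $\Z/p$-homology. Your final paragraph acknowledges that one must check the coefficient system is genuinely constant, but calling this ``routine'' undersells it: it is precisely the technical content of the lemma.

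The paper handles this by constructing an explicit natural isomorphism. Each $\pi_0(G_{(\sigma,w\tau)})$ is embedded via the connecting homomorphism $\delta_{wb(\tau)}$ into $\pi_1(DG_\sigma)$, and the latter groups are equipped with distinguished generators $\epsilon_\sigma\zeta(b(\sigma))$ chosen so that (i) inclusions $G_\sigma\leqslant G_{\sigma'}$ send generator to generator (this uses the second part of Lemma~\ref{lem:pi1derived}, not just injectivity), and (ii) the conjugation component of each orbit-category morphism acts trivially because the commutator $[\tilde y,\tilde z]$ defining $\delta$ lands in the center of $\widetilde{G_{\sigma'}}$. These two checks are what convert ``all maps are isomorphisms'' into ``the system is naturally the constant $\underline{\Z/p}$'', and neither follows from Lemma~\ref{lem:pi0injective} alone. (For $p=2$ your argument does go through as stated, since $\mathrm{Aut}(\Z/2)$ is trivial; but the lemma is also needed at the odd primes.)
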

\begin{proof}
Let $G$ and $p$ be fixed. The assumption that either $G\notin \{E_7,E_8\}$ or that $p>2$ will only 
become relevant towards the end of the proof.

Let $\mathcal{O}_G$ denote the orbit category of $G$, whose objects are the homogeneous spaces $G/H$ for closed subgroups $H\leqslant G$ and whose morphisms are the $G$-equivariant maps. For the construction of the chain complex $C_\ast(X_G(p);\underline{\pi}_0)$ only a subcategory 
$\mathcal{O}_G'$ of $\mathcal{O}_G$ is relevant, namely the subcategory generated 
by the equivariant maps that enter into the definition of the differentials. To prove the lemma 
it suffices to show that under the stated assumptions the restriction of $(\underline{\pi}_0)_{(p)}$ 
to $\mathcal{O}_G'$ is naturally isomorphic to the constant coefficient system $\underline{\Z/p}$.

The maps defining the differential $C_n(X_G(p);\underline{\pi}_0)\to C_{n-1}(X_G(p);\underline{\pi}_0)$ 
arise by considering the composite of an attaching map $f^n_\alpha$ for an $n$-cell $e^n_\alpha$ and 
the collapse of the complement of an $(n-1)$-cell $e^{n-1}_\beta$ into a point \cite{willson}. 
In our situation this looks as follows. Let $\alpha=(\sigma,\tau,w)\in J_n$, let 
$\beta=(\sigma',\tau',w')\in J_{n-1}$, and let $e^n_\alpha$ respectively $e^{n-1}_\beta$ be the 
corresponding closed cells of $\Hom(\Z^2,G)$. The pair $(e^n_\alpha,e^{n-1}_\beta)$ can contribute 
to the differential only if $\textnormal{int}(e^{n-1}_\beta)\cap \partial e^n_\alpha\neq \emptyset$. 
Let us assume that the intersection is non-empty. It follows, then, that 
$\sigma'\times\tau'\subseteq \partial(\sigma\times\tau)$ and that 
$e^{n-1}_\beta\subseteq \partial e^n_\alpha$.

Now $\textnormal{int}(e^{n-1}_\beta)$ contains the homeomorphic image (under the characteristic map 
$\phi^{n-1}_\beta$ of $e^{n-1}_\beta$) of the orbit 
$G/G_{(\sigma',w'\tau')}\times \{(b(\sigma'),b(\tau'))\}$. The preimage of this orbit under the 
attaching map $f^n_\alpha$ is $G/G_{(\sigma,w\tau)} \times\{(b(\sigma'),b(\tau'))\}$. The composite map
\[
G/G_{(\sigma,w\tau)} \times\{(b(\sigma'),b(\tau'))\} 
\xrightarrow{(\phi^{n-1}_\beta)^{-1}\circ f^n_\alpha} G/G_{(\sigma',w'\tau')}\times \{(b(\sigma'),b(\tau'))\}
\]
is a $G$-equivariant map and is therefore determined by the image of $eG_{(\sigma,w\tau)}$. 
Recalling the definition of $f^n_\alpha$ and $\phi^{n-1}_\beta$ we find that 
$eG_{(\sigma,w\tau)}\mapsto g^{-1}G_{(\sigma',w'\tau')}$ where $g$ is any element of $G$ satisfying
\begin{equation} \label{eq:conditionog}
(b(\sigma'),wb(\tau'))^g=(b(\sigma'),w'b(\tau'))\, .
\end{equation}
(The existence of such $g$ is implicit in the assumption that 
$e^{n-1}_\beta\subseteq \partial e^n_\alpha$.) In particular, we have that $g\in G_{\sigma'}$.

Now let $\mathcal{O}_G'$ be the subcategory of $\mathcal{O}_G$ generated by the $G$-maps
\begin{alignat*}{2}
&& G/G_{(\sigma,w\tau)} & \to G/G_{(\sigma',w'\tau')} \\
&& eG_{(\sigma,w\tau)} & \mapsto g^{-1}G_{(\sigma',w'\tau')}
\end{alignat*}
where $g$ satisfies condition (\ref{eq:conditionog}), and $\sigma'\subseteq \sigma$ and 
$\tau'\subseteq \tau$. Then $\mathcal{O}_G'$ includes all morphisms needed to form the chain 
complex $C_\ast(X_G(p);\underline{\pi}_0)$.

We are going to show that the restriction of $(\underline{\pi}_0)_{(p)}$ to $\mathcal{O}'_G$ is 
naturally isomorphic to the constant coefficient system $\underline{\Z/p}$. Consider a morphism in 
$\mathcal{O}_G'$ and let
\[
\pi_0(G_{(\sigma,w\tau)})\to \pi_0(G_{(\sigma',w'\tau')})
\]
be the map obtained by applying $\underline{\pi}_0$ to it. It can be factored as the map induced 
by the inclusion $G_{(\sigma,w\tau)}\to G_{(\sigma',w\tau')}$ and the map induced by conjugation 
$(-)^g\co G_{(\sigma',w\tau')}\to G_{(\sigma',w'\tau')}$. This yields the upper row in the following 
diagram:
\begin{equation} \label{dgr:naturalisomorphism}
\xymatrix{
\pi_0(G_{(\sigma,w\tau)}) \ar[r] \ar[d]^-{\delta_{wb(\tau)}} & \pi_0(G_{(\sigma',w\tau')}) 
\ar[r]^-{(-)^g} \ar[d]^-{\delta_{wb(\tau')}} & \pi_0(G_{(\sigma',w'\tau')}) \ar[d]^-{\delta_{w'b(\tau')}} \\
\pi_1(DG_\sigma) \ar[r] & \pi_1(DG_{\sigma'}) \ar@{=}[r] & \pi_1(DG_{\sigma'}) \\
\Z/p \ar@{=}[rr] \ar[u]_-{\epsilon_\sigma \zeta(b(\sigma))} && \Z/p \ar[u]_-{\epsilon_{\sigma'}\zeta(b(\sigma'))}.
}
\end{equation}
The first map in the middle row is the one induced by the inclusion $G_\sigma\to G_{\sigma'}$. Let us 
show that the upper half of the diagram commutes. The left hand square commutes by naturality of the 
connecting homomorphism. To see that the right hand square commutes, let $\tilde{g}$ denote a lift of 
$g$ in the universal cover $\widetilde{G_{\sigma'}}$, and let $z\in G_{(\sigma',w\tau')}$. 
By definition of the connecting homomorphism (see Lemma \ref{lem:pi0inpi1}), we obtain
\[
\delta_{w'b(\tau')}(z^g)=[\widetilde{w'b(\tau')},\widetilde{z^g}]=
[\widetilde{(wb(\tau'))^g},\widetilde{z^g}]=[\widetilde{wb(\tau')},\tilde{z}]^{\tilde{g}}
=\delta_{wb(\tau')}(z)\,.
\]
In the second equality we used (\ref{eq:conditionog}), and in the last equality we used the fact 
that the commutator is in the center of $\widetilde{G_{\sigma'}}$.

Next, we are going to show that the vertical arrows in the upper half of the diagram become 
isomorphisms after $p$-localization. Let $(x,y)\in X_G(p)$ with $x\in A$. Then, by 
Lemma \ref{lem:pi0inpi1}, $\pi_1(DG_x)$ contains $p$-torsion, hence $p\mid n^\vee(x)$ as 
$\pi_1(DG_x) \cong \Z/n^\vee(x)$. The set of coroot integers of $G$ displayed in 
Table \ref{table:coroot} is at the same time the set of possible values that $n^\vee(x)$ can 
attain as $x$ ranges over the alcove $A$. If $G\notin \{E_7,E_8\}$ or $p>2$, then $n^\vee(x)$ 
does not contain repeated primes. Hence, $\pi_1(DG_x)_{(p)}\cong \Z/p$. By Lemma \ref{lem:pi0inpi1}, 
the map $\delta_y\co \pi_0(G_{(x,y)})\to \pi_1(DG_x)$ is injective, and it remains so after 
$p$-localization. It follows that
\[
\pi_0(G_{(x,y)})_{(p)}\xrightarrow{\cong} \pi_1(DG_x)_{(p)}\, .
\]

To finish the proof we show that if $\sigma'\subseteq \sigma$ and $p\mid n^\vee(b(\sigma))$, then the map
\[
\pi_1(DG_\sigma)_{(p)}\xrightarrow{\cong} \pi_1(DG_{\sigma'})_{(p)}
\]
induced by the inclusion $G_\sigma\leqslant G_{\sigma'}$ is naturally isomorphic to the identity 
at $\Z/p$. This is achieved by making an appropriate choice of generators. Recall from 
Lemma \ref{lem:pi1derived} that $\pi_1(DG_\sigma)$ is a cyclic group of order $n^\vee(b(\sigma))$ 
generated by $\zeta(b(\sigma))$. Let
\[
\epsilon_\sigma=\begin{cases} 3 & \textnormal{if }n^\vee(b(\sigma))=6\textnormal{ and } p=2, \\ 
2 & \textnormal{if }n^\vee(b(\sigma))=6\textnormal{ and } p=3, \\ 1 & \textnormal{otherwise.} 
\end{cases}
\]
Then $\Z/p\to \pi_1(DG_\sigma)$ sending $1\mapsto \epsilon_\sigma \zeta(b(\sigma))$ is injective 
and becomes an isomorphism after $p$-localization. Moreover, the map 
$\pi_1(DG_\sigma)\to \pi_1(DG_{\sigma'})$ sends 
$\epsilon_\sigma \zeta(b(\sigma))\mapsto \epsilon_{\sigma'}\zeta(b(\sigma'))$ by the second part of 
Lemma \ref{lem:pi1derived}. This demonstrates commutativity of the lower part of 
diagram (\ref{dgr:naturalisomorphism}). As a consequence, (the $p$-localization of) 
diagram (\ref{dgr:naturalisomorphism}) establishes a natural isomorphism of 
$(\underline{\pi}_0)_{(p)}$ with $\underline{\Z/p}$ as asserted.
\end{proof}

Let us comment on what happens when $G \in\{E_7,E_8\}$ and $p=2$. 
Let us restrict the coefficient system $\underline{\pi}_{(0)}$ to the $G$-subcomplex $X_G(2)$ of 
$\Hom(\Z^2,G)$. According to Table \ref{table:coroot} the coefficient system $(\underline{\pi}_0)_{(2)}$ 
can now evaluate to $\Z/2$ and $\Z/4$. Thus, in contrast to Lemma \ref{lem:equivarianthomologyquotient} 
it need not be isomorphic to a constant coefficient system. To handle 
these cases we observe that the argument of Lemma \ref{lem:subcomplex} shows that
\[
X_G(4):=\{(x,y)\in X_G(2) \mid \pi_0(G_{(x,y)})_{(2)}\cong \Z/4\}
\]
is a $G$-subcomplex of $X_G(2)$. Restricted to $X_G(4)$ the coefficient system 
$(\underline{\pi}_0)_{(2)}$ is naturally isomorphic to $\underline{\Z/4}$ and as a consequence 
we obtain the following.

\begin{lemma} \label{lem:e7e8rg4}
Let $G=E_7$ or $G=E_8$. Then
\begin{equation*} \label{eq:RG4}
H^G_\ast(X_G(4);(\underline{\pi}_0)_{(2)})\cong H_\ast(R_G(4);\Z/4)\,,
\end{equation*}
where $R_G(4):=X_G(4)/G$.
\end{lemma}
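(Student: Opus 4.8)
The proof of Lemma \ref{lem:e7e8rg4} will closely follow the structure of the proof of Lemma \ref{lem:equivarianthomologyquotient}, but restricted to the subcomplex $X_G(4)$ and using the coefficient value $\Z/4$ rather than $\Z/p$. First I would verify that $X_G(4)$ is a $G$-subcomplex of $X_G(2)$ exactly as in Lemma \ref{lem:subcomplex}: if $\textnormal{int}(e^n_\alpha)\subseteq X_G(4)$ then $\pi_0(G_{(\sigma,w\tau)})_{(2)}\cong \Z/4$, and by Lemma \ref{lem:pi0injective} the map $\pi_0(G_{(\sigma,w\tau)})\to \pi_0(G_{(x,wy)})$ on any boundary cell is injective; since $\pi_0(G_{(x,wy)})_{(2)}$ is a subgroup of $\pi_1(DG_x)_{(2)}$, which according to Table \ref{table:coroot} can only be $0$, $\Z/2$, $\Z/3$, or $\Z/4$ after $2$-localization, an injection from $\Z/4$ forces the target to be $\Z/4$ as well. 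Hence $\partial e^n_\alpha\subseteq X_G(4)$, which proves the claim.

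Next I would show that the restriction of $(\underline{\pi}_0)_{(2)}$ to the relevant subcategory $\mathcal{O}_G'$ of the orbit category (the one generated by the morphisms entering the differentials of the chain complex of $X_G(4)$) is naturally isomorphic to the constant coefficient system $\underline{\Z/4}$. This uses the same diagram (\ref{dgr:naturalisomorphism}) as before. The upper half of that diagram commutes for exactly the same reasons—naturality of the connecting homomorphism and the fact that commutators land in the center of the relevant simply-connected group, combined with (\ref{eq:conditionog}). For the bottom half, on $X_G(4)$ we have $n^\vee(b(\sigma))=4$ for every face $\sigma$ appearing, so $\pi_1(DG_\sigma)\cong \Z/4$ is already the correct group; we take $\epsilon_\sigma=1$ throughout, so that the generator $\zeta(b(\sigma))$ is sent to $\zeta(b(\sigma'))$ by the second part of Lemma \ref{lem:pi1derived}, and the map $\pi_1(DG_\sigma)\to \pi_1(DG_{\sigma'})$ induced by inclusion is literally the identity on $\Z/4$. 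Since $\delta_y\co \pi_0(G_{(x,y)})\to \pi_1(DG_x)$ is injective by Lemma \ref{lem:pi0inpi1} and its source is $\Z/4$ on $X_G(4)$, it is an isomorphism onto $\pi_1(DG_x)\cong\Z/4$. Thus all vertical maps in diagram (\ref{dgr:naturalisomorphism}) are isomorphisms, establishing the natural isomorphism $(\underline{\pi}_0)_{(2)}|_{\mathcal{O}_G'}\cong \underline{\Z/4}$.

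Finally, once the coefficient system is identified with a constant one, Bredon homology with constant coefficients $\underline{\Z/4}$ of the $G$-CW-complex $X_G(4)$ computes the ordinary homology of the orbit space $X_G(4)/G = R_G(4)$ with $\Z/4$ coefficients, since the chain complex $C_\ast(X_G(4);\underline{\Z/4})$ is canonically identified with the cellular chain complex of $R_G(4)$ tensored with $\Z/4$. This gives the stated isomorphism $H^G_\ast(X_G(4);(\underline{\pi}_0)_{(2)})\cong H_\ast(R_G(4);\Z/4)$. The one point requiring a little care is that one must check that for $G=E_7,E_8$ the faces $\sigma$ of the alcove with $n^\vee(b(\sigma))=4$ do occur and that no face with $n^\vee(b(\sigma))$ equal to a larger multiple of $4$ arises—but this is immediate from Table \ref{table:coroot}, since the coroot integers of $E_7$ are $1,2,3,4$ and those of $E_8$ are $1,2,3,4,5,6$, so $4$ is the maximal power of $2$ dividing any coroot integer. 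I expect the main (though modest) obstacle to be bookkeeping the case analysis in the first step—namely confirming that on $X_G(4)$ the $2$-localized component groups really are uniformly $\Z/4$ and that inclusions of cells preserve this—rather than any genuinely new idea beyond what was already used for Lemma \ref{lem:equivarianthomologyquotient}.
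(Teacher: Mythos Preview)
Your proposal is correct and follows essentially the same approach as the paper, which sketches the argument in the paragraph immediately preceding the lemma: the argument of Lemma~\ref{lem:subcomplex} shows $X_G(4)$ is a $G$-subcomplex, and on it $(\underline{\pi}_0)_{(2)}$ is naturally isomorphic to $\underline{\Z/4}$ by the same reasoning as in Lemma~\ref{lem:equivarianthomologyquotient}. One cosmetic slip: after $2$-localization $\Z/3$ becomes $0$, so the possible values of $\pi_1(DG_x)_{(2)}$ are only $0$, $\Z/2$, $\Z/4$; this does not affect your conclusion.
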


\subsection{The quotient spaces $R_G(p)$} \label{sec:quotients}
Recall the $G$-CW-complex
\[
X_G(p)=\{(x,y)\in \Hom(\Z^2,G)\mid \pi_0(Z_G(x,y))_{(p)}\neq 0\}
\]
and its orbit space $R_G(p)=X_G(p)/G$. At this point, we have reduced the computation of $H_\ast^G(\Hom(\Z^2,G);\underline{\pi}_0)$ to a 
computation of the non-equivariant homology of $R_G(p)$, with an exception when $G$ is $E_7$ or $E_8$. 
As $R_G(p)$ is the orbit space of a $G$-subcomplex of $\Hom(\Z^2,G)$ it is a subcomplex in 
the induced CW-structure of $T^2/W$. It would be interesting to describe this 
subcomplex, but here we will content ourselves with a calculation of the homology of $R_G(p)$. 
This will be achieved by providing an explicit homotopy equivalence of $R_G(p)$ with a weighted projective 
space.

Let $\mathbf{w}=(w_0,\dots,w_r)$ be a tuple of positive integers. Consider the weighted action of 
the circle group $\SS^1\subseteq \C$ on the unit sphere $\SS^{2r+1}\subseteq \C^{r+1}$ defined by
\[
\lambda\cdot (z_0,\dots,z_r)=(\lambda^{w_0}z_0,\dots,\lambda^{w_r}z_r) \; \quad \; 
(\lambda\in \SS^1,\; (z_0,\dots,z_r)\in \SS^{2r+1})\, .
\]
The quotient space
\[
\mathbb{CP}(\mathbf{w}):=\SS^{2r+1}/\SS^1_\mathbf{w}
\]
is called a \emph{weighted projective space}. Here we use the subscript $\mathbf{w}$ to 
indicate the weighted $\SS^1$-action. The most familiar example arises when $\mathbf{w}=(1,\dots,1)$ 
in which case $\mathbb{CP}(\mathbf{w})=\mathbb{CP}^{r}$ is the usual complex projective space.

\medskip

Recall that, relative to a fixed simply--connected simple compact Lie group $G$, we let 
$\mathcal{P}=\{n_1^\vee,\dots,n_r^\vee\}\backslash\{1,4,6\}$ denote 
the set of primes dividing a coroot integer of $G$. Define
\[
\mathcal{Z}:=\{n_1^\vee,\dots,n_r^\vee\}\backslash \{1,6\}\, .
\]
Thus $\mathcal{Z}=\mathcal{P}$ except when $G$ is $E_7$ or $E_8$ in which case 
$\mathcal{Z}=\mathcal{P}\cup \{4\}$. The objective of this subsection is to prove:

\begin{proposition} \label{prop:weightedprojective}
Let $\mathbf{n}^\vee=(n_0^\vee,\dots,n^\vee_r)$ be the tuple of coroot integers of $G$ and let 
$p\in\mathcal{Z}$ be fixed. Let 
$\mathbf{n}^\vee(p)=(n_{i_0}^\vee,\dots,n_{i_k}^\vee)$ denote the tuple obtained from $\mathbf{n}^\vee$ 
by removing those entries that are not divisible by $p$. Let 
$\iota_p\co \mathbb{CP}(\mathbf{n}^\vee(p))\to \mathbb{CP}(\mathbf{n}^\vee)$ be 
the map defined in homogeneous coordinates by $[y_0,\dots,y_k]\mapsto [z_0,\dots,z_r]$ 
where $z_{l}=y_j$ if $l=i_j$ for some $0\leqslant j\leqslant k$, and $z_l=0$ otherwise. 
Then there is a commutative diagram
\[
\xymatrix{
R_G(p) \ar[r]^-{\simeq} \ar[d] & \mathbb{CP}(\mathbf{n}^\vee(p)) \ar[d]^-{\iota_p} \\
\Rep(\Z^2,G) \ar[r]^-{\simeq} & \mathbb{CP}(\mathbf{n}^\vee)
}
\]
in which the horizontal arrows are homotopy equivalences and the left hand vertical arrow is the 
subspace inclusion.
\end{proposition}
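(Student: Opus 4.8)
The plan is to build, by matching cell structures, a homotopy equivalence $h\co\Rep(\Z^2,G)\xrightarrow{\;\simeq\;}\mathbb{CP}(\mathbf n^\vee)$ that carries the subcomplex $R_G(p)$ onto the coordinate subspace $\iota_p(\mathbb{CP}(\mathbf n^\vee(p)))$. Throughout I use the CW--structure on $\Rep(\Z^2,G)\cong T^2/W$ obtained by passing the $G$--CW--structure of Lemma~\ref{lem:cw} to orbit spaces; its cells are indexed by triples $(\sigma,\tau,w)$ with $\sigma,\tau$ faces of the Weyl alcove $A$ and $w\in\mathscr C(\sigma,\tau)$. Reading Lemma~\ref{lem:subcomplex} on orbit spaces, $R_G(p)$ is the subcomplex spanned by the cells with $\pi_0(G_{(\sigma,w\tau)})_{(p)}\neq 0$; by Lemmas~\ref{lem:pi1derived} and~\ref{lem:pi0inpi1} this amounts to $p\mid n^\vee(b(\sigma))$ together with $p$--divisibility of the image of $\delta_{wb(\tau)}\co\pi_0(G_{(\sigma,w\tau)})\to\pi_1(DG_\sigma)$, and by Lemma~\ref{lem:pi0injective} this property passes to faces of $\sigma$ and $\tau$, consistently with its being a subcomplex.

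For the bottom equivalence I would use the presentation of $T$ coming from the extended Dynkin diagram: since $G$ is simply connected, $Q^\vee\cong\bigl(\bigoplus_{i=0}^r\Z\langle\alpha_i^\vee\rangle\bigr)\big/\Z\bigl\langle\sum_{i} n_i^\vee\alpha_i^\vee\bigr\rangle$, so that $T$ is the quotient of the coordinate torus $(\SS^1)^{r+1}$ by the weighted circle $\lambda\mapsto(\lambda^{n_0^\vee},\dots,\lambda^{n_r^\vee})$ --- which is the open torus orbit of $\mathbb{CP}(\mathbf n^\vee)$ --- while the alcove $A$ is the image of the simplex $\{(t_i)\mid\sum_i n_i^\vee t_i=1,\ t_i\geqslant 0\}$. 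Under this correspondence the face of $A$ indexed by $I\subseteq\tilde\Delta$ maps to the conjugacy class of $\exp(v_I)$, whose centraliser is the Levi--type subgroup with extended diagram $I$ and whose derived group has $\pi_1$ cyclic of order $\gcd\{n_i^\vee:i\notin I\}$ by Lemma~\ref{lem:pi1derived} --- exactly the order of the torus isotropy over the corresponding face of the moment simplex of $\mathbb{CP}(\mathbf n^\vee)$. I would then construct $h$ stratum by stratum over $A$: on the interior both total spaces are the full $r$--torus; along each wall the collapse of the fibre torus and the appearance of the $\gcd$--isotropy agree by the displayed formula, with the double coset $w$ recording the residual conjugation that matches the residual torus action on $\mathbb{CP}(\mathbf n^\vee)$; and the inductive step replaces $G$ by a proper Levi--type subgroup --- a product of a torus with simple groups of smaller rank whose coroot integers divide those of $G$ --- so that the relevant sub--weighted--projective--spaces match by induction. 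Assembling these stratum--wise homotopy equivalences along the compatible filtrations (and using that $\Rep(\Z^2,G)$ is simply connected by \cite[Theorem 1.1]{BLR}, as is $\mathbb{CP}(\mathbf n^\vee)$; the integral homology, $\Z$ in even degrees up to $2r$, serves as a check via \cite{K73}) yields $h$.

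For the top equivalence it remains to see that, under the matching of the previous step, the condition ``$(\sigma,\tau,w)\in R_G(p)$'' becomes ``the corresponding cell of $\mathbb{CP}(\mathbf n^\vee)$ lies over the face of the moment simplex all of whose vertices $i$ satisfy $p\mid n_i^\vee$'', i.e.\ over the moment simplex of $\iota_p(\mathbb{CP}(\mathbf n^\vee(p)))$. Indeed $p\mid n^\vee(b(\sigma))$ forces $\tilde\Delta(b(\sigma))\supseteq\{i:p\nmid n_i^\vee\}$, so $\exp(v_I)$ already lies in that face, and the $\delta_{wb(\tau)}$--condition isolates precisely the torus directions that survive into $\iota_p(\mathbb{CP}(\mathbf n^\vee(p)))$. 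Hence $h$ maps $R_G(p)$ onto $\iota_p(\mathbb{CP}(\mathbf n^\vee(p)))$ and, being stratum--preserving, restricts there to a homotopy equivalence by the same assembling argument; the square commutes because $h$ respects the face inclusions.

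The main obstacle is the construction of $h$: identifying the alcove CW--structure of $T^2/W$ with the toric cell structure of $\mathbb{CP}(\mathbf n^\vee)$ so that the coroot integers appear as isotropy orders. This is the elementary shadow of the Borel--Friedman--Morgan/Looijenga description of $\Rep(\Z^2,G)$, and the delicate bookkeeping occurs over the boundary of the alcove, where several walls meet and one must make explicit the passage from the finite Weyl groups $W_\sigma$ and their double cosets $W_\sigma\backslash W/W_\tau$ to the residual torus orbits of $\mathbb{CP}(\mathbf n^\vee)$; once the open stratum and the first layer of walls are matched, the remaining strata follow from the inductive structure of the extended Dynkin diagram.
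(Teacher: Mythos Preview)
Your outline has the right geometric intuition---fibre over the alcove, match with the toric structure of the weighted projective space---but there is a genuine gap at the heart of the construction, and it is precisely the step you flag as ``the main obstacle''.

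Over a boundary face $\sigma^I\subset A$ the fibre of $\mathrm{pr}_1\co\Rep(\Z^2,G)\to A$ is $Z_G(x)/\Ad_{Z_G(x)}$, which is \emph{not} a torus: it is (the quotient by a finite group of) the alcove of the Levi subgroup $Z_G(x)$ crossed with a torus. Your sentence ``along each wall the collapse of the fibre torus and the appearance of the $\gcd$--isotropy agree'' elides the contraction of this alcove piece, and that contraction is the crux of the argument. The paper supplies this as a separate lemma (Lemma~\ref{lem:alcovecontractible}): for any compact simply--connected semisimple $K$, the space $K/\Ad_K$ is $Z(K)$-equivariantly contractible. Without it, one cannot replace the fibres by tori in a way compatible with the face inclusions, so your ``inductive step replaces $G$ by a Levi subgroup'' does not close: the Levi's own representation space is again a non-trivial object, not a sub-weighted-projective-space of $\mathbb{CP}(\mathbf n^\vee)$.

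A second, related issue is the mechanism for ``assembling stratum-wise homotopy equivalences''. Having equivalences over each face of $A$ is not enough; one needs coherence along face inclusions. The paper handles this by writing both sides as coends $A\otimes_{\I}F$ and $A\otimes_{\I}\bar F$ over the face poset $\I$ of $A$---equivalently as homotopy colimits---and exhibiting a \emph{natural} transformation $F\Rightarrow\bar F$ (the projection $Z_G(x)/\Ad_{Z_G(x)}\to T/T(I)$) which is an objectwise equivalence by the contractibility lemma. This is what replaces your inductive glueing. Note also that only the \emph{first} coordinate is stratified by faces of $A$; the cell structure indexed by triples $(\sigma,\tau,w)$ with double cosets $w\in W_\sigma\backslash W/W_\tau$ does not correspond to the toric cells of $\mathbb{CP}(\mathbf n^\vee)$, so trying to match those directly will not work.

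Finally, your description of $R_G(p)$ is slightly off. After the homotopy equivalence the fibre $q^{-1}(x)$ over $x\in A(p)$ is equivalent to the \emph{same} torus $T/T(I)$ as the full fibre $\mathrm{pr}_1^{-1}(x)$ (Lemma~\ref{lem:qfiber} plus equivariant contractibility of the fixed-point alcove), so the condition singling out $R_G(p)$ becomes purely a condition on the first coordinate, namely $x\in A(p)$. There is no residual ``$\delta_{wb(\tau)}$-condition isolating torus directions'' after the contraction; $R_G(p)$ corresponds exactly to the face $A(p)\subset A$, which under the explicit homeomorphism $(A\times T)/{\approx}\cong\mathbb{CP}(\mathbf n^\vee)$ of Lemma~\ref{lem:coendwp} is precisely $\iota_p(\mathbb{CP}(\mathbf n^\vee(p)))$.
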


Like any complete toric variety a weighted projective space is simply--connected 
(for the definition of weighted projective space as a toric variety see for example \cite[p. 35]{Fulton}, 
and for the result on the fundamental group see \cite[Section 3.2]{Fulton}). The integral cohomology 
of $\mathbb{CP}(\mathbf{w})$ was computed by Kawasaki in \cite[Theorem 1]{K73}. The homology is 
then obtained from the universal coefficient theorem:
\begin{equation} \label{eq:homologyofweightedprojectivespace}
H_k(\mathbb{CP}(\mathbf{w});\Z)\cong \begin{cases} \Z\,, & \textnormal{if } 
k\leqslant 2r \textnormal{ is even,} \\ 0\,, &\textnormal{otherwise.} \end{cases}
\end{equation}
In view of this result we have:

\begin{corollary} \label{cor:homologyrp}
Let $p\in \mathcal{P}$ and assume that $G\not\in \{E_7,E_8\}$ or that $p>2$. Then
\[
H_k^G(X_G(p);(\underline{\pi}_0)_{(p)})\cong \begin{cases} \Z/p\,, & \textnormal{if } k 
\leqslant 2(\ell-1)\textnormal{ is even}, \\ 0\,, & \textnormal{otherwise},  \end{cases}
\]
where $\ell\geqslant 1$ is the number of coroot integers of $G$ divisible by $p$.
\end{corollary}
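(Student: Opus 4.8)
The plan is to string together the three ingredients assembled earlier in this subsection, reducing the statement to Kawasaki's computation of the homology of a weighted projective space. First, since we are assuming $G\notin\{E_7,E_8\}$ or $p>2$, Lemma \ref{lem:equivarianthomologyquotient} applies and furnishes a natural isomorphism
\[
H_\ast^G(X_G(p);(\underline{\pi}_0)_{(p)})\cong H_\ast(R_G(p);\Z/p)\,,
\]
so it suffices to compute the ordinary mod $p$ homology of the orbit space $R_G(p)$.

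Next I would invoke Proposition \ref{prop:weightedprojective}, which provides a homotopy equivalence $R_G(p)\simeq \mathbb{CP}(\mathbf{n}^\vee(p))$, where $\mathbf{n}^\vee(p)$ is the tuple obtained from the coroot integers of $G$ by deleting those entries not divisible by $p$. By the definition of $\ell$ as the number of coroot integers of $G$ divisible by $p$, the tuple $\mathbf{n}^\vee(p)$ has exactly $\ell$ entries; hence $\mathbb{CP}(\mathbf{n}^\vee(p))=\SS^{2\ell-1}/\SS^1_{\mathbf{n}^\vee(p)}$ is a weighted projective space of complex dimension $\ell-1$. Combining the homotopy equivalence with homotopy invariance of homology gives $H_\ast^G(X_G(p);(\underline{\pi}_0)_{(p)})\cong H_\ast(\mathbb{CP}(\mathbf{n}^\vee(p));\Z/p)$.

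Finally, Kawasaki's result as recorded in \eqref{eq:homologyofweightedprojectivespace} says that $H_k(\mathbb{CP}(\mathbf{n}^\vee(p));\Z)$ is $\Z$ for even $k\leqslant 2(\ell-1)$ and $0$ otherwise. Since this integral homology is torsionfree in every degree, all $\Tor$ terms in the universal coefficient theorem vanish, so $H_k(\mathbb{CP}(\mathbf{n}^\vee(p));\Z/p)\cong \Z/p$ for even $k\leqslant 2(\ell-1)$ and vanishes in all other degrees. This is precisely the asserted formula.

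I do not anticipate any serious obstacle, as the argument is a formal chaining of results already in hand. The only points requiring a little care are the bookkeeping identifying the number of weights of the relevant weighted projective space with $\ell$ — so that its top nonzero homology sits in degree $2(\ell-1)$ — and the observation that the hypothesis ``$G\notin\{E_7,E_8\}$ or $p>2$'' is exactly what makes Lemma \ref{lem:equivarianthomologyquotient} available (the excluded cases $G\in\{E_7,E_8\}$, $p=2$ being handled separately via $X_G(4)$ and Lemma \ref{lem:e7e8rg4}).
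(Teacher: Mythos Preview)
Your proposal is correct and matches the paper's proof essentially line for line: invoke Lemma~\ref{lem:equivarianthomologyquotient}, then Proposition~\ref{prop:weightedprojective} to identify $R_G(p)$ with a weighted projective space of complex dimension $\ell-1$, and finish with Kawasaki's computation~\eqref{eq:homologyofweightedprojectivespace}. Your added remark about the universal coefficient theorem handling the passage from $\Z$ to $\Z/p$ coefficients is a welcome clarification that the paper leaves implicit.
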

\begin{proof}
By Lemma \ref{lem:equivarianthomologyquotient}, 
$H_\ast^G(X_G(p);(\underline{\pi}_0)_{(p)})\cong H_\ast(R_G(p);\Z/p)$. By Proposition 
\ref{prop:weightedprojective}, $R_G(p)$ is homotopy equivalent to a weighted projective space of 
complex dimension $\ell-1$. The homology groups follow therefore from Kawasaki's computation (\ref{eq:homologyofweightedprojectivespace}).
\end{proof}

Our approach to Proposition \ref{prop:weightedprojective} is as follows. The homotopy equivalence 
$\Rep(\Z^2,G)\simeq \mathbb{CP}(\mathbf{n}^\vee)$ will be obtained as a composite of maps,
\begin{equation} \label{eq:equivalencewp}
\Rep(\Z^2,G)\xrightarrow{\cong} A\otimes_{\mathbb{I}} F\xrightarrow{\simeq} 
A\otimes_{\mathbb{I}} \bar{F} \xrightarrow{\cong} \mathbb{CP}(\mathbf{n}^\vee)\, .
\end{equation}
The two spaces in the middle are certain coends that will be defined below, and the homotopy 
equivalence is induced from a natural equivalence of diagrams $F\xrightarrow{\sim} \bar{F}$. 
The equivalence $R_G(p)\simeq \mathbb{CP}(\mathbf{n}^\vee(p))$ will be obtained from this by 
restriction to subspaces, whence the diagram in Proposition \ref{prop:weightedprojective} will 
commute by construction.

In brief, the homotopy equivalence $\Rep(\Z^2,G)\simeq \mathbb{CP}(\mathbf{n}^\vee)$ is given by 
\begin{equation} \label{eq:explicitequivalence}
(x,y) \textnormal{ modulo }G \mapsto (a_0,a_1t_1,\dots,a_rt_r)/\sqrt{\sum_{i=0}^r a_i^2} 
\textnormal{ modulo } \SS^1_{\mathbf{n}^\vee}\, ,
\end{equation}
where $x\in A$ and $(a_0,\dots,a_r)\in \Delta^r$ are the barycentric coordinates of $x$, and  $y\in T$ 
and $(t_1,\dots,t_r)\in (\SS^1)^r$ are the coordinates of $y$ with respect to the one-parameter subgroups 
of $T$ determined by the coroots $\alpha_1^\vee,\dots,\alpha_r^\vee$. This will be clear once we have 
proved Lemma \ref{lem:coendwp}.

\medskip

To describe the first one of the two homeomorphisms in (\ref{eq:equivalencewp}) we will regard 
$\Rep(\Z^2,G)$ and $R_G(p)$ as spaces over the alcove $A$. Fix $p\in \mathcal{Z}$. For an integer $m$ 
let $A(m)\subseteq A$ be the subspace of the fundamental alcove defined by
\[
x\in A(m)\Longleftrightarrow m\mid n^\vee(x)\,.
\]
This is again a simplex, in fact, a face of $A$ as we explain below (Lemma \ref{lem:amface}). 
Note that if $(x,y)\in X_G(p)$ and $x\in A$, then $p\mid n^\vee(x)$, hence $x\in A(p)$. Thus, 
there is a commutative diagram
\[
\xymatrix{
R_G(p) \ar[r] \ar[d]^-{q} & \Rep(\Z^2,G) \ar[d]^-{\textnormal{pr}_1} \\
A(p) \ar[r] & A
}
\]
in which $\textnormal{pr}_1$ is the projection of $\Rep(\Z^2,G)$ onto the first component and 
$q$ is its restriction to the subspace $R_G(p)$. The two horizontal maps are the inclusions. We 
will describe the fibers of $\textnormal{pr}_1$ and $q$.

\begin{notation*}
When $K$ is a group, we write $K/\Ad_K$ for the quotient by the conjugation (or adjoint) action 
of $K$ on itself. The conjugacy class of an element $g\in K$ is denoted by $(g)\in K/\Ad_K$. 
Similarly, given $(x,y)\in \Hom(\Z^2,G)$ we write $((x,y))\in \Rep(\Z^2,G)$ for its equivalence 
class under simultaneous conjugation. When $(x,y)\in T^2$, we will also write $((x,y))\in T^2/W$ 
for its equivalence class under the diagonal action of $W$.
\end{notation*}

Clearly, if $x\in A$, then the assignment $(y)\mapsto ((x,y))$ defines a homeomorphism
\begin{equation} \label{eq:pr1fiber}
Z_G(x)/\Ad_{Z_G(x)}\cong \textnormal{pr}_1^{-1}(x)\,.
\end{equation}
On the other hand, if $x\in A(p)$, then $q^{-1}(x)=\textnormal{pr}_1^{-1}(x)\cap R_G(p)$ 
corresponds to a subspace of $Z_G(x)/\Ad_{Z_G(x)}$. To describe it we rewrite (\ref{eq:pr1fiber}) 
using the finite covering
\begin{alignat*}{2}
 \rho\co && \widetilde{DZ_G(x)}\times Z(Z_G(x))_0 & \to Z_G(x) \\
&& (g,t) & \mapsto u(g)t\,,
\end{alignat*}
where $Z(Z_G(x))_0$ is the identity component of the center of $Z_G(x)$, and 
$u\co \widetilde{DZ_G(x)}\to DZ_G(x)$ is the universal covering (see \cite[IX \S 1.4 Corollary 1]{Bourbaki}). 
There is a finite subgroup $C \leqslant Z(\widetilde{DZ_G(x)})\times Z(Z_G(x))_0$ such that 
$\rho$ descends to an isomorphism
\begin{equation} \label{eq:coveringderivedcenter}
\widetilde{DZ_G(x)}\times_{C}Z(Z_G(x))_0\xrightarrow{\cong} Z_G(x)\, .
\end{equation}
Under this isomorphism (\ref{eq:pr1fiber}) becomes
\begin{equation} \label{eq:pr1fiber2}
\textnormal{pr}_1^{-1}(x)\cong \left(\widetilde{DZ_G(x)}/
\Ad_{\widetilde{DZ_G(x)}}\right)\times_{C} Z(Z_G(x))_0\, .
\end{equation}
Here $C$ acts on the adjoint orbits in the natural way: If $K$ is any group, then an action of the 
center $Z(K)$ on $K/\Ad_K$ is defined by
\begin{equation} \label{eq:actionofcenter}
c\cdot (g)=(cg)\,,\quad (c\in Z(K),\,g\in K)\, .
\end{equation}

We can now describe the fibers of $q\co R_G(p)\to A(p)$.
\begin{lemma} \label{lem:qfiber}
Let $p\in \mathcal{Z}$, and let $x\in A(p)$. Then, there is an 
element $\xi(x)\in Z(\widetilde{DZ_G(x)})$ and a homeomorphism
\[
q^{-1}(x)\cong \left( \widetilde{DZ_G(x)}/\Ad_{\widetilde{DZ_G(x)}}
\right)^{\langle \xi(x)\rangle}\times_{C} Z(Z_G(x))_0\,.
\]
\end{lemma}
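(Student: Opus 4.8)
The plan is to identify $q^{-1}(x)$, as a subspace of the fiber $\textnormal{pr}_1^{-1}(x)$, using the description (\ref{eq:pr1fiber2}). Fix $p\in\mathcal{Z}$ and $x\in A(p)$; by definition of $A(p)$ we have $p\mid n^\vee(x)$, so the cyclic group $\pi_1(DZ_G(x))\cong\Z/n^\vee(x)$ of Lemma \ref{lem:pi1derived} has a unique subgroup of order $p$, and I will take $\xi(x)\in\pi_1(DZ_G(x))\leqslant Z(\widetilde{DZ_G(x)})$ to be a generator of it (for definiteness, the class of $\frac{n^\vee(x)}{p}\zeta(x)$). Under the homeomorphism (\ref{eq:pr1fiber}), $\textnormal{pr}_1^{-1}(x)=Z_G(x)/\Ad_{Z_G(x)}$ with $((x,y))$ corresponding to the conjugacy class $(y)$, so that $q^{-1}(x)=\textnormal{pr}_1^{-1}(x)\cap R_G(p)$ corresponds to the set of those $(y)$ with $\pi_0(Z_G(x,y))_{(p)}\neq 0$ when $p$ is prime, respectively with $\pi_0(Z_G(x,y))_{(2)}\cong\Z/4$ when $p=4$ (the $E_7,E_8$ case, in which necessarily $n^\vee(x)=4$). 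In either case this amounts to $|\pi_0(Z_G(x,y))|$ being divisible by $p$, and the point is to rephrase this in terms of the coordinates of $y$ in (\ref{eq:pr1fiber2}).

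Write $y=\rho(g_y,t_y)$ with $g_y\in\widetilde{DZ_G(x)}$ and $t_y\in Z(Z_G(x))_0$, using the covering (\ref{eq:coveringderivedcenter}). The heart of the proof is the identity
\[
\Im\bigl(\delta_y\co\pi_0(Z_G(x,y))\hookrightarrow\pi_1(DZ_G(x))\bigr)=\textnormal{Stab}_{\pi_1(DZ_G(x))}\bigl((g_y)\bigr)\,,
\]
the stabilizer being taken for the action (\ref{eq:actionofcenter}) of $\pi_1(DZ_G(x))$ on $\widetilde{DZ_G(x)}/\Ad_{\widetilde{DZ_G(x)}}$ and $\delta_y$ being the injection of Lemma \ref{lem:pi0inpi1}. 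To prove it, observe that the universal cover of $Z_G(x)$ may be taken to be $\widetilde{DZ_G(x)}\times\R^{d}$ with $d=\dim Z(Z_G(x))_0$, via the composite $\widetilde{DZ_G(x)}\times\R^d\xrightarrow{\textnormal{id}\times\exp}\widetilde{DZ_G(x)}\times Z(Z_G(x))_0\xrightarrow{\rho}Z_G(x)$. For $z=\rho(g_z,t_z)$ one checks that $zy=yz$ in $Z_G(x)$ if and only if $[g_z,g_y]\in\ker\bigl(\widetilde{DZ_G(x)}\to DZ_G(x)\bigr)=\pi_1(DZ_G(x))$, and, choosing lifts of $y$ and $z$ in the direct product $\widetilde{DZ_G(x)}\times\R^d$ and using that the $\R^d$-factor is central, that $\delta_y([z])=[\tilde y,\tilde z]=[g_y,g_z]$; in particular this depends only on $g_y,g_z$, not on the lifts or on the finite group $C$. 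Hence $\eta\in\Im(\delta_y)$ if and only if $g_zg_yg_z^{-1}=\eta^{-1}g_y$ for some $g_z\in\widetilde{DZ_G(x)}$, i.e. if and only if $(g_y)=(\eta^{-1}g_y)$, i.e. if and only if $\eta$ fixes $(g_y)$ under the action (\ref{eq:actionofcenter}).

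Granting this, $|\pi_0(Z_G(x,y))|=|\Im(\delta_y)|=|\textnormal{Stab}_{\pi_1(DZ_G(x))}((g_y))|$ is divisible by $p$ if and only if this stabilizer contains the order-$p$ subgroup $\langle\xi(x)\rangle$ (using that $\pi_1(DZ_G(x))$ is cyclic, and that $n^\vee(x)=4$ when $p=4$), equivalently $\xi(x)\cdot(g_y)=(g_y)$, equivalently $(g_y)\in\bigl(\widetilde{DZ_G(x)}/\Ad_{\widetilde{DZ_G(x)}}\bigr)^{\langle\xi(x)\rangle}$, since a generator of a cyclic group fixing a point fixes it for the whole group. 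Transporting this characterization back through the homeomorphism (\ref{eq:pr1fiber2}), and noting that this fixed locus is $C$-invariant because the $C$-action on $\widetilde{DZ_G(x)}/\Ad_{\widetilde{DZ_G(x)}}$ is also translation by central elements and therefore commutes with the $\xi(x)$-action, we obtain the claimed homeomorphism
\[
q^{-1}(x)\cong\bigl(\widetilde{DZ_G(x)}/\Ad_{\widetilde{DZ_G(x)}}\bigr)^{\langle\xi(x)\rangle}\times_{C}Z(Z_G(x))_0\,.
\]

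I expect the only delicate step to be the commutator bookkeeping in the second paragraph: being precise about in which group the commutator $[\tilde y,\tilde z]$ is formed, verifying that it reduces to the commutator $[g_y,g_z]$ of the $\widetilde{DZ_G(x)}$-components and is independent of all choices, and correctly turning the existence statement ``$\eta=[g_y,g_z]$ for some $g_z$'' into the fixed-point condition for the action (\ref{eq:actionofcenter}). Once this is settled, everything else is formal, drawing only on Lemmas \ref{lem:pi1derived} and \ref{lem:pi0inpi1} and the covering (\ref{eq:coveringderivedcenter}).
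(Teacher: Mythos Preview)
Your proof is correct and follows essentially the same route as the paper. The paper argues the equivalence $\xi(x)\in\Im(\delta_y)\Leftrightarrow\exists\,\tilde z:[\tilde y,\tilde z]=\xi(x)\Leftrightarrow\xi(x)\cdot(\tilde a)=(\tilde a)$ directly, while you package this as the cleaner identity $\Im(\delta_y)=\textnormal{Stab}_{\pi_1(DZ_G(x))}((g_y))$; the content is identical, and your treatment of the $p=4$ case (noting that $x\in A(4)$ forces $n^\vee(x)=4$) matches the paper's Remark~\ref{rem:fiberqp4}.
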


Note that both $C$ and $\langle \xi(x)\rangle$ act through subgroups of the center $Z(\widetilde{DZ_G(x)})$. 
As the center is abelian, the two actions commute, and there is an induced action of $C$ on the 
$\langle \xi(x)\rangle$-fixed points.

\begin{proof}
Let us discuss the case where $p$ is a prime. The case $p=4$ is treated analogously, see 
Remark \ref{rem:fiberqp4}. By definition of $R_G(p)$ and the isomorphism (\ref{eq:pr1fiber}) we have that
\begin{equation} \label{eq:qx-1}
q^{-1}(x)\cong\{y\in Z_G(x) \mid \pi_0(Z_G(x,y))_{(p)}\neq 0\}/\Ad_{Z_G(x)}\, .
\end{equation}
To reformulate the condition $\pi_0(Z_G(x,y))_{(p)}\neq 0$ we will use the connecting homomorphism 
defined in Lemma \ref{lem:pi0inpi1}. To this end, let
\[
\xi(x):=\begin{cases} \textnormal{any generator of }
\pi_1(DZ_G(x))_{(p)}\cong \Z/p & \textnormal{if }p>2, \\ 
\textnormal{the unique element of order $2$ in }\pi_1(DZ_G(x))_{(2)} & 
\textnormal{if }p=2 \end{cases}
\]
viewed as an element in the center of $\widetilde{DZ_G(x)}$. Then we have that
\[
\pi_0(Z_G(x,y))_{(p)}\neq 0\Longleftrightarrow \xi(x)\in \Im(\delta_y)  
\Longleftrightarrow \exists \tilde{z}\in \widetilde{Z_G(x)}: [\tilde{y},\tilde{z}]=\xi(x)\, .
\]
Since $\widetilde{Z_G(x)}\cong \widetilde{DZ_G(x)}\times \widetilde{Z(Z_G(x))_0}$, the last condition 
is equivalent to $y\in U(x)\times \widetilde{Z(Z_G(x))_0}$, where
\[
U(x):=\{\tilde{a}\in \widetilde{DZ_G(x)}\mid \exists \tilde{z}\in \widetilde{DZ_G(x)}: 
[\tilde{a},\tilde{z}]=\xi(x)\}\,.
\]
Similarly to (\ref{eq:pr1fiber2}) we then obtain
\[
q^{-1}(x)\cong \left(U(x)/\Ad_{\widetilde{DZ_G(x)}}\right)\times_{C} Z(Z_G(x))_0\, .
\]
Finally, writing out the commutator shows that
\[
\exists \tilde{z}\in \widetilde{DZ_G(x)}: [\tilde{a},\tilde{z}]
=\xi(x) \Longleftrightarrow \xi(x)\cdot (\tilde{a})=(\tilde{a}) \,,
\]
hence $U(x)/\Ad_{\widetilde{DZ_G(x)}}\cong 
\left( \widetilde{DZ_G(x)}/\Ad_{\widetilde{DZ_G(x)}}\right)^{\langle \xi(x)\rangle}$, and the 
lemma follows.
\end{proof}

\begin{remark} \label{rem:fiberqp4}
In the case $p=4$ the defining condition for $q^{-1}(x)$ reads $\pi_0(Z_G(x,y))_{(2)}\cong \Z/4$ 
and one chooses $\xi(x)$ to be any generator of $\pi_1(DZ_G(x))_{(2)}\cong \Z/4$. The rest of 
the proof goes through verbatim.
\end{remark}

Observe that the fibers $q^{-1}(x)$ only depend on the face $\sigma\subseteq A(p)$ such that 
$x\in \textnormal{int}(\sigma)$ and not on the specific point $x$ chosen within 
$\textnormal{int}(\sigma)$. That is, if $x,y\in \textnormal{int}(\sigma)$, then 
$q^{-1}(x)=q^{-1}(y)=q^{-1}(b(\sigma))$, and likewise for $\textnormal{pr}_1$. Together with the 
combinatorial structure of the alcove this allows for a more manageable description of $R_G(p)$ 
and $\Rep(\Z^2,G)$.

To this end, recall that the faces of the alcove $A$ are in one-to-one correspondence with the 
proper subsets $I\subseteq \tilde{\Delta}$ of the extended set of simple roots as follows:
\[
I\subsetneq \tilde{\Delta}\; \longleftrightarrow\; \sigma^{I}\in \mathscr{F}_{r-|I|}\, ,
\]
where
\[
\sigma^I:=\{x\in A\mid \forall \alpha\in I: x\textnormal{ lies in the wall of }
A\textnormal{ determined by }\alpha\}\, .
\]
Since $A$ is a simplex and $\sigma^I$ is an intersection of facets, $\sigma^I$ is a face of $A$.

For example, if $I=\emptyset$, then $\sigma^{\emptyset}=A$. If 
$I=\tilde{\Delta}\backslash \{\alpha_j\}$ for some $j\in \{0,\dots,r\}$, then $\sigma^I$ is the 
vertex opposite the wall of $A$ determined by $\alpha_j$. If $x\in A$ and $I=\tilde{\Delta}(x)$ 
(as defined in Section \ref{sec:componentgroup} eq. (\ref{eq:deltax})), then $\sigma^I$ is the 
minimal face of $A$ containing $x$. Also relevant to us is:

\begin{lemma} \label{lem:amface}
Let $m$ be an integer and let $I_m=\{\alpha_j\in \tilde{\Delta}\mid  m\nmid n_j^\vee\}$. Then 
$\sigma^{I_m}=A(m)$.
\end{lemma}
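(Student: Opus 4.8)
The plan is to prove the equality $\sigma^{I_m}=A(m)$ by showing that both sets consist of exactly those points $x\in A$ whose minimal containing face avoids all walls indexed by coroot integers not divisible by $m$. The key observation is the formula from Lemma \ref{lem:pi1derived}, namely $n^\vee(x)=\gcd\{n_i^\vee \mid i\in\tilde{\Delta}\setminus\tilde{\Delta}(x)\}$, which translates the divisibility condition defining $A(m)$ into a combinatorial statement about the face $\tilde{\Delta}(x)$.

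First I would unwind the definitions. Recall $\tilde{\Delta}(x)=\{\alpha\in\tilde{\Delta}\mid x\text{ lies in the wall of }A\text{ determined by }\alpha\}$, so that $\sigma^{\tilde{\Delta}(x)}$ is precisely the minimal face of $A$ containing $x$, and more generally $x\in\sigma^I$ if and only if $I\subseteq\tilde{\Delta}(x)$. Hence $x\in\sigma^{I_m}$ if and only if $I_m\subseteq\tilde{\Delta}(x)$, i.e. every index $j$ with $m\nmid n_j^\vee$ lies in $\tilde{\Delta}(x)$. Equivalently, every index in the complement $\tilde{\Delta}\setminus\tilde{\Delta}(x)$ satisfies $m\mid n_j^\vee$. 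On the other hand, $x\in A(m)$ means $m\mid n^\vee(x)=\gcd\{n_i^\vee\mid i\in\tilde{\Delta}\setminus\tilde{\Delta}(x)\}$, which holds if and only if $m$ divides each $n_i^\vee$ for $i\in\tilde{\Delta}\setminus\tilde{\Delta}(x)$. These two conditions are literally the same, so $x\in\sigma^{I_m}\iff x\in A(m)$, completing the proof.

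The only point requiring a moment of care is the edge case where $\tilde{\Delta}\setminus\tilde{\Delta}(x)$ could in principle be empty; but this cannot happen since $\tilde{\Delta}(x)$ is always a \emph{proper} subset of $\tilde{\Delta}$ (a point of $A$ cannot lie on all walls simultaneously, as $A$ is an $r$-dimensional simplex). With the gcd taken over a nonempty set, the divisibility equivalence is unambiguous. There is no real obstacle here; the statement is essentially a reformulation packaging together the combinatorics of the alcove with the coroot-integer formula for $n^\vee(x)$, and the proof is a short chain of iff's.
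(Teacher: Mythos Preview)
Your proof is correct and follows essentially the same approach as the paper's: both arguments reduce to the chain of equivalences $x\in A(m)\Leftrightarrow m\mid n^\vee(x)\Leftrightarrow \forall j\in\tilde{\Delta}\setminus\tilde{\Delta}(x):m\mid n_j^\vee\Leftrightarrow I_m\subseteq\tilde{\Delta}(x)\Leftrightarrow x\in\sigma^{I_m}$, using the formula $n^\vee(x)=\gcd\{n_i^\vee\mid i\in\tilde{\Delta}\setminus\tilde{\Delta}(x)\}$ from Lemma~\ref{lem:pi1derived}. Your explicit treatment of the edge case $\tilde{\Delta}\setminus\tilde{\Delta}(x)=\emptyset$ is a nice touch that the paper leaves implicit.
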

\begin{proof}
We have that
\begin{equation*}
\begin{split}
x\in A(m) & \Longleftrightarrow m\mid n^\vee(x) \\& \Longleftrightarrow  \forall j\in 
\tilde{\Delta}\backslash \tilde{\Delta}(x): m\mid n_j^\vee \\& 
\Longleftrightarrow I_m\subseteq \tilde{\Delta}(x) \\& \Longleftrightarrow \forall 
\alpha\in I_m : x\textnormal{ lies in the wall of $A$ determined by }\alpha \\& 
\Longleftrightarrow x\in \sigma^{I_m}\, ,
\end{split}
\end{equation*}
by definition of $A(m)$, $n^\vee(x)$, $\tilde{\Delta}(x)$ and $I_m$.
\end{proof}

It follows that $A(m)$ is a face of $A$, hence a simplex.\medskip

Let $\I$ denote the poset of proper subsets of $\tilde{\Delta}$ partially ordered by inclusion. 
Note that when $I,J\in \I$ and $I\subseteq J$, then $\sigma^J$ is a face of $\sigma^I$. Therefore, 
the assignment $I\mapsto \sigma^I$ defines a contravariant functor from $\I$ to $\mathbf{Top}$ 
sending the inclusion $I\subseteq J$ to the opposite inclusion $\sigma^{J}\subseteq \sigma^I$. 
This functor encodes the face structure of the fundamental alcove. Abusing notation, we denote it by
\[
A\co \I^{\op} \to \mathbf{Top}\,, \quad I \mapsto A(I)=\sigma^I\, .
\]
Similarly, for an integer $m$ we denote by
\[
A(m)\co \I(m)^{\op}\to \mathbf{Top}
\]
the restriction of $A$ to the subposet $\I(m)\subseteq \I$ consisting of those subsets $I$ 
containing $I_m$ (see Lemma \ref{lem:amface}). It encodes the face structure of the simplex $A(m)$.

On the other hand, we can define the following covariant functors on $\I$. Recall that if 
$\sigma,\tau$ are faces of $A$ and $\tau\subseteq \sigma$, then there is an inclusion 
$Z_G(b(\sigma))\leqslant Z_G(b(\tau))$. This induces a map 
$Z_G(b(\sigma))/\Ad_{Z_G(b(\sigma))}\to Z_G(b(\tau))/\Ad_{Z_G(b(\tau))}$ (which is, in fact, a 
surjection rather than an inclusion) and thus by (\ref{eq:pr1fiber}) a map
\[
\textnormal{pr}_1^{-1}(b(\sigma))\to \textnormal{pr}_1^{-1}(b(\tau))\,.
\]
Now if $\sigma,\tau$ are faces of $A(p)$ (where $p$ is as in Lemma \ref{lem:qfiber}), then this 
restricts to a map
\[
q^{-1}(b(\sigma))\to q^{-1}(b(\tau))\,.
\]
This is perhaps most easily seen from the description of the fibers of $q$ displayed in (\ref{eq:qx-1}). 
We have that $(y)\in q^{-1}(b(\sigma))$ if and only if $\pi_0(Z_G(b(\sigma),y))_{(p)}\neq 0$. 
But then $\pi_0(Z_G(b(\tau),y))_{(p)}\neq 0$ by Lemma \ref{lem:pi0injective}, hence $(y)\in q^{-1}(b(\tau))$. 
Therefore, there are functors
\[
F\co \I \to \mathbf{Top}\,, \quad I \mapsto \textnormal{pr}_1^{-1}(b(\sigma^{I}))					
\]
and
\[
F'\co  \I(p) \to \mathbf{Top}\,,  \quad  I \mapsto q^{-1}(b(\sigma^{I}))\, . 
\]
We may now identify $\Rep(\Z^2,G)$ and $R_G(p)$ as coends of the functor pairs $(A,F)$ and $(A(p),F')$, 
respectively. To this end, recall

\begin{definition}
Let $\mathcal{C}$ be a small category and $M\co \mathcal{C}\to \mathbf{Top}$ and 
$L\co \mathcal{C}^{\op}\to \mathbf{Top}$ a pair of co- and contravariant functors. The 
\emph{coend} $L\otimes_{\mathcal{C}} M$ is the topological space defined by
\[
L\otimes_{\mathcal{C}} M=\left(\bigsqcup_{c\in \Ob(\mathcal{C})} L(c)\times M(c)\right)/\approx\,,
\]
where the equivalence relation $\approx$ is given by
\[
(L(i)(a),b)\approx (a,M(i)(b))
\]
for all $c,d\in \mathcal{C}$, $i\in \Mor_{\mathcal{C}}(c,d)$, $a\in L(d)$ and $b\in M(c)$.
\end{definition}

\begin{notation*}
For $x\in L(c)$ and $y\in M(c)$ we denote by $x\otimes y$ the equivalence class of $(x,y)$ in 
the coend.
\end{notation*}

The following lemma explains the first one of the two homeomorphisms in (\ref{eq:equivalencewp}).

\begin{lemma} \label{lem:coend}
There is a commutative diagram
\[
\xymatrix{
R_G(p) \ar[r]^-{\cong} \ar[d] & A(p)\otimes_{\I(p)} F' \ar[d] \\
\Rep(\Z^2,G) \ar[r]^-{\cong} & A\otimes_{\I} F
}
\]
in which the two vertical maps are inclusions of subspaces.
\end{lemma}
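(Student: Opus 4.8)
The plan is to produce the inverse of the bottom arrow of the diagram as an explicit map $\Phi\colon A\otimes_{\I}F\to \Rep(\Z^2,G)$ assembled from a compatible family of maps on the summands of the coend, to check that $\Phi$ is a continuous bijection, to upgrade this to a homeomorphism by a compactness argument, and finally to obtain the statement for $R_G(p)$ by restricting the whole construction to the subdiagram indexed by $\I(p)$. For each $I\in\I$ I would define $\phi_I\colon \sigma^I\times F(I)\to \Rep(\Z^2,G)$ by $\phi_I(x,(y))=((x,y))$, where $y\in Z_G(b(\sigma^I))$ is a representative of the class $(y)\in F(I)=Z_G(b(\sigma^I))/\Ad$. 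This is well defined: for $x\in\sigma^I$ one has $\tilde\Delta(x)\supseteq I$, hence $Z_G(b(\sigma^I))\leqslant Z_G(x)$, so $x$ commutes with $y$ and simultaneous conjugation by any $g\in Z_G(b(\sigma^I))$ fixes $x$ and replaces $y$ by $gyg^{-1}$, leaving $((x,y))$ unchanged; continuity follows because $\sigma^I$ is locally compact, so $\sigma^I\times Z_G(b(\sigma^I))\to\sigma^I\times F(I)$ is a quotient map. The family $\{\phi_I\}_I$ is compatible with the coend relation: if $I\subseteq J$, $a\in\sigma^J\subseteq\sigma^I$ and $b\in F(I)$, then $F(I\subseteq J)$ is induced by the inclusion $Z_G(b(\sigma^I))\hookrightarrow Z_G(b(\sigma^J))$, so a representative $y$ of $b$ also represents $F(I\subseteq J)(b)$, whence $\phi_I(a,b)=((a,y))=\phi_J(a,F(I\subseteq J)(b))$. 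Thus the $\phi_I$ glue to a continuous map $\Phi\colon A\otimes_{\I}F\to\Rep(\Z^2,G)$, $x\otimes(y)\mapsto((x,y))$.

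Next I would show $\Phi$ is bijective. For surjectivity, given $\xi\in\Rep(\Z^2,G)$ put $x=\textnormal{pr}_1(\xi)\in A$; since $Z_G(x)=Z_G(b(\sigma^{\tilde\Delta(x)}))$ and the fibre $\textnormal{pr}_1^{-1}(x)$ depends only on the relative-interior face of $x$, the identification (\ref{eq:pr1fiber}) gives $\xi\in\textnormal{pr}_1^{-1}(x)=F(\tilde\Delta(x))$, and as $x\in\sigma^{\tilde\Delta(x)}$ we get $\Phi(x\otimes\xi)=\xi$. For injectivity, note that using the coend relation with $I\subseteq\tilde\Delta(x)$ every element of $A\otimes_{\I}F$ has a representative $x\otimes v$ with $v\in F(\tilde\Delta(x))$, and for such a representative $\Phi(x\otimes v)$ lies in $\textnormal{pr}_1^{-1}(x)$ and is precisely $v$ under (\ref{eq:pr1fiber}); hence $\Phi(x\otimes v)=\Phi(x'\otimes v')$ with $v\in F(\tilde\Delta(x))$, $v'\in F(\tilde\Delta(x'))$ forces $x=x'$ (apply $\textnormal{pr}_1$) and then $v=v'$, so the two classes agree. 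Finally, $A\otimes_{\I}F$ is compact, being a quotient of the finite disjoint union $\bigsqcup_{I\in\I}\sigma^I\times(Z_G(b(\sigma^I))/\Ad)$ of compact spaces, while $\Rep(\Z^2,G)$ is Hausdorff, so $\Phi$ is a homeomorphism and its inverse is the bottom arrow of the lemma.

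For the top row and the square I would restrict everything to the subposet $\I(p)\subseteq\I$ of subsets containing $I_p$, which by Lemma \ref{lem:amface} is the face poset of the simplex $A(p)=\sigma^{I_p}$. By the description (\ref{eq:qx-1}) of the fibres of $q$ together with Lemma \ref{lem:pi0injective} one has $q^{-1}(b(\sigma^I))=\textnormal{pr}_1^{-1}(b(\sigma^I))\cap R_G(p)$ for every $I\in\I(p)$, so $F'$ is a subfunctor of the restriction of $F$ and $A(p)\otimes_{\I(p)}F'$ maps into $A\otimes_{\I}F$. The restriction of $\Phi$ carries $A(p)\otimes_{\I(p)}F'$ bijectively onto $R_G(p)$: every $\xi\in R_G(p)$ has $\textnormal{pr}_1(\xi)\in A(p)$ (since $X_G(p)$ maps into $A(p)$), hence $\tilde\Delta(x)\in\I(p)$ and $\xi\in q^{-1}(x)=F'(\tilde\Delta(x))$, giving a preimage, while injectivity is inherited from $\Phi$. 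Since $R_G(p)$ carries the subspace topology from $\Rep(\Z^2,G)$ and $A(p)\otimes_{\I(p)}F'$ is compact, this restriction is again a homeomorphism, the induced map $A(p)\otimes_{\I(p)}F'\to A\otimes_{\I}F$ is an embedding onto a subspace, and the square commutes because its horizontal arrows are $\Phi^{-1}$ and its restriction and its vertical arrows are the subspace inclusions. The only delicate point I anticipate is the bookkeeping: keeping the variances in the coend straight and verifying that the coend relation is matched exactly by $\phi_I(a,b)=\phi_J(a,F(I\subseteq J)(b))$, together with the repeated use of the fact that $\textnormal{pr}_1^{-1}(x)$ and $q^{-1}(x)$ depend only on the relative-interior face of $x$; everything else is either formal or already recorded in the lemmas above.
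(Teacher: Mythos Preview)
Your proof is correct and follows essentially the same strategy as the paper's: both construct an explicit continuous bijection between $\Rep(\Z^2,G)$ and $A\otimes_{\I}F$ and then invoke the compact--Hausdorff argument, with the only difference being that you build the map $\Phi$ from the coend to $\Rep(\Z^2,G)$ whereas the paper builds $h$ in the opposite direction. Your direction has the mild advantage that Hausdorffness of $\Rep(\Z^2,G)$ is immediate, while the paper must take Hausdorffness of the coend for granted; otherwise the arguments are interchangeable.
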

\begin{proof}
We first construct the bottom map. Each $((x,y))\in \Rep(\Z^2,G)$ is represented by a pair $(x,y)$ 
with $x\in A$. Define
\begin{alignat*}{2}
h\co \Rep(\Z^2,G) & \to A\otimes_{\I} F && \\
 ((x,y)) & \mapsto x\otimes (y) & \quad \quad & (x\in \sigma^{\tilde{\Delta}(x)},\, 
(y)\in Z_G(x)/\Ad_{Z_G(x)}).
\end{alignat*}
To see that $h$ is well defined suppose that $((x,y))=((x',y'))$ and both $x,x'\in A$. Then $x=x'$ and 
there is $g\in Z_G(x)$ such that $y'=y^g$. Hence, $(y)=(y')$ as elements of $Z_G(x)/\Ad_{Z_G(x)}$.

To see that $h$ is surjective observe that $F(\emptyset)=Z_G(b(A))/\Ad_{Z_G(b(A))}=T$, since $b(A)$ is 
a regular point of $G$. Hence, $A\times T$ appears as one of the disjoint summands in the definition of 
the coend. Since for each $\sigma\subseteq A$ the map $T\to Z_G(b(\sigma))/\Ad_{Z_G(b(\sigma))}$ is 
surjective, every element of $A\otimes_{\I}F$ has a representative in $A\times T$ and is therefore in 
the image of $h$.

Finally, to see that $h$ is injective suppose that $x\otimes (y)=x'\otimes (y')$. Since for each inclusion 
$I\subseteq J$ the map $\sigma^J\to \sigma^I$ is injective, we must have $x=x'$. But then, $(y)=(y')$ as 
elements of $Z_G(x)/\Ad_{Z_G(x)}$, so $((x,y))=((x',y'))$.

Together this proves that $h$ is a continuous bijection. Since $\Rep(\Z^2,G)$ is compact and 
$A\otimes_{\I} F$ is Hausdorff, $h$ is a homeomorphism.

To obtain the homeomorphism in the top row of the square, first observe that the inclusion map
\[
\bigsqcup_{I\in \I(p)} \sigma^I\times F'(I) \hookrightarrow \bigsqcup_{I\in\I} \sigma^I\times F(I)
\]
induces a homeomorphism $i$ of $A(p)\otimes_{\I(p)}F'$ onto its image in $A\otimes_{\I}F$. The 
homeomorphism $h$ from before restricts to a bijection of $R_G(p)$ with $i(A(p)\otimes_{\I(p)}F')$. 
Since both $R_G(p)$ and $i(A(p)\otimes_{\I(p)}F)$ carry the subspace topology, this bijection is a 
homeomorphism.
\end{proof}

We now turn our attention to the homotopy equivalence in (\ref{eq:equivalencewp}). To this end, we 
introduce another diagram on $\I$ and $\I(p)$. Given $I\in \I$ let $\t(I)\leqslant \t$ be the 
$\R$-linear span of $\{\alpha^\vee_i\mid i\in I\}$, and let $T(I)\leqslant T$ be the subtorus 
whose Lie algebra is $\t(I)$; in other words, $T(I)$ is the image of $\t(I)$ under 
$\exp\co \t\to T$. Note that $T(I)=T\cap DZ_G(b(\sigma^I))$ can be identified with the maximal 
torus of $DZ_G(b(\sigma^I))$. When $I\subseteq J$, then $T(I)\leqslant T(J)$, and there is a quotient 
map $T/T(I)\to T/T(J)$. This gives rise to a diagram of tori
\[
\bar{F}\co \I \to \mathbf{Top}\,, \quad I \mapsto T/T(I)\, .
\]
We will not distinguish notationally between $\bar{F}$ and its restriction to $\I(p)$.

\begin{lemma} \label{lem:coendequivalence}
There are natural equivalences of diagrams $F\xrightarrow{\simeq} \bar{F}$ and 
$F'\xrightarrow{\simeq} \bar{F}$ which fit into a commutative diagram
\[
\xymatrix{
A(p)\otimes_{\I(p)} F' \ar[r]^-{\simeq} \ar[d] & A(p)\otimes_{\I(p)} \bar{F} \ar[d] \\
A\otimes_{\I} F \ar[r]^-{\simeq} & A\otimes_{\I} \bar{F}
}
\]
where the two vertical maps are inclusions of subspaces.
\end{lemma}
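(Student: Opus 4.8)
I will prove Lemma~\ref{lem:coendequivalence} by building the two natural transformations termwise, checking each component is a homotopy equivalence, and then transporting objectwise equivalences through the coend.

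First I would define the components of $F\to\bar F$. For $I\in\I$ write $x_I=b(\sigma^I)$ and $K_I=Z_G(x_I)$; every conjugacy class of $K_I$ meets the maximal torus $T$, so $F(I)=\textnormal{pr}_1^{-1}(x_I)\cong K_I/\Ad_{K_I}\cong T/W_{K_I}$, where $W_{K_I}$ is the stabilizer of $x_I$ in $W$. Since $\tilde{\Delta}(x_I)=I$, the simple coroots of $K_I$ are $\{\alpha_i^\vee\mid i\in I\}$, so \emph{every} coroot of $K_I$ lies in $\t(I)$; hence each reflection in $W_{K_I}$ differs from the identity by an endomorphism of $\t$ with image in $\t(I)$, and therefore $W_{K_I}$ acts trivially on $T/T(I)$. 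This makes $\varphi_I\colon F(I)\to\bar F(I)$, $(y)\mapsto yT(I)$ (for a representative $y\in T$ of the conjugacy class), well defined. For $I\subseteq J$ one has $W_{K_I}\leqslant W_{K_J}$ and $T(I)\leqslant T(J)$, and both composites in the naturality square send $(y)$ to $yT(J)$, so the $\varphi_I$ assemble to a natural transformation $F\to\bar F$. Restricting to $\I(p)$ and precomposing with the subspace inclusions $F'(I)=q^{-1}(x_I)\hookrightarrow F(I)$ — natural by Lemma~\ref{lem:pi0injective} — yields $F'\to\bar F$, and the square of the lemma then commutes by construction.

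Next I would show each $\varphi_I$ is a homotopy equivalence by feeding it through the identification (\ref{eq:pr1fiber2}), under which $F(I)\cong\big(\widetilde{DK_I}/\Ad_{\widetilde{DK_I}}\big)\times_C Z(K_I)_0$ and $\varphi_I$ becomes the projection onto $Z(K_I)_0/C'$, with $C'$ the image of $C$ in $Z(K_I)_0$. A short computation identifies $C'=T(I)\cap Z(K_I)_0$, so $Z(K_I)_0/C'\cong T/T(I)$ canonically since $T=T(I)\cdot Z(K_I)_0$; and over an evenly covered open set the projection is a product, hence a fibre bundle, with fibre $\widetilde{DK_I}/\Ad_{\widetilde{DK_I}}$ modulo a finite group acting by affine symmetries. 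As $\widetilde{DK_I}$ is compact and simply--connected, $\widetilde{DK_I}/\Ad_{\widetilde{DK_I}}$ is the closure of a Weyl alcove \cite[IX \S 5.2 Corollary 1]{Bourbaki}, a simplex, and the straight--line contraction to the barycenter of a finite orbit is equivariant, so the fibre is contractible; thus $\varphi_I$ is a homotopy equivalence. The same computation handles $F'(I)\to\bar F(I)$, now via Lemma~\ref{lem:qfiber}: the fibre of the corresponding bundle is the $\langle\xi(x_I)\rangle$-fixed subspace of $\widetilde{DK_I}/\Ad_{\widetilde{DK_I}}$ modulo a finite group, and this fixed subspace, being the fixed locus of a finite--order affine automorphism of a simplex, is nonempty and convex, hence contractible. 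I expect this paragraph to be the main obstacle: the conceptual inputs (adjoint quotient of a simply--connected group is a simplex, a simplex modulo affine symmetries is contractible) are standard, but the bookkeeping of the finite groups $C$, $C'$, and the fibre group, and the verification that all these identifications are natural in $I$, requires care.

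Finally I would pass to coends. Filtering $A\otimes_{\I}F$ by the sub-coends $X_n$ spanned by the faces $\sigma^I$ with $\dim\sigma^I\leqslant n$, one sees that $X_n$ is the pushout of $X_{n-1}$ along the closed cofibration $\bigsqcup_{|I|=r-n}\partial\sigma^I\times F(I)\hookrightarrow\bigsqcup_{|I|=r-n}\sigma^I\times F(I)$, the attaching maps being assembled from the structure maps of $F$. Since $F\to\bar F$ is an objectwise equivalence, an induction on $n$ using the gluing lemma shows $A\otimes_{\I}F\to A\otimes_{\I}\bar F$ is a homotopy equivalence, and the identical argument over the upward--closed subposet $\I(p)$ — whose associated sub-coend is exactly $A(p)\otimes_{\I(p)}F'$, respectively $A(p)\otimes_{\I(p)}\bar F$ — gives the top equivalence, with compatibility of the vertical inclusions immediate from the construction. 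This completes the proof.
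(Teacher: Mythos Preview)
Your proposal is correct and takes essentially the same approach as the paper: you use the identification $F(I)\cong(\widetilde{DK_I}/\Ad)\times_C Z(K_I)_0$ together with the equivariant contractibility of the alcove (which the paper packages separately as Lemma~\ref{lem:alcovecontractible}) to collapse the derived factor, and then identify $Z(K_I)_0/C'$ with $T/T(I)$ just as the paper does. The only difference is in the last step: the paper recognizes $A\otimes_{\I}F$ as $\hocolim_{\I}F$ via the natural isomorphism $\sigma^I\cong B(I/\I)$ and invokes homotopy invariance of homotopy colimits, whereas you reprove that invariance directly for this diagram via a skeletal filtration and the gluing lemma.
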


Suppose that $K$ is a compact, simply--connected, simple Lie group. To prove the lemma we need a 
fact concerning the action of the center $Z(K)$ on $K/\Ad_K$ specified in (\ref{eq:actionofcenter}). 
Let $S\leqslant K$ be a maximal torus, $\s$ its Lie algebra and assume that appropriate choices 
have been made that allow us to identify $K/\Ad_K$ with (the closure of) an alcove $A_K\subseteq \s$. Then it is well 
known that the resulting action of $Z(K)$ on $A_K$ is through affine isometries of $\s$ permuting the 
vertices of $A_K$ (see \cite[Section 3.2]{BFM}).
\begin{lemma} \label{lem:alcovecontractible}
Let $K$ be a compact simply--connected semi-simple Lie group with center $Z(K)$. Then $K/\Ad_K$ is 
$Z(K)$-equivariantly contractible. 
\end{lemma}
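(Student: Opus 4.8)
The plan is to exploit the explicit model for $K/\Ad_K$ recalled just above the statement: fixing a maximal torus $S\leqslant K$ with Lie algebra $\mathfrak{s}$, one identifies $K/\Ad_K$ homeomorphically with the closure $A_K\subseteq \mathfrak{s}$ of a fundamental alcove, and under this identification $Z(K)$ acts on $A_K$ by affine isometries of $\mathfrak{s}$ that permute the vertices of $A_K$ (the fact cited from \cite[Section 3.2]{BFM}). When $K$ is semisimple but not simple, $K/\Ad_K$ is the product of the spaces $K_i/\Ad_{K_i}$ over the simple factors of $K$, $A_K$ is the corresponding product of simplices, and $Z(K)=\prod_i Z(K_i)$ acts factorwise; in particular $A_K$ is a compact convex polytope and the action is still by affine isometries permuting vertices. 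The goal is then purely a statement about such an action on a convex polytope: to promote the evident contractibility of $A_K$ to a $Z(K)$-equivariant contraction.

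First I would exhibit a $Z(K)$-fixed point. The natural choice is the barycenter $b\in A_K$, the average of all vertices of $A_K$. Since each $g\in Z(K)$ acts through an affine map $\phi_g$ that permutes the finite vertex set of $A_K$, and an affine map sends the barycenter of a finite point set to the barycenter of its image, we obtain $\phi_g(b)=b$ for every $g\in Z(K)$.

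Next I would contract $A_K$ onto $b$ along straight lines: define $H\colon A_K\times[0,1]\to A_K$ by $H(x,t)=(1-t)x+tb$. This is well defined because $A_K$ is convex and $b\in A_K$, it is continuous, and $H(\,\cdot\,,0)=\mathrm{id}$, $H(\,\cdot\,,1)\equiv b$. Equivariance is immediate from affineness of the action: for $g\in Z(K)$,
\[
\phi_g(H(x,t))=(1-t)\phi_g(x)+t\phi_g(b)=(1-t)\phi_g(x)+tb=H(\phi_g(x),t)\, .
\]
Transporting $H$ through the homeomorphism $K/\Ad_K\cong A_K$ then gives the desired $Z(K)$-equivariant contraction. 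I do not anticipate any real difficulty here; the only point requiring a little care is that $A_K$ need not be a single simplex in the semisimple non-simple case, but convexity of $A_K$ — which is all the straight-line homotopy uses — holds regardless, and the barycenter of a convex compact polytope is preserved by every affine automorphism, so the argument goes through unchanged.
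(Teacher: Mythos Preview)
Your proof is correct and follows essentially the same idea as the paper: identify $K/\Ad_K$ with the alcove (or product of alcoves), observe that the barycenter is $Z(K)$-fixed because the action is affine and permutes vertices, and use the straight-line homotopy to the barycenter. The paper first reduces to the simple case and then phrases the argument as ``each fixed-point set $A_K^\Gamma$ is star-shaped about the barycenter, hence contractible for all $\Gamma\leqslant Z(K)$,'' invoking this as a criterion for equivariant contractibility; your version is slightly more direct in that you verify equivariance of the contraction $H(x,t)=(1-t)x+tb$ explicitly from affineness, which avoids any appeal to an equivariant Whitehead-type statement and handles the semisimple case in one stroke via convexity of the product polytope.
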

\begin{proof}
It suffices to prove the lemma in the case where $K$ is simple, because in the general case there is 
$s\geqslant 1$ such that $K\cong K_1\times \dots \times K_s$ where each $K_i$ is simple. Hence, 
$K/\Ad_K \cong K_1/\Ad_{K_1}\times \dots\times K_s/\Ad_{K_s}$, and the action of 
$Z(K)\cong Z(K_1)\times \dots \times Z(K_s)$ is through the action of $Z(K_i)$ on $K_i/\Ad_{K_i}$ 
for every $i=1,\dots,s$.

Assuming that $K$ is simple, we identify $K/\Ad_K$ with a Weyl alcove $A_K$ in the Lie algebra $\s$ of 
a maximal torus for $K$. Then $A_K$ is a simplex given in barycentric coordinates by
\[
A_K=\left\{ \sum_{v\in V} a_v v \mid a_v\in \R_{\geqslant 0} \textnormal{ for all } v\in V 
\textnormal{ and } \sum_{v\in V}a_v=1\right\}\subseteq \s\, ,
\]
where $V\subseteq A_K$ is the set of vertices.

As $Z(K)$ acts on $A_K$ through affine isometries of $\s$ permuting the vertices of $A_K$, the 
action is determined by this permutation action on the vertex set $V$.  In particular, the 
barycenter $b(A_K)=\left(\sum_{v\in V} v\right)/|V|$ is a global fixed point for the $Z(K)$-action. 
Now let $\Gamma\leqslant Z(K)$ be any subgroup. Let $V=V_1\sqcup \dots \sqcup V_{\ell}$ be the 
decomposition of $V$ into orbits with respect to the permutation action of $\Gamma$ on $V$. Then 
a point $x=\sum_{v\in V} a_v v$ of $A_K$ is fixed by $\Gamma$ if and only if $a_v=a_u$ for all 
$v,u\in V_j$ and all $j=1\dots \ell$. Clearly, if $x$ is fixed by $\Gamma$, then so is 
$ab(A_K)+(1-a)x$ for every $a\in [0,1]$. It follows that the fixed point space $A_K^\Gamma$ is a 
star-shaped domain in $\s$ with respect to the barycenter $b(A_K)$ and therefore contractible. 
As this holds for all subgroups $\Gamma\leqslant Z(K)$, $A_K$ is $Z(K)$-equivariantly contractible.
\end{proof}

\begin{proof}[Proof of Lemma \ref{lem:coendequivalence}]
We will only describe the equivalence $F'\xrightarrow{\simeq} \bar{F}$ as the equivalence 
$F\xrightarrow{\simeq} \bar{F}$ follows analogously. By Lemma \ref{lem:qfiber},
\[
F'(I)=q^{-1}(b(\sigma^I))\cong  
\left( \widetilde{DZ_G(b(\sigma^I))}/\Ad_{\widetilde{DZ_G(b(\sigma^I))}}
\right)^{\langle \xi(b(\sigma^I))\rangle}\times_{C} Z(Z_G(b(\sigma^I)))_0\, .
\]
Since both $\langle \xi(b(\sigma^I))\rangle$ and $C$ act through the center of 
$\widetilde{DZ_G(b(\sigma^I))}$, Lemma \ref{lem:alcovecontractible} implies that the projection 
onto $Z(Z_G(b(\sigma^I)))_0$ induces a homotopy equivalence
\[
F'(I)\xrightarrow{\simeq} Z(Z_G(b(\sigma^I)))_0/C\, ,
\]
that is natural in $I$. The latter space can be identified naturally with $Z_G(b(\sigma^I))/DZ_G(b(\sigma^I))$ 
as a look at the isomorphism (\ref{eq:coveringderivedcenter}) shows. On the other hand, as 
$T(I)=T\cap DZ_G(b(\sigma^I))$ the inclusion $T\hookrightarrow Z_G(b(\sigma^I))$ induces a natural isomorphism
\[
T/T(I) \cong Z_G(b(\sigma^I))/DZ_G(b(\sigma^I))\, .
\]
This proves the equivalence $F'\xrightarrow{\simeq} \bar{F}$.

To see that the natural equivalences $F'\xrightarrow{\simeq} \bar{F}$ and $F\xrightarrow{\simeq} \bar{F}$ 
induce homotopy equivalences $A(p)\otimes_{\I(p)} F'\simeq A(p)\otimes_{\I(p)}\bar{F}$ and 
$A\otimes_{\I} F\simeq A\otimes_{\I} \bar{F}$ we recognize the coends as homotopy colimits of the diagrams 
$F'$, $F$ and $\bar{F}$. For this recall that, if $M\co \mathcal{C}\to \mathbf{Top}$ is a small diagram 
of spaces, then a model for the homotopy colimit of $M$ is the coend
\[
\underset{\mathcal{C}}{\hocolim}\, M=B(-/ \mathcal{C})\otimes_{\mathcal{C}} M\,,
\]
where $B(-)$ is the classifying space functor and $c /\mathcal{C}$ denotes the category of objects under 
$c\in\mathcal{C}$, see \cite[XII. \S 2.1]{BK72}. Since the diagrams $A\co I\mapsto \sigma^I$ and 
$I\mapsto B(I/\I)$ are naturally isomorphic, we find that
\[
\Rep(\Z^2,G)\cong A\otimes_{\I} F\cong B(-/\I)\otimes_{\I} F = \underset{\I}{\hocolim}\, F\, ,
\]
and similarly,
\[
R_G(p)\cong \hocolim_{\I(p)} F'\,.
\]
The required homotopy equivalences are now implied by homotopy invariance of homotopy colimits. 
Finally, commutativity of the diagram follows by inspection.
\end{proof}

The following lemma completes the proof of Proposition \ref{prop:weightedprojective}. Essentially, 
this is an identification of the coend $A\otimes_{\I} \bar{F}$ with the weighted projective space 
$\mathbb{CP}(\mathbf{n}^\vee)$. This kind of identification is well known in toric topology (see e.g. \cite[Section 5.3]{WZZ}), 
but we will give a direct proof here.
\begin{lemma} \label{lem:coendwp}
There is a commutative diagram
\[
\xymatrix{
A(p)\otimes_{\I(p)} \bar{F} \ar[r]^-{\cong} \ar[d] & \mathbb{CP}(\mathbf{n}^\vee(p)) \ar[d]^-{\iota_p} \\
A\otimes_{\I} \bar{F} \ar[r]^-{\cong} & \mathbb{CP}(\mathbf{n}^\vee)
}
\]
in which the left hand vertical map is a subspace inclusion.
\end{lemma}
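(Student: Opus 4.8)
The plan is to construct the top homeomorphism directly from the explicit formula (\ref{eq:explicitequivalence}), assembled cell by cell over the poset $\I$, and then to check by inspection that it carries the sub-coend $A(p)\otimes_{\I(p)}\bar F$ onto the coordinate sub-weighted-projective-space cut out by $\iota_p$. The one piece of genuine content is a natural description of the diagram $\bar F$ in terms of quotient tori of $(\SS^1)^{r+1}$. Since $G$ is simply--connected we have $\Lambda=Q^\vee$, and together with $n_0^\vee=1$ and the relation $\sum_{i=0}^{r}n_i^\vee\alpha_i^\vee=0$ this produces a linear isomorphism $\R^{r+1}/\R\langle\mathbf n^\vee\rangle\xrightarrow{\cong}\t$, $e_i\mapsto\alpha_i^\vee$, carrying the lattice $\Z^{r+1}/\Z\langle\mathbf n^\vee\rangle$ (torsion--free because $\mathbf n^\vee$ is primitive) isomorphically onto $Q^\vee$; the kernel computation required is the one already carried out in the proof of Lemma \ref{lem:pi1derived}. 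Hence $T\cong(\SS^1)^{r+1}/\SS^1_{\mathbf n^\vee}$, and under this isomorphism $T(I)$---the image of $\mathrm{span}\{\alpha_i^\vee\mid i\in I\}$---is the image of the coordinate subtorus $(\SS^1)^{I}$, so collapsing $(\SS^1)^{I}$ first yields, naturally in $I\subsetneq\tilde\Delta$,
\[
\bar F(I)=T/T(I)\;\cong\;(\SS^1)^{\tilde\Delta\setminus I}\big/\SS^1_{(n_j^\vee)_{j\notin I}}\,,
\]
with the structure map $\bar F(I)\to\bar F(J)$ for $I\subseteq J$ becoming ``forget the coordinates indexed by $J\setminus I$''.

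Next I would define the map. The face $\sigma^{I}$ is spanned by the vertices $v_j$ of $A$ with $j\in\tilde\Delta\setminus I$, so $x\in\sigma^{I}$ has barycentric coordinates $(a_j)_{j\in\tilde\Delta}$ vanishing on $I$. For $(x,\bar y)\in\sigma^{I}\times\bar F(I)$ I lift $\bar y$ to $(u_j)_{j\notin I}\in(\SS^1)^{\tilde\Delta\setminus I}$ and set $\Phi(x,\bar y)=[z]$, where $z_j=a_j u_j$ for $j\notin I$ and $z_j=0$ otherwise (normalised to lie on $\SS^{2r+1}$); choosing the lift with $u_0=1$ when $0\notin I$, which is legitimate since $n_0^\vee=1$, this realises (\ref{eq:explicitequivalence}). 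A change of lift of $\bar y$ rescales $z$ by an element of $\SS^1_{\mathbf n^\vee}$, so $\Phi$ is well defined on each cell, and since $a_j=0$ on $J\setminus I$ whenever $x\in\sigma^{J}\subseteq\sigma^{I}$, the formula is compatible with the defining relations of the coend; hence $\Phi$ descends to a continuous map $A\otimes_\I\bar F\to\mathbb{CP}(\mathbf n^\vee)$. Bijectivity I would check fibrewise over $A$: the projection $A\otimes_\I\bar F\to A$ has fibre $\bar F(\tilde\Delta(x))$ over $x$ (the terminal object among $\{\bar F(I)\mid I\subseteq\tilde\Delta(x)\}$, with no nontrivial identifications since $\I$ is a poset), and on it $\Phi$ is injective---compare successively the vanishing pattern of the $z_j$, the strictly positive moduli $a_j$, and the phases $u_j$---with image the moment fibre $\{[z]\mid z_j\neq 0\Leftrightarrow a_j\neq 0\}$; since $\Phi$ covers the homeomorphism $A\xrightarrow{\cong}\Delta^r$, $(a_j)\mapsto(a_j^2)/\sum_k a_k^2$, it is a bijection. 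As $A\otimes_\I\bar F$ is a quotient of a finite union of compacta it is compact, and $\mathbb{CP}(\mathbf n^\vee)$ is Hausdorff, so $\Phi$ is a homeomorphism; this is the ``toric'' identification also recorded in \cite[Section 5.3]{WZZ}.

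Finally, for the square: by Lemma \ref{lem:amface}, $A(p)=\sigma^{I_p}$ with $I_p=\{j\mid p\nmid n_j^\vee\}$ and $\I(p)=\{I\in\I\mid I_p\subseteq I\}$, so $A(p)\otimes_{\I(p)}\bar F$ is the sub-coend on the cells $\sigma^{I}\times\bar F(I)$ with $\sigma^{I}\subseteq\sigma^{I_p}$, on which $a_j=0$ for every $j\in I_p$. Thus $\Phi$ maps $A(p)\otimes_{\I(p)}\bar F$ into $\{[z]\mid z_j=0\text{ whenever }p\nmid n_j^\vee\}$, which is precisely the image of the closed embedding $\iota_p$; the same fibrewise argument shows this restriction is a continuous bijection onto $\mathbb{CP}(\mathbf n^\vee(p))$, hence a homeomorphism, and the square commutes by construction of $\Phi$.

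I expect the only real obstacle to be the first step---making the torus identifications genuinely natural in $I$, which is what forces $\Phi$ to respect the coend gluing; everything afterwards is formal bookkeeping. One could alternatively quote the standard presentation of a weighted projective space as a coend (homotopy colimit) over the face poset of its moment simplex, but the explicit map $\Phi$ is needed to pin down the homotopy equivalence in (\ref{eq:equivalencewp}) and to make the diagram of Proposition \ref{prop:weightedprojective} commute on the nose.
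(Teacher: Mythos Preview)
Your argument is correct and is a cleaner variant of the paper's own proof. The paper parametrises $T$ by the $r$ simple-coroot coordinates $(t_1,\dots,t_r)$, passes to the identification space $(A\times T)/{\approx}$, and defines the map via the join model as $\phi([\mathbf{a},t])=\langle a_0e,\,a_1t_1,\dots,a_rt_r\rangle\,\SS^1_{\mathbf{n}^\vee}$; the asymmetric treatment of the $0$th slot then forces separate case checks when $a_0=0$, both for well-definedness (using the relation $-\alpha_0^\vee=\sum_{i\geqslant 1}n_i^\vee\alpha_i^\vee$ explicitly) and for injectivity. You instead begin with the symmetric presentation $T\cong(\SS^1)^{r+1}/\SS^1_{\mathbf{n}^\vee}$ and the induced natural isomorphism $\bar F(I)\cong(\SS^1)^{\tilde\Delta\setminus I}/\SS^1_{(n_j^\vee)_{j\notin I}}$, after which the formula $(x,\bar y)\mapsto[a_ju_j]_j$ is manifestly compatible with the coend relations and the fibrewise bijectivity over the moment simplex needs no case split. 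As you note, choosing the lift with $u_0=1$ recovers the paper's explicit formula (\ref{eq:explicitequivalence}), so the two constructions give literally the same homeomorphism; your packaging simply absorbs the special role of the index $0$ into the torus identification up front.
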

\begin{proof}
The top horizontal map is simply the restriction of the bottom map, so we first construct the latter. 
To this end, we replace $A\otimes_{\I}\bar{F}$ by the homeomorphic identification space 
$(A\times T)/{\approx}$ where
\[
(x,t)\approx (x',t') \Longleftrightarrow x=x'\textnormal{ and } t^{-1}t'\in T(\tilde{\Delta}(x))\,.
\]
The homeomorphism $(A\times T)/{\approx}\to A\otimes_{\I} \bar{F}$ is given by mapping 
$(x,t)\mapsto x\otimes t$.

To define a homeomorphism of $(A\times T)/{\approx}$ with 
$\mathbb{CP}(\mathbf{n}^\vee)=\SS^{2r+1}/\SS^1_{\mathbf{n}^\vee}$ we 
shall view $\SS^{2r+1}$ as the $(r+1)$-fold unreduced join
\[
\SS^{2r+1}\cong \SS^1\ast\cdots \ast \SS^1=(\SS^1)^{\ast(r+1)}\, .
\]
It is convenient to write elements of $(\SS^1)^{\ast(r+1)}$ formally as tuples 
$\langle a_0 t_0, \dots, a_r t_r\rangle$ with
\[
a_i\in [0,1],\, t_i\in \SS^1\textnormal{ for } i=0,\dots,r\textnormal{ and }\sum_{i=0}^r a_i=1\,,
\]
and subject to the identification $0 t=0 t'$ for all $t,t'\in \SS^1$. The homeomorphism with the unit 
sphere $\SS^{2r+1}\subseteq \C^{r+1}$ is then given by
\[
\langle a_0 t_0,\dots,a_r t_r\rangle \mapsto (a_0t_0,\cdots,a_rt_r)/\rVert(a_0t_0,\cdots,a_rt_r)\rVert\, .
\]
Note that this map is $\SS^1_{\mathbf{n}^\vee}$-equivariant for the diagonal (weighted) action on 
$(\SS^1)^{\ast(r+1)}$. We identify points of $A$ with their barycentric coordinates 
$\mathbf{a}=(a_0,\dots,a_r)\in\Delta^r$. Given $t\in T$, we let $(t_1,\dots,t_r)\in (\SS^1)^r$ 
denote the coordinates of $t$ with respect to the 
decomposition
\[
\SS^1_{\alpha_1^\vee} \times \cdots \times \SS^1_{\alpha_r^\vee} \xrightarrow{\cong} \SS^1_{\alpha_1^\vee} \cdots  
\SS^1_{\alpha_r^\vee} = T\,,
\]
where $\SS^1_{\alpha_i^\vee}\leqslant T$ is the one-parameter subgroup determined by the coroot 
$\alpha_i^\vee$.\footnote{Not to be confused with our notation $\SS^1_{\mathbf{w}}$ for the circle group acting on $\SS^{2r+1}$ with weights $\mathbf{w}$.} Define
\begin{alignat*}{1}
\phi\co (A\times T)/{\approx} & \to \SS^{\ast(r+1)}/\SS^1_{\mathbf{n}^\vee} \\
[\mathbf{a},t] & \mapsto \langle a_0 e,a_1 t_1,\dots,a_r t_r\rangle \SS^1_{\mathbf{n}^\vee}\, ,
\end{alignat*}
where $e\in \SS^1$ is the identity element. We claim that $\phi$ is a continuous bijection of the compact 
space $(A\times T)/\approx$ with the Hausdorff space $\SS^{\ast(r+1)}/\SS^1_{\mathbf{n}^\vee}$, 
thus a homeomorphism.

To see that $\phi$ is well-defined it suffices to check that if $a_i=0$ for some $i\in\{0,\dots,r\}$, 
then $\phi([\mathbf{a},ts])=\phi([(\mathbf{a},t)])$ for any $s\in \SS^1_{\alpha_i^\vee}$ and $t\in T$. 
This is clear when $1\leqslant i\leqslant r$, because in this case $\phi([\mathbf{a},t])$ is independent 
of $t_i$ by the identifications made in the join construction. Now suppose that $a_0=0$ and let 
$s \in \SS^1_{\alpha_0^\vee} \leqslant T$. Since $-\alpha_0^\vee=\sum_{i=1}^rn_i^\vee\alpha_i^\vee$, 
we can parametrize $\SS_{\alpha_0^\vee}^1$ by 
$\lambda \mapsto (\lambda^{n_1^\vee},\dots,\lambda^{n_r^\vee})\in  \prod_{i=1}^r \SS^1_{\alpha_i^\vee}$, 
thus $s=(\lambda^{n_1^\vee},\dots,\lambda^{n_r^\vee})$ for some $\lambda\in \SS^1$. Then
\begin{equation*}
\begin{split}
\phi([(\mathbf{a},ts])& =\langle 0 e,a_1  t_1\lambda^{n_1^\vee},
\dots,a_r t_r\lambda^{n_r^\vee}\rangle \SS^1_{\mathbf{n}^\vee} \\
&=\langle 0 \lambda^{-n_0^\vee},a_1 t_1,\dots,a_r t_r\rangle \SS^1_{\mathbf{n}^\vee}\\&=\phi([(\mathbf{a},t])\, .
\end{split}
\end{equation*}
Here we used the weighted action of $\SS^1_{\mathbf{n}^\vee}$ in the second equality. 
We conclude that $\phi$ is well defined and continuous.

It is clear that $\phi$ is surjective, that is, that every element of $(\SS^1)^{\ast(r+1)}$ is 
equivalent to one of the form $\langle a_0 e,a_1 t_1,\dots,a_r t_r\rangle$ modulo $\SS^1_{\mathbf{n}^\vee}$.

To see that $\phi$ is injective suppose that $\phi([\mathbf{a},t])=\phi([\mathbf{a}',t'])$, that is, 
suppose that
\[
\langle a_0 e,a_1 t_1,\dots,a_r t_r\rangle \SS^1_{\mathbf{n}^\vee} 
=\langle a_0' e,a_1' t_1',\dots,a_r' t_r'\rangle \SS^1_{\mathbf{n}^\vee}\, .
\]
Then there exists $\lambda \in \SS^1$ such that
\begin{equation} \label{eq:joinequal}
\langle a_0 e,a_1 t_1',\dots,a_r t_r'\rangle
=\langle a_0 \lambda ,a_1 t_1 \lambda^{n_1^\vee},\dots,a_r t_r\lambda^{n_r^\vee}\rangle\, .
\end{equation}
First, this implies that $\mathbf{a}=\mathbf{a}'$ by the properties of the join construction. 
In addition, we claim that $t^{-1}t'\in T(\tilde{\Delta}(\mathbf{a}))$. From this it will follow that 
$(\mathbf{a},t)\approx (\mathbf{a}',t')$, hence that $\phi$ is injective. As a subset of 
$\{0,\dots,r\}$ the set $\tilde{\Delta}(\mathbf{a})$ consists of those $i$ 
such that $a_i=0$. Consider first the case $0\not\in \tilde{\Delta}(\mathbf{a})$, that is, 
$a_0\neq 0$. From (\ref{eq:joinequal}) we deduce that $\lambda=e$, and $t_i=t_i'$ for all 
$i\not\in \tilde{\Delta}(\mathbf{a})$. Therefore, $t^{-1}t'\in T(\tilde{\Delta}(\mathbf{a}))$. 
In the case $a_0=0$, we deduce that $t_i^{-1}t_i'=\lambda^{n_i^\vee}$ for all 
$i\not\in \tilde{\Delta}(\mathbf{a})$. Therefore, letting 
$s=(\lambda^{n_1^\vee},\dots,\lambda^{n_r^\vee})\in \SS^1_{\alpha_0^\vee}$, we have that 
$t^{-1}t's^{-1}\in T(\tilde{\Delta}(\mathbf{a})\backslash\{0\})$, hence 
$t^{-1}t'\in T(\tilde{\Delta}(\mathbf{a}))$. This completes the proof that $\phi$ is a homeomorphism.

The top horizontal map in the diagram is obtained by restriction of $\phi$. The subspace 
$A(p)\otimes_{\I(p)} \bar{F}$ of $A\otimes_{\I} \bar{F}$ consists of those $[\mathbf{a},t]$ with 
$\mathbf{a}\in A(p)$. By definition, $\mathbf{a}\in A(p)$ is equivalent to $a_i=0$ for all $i$ such that 
$p\nmid n_i^\vee$. Thus $\phi$ maps $A(p)\otimes_{\I(p)} \bar{F}$ homeomorphically onto the subspace
\[
\left\{ \langle a_0t_0,\dots,a_rt_r\rangle \SS_{\mathbf{n}^\vee}^1\in 
\SS^{\ast(r+1)}/\SS^1_{\mathbf{n}^\vee} \mid a_i=0 
\textnormal{ if }p\nmid n_i^\vee\right\}\subseteq \SS^{\ast(r+1)}/\SS^1_{\mathbf{n}^\vee}\,.
\]
This space is homeomorphic to the weighted projective space $\mathbb{CP}(\mathbf{n}^\vee(p))$. 
By inspection, one identifies the inclusion into $\SS^{\ast(r+1)}/\SS^1_{\mathbf{n}^\vee}$ with the map 
$\iota_p\co \mathbb{CP}(\mathbf{n}^\vee(p))\to \mathbb{CP}(\mathbf{n}^\vee)$ described in 
Proposition \ref{prop:weightedprojective}.
\end{proof}

\subsection{The cases $E_7$ and $E_8$ and the proofs of Theorems \ref{thm:mainpairs2intro} and \ref{thm:mainpairsintro}}  \label{sec:e7e8}

Thus far we have determined $H_\ast^G(X_G(p);(\underline{\pi}_0)_{(p)})$ under the assumption that 
$G\not\in \{E_7,E_8\}$ or $p>2$. In this subsection we calculate additional homology groups for 
$G=E_7$ and $G=E_8$ which are needed to prove Theorem \ref{thm:mainpairsintro}.

\begin{lemma} \label{lem:e7e8homology}
Suppose that $G=E_7$ or $G=E_8$. Then
\[
H_k^G(X_G(2);(\underline{\pi}_0)_{(2)}) \cong \begin{cases} \Z/4\,, & \textnormal{ if }k=0, \\ 
0 \,, & \textnormal{ if }k=1. 
\end{cases}
\]
\end{lemma}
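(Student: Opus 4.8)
The plan is to compute $H_\ast^G(X_G(2);(\underline{\pi}_0)_{(2)})$ for $G = E_7, E_8$ by decomposing the computation according to whether the component group has $2$-torsion equal to $\Z/2$ or $\Z/4$. Let $X_G(2)$ be the $G$-subcomplex where $\pi_0(G_{(x,y)})_{(2)}\neq 0$ and $X_G(4)\subseteq X_G(2)$ the further $G$-subcomplex where $\pi_0(G_{(x,y)})_{(2)}\cong \Z/4$; both are $G$-CW-subcomplexes by the argument of Lemma \ref{lem:subcomplex}. Since $(\underline{\pi}_0)_{(2)}$ restricted to $X_G(4)$ is the constant system $\underline{\Z/4}$, Lemma \ref{lem:e7e8rg4} gives $H_\ast^G(X_G(4);(\underline{\pi}_0)_{(2)})\cong H_\ast(R_G(4);\Z/4)$, and $R_G(4)$ is homotopy equivalent to the weighted projective space $\mathbb{CP}(\mathbf{n}^\vee(4))$ by Proposition \ref{prop:weightedprojective} (here $p=4\in\mathcal{Z}$). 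First I would record what $\mathbf{n}^\vee(4)$ is: for $E_7$ the coroot integers are $1,2,3,4$, so exactly one is divisible by $4$ and $\mathbb{CP}(\mathbf{n}^\vee(4))\simeq \ast$; for $E_8$ the coroot integers are $1,2,3,4,5,6$, again exactly one divisible by $4$, so $\mathbb{CP}(\mathbf{n}^\vee(4))\simeq \ast$. Hence in both cases $H_k^G(X_G(4);(\underline{\pi}_0)_{(2)})\cong \Z/4$ for $k=0$ and $0$ otherwise.

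Next I would analyze the quotient coefficient system on $X_G(2)$. There is a short exact sequence of coefficient systems on the orbit category relevant to $X_G(2)$,
\[
0\to (\underline{\pi}_0)_{(2)}^{[4]} \to (\underline{\pi}_0)_{(2)} \to \mathcal{Q}\to 0\,,
\]
where the subsystem supported on $X_G(4)$ picks out the $\Z/4$-part and the quotient $\mathcal{Q}$ is, up to the combinatorics of which cells survive, a constant $\underline{\Z/2}$-system supported on $X_G(2)$. More precisely, using diagram (\ref{dgr:naturalisomorphism}) with the choices of generators $\epsilon_\sigma\zeta(b(\sigma))$ made there, one checks that the chain complex $C_\ast(X_G(2);(\underline{\pi}_0)_{(2)})$ sits in a short exact sequence with $C_\ast(X_G(4);\underline{\Z/4})$ as a subcomplex and $C_\ast(X_G(2);\underline{\Z/2})$ as the quotient; the point is that away from $X_G(4)$ the system $(\underline{\pi}_0)_{(2)}$ is constant $\underline{\Z/2}$, and on $X_G(4)$ the quotient by the image of $\Z/4$ is again $\Z/2$, compatibly. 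This yields a long exact sequence
\[
\cdots\to H_k^G(X_G(4);\underline{\Z/4})\to H_k^G(X_G(2);(\underline{\pi}_0)_{(2)})\to H_k(X_G(2)/G;\Z/2)\to H_{k-1}^G(X_G(4);\underline{\Z/4})\to\cdots
\]
Now $X_G(2)/G = R_G(2)$ is homotopy equivalent to $\mathbb{CP}(\mathbf{n}^\vee(2))$ by Proposition \ref{prop:weightedprojective}: for $E_7$ the entries of $\mathbf{n}^\vee$ divisible by $2$ are $2,4$, so $R_{E_7}(2)\simeq \mathbb{CP}^1$; for $E_8$ the entries divisible by $2$ are $2,4,6$, so $R_{E_8}(2)\simeq \mathbb{CP}(2,4,6)$, which is a weighted $\mathbb{CP}^2$ and hence has $H_k(-;\Z/2)\cong \Z/2$ for $k=0,2,4$ and $0$ otherwise. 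In particular $H_1(R_G(2);\Z/2)=0$ in both cases, and $H_0(R_G(2);\Z/2)=\Z/2$.

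Finally I would extract the answer in degrees $0$ and $1$ from the long exact sequence. In degree $1$: $H_1^G(X_G(4);\underline{\Z/4})=0$ and $H_1(R_G(2);\Z/2)=0$, so $H_1^G(X_G(2);(\underline{\pi}_0)_{(2)})=0$. In degree $0$: the sequence reads
\[
0\to H_0^G(X_G(4);\underline{\Z/4})\to H_0^G(X_G(2);(\underline{\pi}_0)_{(2)})\to H_0(R_G(2);\Z/2)\to 0
\]
because the next term $H_{-1}$ vanishes, i.e. $0\to \Z/4\to H_0^G(X_G(2);(\underline{\pi}_0)_{(2)})\to \Z/2\to 0$. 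The middle group is then either $\Z/8$ or $\Z/4\oplus\Z/2$; to see it is $\Z/4$... is wrong — instead I would argue directly, not via this extension, that $H_0^G(X_G(2);(\underline{\pi}_0)_{(2)})\cong \Z/4$. The cleanest route: $H_0^G(X;\mathcal{M})$ is the colimit of $\mathcal{M}$ over the component, and since $X_G(2)$ is path–connected (it contains $X_G(4)$ and everything maps to the connected simplex $A(2)$, with fibers that are connected after quotienting appropriately — here one invokes connectedness of the relevant orbit-category computation as in Lemma \ref{computation spectral sequence}), $H_0^G(X_G(2);(\underline{\pi}_0)_{(2)})$ is the value of $(\underline{\pi}_0)_{(2)}$ at the generic isotropy type with maximal component group, which over $X_G(2)$ is $\Z/4$ (attained at the vertex of $A$ with coroot integer $4$). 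Thus the surjection $\Z/4 \to H_0(R_G(2);\Z/2)=\Z/2$ is the mod-$2$ reduction and the injection $\Z/4\hookrightarrow \Z/4$ is the identity, forcing $H_0^G(X_G(2);(\underline{\pi}_0)_{(2)})\cong \Z/4$.

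The main obstacle I expect is establishing precisely the short exact sequence of chain complexes (equivalently, pinning down the quotient coefficient system $\mathcal{Q}$ and its constancy), since one must track the generator-dependent identifications of diagram (\ref{dgr:naturalisomorphism}) across the boundary between $X_G(4)$ and its complement in $X_G(2)$; the degree-$0$ identification also requires care because a naive extension argument leaves an ambiguity between $\Z/4$ and $\Z/8$, resolved only by the direct colimit description of $H_0^G$.
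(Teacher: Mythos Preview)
Your overall strategy---filter by the subcomplex $X_G(4)\subseteq X_G(2)$ and use the associated long exact sequence---is exactly the paper's approach. But there is a genuine error in your identification of the quotient chain complex, and this is what produces the spurious extension problem you encounter at the end.

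You claim a short exact sequence with subcomplex $C_\ast(X_G(4);\underline{\Z/4})$ and quotient $C_\ast(X_G(2);\underline{\Z/2})$. The subcomplex is correct, but the quotient is not: on a cell of $X_G(4)$ the coefficient group is $\Z/4$ and you are quotienting by all of $\Z/4$, so the contribution is $0$, not $\Z/2$. The actual quotient is the \emph{relative} chain complex $C_\ast(X_G(2),X_G(4);(\underline{\pi}_0)_{(2)})$, which has a $\Z/2$ only for cells of $X_G(2)\setminus X_G(4)$. This is precisely why the paper passes to the quotient space $X_G(2)/X_G(4)$: there the basepoint (the image of $X_G(4)$) has isotropy $G$ and the relevant coefficient system $\mathcal{Q}$ vanishes at $G/G$ while being $\Z/2$ elsewhere. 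The paper then shows $H_k^G(X_G(2)/X_G(4);\mathcal{Q})=0$ for $k=0,1$ by Lemma~\ref{computation spectral sequence} and a short comparison with the constant system $\underline{\Z/2}$. Once the relative groups vanish, the long exact sequence of the pair gives $H_k^G(X_G(2);(\underline{\pi}_0)_{(2)})\cong H_k^G(X_G(4);(\underline{\pi}_0)_{(2)})$ for $k=0,1$ directly, with no extension to resolve.

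Two smaller points. First, your count of coroot integers divisible by $4$ is off for $E_8$: the tuple $\mathbf{n}^\vee$ for $E_8$ contains $4$ twice (Table~\ref{table:coroot} lists the \emph{set} of values, not the tuple), so $R_{E_8}(4)\simeq\mathbb{CP}(4,4)\cong\mathbb{CP}^1$ rather than a point; similarly your $R_{E_7}(2)$ and $R_{E_8}(2)$ have larger dimension than you state. These do not affect $H_0$ and $H_1$, so your conclusions there survive. Second, your rescue via ``$H_0^G$ is a colimit over the orbit category'' is morally correct and would give $\Z/4$, but it needs the connectedness of $X_G(2)$ (through the $G$-CW structure) and the fact that the coefficient system maps are the injections $\Z/2\hookrightarrow\Z/4$; you should justify both rather than assert them. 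The paper's route via vanishing of the relative groups avoids this altogether.
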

\begin{proof}
To start observe that Lemma \ref{lem:e7e8rg4} and Proposition \ref{prop:weightedprojective} imply that  
\begin{equation}\label{caseX(4)}
H_k^G(X_G(4);(\underline{\pi}_0)_{(2)}) \cong \begin{cases} \Z/4\,, & \textnormal{ if }k=0, \\ 
0 \,, & \textnormal{ if }k=1. 
\end{cases}
\end{equation}
In what follows we are going to prove that 
\begin{equation}\label{relativeBredon}
H_{k}^{G}(X_G(2),X_G(4);(\underline{\pi}_0)_{(2)})=0 \ \ \text{ for } \ \ k=0, 1.
\end{equation} 
The result is then obtained using (\ref{caseX(4)}) and (\ref{relativeBredon}) in the 
long exact sequence in Bredon homology associated to the pair $(X_G(2),X_G(4))$
\[
\cdots\to H^{G}_{k}(X_G(4);(\underline{\pi}_0)_{(2)})\to H_{k}^{G}(X_G(2);(\underline{\pi}_0)_{(2)}) 
\to H_{k}^{G}(X_G(2),X_G(4);(\underline{\pi}_0)_{(2)})\to \cdots
\]
A standard argument using excision and the long exact sequence associated to the pair 
$(X_G(2)/X_G(4),X_G(4)/X_G(4))$ shows that (\ref{relativeBredon}) follows from
\[
H_{k}^{G}(X_G(2)/X_G(4);(\underline{\pi}_0)_{(2)})=0 \ \ \text{ for } \ \ k=0, 1.
\] 
We show this next. Let $\mathcal{Q}$  denote the coefficient system whose value 
is $\Z/2$ everywhere, except for $G/G$ where we set $\mathcal{Q}(G/G)=0$, and for which each map $\Z/2\to \Z/2$ is an isomorphism.
The coefficient systems $(\underline{\pi}_0)_{(2)}$ and $\mathcal{Q}$ agree when 
restricted to the quotient complex $X_G(2)/X_G(4)$, hence 
\[
H_{k}^{G}(X_G(2)/X_G(4);(\underline{\pi}_0)_{(2)})\cong H_{k}^{G}(X_G(2)/X_G(4);\mathcal{Q}).
\] 
Therefore, the proof of the lemma reduces to showing that $H_{k}^{G}(X_G(2)/X_G(4);\mathcal{Q})$ vanishes 
when $k=0, 1$. To see this for $k=0$ we observe that the coefficient system $\mathcal{Q}$ satisfies the 
conditions of Lemma \ref{computation spectral sequence}. Moreover, $X_G(2)/X_G(4)$ is path--connected 
as the same is true for $X_G(2)$, and the basepoint $X_G(4)/X_G(4)$ is fixed by the 
$G$-action. Lemma \ref{computation spectral sequence} then implies that 
\begin{equation}\label{stepk=0}
H_{0}^{G}(X_G(2)/X_G(4);\mathcal{Q})=0. 
\end{equation}
To finish we handle the case $k=1$. For this notice that Proposition \ref{prop:weightedprojective} 
implies that, up to homotopy equivalence, both $R_G(2)$ and $R_G(4)$ can be identified with 
weighted projective spaces. Therefore, for the constant coefficient $\underline{\Z/2}$ 
we have 
\begin{equation}\label{trivialstep1}
H_{k}^{G}(X_G(2)/X_G(4);\underline{\Z/2}) \cong 
H_{k}(R_G(2)/R_G(4);\Z/2)\cong \begin{cases} \Z/2\,, & \textnormal{ if }k=0, \\ 
0 \,, & \textnormal{ if }k=1. 
\end{cases}
\end{equation}
Let $\K$ denote the coefficient system which is 
$0$ everywhere, except for $G/G$ where we set $\K(G/G)= \Z/2$. Then we have a short exact sequence 
of coefficient systems 
\[
0\to \K\to \underline{\Z/2}\to \mathcal{Q}\to 0.
\]
Using (\ref{stepk=0}), (\ref{trivialstep1}) and the long exact sequence in Bredon 
homology associated to the above short exact sequence we obtain the short exact sequence 
\begin{equation}\label{longesE}
0\to H_{1}^{G}(X_G(2)/X_G(4);\mathcal{Q}) \to H_{0}^{G}(X_G(2)/X_G(4);\K)
\to  H_{0}^{G}(X_G(2)/X_G(4);\underline{\Z/2}) \to  0.
\end{equation}
By definition of $\K$, $H_{*}^{G}(X_G(2)/X_G(4);\K)$ is the $\Z/2$-homology of the space of $G$-fixed points of $X_G(2)/X_G(4)$. The only point fixed is the basepoint,  so $H_{0}^{G}(X_G(2)/X_G(4);\K)\cong \Z/2$. From the exact sequence (\ref{longesE}) we know that 
\[
H_{0}^{G}(X_G(2)/X_G(4);\K)\to H_{0}^{G}(X_G(2)/X_G(4);\underline{\Z/2})\cong \Z/2
\]
is surjective, hence an isomorphism. Exactness of (\ref{longesE}) thus implies that 
\[
H_{1}^{G}(X_G(2)/X_G(4);\mathcal{Q})=0\,,
\]
as we wanted to show.
\end{proof}

With the calculations of the previous lemma we have gathered all the necessary pieces to 
prove

\begin{theorem} \label{thm:mainpairs}
Let $G$ be a simply--connected and simple compact Lie group. Then
\[
\pi_2(\Hom(\Z^2,G)) \cong \Z\,,
\]
and on this group the quotient map $\Hom(\Z^2,G)\to \Rep(\Z^2,G)$ induces multiplication by the Dynkin index $\textnormal{lcm}\{n_0^\vee,\dots,n^\vee_r\}$ where 
$n_0^\vee,\dots,n_r^\vee \geqslant 1$ are the coroot integers of $G$.
\end{theorem}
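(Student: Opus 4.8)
The plan is to assemble Theorem \ref{thm:mainpairs} from Lemma \ref{lem:pretheorem} together with the Bredon homology computations carried out in this section. Recall that Lemma \ref{lem:pretheorem} gives $\pi_2(\Hom(\Z^2,G))\cong \Z\oplus E^2_{1,1}$, where $E^2_{1,1}=H_1^G(\Hom(\Z^2,G);\underline{\pi}_0)$ is finite, and that the image of $\pi_\ast$ on $\pi_2$ is the subgroup $d\Z\subseteq \Z\cong \pi_2(\Rep(\Z^2,G))$ whose index $d$ is the order of $E^2_{0,1}=H_0^G(\Hom(\Z^2,G);\underline{\pi}_0)$. So the theorem comes down to showing (i) $H_1^G(\Hom(\Z^2,G);\underline{\pi}_0)=0$, and (ii) $H_0^G(\Hom(\Z^2,G);\underline{\pi}_0)$ has order $\textnormal{lcm}\{n_0^\vee,\dots,n_r^\vee\}$, the Dynkin index.

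First I would apply Corollary \ref{cor:decomposition} to split $H_\ast^G(\Hom(\Z^2,G);\underline{\pi}_0)\cong \bigoplus_{p\in\mathcal{P}} H_\ast^G(X_G(p);(\underline{\pi}_0)_{(p)})$, reducing both statements to the individual $p$-local summands. For each prime $p\in\mathcal{P}$ with $G\notin\{E_7,E_8\}$ (or with $p>2$), Corollary \ref{cor:homologyrp} yields $H_0^G(X_G(p);(\underline{\pi}_0)_{(p)})\cong \Z/p$ and $H_1^G(X_G(p);(\underline{\pi}_0)_{(p)})=0$; while for $G\in\{E_7,E_8\}$ and $p=2$, Lemma \ref{lem:e7e8homology} gives $H_0^G(X_G(2);(\underline{\pi}_0)_{(2)})\cong \Z/4$ and $H_1^G(X_G(2);(\underline{\pi}_0)_{(2)})=0$. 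In every case the degree-one term vanishes, so $E^2_{1,1}=0$ and $\pi_2(\Hom(\Z^2,G))\cong \Z$, which is (i).

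For (ii), the direct sum decomposition exhibits $H_0^G(\Hom(\Z^2,G);\underline{\pi}_0)$ as a sum of cyclic groups of pairwise coprime orders --- one copy of $\Z/p$ for each $p\in\mathcal{P}$, with the summand at $p=2$ replaced by $\Z/4$ when $G$ is $E_7$ or $E_8$ --- hence it is cyclic, of order the product of those orders. Using Table \ref{table:coroot} one checks that this product is $1$ for $SU(m)$ and $Sp(k)$, $2$ for $Spin(k)$ and $G_2$, $6$ for $E_6$ and $F_4$, $12$ for $E_7$, and $60$ for $E_8$; in each case it coincides with $\textnormal{lcm}\{n_0^\vee,\dots,n_r^\vee\}$ (indeed the $p$-primary part is precisely the largest power of $p$ dividing a coroot integer). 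Thus $d=\textnormal{lcm}\{n_0^\vee,\dots,n_r^\vee\}$ is the Dynkin index, as recalled in the introduction. Since a homomorphism $\Z\to\Z$ with image $d\Z$ is multiplication by $\pm d$, the induced map $\pi_2(\Hom(\Z^2,G))\to\pi_2(\Rep(\Z^2,G))$ is multiplication by the Dynkin index, completing the argument.

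I expect the genuine mathematical content to be entirely contained in the lemmas already proved in this section --- the $G$-CW structure on $\Hom(\Z^2,G)$, the description of $\pi_0(Z_G(x,y))$ via the connecting map $\delta_y$, the identification of $R_G(p)$ with a weighted projective space (Proposition \ref{prop:weightedprojective}), and the separate $E_7$/$E_8$ calculation --- so the only remaining hurdle is organizational: correctly tracking the $2$-primary contribution in the exceptional cases, where the relevant coefficient system takes the value $\Z/4$ rather than $\Z/2$, and verifying the numerical coincidence with the tabulated Dynkin indices.
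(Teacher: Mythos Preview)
Your proposal is correct and follows essentially the same approach as the paper: invoke Lemma~\ref{lem:pretheorem}, split via Corollary~\ref{cor:decomposition}, then read off $E^2_{1,1}=0$ and the order of $E^2_{0,1}$ from Corollary~\ref{cor:homologyrp} and Lemma~\ref{lem:e7e8homology}, matching the result against Table~\ref{table:coroot}. The only cosmetic difference is that you note $E^2_{0,1}$ is cyclic (true, since the summands have coprime orders) whereas the paper is content to record its order.
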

\begin{proof}
To calculate $\pi_2(\Hom(\Z^2,G))$ our starting point is Lemma \ref{lem:pretheorem}, where we 
showed that $\pi_2(\Hom(\Z^2,G))\cong \Z\oplus E_{1,1}^2$. By Corollary \ref{cor:decomposition}, 
the group $E_{k,1}^2$ splits for each $k\geqslant 0$ into a direct sum
\[
E_{k,1}^2=H_k^G(\Hom(\Z^2,G);\underline{\pi}_0)\cong 
\bigoplus_{p\in \mathcal{P}} H_k^G(X_G(p);(\underline{\pi}_0)_{(p)})\,,
\]
where $\mathcal{P}$ is the set of primes dividing a coroot integer of $G$. For $k=1$ each direct 
summand is trivial, either by Corollary \ref{cor:homologyrp} or by Lemma \ref{lem:e7e8homology}, 
hence $E_{1,1}^2=0$. This proves that $\pi_2(\Hom(\Z^2,G))\cong \Z$.

Lemma \ref{lem:pretheorem} also showed that the degree\footnote{By \textit{degree} of a map $\Z\to \Z$ we will always mean its absolute value.} of 
$\pi_\ast\co \pi_2(\Hom(\Z^2,G))\to \pi_2(\Rep(\Z^2,G))$ equals the order of the finite group $E_{0,1}^2$. 
When $G$ is neither $E_7$ nor $E_8$, then $E_{0,1}^2\cong \bigoplus_{p \in \mathcal{P}}\Z/p$ by 
Corollary \ref{cor:homologyrp}. On the other hand, when $G=E_7$ or $G=E_8$, then 
Lemma \ref{lem:e7e8homology} implies that 
$E_{0,1}^2\cong\Z/4\oplus \bigoplus_{p\in\mathcal{P}\backslash\{2\}}\Z/p$. 
By inspection (see Table \ref{table:coroot}), the order of $E_{0,1}^2$ equals 
$\textnormal{lcm}\{n_0^\vee,\dots,n_r^\vee\}$, and the proof of the theorem is complete.
\end{proof}

Combining Proposition \ref{prop:weightedprojective} and Theorem \ref{thm:mainpairs} we derive the following more general result:

\begin{theorem} \label{thm:mainpairs2}
Suppose that $G$ is a semisimple compact connected Lie group. Then there is an extension
\[
0\to \Z^s \to H_2(\Hom(\Z^2,G)_{\BONE};\Z) \to H_2(\pi_1(G)^2;\Z)\to 0\,,
\]
where $s\geqslant 0$ is the number of simple factors in the Lie algebra of $G$.
\end{theorem}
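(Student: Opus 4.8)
The plan is to pass to the universal covering group and run the Cartan--Leray spectral sequence of the resulting covering. Write $\tilde G=G_1\times\cdots\times G_s$ for the universal cover of $G$, a product of simply--connected simple compact Lie groups, so that $G\cong\tilde G/\pi_1(G)$ with $\pi_1(G)$ a finite central subgroup of $\tilde G$. By \cite[Lemma 2.2]{Goldman} the projection $\tilde G\to G$ induces a covering map $\Hom(\Z^2,\tilde G)\to\Hom(\Z^2,G)_{\BONE}$; since $\Hom(\Z^2,\tilde G)$ is simply--connected and $\pi_1(\Hom(\Z^2,G)_{\BONE})\cong\pi_1(G)^2$ by \cite[Theorem 1.1]{GPS}, this is the universal cover, a regular covering with deck group $\Gamma:=\pi_1(G)^2$. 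Using Theorem \ref{thm:mainpairs} together with the K\"unneth theorem (all factors are simply--connected, so there are no $H_1$-contributions) one obtains $H_2(\Hom(\Z^2,\tilde G);\Z)\cong\pi_2(\Hom(\Z^2,\tilde G))\cong\bigoplus_{i=1}^{s}\pi_2(\Hom(\Z^2,G_i))\cong\Z^s$, while $H_1(\Hom(\Z^2,\tilde G);\Z)=0$.

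Next I would feed this into the Cartan--Leray spectral sequence
\[
E^2_{p,q}=H_p\!\left(\Gamma;H_q(\Hom(\Z^2,\tilde G);\Z)\right)\Longrightarrow H_{p+q}(\Hom(\Z^2,G)_{\BONE};\Z)\,.
\]
Because the cover has trivial first homology the entire row $q=1$ vanishes, and the only differential that can touch the terms of total degree $2$ is $d_3\colon E^3_{3,0}\to E^3_{0,2}$; hence the filtration on $H_2(\Hom(\Z^2,G)_{\BONE};\Z)$ yields a short exact sequence
\[
0\to \operatorname{coker}\!\big(d_3\colon H_3(\Gamma;\Z)\to H_2(\Hom(\Z^2,\tilde G);\Z)_{\Gamma}\big)\to H_2(\Hom(\Z^2,G)_{\BONE};\Z)\to H_2(\Gamma;\Z)\to 0\,,
\]
in which $H_2(\Gamma;\Z)=H_2(\pi_1(G)^2;\Z)$ is the desired quotient. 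Since $\Gamma$ is finite, $H_3(\Gamma;\Z)$ is finite; so as soon as we know that $\Gamma$ acts \emph{trivially} on $H_2(\Hom(\Z^2,\tilde G);\Z)\cong\Z^s$, the coinvariants equal $\Z^s$, the differential $d_3$ takes a finite group into a torsion--free one and therefore vanishes, and the exact sequence becomes the asserted extension.

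The crux, and the step I expect to be the main obstacle, is thus to prove that the deck action of $\Gamma$ on $H_2$ of the cover is trivial. The deck transformation attached to $\gamma=(z_1,z_2)\in\pi_1(G)^2\subseteq Z(\tilde G)^2$ is the self--map $\tau_{\gamma}\colon(a,b)\mapsto(z_1a,z_2b)$ of $\Hom(\Z^2,\tilde G)$; since $z_1,z_2$ are central, $\tau_\gamma$ descends, compatibly with the quotient map $\pi$, to the self--map $\bar\tau_{\gamma}$ of $\Rep(\Z^2,\tilde G)\cong T_{\tilde G}^2/W_{\tilde G}$ given by translation by $(z_1,z_2)\in T_{\tilde G}^2$. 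Translation by a fixed element of the connected group $T_{\tilde G}^2$ is homotopic to the identity, so it acts trivially on $H^{\ast}(T_{\tilde G}^2;\Q)$; as $|W_{\tilde G}|$ is invertible in $\Q$, the quotient map identifies $H^{\ast}(\Rep(\Z^2,\tilde G);\Q)$ with $H^{\ast}(T_{\tilde G}^2;\Q)^{W_{\tilde G}}$, whence $\bar\tau_{\gamma}$ acts trivially on $H^{\ast}(\Rep(\Z^2,\tilde G);\Q)$. Because $\pi_2(\Rep(\Z^2,\tilde G))\cong H_2(\Rep(\Z^2,\tilde G);\Z)$ is torsion--free — by Proposition \ref{prop:weightedprojective} each of its factors is a weighted projective space — it follows that $\bar\tau_{\gamma}$ is the identity on $\pi_2(\Rep(\Z^2,\tilde G))$.

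To transfer this back to $\Hom(\Z^2,\tilde G)$, observe that by Theorem \ref{thm:mainpairs} the map $\pi_{\ast}\colon\pi_2(\Hom(\Z^2,\tilde G))\to\pi_2(\Rep(\Z^2,\tilde G))$ is the direct sum of multiplications by the Dynkin indices of the factors $G_i$, each a nonzero integer, so $\pi_{\ast}$ is injective. From $\pi_{\ast}\circ(\tau_{\gamma})_{\ast}=(\bar\tau_{\gamma})_{\ast}\circ\pi_{\ast}=\pi_{\ast}$ we conclude that $(\tau_{\gamma})_{\ast}$ is the identity on $\pi_2(\Hom(\Z^2,\tilde G))=H_2(\Hom(\Z^2,\tilde G);\Z)$, which is exactly the triviality needed above; substituting this into the short exact sequence of the second paragraph completes the proof.
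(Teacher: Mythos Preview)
Your proof is correct and follows essentially the same approach as the paper: both run the Serre (equivalently, Cartan--Leray) spectral sequence of the universal covering $\Hom(\Z^2,\tilde G)\to\Hom(\Z^2,G)_{\BONE}$, reduce to showing the deck action on $H_2(\Hom(\Z^2,\tilde G);\Z)\cong\Z^s$ is trivial, and verify this by passing to $\Rep(\Z^2,\tilde G)$ via the injection of Theorem~\ref{thm:mainpairs} and noting that central translations act trivially on the homology of the representation space. Your argument for the last point (homotoping the translation on $T^2$ to the identity and taking $W$-invariants rationally, then using torsion-freeness of $H_2(\Rep(\Z^2,\tilde G);\Z)$) is a minor variant of the paper's, which instead invokes the explicit identification with a weighted projective space to extend the central action to a connected torus action.
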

\begin{proof}
Consider the Serre spectral sequence for the universal covering sequence
\[
\Hom(\Z^2,\tilde{G})\to \Hom(\Z^2,G)_{\BONE}\to B\pi_1(G)^2\,.
\]
Inspection of the proof of \cite[Lemma 2.2]{Goldman} shows that the action by deck translation of $\pi_1(G)^2$ on $\Hom(\Z^2,\tilde{G})$ is simply $(x,y)\mapsto (ax,by)$ where $a,b\in \pi_1(G)$ are viewed as elements in the center of $\tilde{G}$. We claim that this action is trivial on $H_2(\Hom(\Z^2,\tilde{G});\Z)$. Let us first treat the case when $G$ is simple. Then we can view $H_2(\Hom(\Z^2,\tilde{G});\Z)\cong \Z$ as a $\pi_1(G)^2$-submodule of $H_2(\Rep(\Z^2,\tilde{G});\Z)$ through Theorem \ref{thm:mainpairs}. The action of $Z(\tilde{G})$ on $\Rep(\Z^2,\tilde{G})$ given by translation of either coordinate is trivial on homology, because it extends to an action of a maximal torus on $\SS^{2r+1}/\SS^1$ under the identification in (\ref{eq:explicitequivalence}). Since $H_2(\Hom(\Z^2,\tilde{G});\Z)$ is torsion free, the differential $d_3\co H_3(\pi_1(G)^2;\Z)\to H_2(\Hom(\Z^2,\tilde{G});\Z)$ is trivial, and the extension follows.

The same argument applies when $G$ is semisimple, with the only difference that now $\Hom(\Z^2,\tilde{G})\cong \Hom(\Z^2,G_1)\times \cdots \times \Hom(\Z^2,G_s)$, hence $H_2(\Hom(\Z^2,\tilde{G});\Z)\cong \Z^s$, where $G_1,\dots,G_s$ are the simple factors in $\tilde{G}$.
\end{proof}

Let $G$ be simple, and suppose that the finite group $\pi_1(G)$ is not cyclic; then neither is $H_2(\pi_1(G)^2;\Z)$, and we deduce from Theorem \ref{thm:mainpairs2} that $H_2(\Hom(\Z^2,G)_{\BONE};\Z)$ necessarily has torsion. This happens for $G=PSO(4n)$ for any $n\geqslant 1$, since $\pi_1(PSO(4n))\cong \Z/2\oplus \Z/2$.

As another example consider $SO(3)$. One shows that $H_2(\Hom(\Z^2,SO(3))_{\BONE};\Z)\cong \Z$ (for example, using the description $\Hom(\Z^2,SO(3))_{\BONE}\cong (\SS^2\times_{\Z/2}(\SS^1)^2)/(\SS^2\times_{\Z/2}\ast)$ provided in \cite[Theorem 3.1]{STG}). In this case the extension is $\Z\stackrel{2}{\longrightarrow}\Z\to \Z/2$.\medskip

For $n\geqslant 2$ let $p_{i,j}\co \Hom(\Z^n,G)_{\BONE}\to \Hom(\Z^2,G)$ be the projection onto 
the $i$-th and $j$-th component. Denote by 
$p\co \Hom(\Z^n,G)_{\BONE}\to \prod_{1\leqslant i<j\leqslant n} \Hom(\Z^2,G)$ the map whose 
$(i,j)$-th component is $p_{i,j}$. For later reference we record

\begin{corollary} \label{cor:h2modtorsion}
Let $n\geqslant 2$. Then the map $p\co \Hom(\Z^n,G)_{\BONE}\to \prod^{\binom{n}{2}}\Hom(\Z^2,G)$ 
induces an isomorphism
\[
p_\ast \co \pi_2(\Hom(\Z^n,G)_{\BONE})/\textnormal{torsion} 
\xrightarrow{\cong} \bigoplus^{\binom{n}{2}} \pi_2(\Hom(\Z^2,G))\,.
\]
\end{corollary}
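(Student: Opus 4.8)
The plan is to prove that $p_\ast$ is surjective and then finish with a rank count. First I would note that $\Hom(\Z^n,G)_{\BONE}$ is compact and has the homotopy type of a finite CW complex, and that it is simply--connected because $G$ is; hence $\pi_2(\Hom(\Z^n,G)_{\BONE})\cong H_2(\Hom(\Z^n,G)_{\BONE};\Z)$ is a finitely generated abelian group, and by Corollary \ref{cor:rank} its quotient by torsion is free abelian of rank $\binom{n}{2}$. On the other side, $\bigoplus^{\binom{n}{2}}\pi_2(\Hom(\Z^2,G))\cong \Z^{\binom{n}{2}}$ by Theorem \ref{thm:mainpairs}. Since the target is torsion--free, $p_\ast$ descends to a homomorphism between free abelian groups of equal finite rank, and any such homomorphism that is surjective is automatically an isomorphism (its kernel is a direct summand, necessarily of rank zero). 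So it suffices to show that $p_\ast$ is surjective.

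For surjectivity, for each pair $1\leqslant i<j\leqslant n$ I would use the map $\iota_{i,j}\co \Hom(\Z^2,G)\to \Hom(\Z^n,G)_{\BONE}$ that places a commuting pair $(x,y)$ in coordinates $i$ and $j$ and the identity $1_G$ in all remaining coordinates; its image lands in the path--component of the trivial homomorphism since $\Hom(\Z^2,G)$ is connected. Composing with the projections $p_{k,l}$ one checks directly that: $p_{i,j}\circ\iota_{i,j}=\textnormal{id}$; the composite $p_{k,l}\circ\iota_{i,j}$ is constant when $\{k,l\}\cap\{i,j\}=\emptyset$; and $p_{k,l}\circ\iota_{i,j}$ factors through a one--coordinate inclusion $G\hookrightarrow \Hom(\Z^2,G)$, $x\mapsto(x,1_G)$ or $x\mapsto(1_G,x)$, when $\{k,l\}$ meets $\{i,j\}$ in exactly one index. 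In that last case the composite induces the zero map on $\pi_2$ because $\pi_2(G)=0$. Therefore $p_\ast\circ(\iota_{i,j})_\ast$ is precisely the inclusion of the $(i,j)$-th summand of $\bigoplus^{\binom{n}{2}}\pi_2(\Hom(\Z^2,G))$, and letting $(i,j)$ range over all pairs shows that the image of $p_\ast$ contains every summand. Hence $p_\ast$ is surjective and the corollary follows.

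I do not expect a genuine obstacle. The only step that needs a moment of care is the claim that the ``off--diagonal'' composites $p_{k,l}\circ\iota_{i,j}$ vanish on $\pi_2$, and this is immediate once one observes that such a composite factors through a single coordinate of $\Hom(\Z^2,G)$, together with the classical fact $\pi_2(G)=0$. Everything else is the rank information already supplied by Corollary \ref{cor:rank} and Theorem \ref{thm:mainpairs} plus the elementary observation that a surjection of free abelian groups of the same finite rank is an isomorphism.
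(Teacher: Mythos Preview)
Your proposal is correct and follows essentially the same argument as the paper: both use the coordinate-inclusion maps $\iota_{i,j}$ together with $\pi_2(G)=0$ to show that $p_\ast$ is surjective, and then conclude by the rank count from Corollary~\ref{cor:rank} and Theorem~\ref{thm:mainpairs}. Your case analysis of the off-diagonal composites is slightly more detailed than the paper's, but the strategy is identical.
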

\begin{proof}
For each $1\leqslant i< j\leqslant n$ we have a natural inclusion of the $i$-th and $j$-th component
\[
I_{i,j}:\Hom(\Z^2,G)\to  \Hom(\Z^n,G)_{\BONE}
\]
which inserts the identity in all the other components. Observe that, since $\pi_2(G)=0$, for all $i,j,k,l$ we have that $(p_{i,j})_\ast\circ (I_{k,l})_\ast=0$, unless $i=k$ and $j=l$ in which case $(p_{i,j})_\ast\circ (I_{i,j})_\ast=id$. This implies that $p_{*}$ is surjective. 
By Corollary \ref{cor:rank} and Theorem \ref{thm:mainpairs}, both the domain (modulo torsion) and codomain of $p_\ast$ are free abelian groups of rank $\binom{n}{2}$. 
Since $p_{*}$ is a surjective map of free abelian groups of the same rank it must be 
an isomorphism. 
\end{proof}

\section{Stability for commuting pairs in Spin groups} \label{sec:stability}

In this section we study the stability behavior in the case of Spin groups. 

For $m\geqslant 3$ the group $Spin(m)$ is the universal covering group of $SO(m)$. 
The standard inclusion $SO(m)\hookrightarrow SO(m+1)$, given by block sum with a $1\times 1$ 
identity matrix, induces a map $Spin(m)\to Spin(m+1)$. The purpose of this section is to prove 
Theorem \ref{thm:spinstability} which asserts that for $m\geqslant 5$ the map
\begin{equation} \label{eq:stabilizationmapspin}
\Hom(\Z^2,Spin(m))\to \Hom(\Z^2,Spin(m+1))
\end{equation}
induces an isomorphism of second homotopy groups.

We begin by proving a general homology stability result for representation spaces of spinor 
groups which may be interesting on its own. Only a special case of it will be needed to prove 
Theorem \ref{thm:spinstabilityintro}. Let
\[
i_{\ell-1}^{\textnormal{ev}}\co \Hom(\Z^2,Spin(2\ell-2))\to \Hom(\Z^2,Spin(2\ell))
\]
denote the map obtained by iterating the stabilization map (\ref{eq:stabilizationmapspin}). 
Similarly, define 
\[
i_{\ell-1}^{\textnormal{odd}}\co \Hom(\Z^2,Spin(2\ell-1))\to \Hom(\Z^2,Spin(2\ell+1))\, .
\]
Let $\bar{i}_{\ell-1}^{\textnormal{ev}}$ and $\bar{i}_{\ell-1}^{\textnormal{odd}}$ 
denote the respective maps induced on representation spaces.

\begin{theorem} \label{thm:stabilityrepspin}
\leavevmode
\begin{itemize}
\item[(i)] When $\ell\geqslant 4$, then the map
\[
(\bar{i}_{\ell-1}^{\textnormal{ev}})_\ast\co H_k(\Rep(\Z^2,Spin(2\ell-2));\Z)\to H_k(\Rep(\Z^2,Spin(2\ell));\Z)
\]
is an isomorphism for all $k\leqslant 2\ell-7$ and multiplication by $2$ for $k=2\ell-6$. \smallskip
\item[(ii)] When $\ell\geqslant 3$, then the map
\[
(\bar{i}_{\ell-1}^{\textnormal{odd}})_\ast\co H_k(\Rep(\Z^2,Spin(2\ell-1));\Z)\to H_k(\Rep(\Z^2,Spin(2\ell+1));\Z)
\]
is an isomorphism for all $k\leqslant 2\ell-5$ and multiplication by $2$ for $k=2\ell-4$.
\end{itemize}
\end{theorem}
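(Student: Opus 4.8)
The plan is to reduce everything to weighted projective spaces and their standard coordinate inclusions, using Proposition \ref{prop:weightedprojective}, and then to carry out a homology computation based on Kawasaki's description.

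\textbf{Step 1 (weighted projective models).} For $G$ simple and simply--connected, Proposition \ref{prop:weightedprojective} gives a homotopy equivalence $\Rep(\Z^2,G)\simeq \mathbb{CP}(\mathbf{n}^\vee)$. A direct computation of coroot integers shows that for $B_\ell$ one has $-\alpha_0^\vee=\alpha_1^\vee+2\alpha_2^\vee+\dots+2\alpha_{\ell-1}^\vee+\alpha_\ell^\vee$, so $\mathbf{n}^\vee=(1,1,1,2,\dots,2)$ with $\ell-2$ twos, and for $D_\ell$ one has $-\alpha_0^\vee=\alpha_1^\vee+2\alpha_2^\vee+\dots+2\alpha_{\ell-2}^\vee+\alpha_{\ell-1}^\vee+\alpha_\ell^\vee$, so $\mathbf{n}^\vee=(1,1,1,1,2,\dots,2)$ with $\ell-3$ twos. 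Hence
\[
\Rep(\Z^2,Spin(2\ell+1))\simeq \mathbb{CP}(1^3,2^{\ell-2})\,,\qquad \Rep(\Z^2,Spin(2\ell))\simeq \mathbb{CP}(1^4,2^{\ell-3})\,,
\]
both of complex dimension $\ell$, while the sources of $\bar{i}_{\ell-1}^{\textnormal{odd}}$ and $\bar{i}_{\ell-1}^{\textnormal{ev}}$ have complex dimension $\ell-1$.

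\textbf{Step 2 (identifying the stabilization maps).} The claim is that, under the equivalences of Step 1, $\bar{i}_{\ell-1}^{\textnormal{odd}}$ is homotopic to the standard coordinate inclusion $\mathbb{CP}(1^3,2^{\ell-3})\hookrightarrow\mathbb{CP}(1^3,2^{\ell-2})$ which adjoins one weight--$2$ coordinate and sets it to zero (a map of the type $\iota_p$ in Proposition \ref{prop:weightedprojective}), and likewise $\bar{i}_{\ell-1}^{\textnormal{ev}}$ is homotopic to $\mathbb{CP}(1^4,2^{\ell-4})\hookrightarrow\mathbb{CP}(1^4,2^{\ell-3})$. I expect this to be the main obstacle. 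The subtlety is that the affine Dynkin diagram of $B_{\ell-1}$ (resp.\ $D_{\ell-1}$) is \emph{not} a subdiagram of that of $B_\ell$ (resp.\ $D_\ell$), so the stabilization does not carry the small alcove isometrically onto a face of the large one, and the required homotopy must be produced by hand. My approach would be to realise both representation spaces as homotopy colimits over their face posets as in Lemma \ref{lem:coendequivalence}, to choose the maximal torus of the smaller group as a codimension--one subtorus of that of the larger one so that $W(B_{\ell-1})$ (resp.\ $W(D_{\ell-1})$) embeds as the effective stabiliser of the extra coordinate, and then to construct a map of diagrams inducing $\bar{i}_{\ell-1}^{\textnormal{odd}}$ (resp.\ $\bar{i}_{\ell-1}^{\textnormal{ev}}$) which, after the affine reparametrisation of alcoves supplied by the explicit formula (\ref{eq:explicitequivalence}), becomes the coordinate inclusion. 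One only needs this identification up to self--homotopy--equivalences of the target, which act by $\pm 1$ on $H_2\cong\Z$, so it suffices for the homology conclusion.

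\textbf{Step 3 (homology of coordinate inclusions, and conclusion).} It remains to prove: for $a\geqslant 2$ the standard coordinate inclusion $j\co \mathbb{CP}(1^a,2^b)\hookrightarrow\mathbb{CP}(1^a,2^{b+1})$ induces on $H_{2k}(-;\Z)$ an isomorphism for $k\leqslant b$ and multiplication by $2$ for $k=b+1$. I would deduce this from Kawasaki's computation (\ref{eq:homologyofweightedprojectivespace}) together with the observation that the affine chart $\{v_{b+1}\neq 0\}$ of $\mathbb{CP}(1^a,2^{b+1})$ is the contractible space $(\C^a/(\Z/2))\times\C^b$: thus $\mathbb{CP}(1^a,2^{b+1})$ is obtained from $\mathbb{CP}(1^a,2^b)$ by attaching a cone along the corresponding lens--type boundary, and the long exact sequence of this pair — compared, via the $2^b$--fold branched covers $\mathbb{CP}^{a+b-1}\to\mathbb{CP}(1^a,2^b)$ and $\mathbb{CP}^{a+b}\to\mathbb{CP}(1^a,2^{b+1})$ that square the weight--$2$ coordinates, with the corresponding sequence for the torsion--free inclusion $\mathbb{CP}^{a+b-1}\hookrightarrow\mathbb{CP}^{a+b}$ — pins down $j_\ast$ as claimed, the factor $2$ being the Euler number of the weight--$2$ normal direction. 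Finally, feeding in the numerology of Step 1 (so $b=\ell-3$ in the odd case, $b=\ell-4$ in the even case, the source has complex dimension $\ell-1$, and odd--degree homology vanishes since these spaces are simply--connected) yields exactly the ranges asserted in (i) and (ii).
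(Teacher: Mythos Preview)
Your Step 1 is correct (up to harmless reordering of the weights), and your Step 3 is a reasonable variant of the Kawasaki computation the paper also invokes. The genuine gap is Step 2.

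You assert that $\bar{i}_{\ell-1}^{\textnormal{ev}}$ and $\bar{i}_{\ell-1}^{\textnormal{odd}}$ become, under the weighted projective models, the \emph{standard} coordinate inclusion that adjoins one weight--$2$ coordinate and sets it to zero. The paper actually computes these maps explicitly, and they are \emph{not} standard inclusions. In the $B$ case, for instance, the induced map on $(A\times T)/{\approx}$ sends $(t_1,\dots,t_{\ell-1})\mapsto(t_1,\dots,t_{\ell-2},t_{\ell-1}^2,t_{\ell-1})$ while the barycentric image has its \emph{weight--$1$} coordinate (not a weight--$2$ coordinate) set to zero; in homogeneous coordinates this reads $[z_0,\dots,z_{\ell-1}]\mapsto[z_0,\dots,z_{\ell-2},z_{\ell-1}^2/|z_{\ell-1}|,0]$, with the squared term landing in a weight--$2$ slot. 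In the $D$ case the alcove map $\bar\theta_{\ell-1}$ is genuinely piecewise--linear, $(a_0,\dots,a_{\ell-1})\mapsto(a_0,\dots,a_{\ell-3},2\min\{a_{\ell-2},a_{\ell-1}\},\tfrac{|a_{\ell-1}-a_{\ell-2}|}{2},\tfrac{|a_{\ell-1}-a_{\ell-2}|}{2})$, which again is not a coordinate embedding. So the identification you hope to produce ``by hand'' does not hold on the nose, and you give no argument that it holds up to homotopy (or even up to self--equivalences of the target).

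The paper sidesteps this by proving a weaker but sufficient statement: after computing $f_{\ell-1}$ explicitly, it observes that its \emph{restriction} to the sub-weighted-projective-space $\mathbb{CP}((\mathbf{n}^\vee_{\ell-1})')$ obtained by zeroing the last two (for $D$) or last one (for $B$) homogeneous coordinates \emph{is} the standard inclusion into $\mathbb{CP}(\mathbf{n}^\vee_\ell)$. Since, by Kawasaki's degree formula, this smaller inclusion is already a homology isomorphism onto $\mathbb{CP}(\mathbf{n}^\vee_{\ell-1})$ in the stated range, the commutative triangle (diagram (\ref{dgr:wp12}) in the paper) pins down the degree of $(f_{\ell-1})_\ast$ in each relevant degree without ever identifying $f_{\ell-1}$ globally with a standard map. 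That factorization is the missing idea in your argument.
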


\begin{proof}
We begin by proving (i). For $\ell\geqslant 3$ the group $Spin(2\ell)$ is described by the root system 
$D_\ell$ (for $\ell=3$ there is an isomorphism $D_3\cong A_3$ yielding the exceptional isomorphism 
$Spin(6)\cong SU(4)$). Let $\{\alpha_1,\dots,\alpha_\ell\}$ be a set of simple roots. It may be chosen 
in such a way that, when $\ell\geqslant 4$, the subset $\{\alpha_2,\dots,\alpha_\ell\}$ determines a 
subroot system of $D_\ell$ of type $D_{\ell-1}$ and the image of $Spin(2\ell-2)$ in $Spin(2\ell)$ is 
the subgroup corresponding to this subroot system.

Now let $\ell\geqslant 4$. Let $\mathbf{n}^{\vee}_{\ell-1}=(1,1,2,\dots,2,1,1)\in\Z^{\ell}$ be the 
tuple of coroot integers of $Spin(2\ell-2)$ (of which $\ell-4$ entries are equal to $2$). By 
Proposition \ref{prop:weightedprojective}, the map $\bar{i}_{\ell-1}^{\textnormal{ev}}$ is 
equivalent to a map $f_{\ell-1} \co \mathbb{CP}(\mathbf{n}^\vee_{\ell-1}) \to \mathbb{CP}(\mathbf{n}^\vee_{\ell})$, 
so that $\deg((\bar{i}_{\ell-1}^{\textnormal{ev}})_\ast)=\deg((f_{\ell-1})_\ast)$. 
Let $(\mathbf{n}^\vee_{\ell-1})'\in \Z^{\ell-2}$ be the tuple consisting of the first $\ell-2$ entries of 
$\mathbf{n}^\vee_{\ell-1}$. We are going to describe $f_{\ell-1}$ explicitly, and show that 
it fits into a commutative diagram
\begin{equation} \label{dgr:wp12}
\xymatrix{
\mathbb{CP}((\mathbf{n}^\vee_{\ell-1})')\ar[d] \ar[dr] & \\
\mathbb{CP}(\mathbf{n}^\vee_{\ell-1}) \ar[r]^-{f_{\ell-1}} & \mathbb{CP}(\mathbf{n}^\vee_{\ell}).
}
\end{equation}
Here the map $\mathbb{CP}((\mathbf{n}^\vee_{\ell-1})')\to \mathbb{CP}(\mathbf{n}^\vee_{\ell-1})$ is the 
``standard inclusion'', described in homogeneous coordinates by 
$[z_0,\dots,z_{\ell-3}]\mapsto [z_0,\dots,z_{\ell-3},0,0]$, and 
$\mathbb{CP}((\mathbf{n}^\vee_{\ell-1})')\to \mathbb{CP}(\mathbf{n}^\vee_{\ell})$ is the ``standard inclusion'' 
$[z_0,\dots,z_{\ell-3}]\mapsto [z_0,\dots,z_{\ell-3},0,0,0]$.

Let us first show how the conclusion of the theorem is obtained from commutativity of the diagram. If $\mathbf{w}\in \Z^{r+1}_{>0}$, $r\geqslant 1$, and $p_{\mathbf{w}}\co \mathbb{CP}^r\to \mathbb{CP}(\mathbf{w})$ is the projection given by 
$[z_0,\dots,z_r]\mapsto [z_0^{w_0},\dots,z_r^{w_r}]$, then the degree of 
$(p_{\mathbf{w}})_\ast\co H_{2k}(\mathbb{CP}^r;\Z)\to H_{2k}(\mathbb{CP}(\mathbf{w});\Z)$ is given by
\begin{equation} \label{eq:degreeofmapofweightedprojectivespaces}
\textnormal{lcm}\left\{\frac{w_{i_0}\cdots w_{i_k}}{\textnormal{gcd}\{w_{i_0},\dots,w_{i_k}\}}
\mid 0\leqslant i_0 <\cdots < i_k\leqslant r \right\}\,,
\end{equation}
see \cite[p. 244]{K73}. This can be used to show that the inclusion $\mathbb{CP}((\mathbf{n}^\vee_{\ell-1})')\to \mathbb{CP}(\mathbf{n}^\vee_{\ell-1})$ induces homology 
isomorphisms in degrees $k\leqslant 2\ell-6$, whereas 
$\mathbb{CP}((\mathbf{n}^\vee_{\ell-1})')\to \mathbb{CP}(\mathbf{n}^\vee_\ell)$ induces isomorphisms in degrees 
$k\leqslant 2\ell-7$ and multiplication by $2$ in degree $k=2\ell-6$. Consequently, 
$f_{\ell-1}\co \mathbb{CP}(\mathbf{n}^\vee_{\ell-1})\to \mathbb{CP}(\mathbf{n}^\vee_\ell)$ 
induces isomorphisms in degrees $k\leqslant 2\ell-7$ and multiplication by $2$ in degree $k=2\ell-6$.

Now we construct $f_{\ell-1}$. Fix a maximal torus $T_\ell\leqslant Spin(2\ell)$, and let $W_\ell$ 
be the corresponding Weyl group. The maximal torus $T_{\ell-1}\leqslant Spin(2\ell-2)$ is the subtorus of 
$T_\ell$ generated by the one-parameter subgroups corresponding to the coroots 
$\alpha_2^\vee,\dots,\alpha_\ell^\vee$. Let $\theta_{\ell-1}\co T_{\ell-1}\hookrightarrow T_{\ell}$ 
denote the inclusion. The map $\bar{i}_{\ell-1}^{\textnormal{ev}}\co \Rep(\Z^2,Spin(2\ell-2))\to \Rep(\Z^2,Spin(2\ell))$ 
may be identified with the map $(T_{\ell-1}\times T_{\ell-1})/W_{\ell-1}\to (T_{\ell}\times T_{\ell})/W_{\ell}$ 
induced by $\theta_{\ell-1}$. Let $A_\ell$ denote the fundamental alcove determined by 
$\{\alpha_1,\dots,\alpha_\ell\}$, and let $A_{\ell-1}$ be the alcove in $\t_{\ell-1}$ determined by 
$\{\alpha_2,\dots,\alpha_\ell\}$. Our aim is to define a map $f_{\ell-1}$ making the following 
diagram commute:
\begin{equation} \label{dgr:fl}
\xymatrix{
(T_{\ell-1}\times T_{\ell-1})/W_{\ell-1} \ar[d]^-{\simeq} 
\ar[r]^-{\bar{i}_{\ell-1}^{\textnormal{ev}}} & (T_{\ell}\times T_{\ell})/W_{\ell}  \ar[d]^-{\simeq}  \\
(A_{\ell-1}\times T_{\ell-1})/{\approx} \ar[r]^-{f_{\ell-1}}  & (A_{\ell}\times T_{\ell})/{\approx}
}
\end{equation}
The spaces in the bottom row are homeomorphic to weighted projective spaces, see Lemma \ref{lem:coendwp}. 
The vertical maps are the equivalences established in the course of proving 
Proposition \ref{prop:weightedprojective}. For example, the right hand map takes a 
$W_\ell$-orbit $((t_1,t_2))$ to the equivalence class under $\approx$ of any representative 
$(x,t)$ of $((t_1,t_2))$ with $x\in A_\ell$ and $t\in T_\ell$.

The inclusion $\theta_{\ell-1}\co T_{\ell-1}\hookrightarrow T_\ell$ descends to a map 
$T_{\ell-1}/W_{\ell-1}\to T_\ell/W_\ell$, hence defines a map of alcoves 
$\bar{\theta}_{\ell-1}\co A_{\ell-1}\to A_\ell$. To define $f_{\ell-1}$ we begin by describing 
$\bar{\theta}_{\ell-1}$ in barycentric coordinates. For this we will work with explicit formulae 
for the root system $D_\ell$ which can be found, for example, in \cite[Plate IV]{Bourbaki46}. 
Note that $D_\ell$ is self-dual, which means that the expressions for roots and weights shown 
in \cite{Bourbaki46} also represent the coroots and coweights, respectively.

We identify the Lie algebra $\mathfrak{t}_\ell$ of $T_\ell$ with $\R^\ell$. With respect to the 
standard basis $\{e_1,\dots,e_\ell\}$ of $\R^\ell$, the simple coroots can be chosen to be
\begin{equation} \label{eq:corootsstandardbasis}
\alpha_i^\vee=e_i-e_{i+1}\;\textnormal{ for }\;i=1,\dots,\ell-1\,,\;\textnormal{ and }\; 
\alpha^\vee_\ell=e_{\ell-1}+e_{\ell}\,.
\end{equation}
The Weyl group $W_\ell$ acts on $\t_\ell$ by signed permutations $e_i\mapsto \pm e_{\sigma(i)}$ 
($\sigma\in \Sigma_\ell$) such that the total number of negative signs is even.  
Note that $\mathfrak{t}_{\ell-1}=\textnormal{span}(\alpha_2^\vee,\dots,\alpha_\ell^\vee)
=\textnormal{span}(e_2,\dots,e_\ell) \subseteq \t_\ell$.

Recall from \cite[VI \S 2.2 Corollary of Proposition 5]{Bourbaki46} that, as a subspace of 
$\mathfrak{t}_\ell$, $A_{\ell}$ is the convex hull
\[
A_{\ell}=\textnormal{Conv}(0,\omega^\vee_1/n_1,\dots,\omega^\vee_\ell/n_\ell)\,,
\]
where $\omega_i^\vee$ is the $i$-th fundamental coweight (defined by $\alpha_j(\omega_i^\vee)=\delta_{ij}$) 
and $n_i$ is the root integer associated to $\alpha_i$ (which in the case of $D_\ell$ equals 
the $i$-th coroot integer $n_i^\vee$). Let us write $v_i:=\omega_i^\vee/n_i$, so that 
$\{0,v_1,\dots,v_\ell\}$ is the set of vertices of $A_\ell$. Let $\{0,u_1,\dots,u_{\ell-1}\}$ 
denote the set of vertices of $A_{\ell-1}\subseteq \t_{\ell-1}\subseteq \t_\ell=\R^\ell$. 
Using \cite[Plate IV]{Bourbaki46} we can write the vertices in terms of the standard basis of $\R^\ell$. 
The result is displayed in Table \ref{table:vertices}.

\begin{table}[h]
\centering
\def\arraystretch{1.5}
\begin{tabular}{ll}
$u_1=e_2$  &  $v_1=e_1$ \\
$u_2=\frac{1}{2}\left(e_2+e_3\right)$   & $v_2=\frac{1}{2}\left(e_1+e_2\right)$ \\
\vdots & \vdots \\
$u_{\ell-3}= \frac{1}{2}\left(e_2+\cdots + e_{\ell-2}\right)$ & $v_{\ell-3}
=\frac{1}{2}\left(e_1+\cdots+e_{\ell-3}\right)$  \\
$u_{\ell-2}= \frac{1}{2}\left(e_2+\cdots + e_{\ell-1}-e_\ell\right)$  & 
$v_{\ell-2}=\frac{1}{2}\left(e_1+\cdots+e_{\ell-2}\right)$ \\
$u_{\ell-1}= \frac{1}{2}\left(e_2+\cdots + e_{\ell-1}+e_\ell\right)$  & 
$v_{\ell-1}=\frac{1}{2}\left(e_1+\cdots+e_{\ell-1}-e_{\ell}\right)$  \\
&   $v_{\ell}=\frac{1}{2}\left(e_1+\cdots+e_{\ell-1}+e_\ell\right)$
\end{tabular}
\vspace{10pt}
\caption{Vertices of $A_{\ell-1}=\textnormal{Conv}(0,u_1,\dots,u_{\ell-1})$ and 
$A_{\ell}=\textnormal{Conv}(0,v_1,\dots,v_\ell)$.} \label{table:vertices}
\end{table}

Let $x\in A_{\ell-1}$. Then $\bar{\theta}_{\ell-1}(x)=wx$, where $w\in W_\ell$ is any element 
such that $w x\in A_\ell$. Let $\sigma^+\in W_\ell$ be the cyclic permutation mapping 
$e_i\mapsto e_{i-1}$ for $i=2,\dots,\ell$, and $e_1\mapsto e_\ell$. Let $\sigma^-\in W_\ell$ be 
the element whose underlying permutation is $\sigma^+$, but which sends $e_1\mapsto -e_\ell$ and 
$e_\ell\mapsto -e_{\ell-1}$. One can check that $\sigma^+ u_i=\sigma^-u_i=v_i$ for all 
$1\leqslant i\leqslant \ell-3$, that $\sigma^+(u_{\ell-2}+u_{\ell-1})
=\sigma^-(u_{\ell-2}+u_{\ell-1})=2v_{\ell-2}$, and that $\sigma^+ u_{\ell-1}=\sigma^- u_{\ell-2}
=(v_{\ell-1}+v_{\ell})/2$.

Now suppose that $x=a_1u_1+\dots+a_{\ell-1}u_{\ell-1} \in A_{\ell-1}$ with $a_i\geqslant 0$ and 
$a_1+\cdots+a_{\ell-1}\leqslant 1$. If $a_{\ell-2}\leqslant a_{\ell-1}$, then, writing
\[
x=a_1u_1+\dots+a_{\ell-3}u_{\ell-3}+a_{\ell-2}(u_{\ell-2}+u_{\ell-1})+(a_{\ell-1}-a_{\ell-2})u_{\ell-1}\,,
\]
we see that
\[
\sigma^+ x=a_1v_1+\cdots+a_{\ell-3}v_{\ell-3}+2a_{\ell-2}v_{\ell-2}+\frac{a_{\ell-1}-a_{\ell-2}}{2}(v_{\ell-1}+v_{\ell})\, ,
\]
which is a convex combination of $0,v_1,\dots,v_\ell$, hence a point in $A_\ell$. On the other hand, 
if $a_{\ell-2}>a_{\ell-1}$, then we find in a similar way that
\[
\sigma^- x=a_1v_1+\cdots+a_{\ell-3}v_{\ell-3}+2a_{\ell-1}v_{\ell-2}+\frac{a_{\ell-2}-a_{\ell-1}}{2}(v_{\ell-1}+v_{\ell})\,,
\]
which is again in $A_{\ell}$. From this we derive a description of the map 
$\bar{\theta}_{\ell-1}\co A_{\ell-1}\to A_\ell$ in barycentric coordinates:
\[
(a_0,\dots,a_{\ell-1})\stackrel{\bar{\theta}_{\ell-1}}{\longmapsto} 
\left(a_0,\dots,a_{\ell-3},2\min\{a_{\ell-2},a_{\ell-1}\},
\frac{|a_{\ell-1}-a_{\ell-2}|}{2},\frac{|a_{\ell-1}-a_{\ell-2}|}{2}\right)\, .
\]

Now let $[x,t]\in (A_{\ell-1}\times T_{\ell-1})/{\approx}$, and let $(a_0,\dots,a_{\ell-1})$ be 
the barycentric coordinates of $x$. Since we want the diagram (\ref{dgr:fl}) to commute, 
we are forced to set
\[
f_{\ell-1}([x,t]):=[\bar{\theta}_{\ell-1}(x),w\theta_{\ell-1}(t)]\,,
\]
where $w=\sigma^+$ if $a_{\ell-2}\leqslant a_{\ell-1}$, and $w=\sigma^-$ if $a_{\ell-2}> a_{\ell-1}$. 
Let $(t_1,\dots,t_{\ell-1})\in (\SS^1)^{\ell-1}$ be the coordinates of $t\in T_{\ell-1}$ 
with respect to $\alpha_2^\vee,\dots,\alpha_\ell^\vee$. Then it is easily verified, using the 
action of $W_\ell$ on the coroots $\alpha_1^\vee,\dots,\alpha_\ell^\vee$ 
(see (\ref{eq:corootsstandardbasis})), that
\[
\sigma^+\theta_{\ell-1}(t)= (t_1,\dots,t_{\ell-3},t_{\ell-1}t_{\ell-2},t_{\ell-1},t_{\ell-1})\,,
\quad \sigma^-\theta_{\ell-1}(t)=(t_1,\dots,t_{\ell-3},t_{\ell-1}t_{\ell-2},t_{\ell-2},t_{\ell-2})\,.
\]
Now it is not difficult to check that $f_{\ell-1}$ is well defined and continuous 
(for the definition of the equivalence relation $\approx$ see the proof of Lemma \ref{lem:coendwp}). 
Moreover, the diagram (\ref{dgr:fl}) commutes by construction.

Using the homeomorphism $\phi$ defined in the proof of Lemma \ref{lem:coendwp} we could give a 
description of $f_{\ell-1} \co \mathbb{CP}(\mathbf{n}^\vee_{\ell-1})\to \mathbb{CP}(\mathbf{n}^\vee_\ell)$ 
in homogeneous coordinates. Relevant to our proof, however, is merely the observation that the 
restriction of $f_{\ell-1}$ to the first $\ell-2$ homogenous coordinates of 
$\mathbb{CP}(\mathbf{n}^\vee_{\ell-1})$ (i.e., setting $a_{\ell-2}=a_{\ell-1}=0$) equals the standard 
inclusion $[z_0,\dots,z_{\ell-3}]\mapsto [z_0,\dots,z_{\ell-3},0,0,0]$; but this is evident from the 
description of $\bar{\theta}_{\ell-1}$ and $w\theta_{\ell-1}$ given above, as these coordinates 
depend only on $a_0,\dots,a_{\ell-3}$ and $t_1,\dots,t_{\ell-3}$. This proves part (i) of the theorem.

Part (ii) for $\ell\geqslant 4$ is proved in the same way. The group $Spin(2\ell+1)$ is described 
by the root system $B_{\ell}$. A basis $\{\alpha_1,\dots,\alpha_\ell\}$ of simple roots can be chosen 
in such a way that the subgroup $Spin(2\ell-1)$ corresponds to the subroot system $B_{\ell-1}$ 
obtained by omitting the simple root $\alpha_1$. A calculation as before shows that the stabilization 
map $\bar{i}_{\ell-1}^{\textnormal{odd}}$ is equivalent to the one induced by the map 
$A_{\ell-1}\times T_{\ell-1}\to A_{\ell}\times T_\ell$ sending
\[
((a_0,\dots,a_{\ell-1}),(t_1,\dots,t_{\ell-1}))\mapsto 
((a_0,\dots,a_{\ell-2},a_{\ell-1},0),(t_1,\dots,t_{\ell-2},t_{\ell-1}^2,t_{\ell-1}))\, .
\]
In particular, the restriction to the first $\ell-1$ homogeneous coordinates is equal to the standard inclusion. 
The conclusion follows again from \cite{K73} and a commutative diagram similar to (\ref{dgr:wp12}). 
For the case $\ell=3$ see Remark \ref{rem:spin5spin7}.
\end{proof}

\begin{remark} \label{rem:oddspin}
Theorem \ref{thm:stabilityrepspin} should be compared with the homology stability result of 
Ramras and Stafa, \cite[Theorem 1.1]{RS2}. From their result one deduces that the rational homology groups of 
$\Rep(\Z^2,Spin(2\ell-1))$ and $\Rep(\Z^2,Spin(2\ell+1))$, and of $\Rep(\Z^2,Spin(2\ell-2))$ and 
$\Rep(\Z^2,Spin(2\ell))$, respectively, are abstractly isomorphic up to homological degree 
$k\leqslant \ell-1$.
\end{remark}

Next we prove stability of $\pi_2$ for spaces of commuting pairs in spinor groups. As in the previous theorem, it is natural to divide 
the analysis into the case of even and odd Spin groups; it will be enough, however, to treat the even 
case from which the general case may be deduced.

\begin{theorem} \label{thm:spinstability}
For all $m\geqslant 5$ the map $Spin(m)\to Spin(m+1)$ induces an isomorphism
\[
\pi_2(\Hom(\Z^2,Spin(m)))\xrightarrow{\cong} \pi_2(\Hom(\Z^2,Spin(m+1)))\, .
\]
\end{theorem}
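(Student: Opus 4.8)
The plan is to reduce a single stabilization step to the two–step map $Spin(m)\to Spin(m+2)$ and feed the latter into the homology stability statement of Theorem~\ref{thm:stabilityrepspin}. For $m\geqslant 5$ the group $Spin(m)$ is simple and simply connected, so Theorem~\ref{thm:mainpairs} gives $\pi_2(\Hom(\Z^2,Spin(m)))\cong \Z$ and identifies the quotient map $\pi_2(\Hom(\Z^2,Spin(m)))\to \pi_2(\Rep(\Z^2,Spin(m)))\cong\Z$ with multiplication by the Dynkin index $d_m$. Here $d_5=d_6=1$ (the root systems are of types $B_2=C_2$ and $A_3$, whose coroot integers all equal $1$), while $d_m=2$ for $m\geqslant 7$, the root system being $B_\ell$ or $D_\ell$ (Table~\ref{table:coroot}). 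Write $a_m$ for the map induced by $Spin(m)\to Spin(m+1)$ on $\pi_2(\Hom(\Z^2,-))$, and set $j_m:=a_{m+1}\circ a_m$, the map induced by $Spin(m)\to Spin(m+2)$.

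The main step is to prove that $j_m$ is an isomorphism for every $m\geqslant 5$. Naturality of the quotient map $\Hom(\Z^2,-)\to\Rep(\Z^2,-)$ in the group gives a commutative square
\[
\xymatrix{
\pi_2(\Hom(\Z^2,Spin(m))) \ar[r]^-{j_m} \ar[d]_-{\cdot d_m} & \pi_2(\Hom(\Z^2,Spin(m+2))) \ar[d]^-{\cdot d_{m+2}} \\
\pi_2(\Rep(\Z^2,Spin(m))) \ar[r]^-{\bar{j}_m} & \pi_2(\Rep(\Z^2,Spin(m+2)))
}
\]
in which all four groups are infinite cyclic. Since $\Rep(\Z^2,Spin(k))$ is simply connected we have $\pi_2=H_2$, and $\bar{j}_m$ is the map $(\bar{i}_{\ell-1}^{\textnormal{ev}})_\ast$ in degree $2$ when $m=2\ell-2$, respectively $(\bar{i}_{\ell-1}^{\textnormal{odd}})_\ast$ when $m=2\ell-1$. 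By Theorem~\ref{thm:stabilityrepspin}, $\bar{j}_m$ is therefore an isomorphism whenever $m\geqslant 7$, and is multiplication by $2$ when $m=5$ or $m=6$. Chasing (absolute) degrees around the square forces $\deg(j_m)=1$ in every case: for $m\geqslant 7$ both verticals have degree $2$ and $\bar{j}_m$ degree $1$, so $2\deg(j_m)=2$; for $m\in\{5,6\}$ the left vertical is the identity, $\bar{j}_m$ has degree $2$ and the right vertical degree $2$, so again $2\deg(j_m)=2$. Hence $j_m$ is an isomorphism.

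Finally we upgrade from the two–step maps to the single steps using only that $\Z$ is Hopfian, i.e. that a surjective endomorphism of $\Z$ is an isomorphism. Since $j_5=a_6\circ a_5$ is an isomorphism, $a_6$ is surjective, hence an isomorphism, and then $a_5=a_6^{-1}\circ j_5$ is an isomorphism too. Inductively, if $a_{k-1}$ is an isomorphism for some $k\geqslant 7$, then $j_{k-1}=a_k\circ a_{k-1}$ being an isomorphism forces $a_k=j_{k-1}\circ a_{k-1}^{-1}$ to be an isomorphism. Thus $a_k$ is an isomorphism for all $k\geqslant 5$, which is the assertion of the theorem.

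The step I expect to require the most care is the degree chase in the second paragraph: one must check that the homological range of Theorem~\ref{thm:stabilityrepspin} actually reaches degree $2$ for every $m\geqslant 5$ (it does, but only just), and that in exactly the two borderline cases $m=5,6$ — where $\bar{j}_m$ is multiplication by $2$ rather than an isomorphism — the Dynkin index of the source group drops to $1$, so the relation $2\deg(j_m)=2$ still holds. The even and odd cases are handled uniformly in this way; alternatively, the case $m=5$ can be treated via the exceptional isomorphisms $Spin(5)\cong Sp(2)$, $Spin(6)\cong SU(4)$ together with Theorem~\ref{thm:casesymplecticandunitary} and Remark~\ref{rem:symplecticandunitary}.
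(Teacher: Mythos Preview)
Your overall strategy is the same as the paper's: compare $\pi_2(\Hom)$ with $\pi_2(\Rep)$ via the commutative square and use Theorem~\ref{thm:mainpairs} for the vertical degrees and Theorem~\ref{thm:stabilityrepspin} for the horizontal one, then reduce two-step isomorphisms to one-step ones. For $m\geqslant 6$ this works exactly as you wrote; in fact the paper uses only the \emph{even} two-step maps $i_{\ell-1}^{\textnormal{ev}}$ with $\ell\geqslant 4$ and the same degree chase, deducing each pair $a_{2\ell-2},a_{2\ell-1}$ from $j_{2\ell-2}$.

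There is, however, a circularity in your treatment of $m=5$. You invoke the $\ell=3$ case of Theorem~\ref{thm:stabilityrepspin}(ii) to compute the degree of $\bar{j}_5$, but look at the paper's proof of that theorem: the odd case is only established directly for $\ell\geqslant 4$, and the sentence ``For the case $\ell=3$ see Remark~\ref{rem:spin5spin7}'' defers that instance to a remark which \emph{uses} Theorem~\ref{thm:spinstability} (specifically, that $\pi_2(\Rep(\Z^2,Spin(5)))\to\pi_2(\Rep(\Z^2,Spin(6)))$ is an isomorphism). So as written your argument for $j_5$, and hence for $a_5$ and $a_6$, is circular. The $a_6$ gap is harmless: start your induction at $j_6=a_7\circ a_6$ (even case $\ell=4$, which is independent of Theorem~\ref{thm:spinstability}) to obtain $a_7$ and then $a_6$, exactly as the paper does. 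But $a_5$ genuinely needs a separate argument.

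Your ``alternative'' for $m=5$ is on the right track but incomplete: neither Theorem~\ref{thm:casesymplecticandunitary} nor Remark~\ref{rem:symplecticandunitary} says anything about the specific map $Sp(2)\to SU(4)$ realizing $Spin(5)\to Spin(6)$. The missing step is to factor through $Sp(1)$ (equivalently $Spin(4)$): the composite $Sp(1)\hookrightarrow Sp(2)\hookrightarrow SU(4)$ is conjugate in $U(4)$, by a permutation matrix, to the standard $SU(2)\hookrightarrow SU(4)$; as $U(4)$ is connected the two are homotopic, so the composite induces an isomorphism on $\pi_2(\Hom(\Z^2,-))$ by Theorem~\ref{thm:casesymplecticandunitary}(i) and Remark~\ref{rem:symplecticandunitary}. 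Since $Sp(1)\to Sp(2)$ already induces an isomorphism by Theorem~\ref{thm:casesymplecticandunitary}(ii), it follows that $Sp(2)\to SU(4)$ does too, hence $a_5$ is an isomorphism. This is precisely the paper's argument for $m=5$.
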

\begin{proof}
We first focus on the range $m\geqslant 6$. The case $m=5$ will be dealt with separately at the end. 
Let $\ell\geqslant 4$. By Theorem \ref{thm:mainpairs}, all three groups in the sequence
\[
\pi_2(\Hom(\Z^2,Spin(2\ell-2))) \to \pi_2(\Hom(\Z^2,Spin(2\ell-1)))\to \pi_2(\Hom(\Z^2,Spin(2\ell)))
\]
are isomorphic to $\Z$. Thus, if the composite map is an isomorphism, then so are the two component maps. 
To prove the theorem in the range $m\geqslant 6$ it therefore suffices to show that for all 
$\ell\geqslant 4$ the map $i_{\ell-1}^{\textnormal{ev}}\co 
\Hom(\Z^2,Spin(2\ell-2))\to \Hom(\Z^2,Spin(2\ell))$ induces an isomorphism of second homotopy groups.

Consider the commutative diagram
\[
\xymatrix{
\pi_2(\Hom(\Z^2,Spin(2\ell-2)))\ar[r]^-{(i_{\ell-1}^{\textnormal{ev}})_\ast} 
\ar[d]^-{(\pi_{\ell-1})_\ast} & \pi_2(\Hom(\Z^2,Spin(2\ell))) \ar[d]^-{(\pi_{\ell})_\ast} \\
\pi_2(\Rep(\Z^2,Spin(2\ell-2))) \ar[r]^-{(\bar{i}_{\ell-1}^{\textnormal{ev}})_\ast} & \pi_2(\Rep(\Z^2,Spin(2\ell)))
}
\]
in which the maps are the obvious ones. As all groups in the diagram are isomorphic to $\Z$, 
each map is determined by its degree. The degrees of the vertical maps are described by 
Theorem \ref{thm:mainpairs}, from which we deduce that
\begin{equation*} \label{eq:deg}
\deg((i_{\ell-1}^{\textnormal{ev}})_\ast)=\frac{\deg((\bar{i}_{\ell-1}^{\textnormal{ev}})_\ast) 
\deg((\pi_{\ell-1})_\ast)}{\deg((\pi_{\ell})_\ast)}=\begin{cases} 
\deg((\bar{i}_{\ell-1}^{\textnormal{ev}})_\ast)/2 & \textnormal{if } \ell=4, \\ 
\deg((\bar{i}_{\ell-1}^{\textnormal{ev}})_\ast) & \textnormal{if } \ell>4. \end{cases}
\end{equation*}
Theorem \ref{thm:stabilityrepspin} implies that $\deg(\bar{i}_{\ell-1}^{\textnormal{ev}})_\ast=1$ 
if $\ell >4$ and $\deg(\bar{i}_{\ell-1}^{\textnormal{ev}})_\ast=2$ if $\ell=4$, hence 
$\deg((i_{\ell-1}^{\textnormal{ev}})_\ast)=1$ for all $\ell\geqslant 4$. 
This proves the theorem in the range $m\geqslant 6$.

It remains to prove that $Spin(5)\to Spin(6)$ induces an isomorphism as stated. It is enough to show 
that the map $\pi_2(\Hom(\Z^2,Spin(5)))\to \pi_2(\Hom(\Z^2,Spin(6)))$ is surjective as both groups are 
isomorphic to $\Z$. Consider the following sequence of matrix groups
\[
\xymatrix{
Spin(4) \ar[r] & Spin(5) \ar[r] & Spin(6) \\
SU(2)\times SU(2) \ar@{}[u]|{\rotatebox{90}{$\cong$}} & Sp(2) \ar@{}[u]|{\rotatebox{90}{$\cong$}} & 
SU(4) \ar@{}[u]|{\rotatebox{90}{$\cong$}}
}
\]
Recall that $Sp(k)=\{A\in GL(k,\mathbb{H})\mid A^\dagger A=\BONE\}$. In particular, $Sp(1)\cong SU(2)$ 
is the group of quaternions of unit norm. It is wellknown, see \cite[Theorem 5.20]{MT91}, that the first
 map in the sequence is block sum $Sp(1)^2\to Sp(2)$, $(A,B)\mapsto A\oplus B$. The second map is the 
inclusion $Sp(2)\hookrightarrow SU(4)$ resulting from
\[
M(2,\mathbb{H})\to M(4,\C)\,,\; A+\mathbf{j}B \mapsto 
\begin{pmatrix} A & -\bar{B} \\ B & \bar{A} \end{pmatrix}\, ,
\]
see \cite[Theorem 5.21]{MT91}. There is a permutation $P\in \Sigma_4\leqslant U(4)$ such that the 
composition of $SU(2)^2\to Sp(2) \to SU(4)$ is conjugate by $P$ to block sum. As $U(4)$ is 
path--connected the composition is homotopic to block sum. Then its restriction to the first 
$SU(2)$-factor is homotopic to the canonical inclusion of $SU(2)$ into $SU(4)$, which by 
Theorem \ref{thm:casesymplecticandunitary} and Remark \ref{rem:symplecticandunitary} induces 
an isomorphism $\pi_2(\Hom(\Z^2,SU(2)))\cong \pi_2(\Hom(\Z^2,SU(4)))$. As a consequence, 
the map $\pi_2(\Hom(\Z^2,Spin(4)))\to \pi_2(\Hom(\Z^2,Spin(6)))$ is surjective, hence so is 
$\pi_2(\Hom(\Z^2,Spin(5)))\to \pi_2(\Hom(\Z^2,Spin(6)))$.
\end{proof}

\begin{remark} \label{rem:spin5spin7}
To prove the case $\ell=3$ of Theorem \ref{thm:stabilityrepspin} (ii) we consider the sequence of maps
\[
\xymatrix{
\Rep(\Z^2,Spin(6)) \ar[r] \ar@/^1.4pc/[rr]^-{2} & \Rep(\Z^2,Spin(7)) 
\ar[r] \ar@/_1.4pc/[rr]_-{1} & \Rep(\Z^2,Spin(8)) \ar[r] & \Rep(\Z^2,Spin(9)) \\
}
\]
The two labels indicate the degree of the induced map of second homotopy groups as implied 
by the case $\ell=4$ of Theorem \ref{thm:stabilityrepspin}. It follows that the first map in 
the sequence has degree $2$ on second homotopy groups. Now 
$\pi_2(\Rep(\Z^2,Spin(5)))\to \pi_2(\Rep(\Z^2,Spin(6)))$ is an isomorphism, by Theorem 
\ref{thm:spinstability}, hence $\pi_2(\Rep(\Z^2,Spin(5)))\to \pi_2(\Rep(\Z^2,Spin(7)))$ must have degree 2.
\end{remark}

\section{The distinguished role of $SU(2)$} \label{sec:rolesu2}

In this section we continue to work with a fixed simply--connected simple compact Lie group $G$. Let
\[
\nu \co SU(2)\hookrightarrow G
\]
be the embedding that corresponds to the highest root of $G$. Then the induced map
\[
\nu_\ast\co H_3(SU(2);\Z)\xrightarrow{\cong} H_3(G;\Z)
\]
is an isomorphism, see \cite[III Proposition 10.2]{BS52}.

\begin{theorem} \label{thm:su2represents}
The map $\nu \co SU(2)\hookrightarrow G$ induces an isomorphism
\[
\pi_2(\Hom(\Z^2,SU(2)))\cong \pi_2(\Hom(\Z^2,G))\, .
\]
\end{theorem}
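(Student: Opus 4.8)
The plan is to reduce the assertion to a degree computation by comparing $\nu_\ast$ with the map it induces on representation spaces. By Theorem~\ref{thm:mainpairs}, applied to both $SU(2)$ and $G$, the groups $\pi_2(\Hom(\Z^2,SU(2)))$ and $\pi_2(\Hom(\Z^2,G))$ are infinite cyclic, so it suffices to prove that $\nu_\ast$ has degree $\pm 1$. Writing $\bar\nu\co\Rep(\Z^2,SU(2))\to\Rep(\Z^2,G)$ for the induced map, I would use the commutative square
\[
\xymatrix{
\pi_2(\Hom(\Z^2,SU(2))) \ar[r]^-{\nu_\ast} \ar[d] & \pi_2(\Hom(\Z^2,G)) \ar[d] \\
\pi_2(\Rep(\Z^2,SU(2))) \ar[r]^-{\bar\nu_\ast} & \pi_2(\Rep(\Z^2,G))
}
\]
in which the vertical maps are induced by the quotient maps. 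By Theorem~\ref{thm:mainpairs} the left vertical map is multiplication by the Dynkin index of $SU(2)$, which equals $1$ (Table~\ref{table:coroot}), hence is an isomorphism, while the right vertical map is multiplication by $d_G:=\textnormal{lcm}\{n_0^\vee,\dots,n_r^\vee\}$. Commutativity then gives $\deg(\nu_\ast)=\deg(\bar\nu_\ast)/d_G$, so it is enough to show $\deg(\bar\nu_\ast)=d_G$.

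To compute $\deg(\bar\nu_\ast)$ I would make $\bar\nu$ explicit using the homotopy equivalences with weighted projective spaces of Proposition~\ref{prop:weightedprojective}, i.e.\ formula~(\ref{eq:explicitequivalence}), for both $SU(2)$ and $G$. The maximal torus of $\nu(SU(2))$ is $\SS^1_{\theta^\vee}$, where $\theta=-\alpha_0$ is the highest root and $\theta^\vee=-\alpha_0^\vee=\sum_{i=1}^r n_i^\vee\alpha_i^\vee$, and $\nu$ carries the fundamental alcove of $SU(2)$ onto the segment $[0,\theta^\vee/2]\subseteq\t$. Since $\theta$ is dominant we have $\alpha_j(s\theta^\vee/2)\geqslant 0$ for $1\leqslant j\leqslant r$ and $\theta(s\theta^\vee/2)=s$, so $[0,\theta^\vee/2]$ is contained in the alcove $A$ of $G$; therefore no Weyl conjugation is needed and the two equivalences are compatible with $\bar\nu$ on the nose. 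The barycentric coordinates of $s\theta^\vee/2$ in $A$ are $(1-s,a_1,\dots,a_r)$ with each $a_k$ a non-negative multiple of $\alpha_k(\theta^\vee)$; since $\sum_k a_k=s$ and $\alpha_k(\theta^\vee)=0$ unless $\alpha_k$ is joined to the affine node $\alpha_0$ in the extended Dynkin diagram, only the coordinates indexed by $\{0\}\cup S$ are nonzero, where $S$ is the set of neighbours of $\alpha_0$. Using in addition that the torus coordinate $t_i$ of $\nu(\exp(u\theta^\vee))$ equals $\tau^{n_i^\vee}$ with $\tau=e^{2\pi i u}$, formula~(\ref{eq:explicitequivalence}) identifies $\bar\nu$, up to homotopy, with the composite of a map $\mathbb{CP}^1=\mathbb{CP}(1,1)\xrightarrow{g}\mathbb{CP}(\mathbf{w})$ and the inclusion $\iota\co\mathbb{CP}(\mathbf{w})\hookrightarrow\mathbb{CP}(\mathbf{n}^\vee)$ of the coordinate subspace corresponding to the subtuple $\mathbf{w}$ of $\mathbf{n}^\vee$ indexed by $\{0\}\cup S$.

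Finally I would compute $\deg(\bar\nu_\ast)=(\iota\circ g)_\ast$ on $H_2\cong\pi_2$. When $G$ is not of type $A$ the affine node has a unique neighbour $\alpha_{k_0}$, so $\mathbf{w}=(1,n_{k_0}^\vee)$ and $g\co\mathbb{CP}(1,1)\to\mathbb{CP}(1,n_{k_0}^\vee)$ is $\langle(1-s)e,s\tau\rangle\mapsto\langle(1-s)e,s\tau^{n_{k_0}^\vee}\rangle$, a map of degree $n_{k_0}^\vee$ (it wraps the $\tau$-circle $n_{k_0}^\vee$ times and collapses the two end circles), while $\iota$ has degree $d_G/n_{k_0}^\vee$ on $H_2$. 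The latter follows from the commutative square relating $\iota$, the linear inclusion $\mathbb{CP}^1\hookrightarrow\mathbb{CP}^r$, and the projections $p_{(1,n_{k_0}^\vee)}\co\mathbb{CP}^1\to\mathbb{CP}(1,n_{k_0}^\vee)$ and $p_{\mathbf{n}^\vee}\co\mathbb{CP}^r\to\mathbb{CP}(\mathbf{n}^\vee)$, once one notes via Kawasaki's formula~(\ref{eq:degreeofmapofweightedprojectivespaces}) that these projections have degrees $n_{k_0}^\vee$ and $d_G$ respectively on $H_2$. Hence $\deg(\bar\nu_\ast)=n_{k_0}^\vee\cdot(d_G/n_{k_0}^\vee)=d_G$. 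When $G=SU(m)$ the highest root is $e_1-e_m$, so $\nu$ is conjugate to the standard inclusion $SU(2)\hookrightarrow SU(m)$ and the result is immediate from Theorem~\ref{thm:casesymplecticandunitary}(i) and Remark~\ref{rem:symplecticandunitary}; alternatively the same analysis applies with $S=\{1,m-1\}$ and all coroot integers equal to $1$, again giving $\deg(\bar\nu_\ast)=1=d_G$. In every case $\deg(\bar\nu_\ast)=d_G$, whence $\deg(\nu_\ast)=1$ and $\nu_\ast$ is an isomorphism.

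I expect the main obstacle to be the identification of $\bar\nu$ in the second step: one must check that $[0,\theta^\vee/2]$ lies in $A$ so that the two homotopy equivalences match up with no Weyl correction, and carry out the barycentric-coordinate bookkeeping that localizes the support to $\{0\}\cup S$. After that the degree computation is routine, the only real point being that the weighted projective subspace inclusion $\iota$ is \emph{not} of degree one, which is precisely what accounts for the factor $d_G$.
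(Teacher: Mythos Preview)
Your argument is correct and is genuinely different from the paper's proof. The paper argues via the Puppe sequence of $\Hom(\Z^2,G)\hookrightarrow G^2$ together with the commutator map $\gamma\co G^2/\Hom(\Z^2,G)\to G$: one shows $\gamma_\ast$ is an isomorphism on $H_3$ for $SU(2)$, and since $\nu_\ast\co H_3(SU(2);\Z)\to H_3(G;\Z)$ is an isomorphism, a diagram chase with $(\overline{\nu\times\nu})_\ast$ on $H_3(G^2/\Hom(\Z^2,G);\Z)$ finishes the proof. In particular the paper's argument does \emph{not} invoke Theorem~\ref{thm:mainpairs}; it is independent of the Bredon spectral sequence machinery. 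Your route, by contrast, uses Theorem~\ref{thm:mainpairs} as input and reduces everything to computing $\deg(\bar\nu_\ast)$ on representation spaces via the weighted projective space model. Interestingly, the Remark immediately following Theorem~\ref{thm:su2represents} sketches exactly your computation in the reverse logical direction (deducing the Dynkin index from Theorem~\ref{thm:su2represents}), describing it as ``a case-by-case argument with root systems similar to the proof of Theorem~\ref{thm:spinstability}''; you have found a reasonably uniform version of that case analysis. The trade-off is that the commutator-map proof is shorter and conceptually cleaner, while your approach has the virtue of making the connection with the weighted projective space description of $\Rep(\Z^2,G)$ completely explicit.

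One point in your write-up deserves a bit more care: the phrase ``the two equivalences are compatible with $\bar\nu$ on the nose'' overstates matters, since the equivalence of Proposition~\ref{prop:weightedprojective} passes through the contraction of Lemma~\ref{lem:coendequivalence}, which is only a homotopy equivalence. What you actually need (and what holds) is that the explicit map~(\ref{eq:explicitequivalence}) is well defined on pairs $(x,y)$ with $x\in A$ and $y\in T$, and that for such pairs it is given by the formula you wrote down; since $\nu(T_{SU(2)})\subseteq T$ and $[0,\theta^\vee/2]\subseteq A$, both sides of the square are computable by~(\ref{eq:explicitequivalence}), and the square commutes. This is a small gap in exposition rather than a mathematical error.
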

\begin{proof}
A standard argument with the semi-algebraic triangulation theorem \cite[Theorem 9.2.1]{BCR} shows 
that we can give $G^2=G\times G$ a CW-structure in such a way that $\Hom(\Z^2,G)$ is a subcomplex, 
hence the inclusion $i\co \Hom(\Z^2,G)\to G^2$ is a cofibration. Let us consider the 
Puppe sequence of $i$,
\[
\Hom(\Z^2,G)\xrightarrow{i} G^2\to G^2 /\Hom(\Z^2,G) \xrightarrow{\partial} 
\Sigma \Hom(\Z^2,G)\xrightarrow{-\Sigma i} \Sigma G^2\, .
\]
By the K{\"u}nneth theorem, and the fact that $G$ is $2$-connected, we have that
\[
H_3(\Sigma G^2;\Z)\cong H_2(G^2;\Z)=0\,.
\]
Moreover, if $j\co G\vee G\to \Hom(\Z^2,G)$ is the inclusion, then the composite map
\[
H_3(G\vee G;\Z)\xrightarrow{j_\ast} H_3(\Hom(\Z^2,G);\Z) \xrightarrow{i_\ast} H_3(G^2;\Z)
\]
is an isomorphism. Hence, $i_\ast$ is surjective. From the long exact homology sequence 
associated to the Puppe sequence we derive the isomorphism
\[
\partial_\ast\co H_3(G^2/\Hom(\Z^2,G);\Z)\xrightarrow{\cong} H_2(\Hom(\Z^2,G);\Z)\, . 
\]
Let $\overline{\nu\times\nu}\co SU(2)^2/\Hom(\Z^2,SU(2))\to G^2/\Hom(\Z^2,G)$ be the map 
induced by $\nu\co SU(2)\hookrightarrow G$ upon passage to quotients.  By naturality 
of the connecting map $\partial$ and by the Hurewicz theorem, it suffices to show that 
there is an isomorphism
\[
(\overline{\nu\times \nu})_\ast \co H_3(SU(2)^2/\Hom(\Z^2,SU(2));\Z)\xrightarrow{\cong} 
H_3(G^2/\Hom(\Z^2,G);\Z)\, .
\]

To see this we consider the commutator map $G^2\to G$, $(x,y)\mapsto [x,y]$. It is 
constant on commuting pairs, hence induces a map
\[
\gamma\co G^2/\Hom(\Z^2,G)\to G\, .
\]
We claim that when $G=SU(2)$ the induced map
\[
\gamma_\ast\co H_3(SU(2)^2/\Hom(\Z^2,SU(2));\Z)\xrightarrow{\cong} H_3(SU(2);\Z)
\]
is an isomorphism. Indeed, observe that
\[
SU(2)^2/\Hom(\Z^2,SU(2))\cong Y^+\,,
\]
where $Y^+$ is the one-point compactification of the space $Y:=SU(2)^2\backslash \Hom(\Z^2,SU(2))$ 
of non-commuting pairs in $SU(2)$. It is known that the commutator map restricted to $Y$ 
is a locally trivial fiber bundle over $SU(2)\backslash \{1\}$ with compact fiber 
$F:=\{(x,y)\in SU(2)^2\mid [x,y]=-1\}$, see \cite[VI 1 (a)]{AMcC}. In particular, as 
$SU(2)\backslash \{1\}$ is contractible (it is a sphere with one point removed), there 
is a homeomorphism $Y\cong (SU(2)\backslash\{1\})\times F$ under which the commutator 
map corresponds to the projection onto the first factor. Therefore, $\gamma$ is equivalent 
to the map $SU(2)\wedge F_+\to SU(2)$ induced by $F_+\to S^0$. Because this map has a section, 
and because $H_3(SU(2)\wedge F_+;\Z)\cong H_2(\Hom(\Z^2,SU(2));\Z)\cong  \Z$, the induced map 
$H_3(SU(2)\wedge F_+;\Z)\to H_3(SU(2);\Z)$ must be an isomorphism. Consequently, $\gamma_\ast$ 
is an isomorphism in the case $G=SU(2)$.

Finally, we contemplate the commutative diagram
\[
\xymatrix{
 H_3(SU(2)^2/\Hom(\Z^2,SU(2));\Z) \ar[r]^-{\gamma_\ast}_-{\cong} 
\ar[d]^-{(\overline{\nu\times \nu})_\ast} & H_3(SU(2);\Z) \ar[d]^-{\nu_\ast}_-{\cong} \\
H_3(G^2/\Hom(\Z^2,G);\Z) \ar[r]^-{\gamma_\ast} & H_3(G;\Z)
}
\]
By Theorem \ref{thm:mainpairs} all groups in the diagram are isomorphic to $\Z$, which forces 
both the left hand vertical arrow as well as the bottom horizontal arrow to be isomorphisms. 
This finishes the proof of the theorem. 
\end{proof}

\begin{remark}
Theorem \ref{thm:su2represents} provides an alternative proof of Theorem \ref{thm:spinstability}. Indeed, as all long roots are conjugate, Theorem \ref{thm:su2represents} holds for any homomorphism $SU(2)\hookrightarrow G$ corresponding to the inclusion of a long root. In $D_n$ all roots have the same length, and thus Theorem \ref{thm:spinstability} can be proved by applying Theorem \ref{thm:su2represents} to the first and the composite map in $\Hom(\Z^2,SU(2))\to \Hom(\Z^2,Spin(2\ell-2))\to \Hom(\Z^2,Spin(2\ell))$.
\end{remark}

In the study of spaces of homomorphisms $\Hom(\Gamma,G)$ it is natural to ask for the properties of the classifying space map
\[
B\co \Hom(\Gamma,G)\to \textnormal{map}_\ast(B\Gamma,BG)\,,
\]
which takes a homomorphism $\phi\co \Gamma\to G$ to the classifying map $B\phi\co B\Gamma\to BG$ of a flat principal $G$-bundle over $B\Gamma$ with holonomy $\phi$. On path--connected components $B$ describes the contribution of flat bundles to the set of isomorphism classes of principal $G$-bundles over $B\Gamma$; on higher homotopy groups there is a similar interpretation in terms of families of flat bundles (cf. \cite[Section 7]{R19}). For $\Gamma=\Z^2$ we have the following corollary of Theorem \ref{thm:su2represents}.

\begin{corollary} \label{cor:classifyingmap}
The classifying space map
\[
B\co \Hom(\Z^2,G)\to \textnormal{map}_\ast(\SS^1\times \SS^1,BG)
\]
induces an isomorphism on $\pi_k$ for all $k\leqslant 2$.
\end{corollary}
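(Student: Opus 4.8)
The plan is to reduce everything to known facts about the mapping space $\textnormal{map}_\ast(\SS^1\times\SS^1,BG)$ together with Theorem \ref{thm:su2represents}. First I would record the homotopy type of the target through dimension $2$. Since $G$ is simply--connected we have $\pi_1(BG)=\pi_2(BG)=0$ and $\pi_3(BG)\cong\pi_3(G)\cong\Z$, $\pi_4(BG)\cong\pi_3(G)\oplus(\text{something})$ — more precisely, using the evaluation fibration or the cofibre sequence $\SS^1\vee\SS^1\to \SS^1\times\SS^1\to \SS^2$, one has $\textnormal{map}_\ast(\SS^1\times\SS^1,BG)$ built from $\Omega^2 BG\simeq \Omega G$, two copies of $\Omega BG\simeq G$, and the constant map. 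Concretely, the cofibre sequence $\SS^1\vee\SS^1\hookrightarrow \SS^1\times\SS^1\xrightarrow{c}\SS^2$ gives a fibration
\[
\textnormal{map}_\ast(\SS^2,BG)\xrightarrow{c^\ast}\textnormal{map}_\ast(\SS^1\times\SS^1,BG)\to \textnormal{map}_\ast(\SS^1\vee\SS^1,BG)=G\times G\,,
\]
and $\textnormal{map}_\ast(\SS^2,BG)=\Omega^2 BG\simeq \Omega G$ has $\pi_0=\pi_3(G)$ wait — let me instead just use homotopy groups directly: $\pi_k(\textnormal{map}_\ast(\SS^1\times\SS^1,BG))$ fits in exact sequences assembled from $\pi_{k+j}(BG)$ for $j=0,1,2$. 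Since $\pi_j(BG)=0$ for $j\le 2$, one gets $\pi_0=\pi_1=0$ and $\pi_2(\textnormal{map}_\ast(\SS^1\times\SS^1,BG))\cong \pi_4(BG)\cong \pi_3(G)\cong\Z$ (the $\SS^1\vee\SS^1$ part contributes only in degrees $\ge 2+1=3$ after looping twice — actually $\pi_2$ of the wedge mapping space is $\pi_3(BG)\oplus\pi_3(BG)=0$; so the $\SS^2$-summand is the only contribution).

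Next I would use the commutator map. Recall from the proof of Theorem \ref{thm:su2represents} the map $\gamma\co G^2/\Hom(\Z^2,G)\to G$ induced by $(x,y)\mapsto[x,y]$, which on $H_3$ is an isomorphism $\Z\xrightarrow{\cong}\Z$. The classifying-space map $B$ is compatible with this picture: the composite $\SS^1\times\SS^1\xrightarrow{c}\SS^2\to BG$ representing the obstruction to extending a map on the $1$-skeleton is exactly detected by the commutator, so that the Puppe-sequence connecting map $\partial_\ast\co H_3(G^2/\Hom(\Z^2,G);\Z)\xrightarrow{\cong}H_2(\Hom(\Z^2,G);\Z)$ from the proof of Theorem \ref{thm:su2represents} fits into a commutative square with the map $B_\ast\co \pi_2(\Hom(\Z^2,G))\to\pi_2(\textnormal{map}_\ast(\SS^1\times\SS^1,BG))$ and the identification $\pi_2(\textnormal{map}_\ast(\SS^1\times\SS^1,BG))\cong H_3(G^2/\Hom(\Z^2,G);\Z)$ coming from $\gamma$. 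Concretely: a based map $\SS^2\to \Hom(\Z^2,G)$ is adjoint under $B$ to a based map $\SS^2\to\textnormal{map}_\ast(\SS^1\times\SS^1,BG)$, i.e.\ (by the cofibre sequence) to a lift determined by a map $\SS^2\to\textnormal{map}_\ast(\SS^2,BG)=\Omega^2 BG\simeq G$, and that last map is (up to homotopy) the $\gamma$-composite. Hence $B_\ast$ on $\pi_2$ is, after the identifications above, the isomorphism $\gamma_\ast$.

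Therefore I would argue as follows. For $k=0,1$: $\Hom(\Z^2,G)$ is connected and simply--connected (stated in the excerpt), and we computed $\pi_0=\pi_1=0$ for the target, so $B_\ast$ is an isomorphism trivially. For $k=2$: by Theorem \ref{thm:mainpairs} and the paragraph above both sides are $\Z$, and $B_\ast$ is identified with $\gamma_\ast$, which is an isomorphism by the computation in the proof of Theorem \ref{thm:su2represents} (for $G=SU(2)$ directly, and for general $G$ by the naturality square in that proof, invoking Theorem \ref{thm:su2represents}). This completes all three cases.

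\begin{proof}
For $k=0,1$ the statement is immediate: $\Hom(\Z^2,G)$ is path--connected and simply--connected, and the target mapping space has $\pi_0=\pi_1=0$ because $BG$ is $3$-connected. Indeed $\pi_k(\textnormal{map}_\ast(\SS^1\times\SS^1,BG))$ is assembled, via the cofibre sequence $\SS^1\vee\SS^1\hookrightarrow \SS^1\times\SS^1\to\SS^2$, from $\pi_k(\textnormal{map}_\ast(\SS^1\vee\SS^1,BG))\cong\pi_{k+1}(BG)^2$ and $\pi_k(\textnormal{map}_\ast(\SS^2,BG))\cong\pi_{k+2}(BG)$; these vanish for $k\le 1$ since $\pi_j(BG)=0$ for $j\le 2$, and for $k=2$ we obtain
\[
\pi_2(\textnormal{map}_\ast(\SS^1\times\SS^1,BG))\cong \pi_4(BG)\cong\pi_3(G)\cong\Z\,,
\]
the wedge contribution $\pi_3(BG)^2$ being zero.

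For $k=2$ consider the commutator map $c\co G^2\to G$, $(x,y)\mapsto[x,y]$, which is constant on $\Hom(\Z^2,G)$ and hence induces $\gamma\co G^2/\Hom(\Z^2,G)\to G$. In the proof of Theorem \ref{thm:su2represents} it was shown that $\gamma_\ast\co H_3(G^2/\Hom(\Z^2,G);\Z)\to H_3(G;\Z)$ is an isomorphism, and that the Puppe connecting map yields an isomorphism $\partial_\ast\co H_3(G^2/\Hom(\Z^2,G);\Z)\xrightarrow{\cong}H_2(\Hom(\Z^2,G);\Z)$.

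Now unwind the classifying-space map on $\pi_2$. A based map $f\co \SS^2\to\Hom(\Z^2,G)$ is sent by $B$ to its adjoint $\hat f\co \SS^2\to \textnormal{map}_\ast(\SS^1\times\SS^1,BG)$, equivalently to a based map $\SS^2\times(\SS^1\times\SS^1)\to BG$ that is constant on $\SS^2\times(\SS^1\vee\SS^1)\cup \ast\times(\SS^1\times\SS^1)$. By the cofibration $\SS^1\vee\SS^1\hookrightarrow\SS^1\times\SS^1$, such a map factors through $\SS^2\wedge\SS^2\to BG$, i.e.\ through an element of $\pi_2(\textnormal{map}_\ast(\SS^2,BG))=\pi_4(BG)$, and under the identification $\textnormal{map}_\ast(\SS^2,BG)\simeq\Omega^2 BG$ this element is represented by the composite $\SS^2\xrightarrow{f}\Hom(\Z^2,G)\hookrightarrow G^2\to G^2/\Hom(\Z^2,G)\xrightarrow{\gamma}G$, using that the $\SS^2$-coordinate records precisely the obstruction to extending $B\phi$ over the $2$-cell of $\SS^1\times\SS^1$, which is detected by the commutator. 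Passing to $H_3$ and using the Hurewicz isomorphisms for the simply--connected spaces $\Hom(\Z^2,G)$ and $G^2/\Hom(\Z^2,G)$, this shows that
\[
B_\ast\co \pi_2(\Hom(\Z^2,G))\to \pi_2(\textnormal{map}_\ast(\SS^1\times\SS^1,BG))
\]
is identified, via $\partial_\ast^{-1}$ and the above, with $\gamma_\ast\co H_3(G^2/\Hom(\Z^2,G);\Z)\to H_3(G;\Z)\cong \pi_4(BG)$. Since $\gamma_\ast$ is an isomorphism, so is $B_\ast$ on $\pi_2$.
\end{proof}
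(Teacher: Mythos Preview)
Your treatment of $k=0,1$ is fine and matches the paper. For $k=2$, however, the argument has a genuine gap.

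The composite you write down,
\[
\SS^2\xrightarrow{f}\Hom(\Z^2,G)\hookrightarrow G^2\to G^2/\Hom(\Z^2,G)\xrightarrow{\gamma}G\,,
\]
is the constant map: the image of $f$ lies in $\Hom(\Z^2,G)$ and is therefore sent to the basepoint in the quotient. So it cannot represent a generator of anything; and in any case it is a map $\SS^2\to G$, hence an element of $\pi_2(G)=0$, not of $\pi_3(G)\cong\pi_4(BG)$. Relatedly, the claim that the adjoint $\SS^2\times(\SS^1\times\SS^1)\to BG$ is \emph{constant} on $\SS^2\times(\SS^1\vee\SS^1)$ is false: restricted to the first circle it is $(s,z)\mapsto B\langle x(s)\rangle(z)$, which varies with $s$. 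It is null-homotopic (since $\pi_2(G)=0$), but the factorisation through $\SS^2\wedge\SS^2$ depends on the choice of null-homotopy, and you do not control that choice.

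Your intuition that $B_\ast$ on $\pi_2$ is governed by the commutator is correct, but making it precise requires identifying the connecting map of the fibration $\Omega G\to\textnormal{map}_\ast(\SS^1\times\SS^1,BG)\to G\times G$ (a Samelson product) and then comparing it carefully with the Puppe connecting map $\partial_\ast$ from the proof of Theorem~\ref{thm:su2represents}. That comparison is exactly the missing step, and it is not a formality. The paper avoids it entirely: after computing $\pi_2$ of the target, it uses naturality of $B$ and Theorem~\ref{thm:su2represents} to reduce to $G=SU(2)$, then stabilises to the infinite unitary group $U$ via Theorem~\ref{thm:casesymplecticandunitary}, and finally invokes \cite[Theorem~3.4]{R11} to conclude that $B$ induces an isomorphism on $\pi_2$ for $U$.
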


Here we identify $B\Z^2\simeq \SS^1\times \SS^1$ up to homotopy.

\begin{proof}
Since $BG$ is $3$-connected, the space $\textnormal{map}_\ast(\SS^1\times \SS^1,BG)$ is path--connected. By adjunction and the homotopy equivalence 
$\SS^k\wedge(\SS^1\times \SS^1)\simeq \SS^{k+2}\vee \bigvee^2 \SS^{k+1}$ (for $k>0$), we have natural isomorphisms
\[
\pi_k(\textnormal{map}_\ast(\SS^1\times \SS^1,BG))\cong 
[\SS^k\wedge (\SS^1\times \SS^1),BG]\cong \pi_{k+1}(G)\oplus \pi_{k}(G)\oplus \pi_k(G)
\]
for all $k>0$. In particular, $\textnormal{map}_\ast(\SS^1\times \SS^1,BG)$ is simply--connected. 
Since $\Hom(\Z^2,G)$ is also path--connected and simply--connected, it only remains to prove the case $k=2$ of the corollary. We shall prove this case by reducing 
first to $G=SU(2)$, and then further to the stable unitary group $U=\colim_{n\to \infty} U(n)$ for 
which the result is known.

By Theorem \ref{thm:su2represents}, by naturality of the classifying space map, and because 
$\nu\co SU(2)\hookrightarrow G$ induces an isomorphism $\pi_3(SU(2))\cong \pi_3(G)$, it suffices to show that
\[
\pi_2(\Hom(\Z^2,SU(2))) \to \pi_2(\textnormal{map}_\ast(\SS^1\times \SS^1,BSU(2)))\cong \pi_3(SU(2))
\]
is an isomorphism. The isomorphism $\pi_3(SU(2))\cong \pi_3(U)$ induced by the inclusion $SU(2)\to U$ 
is classical. On the other hand, we deduce from Theorem \ref{thm:casesymplecticandunitary} and 
Remark \ref{rem:symplecticandunitary} 
that $SU(2)\to U$ also induces an isomorphism $\pi_2(\Hom(\Z^2,SU(2)))\cong \pi_2(\Hom(\Z^2,U))$. 
It is then enough to show that the classifying space map induces an isomorphism
\[
\pi_2(\Hom(\Z^2,U)) \xrightarrow{\cong} \pi_2(\textnormal{map}_\ast(\SS^1\times \SS^1,BU))\, .
\]
This isomorphism is an application of \cite[Theorem 3.4]{R11}.
\end{proof}

Loosely speaking, the corollary says that every principal $G$-bundle over $\SS^2\times (\SS^1)^2$ arises from an $\SS^2$-family of flat bundles over $(\SS^1)^2$, and the associated family of holonomies $\SS^2\to \Hom(\Z^2,G)$ is uniquely determined up to homotopy.

Also observe that since $\pi_2(\Hom(\Z^2,G))\to \pi_2(\Rep(\Z^2,G))$ need not be an isomorphism, the map induced by $B$ on second homotopy groups does not factor through $\Rep(\Z^2,G)$ in general. This is in contrast to the fact that $\pi_0(\Hom(\Gamma,G))\to [B\Gamma,BG]$ factors through $\pi_0(\Rep(\Gamma,G))$ so long as $G$ is connected (cf. \cite[Lemma 2.5]{AC}).

\begin{remark}
Theorem \ref{thm:mainpairs} showed that the image of $\pi_2(\Hom(\Z^2,G))$ in $\pi_2(\Rep(\Z^2,G))$ 
has index $\textnormal{lcm}\{n_0^\vee,\dots,n_r^\vee\}$. With the ideas of the previous theorem we 
can give an alternative explanation of this fact. Without reference to Theorem \ref{thm:mainpairs}, 
the proof of Theorem \ref{thm:su2represents} and the fact that $\pi_2(\Hom(\Z^2,G))$ has rank one 
as an abelian group (Corollary \ref{cor:rank}) show that the map $\nu\co SU(2)\to G$ induces an isomorphism
\[
\pi_2(\Hom(\Z^2,SU(2))) \cong \pi_2(\Hom(\Z^2,G))/\textnormal{torsion}\, .
\]
By Theorem \ref{thm:generalsecondhomology}, $\pi_\ast\co \pi_2(\Hom(\Z^2,SU(2)))\to \pi_2(\Rep(\Z^2,SU(2)))$ 
is an isomorphism, hence the degree of 
$\pi_\ast\co \pi_2(\Hom(\Z^2,G))/\textnormal{torsion}\to \pi_2(\Rep(\Z^2,G))$ equals the degree of 
the map $\pi_2(\Rep(\Z^2,SU(2)))\to \pi_2(\Rep(\Z^2,G))$ induced by $\nu$. To calculate the degree 
one identifies the map $\Rep(\Z^2,SU(2))\to \Rep(\Z^2,G)$ up to equivalence with a map 
$\mathbb{CP}(1,1)\to \mathbb{CP}(\mathbf{n}^\vee)$ through a case-by-case argument with root systems 
similar to the proof of Theorem \ref{thm:spinstability}. One finds that, for each $G$, there exists 
$j\in \{1,\dots,r\}$ such that $\mathbb{CP}(1,1)\to \mathbb{CP}(\mathbf{n}^\vee)$ is homotopic to
\[
\mathbb{CP}(1,1)\xrightarrow{i_j} \mathbb{CP}(1,\dots,1)\xrightarrow{p_{\mathbf{n}^\vee}} 
\mathbb{CP}(\mathbf{n}^\vee)\,,
\]
where $i_j$ is the inclusion $[z_0,z_1] \mapsto [z_0,0,\dots,0,z_1,0,\dots,0]$ with $z_1$ in 
the $j$-th position, and the second map is the projection 
$[z_0,\dots,z_r]\mapsto [z_0^{n_0^\vee},\dots,z_r^{n_r^\vee}]$. By the formula displayed 
in (\ref{eq:degreeofmapofweightedprojectivespaces}), the degree of the composite map on second 
homology groups equals $\textnormal{lcm}\{n_0^\vee,\dots,n_r^\vee\}$, independent of $j$.
\end{remark}

\begin{remark}
Consider a representation $\rho\co G\to SU(N)$ and let $D_{\rho}$ be the Dynkin index of $\rho$, that is the degree of $\rho_\ast\co \pi_3(G)\to \pi_3(SU(N))$. Theorem \ref{thm:su2represents} (or Corollary \ref{cor:classifyingmap}) shows that the degree of $\pi_2(\Hom(\Z^2,G))\to \pi_2(\Hom(\Z^2,SU(N)))$ equals $D_\rho$. On the other hand, as the degree of $\pi_2(\Hom(\Z^2,SU(N)))\to \pi_2(\Rep(\Z^2,SU(N)))$ is one (Theorem \ref{thm:mainpairs}), the degree of $\pi_2(\Rep(\Z^2,G))\to \pi_2(\Rep(\Z^2,SU(N)))$ equals $D_\rho/D$, where $D=\textnormal{gcd}\{D_\rho\mid \rho\co G\to SU(N),\, N\geq 1\}$ is the Dynkin index of $G$, which equals $\textnormal{lcm}\{n_0^\vee,\dots,n_r^\vee\}$ as noted in the introduction.
\end{remark}

\section{Application: TC structures on trivial principal $G$-bundles}\label{TC structures}

In this section we provide a geometric interpretation for the calculations given in this 
article. In particular, we show how examples of non-trivial transitionally commutative (TC) 
structures on trivial principal 
$G$-bundles can be constructed using non-trivial elements in $\pi_{2}(\Hom(\Z^{2},G))$.

\subsection{TC structures on principal $G$-bundles}

We start by reviewing the concept of a TC structure on a principal $G$-bundle. Assume that $G$ 
is a Lie group. We can associate to $G$ a simplicial space, denoted $[B_{com}G]_{*}$ and 
defined by $[B_{com}G]_n:=\Hom(\Z^{n},G)\subset G^n$. 
The face and degeneracy maps in $[B_{com}G]_{*}$ are defined as the restriction of the 
face and degeneracy maps in the classical bar construction $[BG]_{*}$. 
The geometric realization of the simplicial space $[B_{com}G]_*$ 
is denoted by $B_{com}G$ and called the classifying space for commutativity 
in $G$ \cite{ACT,AG}. The space $B_{com}G$ is naturally a subspace of $BG$ and we denote 
by $p_{com}\co E_{com}G\to B_{com}G$ the restriction of the universal 
principal $G$-bundle $p\co EG\to BG$ over $B_{com}G$. The space $B_{com}G$ classifies principal 
$G$-bundles that come equipped with an additional structure that we will refer to as a 
transitionally commutative structure. To explain this further we need to recall some basic 
definitions from bundle theory.

Suppose that $q\co E\to X$ is a principal $G$-bundle with $G$ acting on the right on $E$ and 
that $X$ is a CW-complex.  By local triviality we can find an open cover $\U=\{U_{i}\}_{i\in I}$ 
of $X$ together with trivializations $\varphi_{i}\co q^{-1}(U_{i})\to U_{i}\times G$ for 
every $i\in I$. If $U_{i}$ and $U_{j}$ are such that $U_{i} \cap U_{j}\ne \emptyset$, then for 
every  $x\in U_{i}\cap U_{j}$ and all $g\in G$ we have  
$\varphi_{i} \varphi_{j}^{-1}(x,g)=(x,\rho_{i,j}(x)g)$. Here 
$\rho_{i,j}:U_{i}\cap U_{j}\to G$ is a continuous function called the transition function. 
The different transition functions satisfy the cocycle identity
\[
\rho_{i,k}(x)=\rho_{i,j}(x) \rho_{j,k}(x)
\] 
for every $x\in U_{i}\cap U_{j}\cap U_{k}$.  Now, if for every $i,j,k\in I$ and every 
$x\in U_{i}\cap U_{j}\cap U_{k}$ the elements $\rho_{i,j}(x), \rho_{j,k}(x)$ and 
$\rho_{i,k}(x)$ commute with each other, then we say that $\{\rho_{i,j}\}$ is a 
commutative cocycle. 
 
Following \cite{AG}, we call a principal $G$-bundle $q\co E\to X$ transitionally commutative if it 
admits a trivialization in such a way that the corresponding transition functions define a 
commutative cocycle. Let $f\co X\to BG$ be the classifying map of a 
principal $G$--bundle  $q\co E\to X$  over a finite CW--complex $X$. Then by \cite[Theorem 2.2]{AG} 
it follows that $f$ factors through $\BC G$, up to homotopy, if and only 
if $q$ is transitionally commutative.  

Next we define the notion of a transitionally commutative structure on principal bundles as in 
\cite{SimonPhD} and \cite{RV}.  

\begin{definition}
Suppose that $q:E\to X$ is a transitionally commutative principal $G$-bundle. A 
\textsl{transitionally commutative structure (TC structure)} on $q:E\to X$ is a 
choice of map $\tilde{f}\co X\to B_{com}G$, up to homotopy, such that 
$i \tilde{f}\co X\to BG$ is a classifying map for $q\co E\to X$. Here $i\co B_{com}G\to BG$ 
denotes the natural inclusion. 
\end{definition}

With the previous definition the set of TC structures on principal $G$-bundles over a space 
$X$ is precisely the set $[X,B_{com}G]$ of homotopy classes of maps $X\to B_{com}G$. 

\begin{example}
Consider the trivial principal $G$-bundle $\textnormal{pr}_{1}\co X\times G\to X$. This bundle admits 
a TC structure given by the homotopy class of a constant map $X\to B_{com}G$. 
We refer to this TC structure as the trivial TC structure.
\end{example}

Observe that the definition of TC structures implies that the same underlying principal 
$G$-bundle can admit TC structures in many different ways.

\subsection{Examples of TC structures on the trivial bundle}

Our next goal is to construct non-trivial TC structures for the trivial principal $G$-bundle 
over $\SS^{4}$.

Let $E_{com}G$ denote the homotopy fiber of the inclusion
$i\co B_{com}G\to BG$. As a direct consequence of \cite[Theorem 6.3]{ACT}, the homotopy fiber 
sequence $E_{com}G\to B_{com}G\to BG$ induces for every $n\geqslant 0$ a split short exact sequence 
\begin{equation} \label{eq:sespibcom}
1\to \pi_{n}(E_{com}G)\to \pi_{n}(B_{com}G) \xrightarrow{i_{*}}\pi_{n}(BG)\to 1\,.
\end{equation}
If $G$ is connected, then both $B_{com}G$ and $BG$ are simply--connected. 
In particular, for every $n\geqslant 0$ 
we have that $[\SS^{n},B_{com}G]\cong \pi_{n}(B_{com}G)$ so that $\pi_{n}(B_{com}G)$ 
agrees with the set of TC structures on principal $G$-bundles over $\SS^{n}$. Let $f\co \SS^{n}\to B_{com}G$ 
be a TC structure on the trivial principal $G$-bundle over $\SS^{n}$. 
Then $[f]\in \pi_{n}(B_{com}G) $ belongs 
to 
\[
\Ker(i_{*}\co \pi_{n}(B_{com}G)\to \pi_{n}(BG))\cong \pi_{n}(E_{com}G)\,.
\]
In fact, elements in $\pi_{n}(E_{com}G)$ correspond precisely to 
TC structures on the trivial principal $G$-bundle over $\SS^{n}$.

If $G=SU(m)$ or $G=Sp(k)$, then $E_{com}G$ is $3$-connected by \cite[Proposition 3.2]{AG}. 
This implies that the lowest dimensional sphere for which a non-trivial TC structure on the 
trivial principal $G$-bundle can exist is $\SS^{4}$. For other simply--connected compact Lie groups 
$G$ this may not hold, as a result of the fact that $\Hom(\Z^n,G)$ can be disconnected. However, 
the following variation of $B_{com}G$ was considered in \cite{AG}. Let $[B_{com}G_{\BONE}]_\ast$ 
denote the sub-simplicial space of $[B_{com}G]_\ast$ defined by 
$[B_{com}G_{\BONE}]_n:=\Hom(\Z^n,G)_{\BONE}$. Let  $E_{com}G_{\BONE}$ denote the homotopy fiber 
of the inclusion $B_{com}G_{\BONE}\to BG$. The proof of \cite[Theorem 6.3]{ACT} shows that there 
is a split exact sequence of the form (\ref{eq:sespibcom}) also when $E_{com}G$ and $B_{com}G$ 
are replaced by $E_{com}G_{\BONE}$ and $B_{com}G_{\BONE}$, respectively. Therefore, 
\cite[Proposition 3.2]{AG} implies that $E_{com}G_{\BONE}$ is $3$-connected if $G$ is simply--connected. 
Moreover, $\pi_4(B_{com}G_{\BONE})\cong \pi_4(E_{com}G_{\BONE})\oplus \Z$ if in addition $G$ is 
assumed simple.

\begin{corollary} \label{cor:pi4}
Let $G$ be a simply--connected simple compact Lie group. Then
\[
\pi_4(E_{com}G_{\BONE})\cong \Z\, \textnormal{ and }\, \pi_4(B_{com}G_{\BONE})\cong \Z\oplus \Z\, .
\]
\end{corollary}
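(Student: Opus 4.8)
The plan is to connect $\pi_4(E_{com}G_{\BONE})$ to the second homotopy group of $\Hom(\Z^2,G)$ via the skeletal filtration of $B_{com}G_{\BONE}$. Recall that $B_{com}G_{\BONE}$ is the geometric realization of the simplicial space $n\mapsto \Hom(\Z^n,G)_{\BONE}$. First I would recall from \cite{AG} (this is how the $3$-connectivity of $E_{com}G_{\BONE}$ is established there) that the homotopy fiber $E_{com}G_{\BONE}$ admits a description in terms of the spaces $\Hom(\Z^n,G)_{\BONE}$, and more precisely that the relevant low-dimensional information is controlled by the simplicial space $[E_{com}G_{\BONE}]_\ast$ whose realization filters by skeleta. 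The key point is that, since $G$ is simply--connected, each $\Hom(\Z^n,G)_{\BONE}$ is simply--connected by \cite[Theorem 1.1]{GPS}, and $\Hom(\Z^0,G)=\Hom(\Z^1,G)=\ast$ up to the relevant connectivity (a point and $G$ itself, the latter being $2$-connected). Consequently the low skeleta of $E_{com}G_{\BONE}$ are built starting from the $2$-simplices, whose space of non-degenerate points is $\Hom(\Z^2,G)$.

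Concretely, I would run the Bousfield--Kan/skeletal spectral sequence (or simply analyze the skeletal filtration directly in low degrees) for the realization. Because $\Hom(\Z^n,G)_{\BONE}$ is $2$-connected for $n\leqslant 1$ and simply--connected with $\pi_2(\Hom(\Z^2,G))\cong \Z$ for $n=2$ by Theorem \ref{thm:mainpairs}, the first potentially nontrivial homotopy of $E_{com}G_{\BONE}$ above degree $3$ arises from the $2$-simplices, shifted up by the simplicial degree: an element of $\pi_2$ of the space in simplicial degree $2$ contributes to $\pi_{2+2}=\pi_4$ of the realization. The degeneracies and face maps into simplicial degrees $0,1$ land in $2$-connected spaces, so there is nothing to kill the class and nothing mapping onto it from higher simplicial degree in this range (the $n=3$ term contributes only to $\pi_{\geqslant 5}$ after the degree shift, once one accounts for the connectivity of $\Hom(\Z^3,G)_{\BONE}$, which has $\pi_2$ free of rank $\binom{3}{2}=3$ but enters in simplicial degree $3$). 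This yields $\pi_4(E_{com}G_{\BONE})\cong \pi_2(\Hom(\Z^2,G))\cong \Z$. A clean way to package this is to identify, in the stable range, the relevant piece of $E_{com}G_{\BONE}$ with a double suspension of (a quotient of) $\Hom(\Z^2,G)$, so that $\pi_4(E_{com}G_{\BONE})\cong \pi_4(\Sigma^2 (\Hom(\Z^2,G)/(\text{lower strata})))\cong H_2(\Hom(\Z^2,G);\Z)\cong \Z$ by Hurewicz.

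Once $\pi_4(E_{com}G_{\BONE})\cong \Z$ is established, the second isomorphism is immediate: the split short exact sequence
\[
1\to \pi_4(E_{com}G_{\BONE})\to \pi_4(B_{com}G_{\BONE})\xrightarrow{i_\ast} \pi_4(BG)\to 1
\]
recalled just before the statement (a consequence of \cite[Theorem 6.3]{ACT}), together with the classical fact $\pi_4(BG)\cong \pi_3(G)\cong \Z$ for $G$ simple simply--connected, gives $\pi_4(B_{com}G_{\BONE})\cong \Z\oplus \Z$.

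The main obstacle will be step two: making the skeletal/realization argument rigorous and, in particular, verifying that there is no extension problem or differential in the relevant spectral sequence that could alter the group — i.e. controlling the contribution of the simplicial degree $3$ (and the degeneracies from degree $2$ to degree $3$) precisely enough to conclude that $\pi_4$ of the realization is exactly $\pi_2$ of the degree-$2$ space and not merely a subquotient. I expect this to come down to the connectivity estimate that $\Hom(\Z^3,G)_{\BONE}$, entering in simplicial degree $3$, is $1$-connected (so its lowest cells after the degree shift sit in dimension $5$), which together with the fact that the simplicial space is proper (Reedy cofibrant) closes the gap. Alternatively one can sidestep this by citing the known computation $E_{com}G_{\BONE}$ is $3$-connected from \cite{AG} and pinning down $\pi_4$ by a direct Hurewicz argument on the $4$-skeleton of $E_{com}G_{\BONE}$, whose homology in degree $4$ is identified with $H_2(\Hom(\Z^2,G);\Z)$.
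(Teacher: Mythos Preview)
Your strategy---use the simplicial/skeletal spectral sequence for the realization, invoke $3$-connectedness from \cite{AG}, and identify $\pi_4$ via Hurewicz with the contribution of $H_2(\Hom(\Z^2,G))$ in simplicial degree $2$---is exactly the paper's approach. But two of the specific claims you make to close the argument are incorrect, and these are precisely the places where the paper does actual work.

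First, the contribution from simplicial degree $1$. You write that $\Hom(\Z^1,G)=G$ is ``$2$-connected'' and dismiss it. But $G$ is simple and simply--connected, so $\pi_3(G)\cong H_3(G;\Z)\cong\Z$; in the homology spectral sequence of the realization this feeds into $E^1_{1,3}$ (and $E^1_{0,4}$, $E^1_{2,3}$ if one works with the $E_{com}$ model $[E_{com}G_{\BONE}]_n=\Hom(\Z^n,G)_{\BONE}\times G$). The paper computes the resulting chain complex explicitly and shows $E^2_{1,3}=0$ and $E^2_{0,4}=0$. This cannot be handled by a connectivity estimate alone.

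Second, and more seriously, your proposed resolution of the simplicial degree $3$ issue does not work. You say that since $\Hom(\Z^3,G)_{\BONE}$ is only $1$-connected, ``its lowest cells after the degree shift sit in dimension $5$,'' and conclude that there is nothing mapping onto the class in degree $4$. But this confuses $E^\infty$-contributions with $E^1$-differentials: the group $H_2(\Hom(\Z^3,G)_{\BONE})$ sits at $E^1_{3,2}$, and the $d^1$ differential $E^1_{3,2}\to E^1_{2,2}$ (alternating sum of face maps) could a priori kill all of $E^1_{2,2}\cong H_2(\Hom(\Z^2,G))\cong\Z$. The $1$-connectivity of the degree-$3$ space gives you nothing here---it only says the column starts at row $2$, which is exactly the row in question. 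The paper resolves this by invoking Corollary \ref{cor:h2modtorsion}: the inclusions $\Hom(\Z^2,G)\hookrightarrow\Hom(\Z^3,G)_{\BONE}$ along the three coordinate pairs span $H_2(\Hom(\Z^3,G)_{\BONE})/\textnormal{torsion}$, and on each of these summands one checks directly that $\sum(-1)^i\partial_i$ vanishes. Your ``alternative'' via Hurewicz on the $4$-skeleton still requires computing $H_4$, hence this same differential.

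So the outline is right and matches the paper, but you need to (i) honestly account for $H_3(G)$ in low simplicial degrees and (ii) replace the connectivity hand-wave with the explicit vanishing of the face-map differential from degree $3$, which rests on Corollary \ref{cor:h2modtorsion}.
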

\begin{proof}
As explained in \cite{AG} a model for $E_{com}G_{\BONE}$ is the geometric realization of a 
simplicial space $[E_{com}G_{\BONE}]_\ast$ with $n$-simplices 
$[E_{com}G_{\BONE}]_n:=\Hom(\Z^n,G)_{\BONE}\times G$. To keep our proof short we refer to \cite{AG} 
for more details about the simplicial structure. The filtration of $E_{com}G_{\BONE}$ arising from 
the simplicial structure leads to a spectral sequence (see \cite[Theorem 11.14]{MayGeometry}) 
which takes the form
\[
E^2_{p,q}=H_p(H_q([E_{com}G_{\BONE}]_\ast;\Z)) \Longrightarrow H_{p+q}(E_{com}G_{\BONE};\Z)\, .
\]
The proof of \cite[Proposition 3.3]{AG} shows that $E^2_{p,q}=0$ for all $p,q\geqslant 0$ with 
$0< p+q\leqslant 3$. Let us determine $E^2_{p,q}$ for $p+q=4$. Since $\Hom(\Z^n,G)_{\BONE}\times G$ 
is path--connected and simply--connected for all $n\geqslant 0$, we have that $E_{p,0}^2=0$ for all 
$p\geqslant 1$ and $E_{p,1}^2=0$ for all $p\geqslant 0$.

The chain complex computing $E^2_{1,3}$ reads
\[
\cdots \to H_3(\Hom(\Z^2,G)\times G;\Z)\xrightarrow{d_2} H_3(G\times G;\Z) \xrightarrow{d_1} H_3(G)\to 0\,,
\]
with differentials $d_k=\sum_{i=0}^{k}(-1)^i\partial_i$ where 
$\partial_i\co [E_{com}G]_{k}\to [E_{com}G]_{k-1}$ is the $i$-th face map. Let $i_2\co G\to G\times G$ 
be the inclusion $x\mapsto (1,x)$. Then a short calculation shows that $\ker(d_1)= \Im((i_2)_\ast)$. 
Moreover, if $i_2'\co G\to \Hom(\Z^2,G) \times G$ denotes the inclusion $x\mapsto (1,1,x)$, 
then $d_2(i_2')_\ast=(i_2)_\ast$. Hence, $E_{1,3}^2=\ker(d_1)/\Im(d_2) =0$.

The chain complex computing $E_{2,2}^2$ reads
\[
\cdots \to H_2(\Hom(\Z^3,G)_{\BONE}\times G;\Z)\xrightarrow{d_3} H_2(\Hom(\Z^2,G)\times G;\Z) \xrightarrow{d_2} 0\, ,
\]
because $G$ is assumed simply--connected and therefore $H_2(G\times G;\Z)=0$ by the K{\"u}nneth theorem. 
Now $H_2(\Hom(\Z^2,G)\times G;\Z)\cong H_2(\Hom(\Z^2,G);\Z)\cong \Z$, by Theorem \ref{thm:mainpairs}. 
As a consequence, $d_3$ factors through $H_2(\Hom(\Z^3,G)_{\BONE};\Z)/\textnormal{torsion}$.

We claim that $d_3=0$, hence $E^2_{2,2}\cong \Z$. To see this let 
$j\co \bigvee^3 \Hom(\Z^2,G)\to \Hom(\Z^3,G)_{\BONE}$ be the map induced by $(x,y)\mapsto (x,y,1)$, 
$(x,y)\mapsto (x,1,y)$, and $(x,y)\mapsto (1,x,y)$. By Corollary \ref{cor:h2modtorsion}, 
the map $j_\ast$ induced by $j$ on second homology groups is an isomorphism modulo torsion. Therefore, 
to see that $d_3=0$ it is enough to show that $d_3j_\ast=0$, but this is an easy calculation.

Finally, it is clear that the differential $d_1\co H_4(G\times G;\Z)\to H_4(G;\Z)$ in the chain complex 
computing $E_{0,4}^2$ is surjective, so $E_{0,4}^2=0$.

It follows that the only non-trivial group in total degree $4$ of the $E^2$-page is $E^2_{2,2}$ and 
there are no non-trivial differentials originating from or arriving at $E^2_{2,2}$. Hence, we conclude 
that $H_4(E_{com}G_{\BONE};\Z)\cong E^{2}_{2,2}\cong \Z$. The isomorphism $\pi_4(E_{com}G_{\BONE})\cong \Z$ 
is obtained from the Hurewicz theorem and the fact that $E_{com}G_{\BONE}$ is $3$-connected.
\end{proof}

We show next how an element of $\pi_{2}(\Hom(\Z^{2},G))\cong \Z$ 
can be used to construct a TC structure on the trivial principal 
$G$-bundle over $\SS^{4}$.

Let $\beta:\SS^{2}\to \Hom(\Z^{2},G)$ be any map. If $\beta_{1}$ and $\beta_{2}$ are the components of $\beta$, 
then we have that $\beta_{1}(x),\beta_{2}(x)\in G$ commute for all $x\in \SS^{2}$. We are going to use 
the functions $\beta_{1}$ and $\beta_{2}$ to construct a commutative cocycle with values in $G$. 
The construction we give follows the idea of \cite[Section 3]{RV} where the case $G=O(2)$ was studied.
Consider 
\[
\SS^{4}=\{(x_{0},x_{1},x_{2},x_{3},x_{4})\in \R^{5}~|~ 
x_{0}^{2}+x_{1}^{2}+x_{2}^{2}+x_{3}^{2}+x_{4}^{2}=1\}.
\]
We can cover $\SS^{4}$ using the closed sets $C_{1}$, $C_{2}$ and $C_{3}$ 
given by
\begin{align*}
C_{1}&=\{(x_{0},x_{1},x_{2},x_{3},x_{4})\in \SS^{4} \mid x_{0}\leqslant 0\}\,,\\
C_{2}&=\{(x_{0},x_{1},x_{2},x_{3},x_{4})\in \SS^{4} \mid x_{0}\geqslant 0,\, x_{4}\geqslant 0\}\,,\\
C_{3}&=\{(x_{0},x_{1},x_{2},x_{3},x_{4})\in \SS^{4} \mid x_{0}\geqslant 0,\, x_{4}\leqslant 0\}\,.
\end{align*}
Notice that 
\[
C_{1}\cap C_{2}\cap C_{3}=\{(x_{0},x_{1},x_{2},x_{3},x_{4})\in \SS^{4} \mid x_{0}= 0,\, x_{4}= 0\}
\cong \SS^{2}.
\]
From now on we identify $\SS^{2}$ with $C_{1}\cap C_{2}\cap C_{3}$. In addition, observe that 
$C_{1}\cap C_{2}\cong \D^{3}$ and under this identification the boundary $\SS^{2}$ 
corresponds to $C_{1}\cap C_{2}\cap C_{3}$. The same is true for  $C_{1}\cap C_{3}$
and $C_{2}\cap C_{3}$.

Recall that $\pi_{2}(G)= 0$, hence $\beta_1\co \SS^2\to G$ is null-homotopic. Since 
$C_{1}\cap C_{2}\cong \D^{3}$, we can find a continuous map $\rho_{1,2}\co C_{1}\cap C_{2}\to G$ 
such that  $\left.\rho_{1,2}\right|_{C_{1}\cap C_{2}\cap C_{3}}\co C_{1}\cap C_{2}\cap C_{3}\to G$ 
agrees with $\beta_{1}$. Similarly, the choice of a null-homotopy of $\beta_2$ defines a continuous map 
$\rho_{2,3}\co C_{2}\cap C_{3}\to G$ such that 
$\left.\rho_{2,3}\right|_{C_{1}\cap C_{2}\cap C_{3}}\co C_{1}\cap C_{2}\cap C_{3}\to G$ agrees with $\beta_2$. 
To define the transition function $\rho_{1,3}\co C_1\cap C_3\to G$ we consider the retraction 
$r\co C_{2}\to C_{2}\cap C_{3}$ defined by
\[
r(x_{0},x_{1},x_{2},x_{3},x_{4}) := 
\left(\sqrt{1-x_{1}^{2}-x_{2}^{2}-x_{3}^{2}},x_1,x_{2},x_{3},0 \right).
\]
Then define $\rho_{1,3}\co C_{1}\cap C_{3}\to G$ by
\[
\rho_{1,3}(0,x_{1},x_{2},x_{3},x_{4})
:=\rho_{1,2}(0,x_{1},x_{2},x_{3},-x_{4})\rho_{2,3}(r(0,x_{1},x_{2},x_{3},-x_{4})).
\]  
For $i>j$ we set $\rho_{i,j}:=\rho_{j,i}^{-1}$. Notice that if $x\in C_{1}\cap C_{2}\cap C_{3}$, 
then $x=(0,x_{1},x_{2},x_{3},0)$ and 
$r(x)=x$. Therefore, for $x\in C_{1}\cap C_{2}\cap C_{3}$ we have that 
$\rho_{1,3}(x)=\rho_{1,2}(x)\rho_{2,3}(x)$ by definition of $\rho_{1,3}$. Thus, $\{\rho_{i,j}\}$ 
satisfies the cocycle condition. Moreover, since $\rho_{1,2}(x)=\beta_{1}(x)$ and  
$\rho_{2,3}(x)=\beta_{2}(x)$ for all $x\in C_{1}\cap C_{2}\cap C_{3}$, we conclude that 
$\{\rho_{i,j}\}$ defines a commutative cocycle relative to the closed cover 
$\mathcal{C}:=\{C_{1},C_{2},C_{3}\}$ of $\SS^4$. 

Let $E_\beta$ be the space defined by 
\[
E_\beta:=(C_{1}\times G\sqcup C_{2}\times G\sqcup C_{3}\times G)/{\sim}\,,
\]
where $(j,x,g)\sim (i,x,\rho_{ij}(x)g)$. The projection map 
$E_\beta\to \SS^{4}$ induced by $(j,x,g)\mapsto x$ defines a principal $G$-bundle by \cite[Lemma 3.1]{RV}.

\begin{lemma} \label{lem:trivialbundle}
The principal $G$-bundle $E_\beta \to \SS^4$ is trivial.
\end{lemma}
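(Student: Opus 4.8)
The bundle $E_\beta \to \SS^4$ is built from a commutative cocycle on the closed cover $\mathcal{C} = \{C_1, C_2, C_3\}$, where $C_1 \cap C_2$, $C_2 \cap C_3$, and $C_1 \cap C_3$ are each homeomorphic to $\D^3$, and the triple intersection is $\SS^2$. The idea is to show that this cocycle can be trivialized, i.e. that there exist continuous maps $\sigma_i \co C_i \to G$ with $\rho_{ij}(x) = \sigma_i(x)\sigma_j(x)^{-1}$ on $C_i \cap C_j$; this exactly produces a global trivialization of $E_\beta$. So the plan is to construct such a family $\{\sigma_i\}$ by hand, exploiting that each $C_i$ is contractible (it is a closed hemisphere-type region, in fact a $4$-disk) and that $\pi_3(G)$-obstructions are the only potential issue — but since we are only gluing over $2$-dimensional overlaps at worst, the relevant obstruction group is $\pi_2(G) = 0$.

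\textbf{Key steps.} First I would set $\sigma_2 \equiv 1_G$ on $C_2$ and $\sigma_3 \equiv 1_G$ on $C_3$; this is consistent on $C_2 \cap C_3$ because $\rho_{2,3}$ restricted there must then equal $1_G$ — however this is not automatic, so instead I would proceed more carefully. A cleaner approach: note that $C_2 \cup C_3$ is the closed upper hemisphere $\{x_0 \geq 0\} \cong \D^4$, and the cocycle restricted to the cover $\{C_2, C_3\}$ of this disk is classified by a map $\D^4 \to BG$, which is null-homotopic since $\D^4$ is contractible; hence the restricted bundle $E_\beta|_{C_2 \cup C_3}$ is trivial, giving a section $s_+$ over $C_2 \cup C_3$. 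Likewise $C_1 \cong \D^4$ is contractible, so $E_\beta|_{C_1}$ is trivial with a section $s_-$. Then $E_\beta$ is obtained by gluing two trivial bundles over $\D^4$ along their common boundary-neighborhood, i.e. $E_\beta$ is classified by a clutching map $\SS^3 \to G$, an element of $\pi_3(G) \cong \Z$. So the real content is to compute this clutching class and show it is zero. I expect that a direct examination of the construction of $\rho_{1,3}$ via the retraction $r$ shows the clutching function is null-homotopic: $\rho_{1,3}$ was defined precisely so that it factors through a contractible region (the image of $r$, and the null-homotopies of $\beta_1, \beta_2$), so on the overlap $3$-sphere the transition data extends over a disk.

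\textbf{The main obstacle.} The subtle point is that $E_\beta$ is a priori only known to be classified by an element of $\pi_3(G)$, and one must rule out that it equals the generator. The cleanest argument is homotopy-theoretic: the bundle $E_\beta$ depends only on the homotopy class of the clutching map, and since the maps $\rho_{1,2}, \rho_{2,3}$ were obtained from \emph{null-homotopies} of $\beta_1$ and $\beta_2$ (which exist because $\pi_2(G) = 0$), the whole cocycle is, up to the appropriate equivalence, connected through a path of cocycles to the trivial one: shrinking the null-homotopies to constant maps deforms $\rho_{1,2}$ and $\rho_{2,3}$ to constant maps $1_G$, and by the formula $\rho_{1,3}(x) = \rho_{1,2}(x)\rho_{2,3}(r(x))$ also $\rho_{1,3}$ to $1_G$, all while preserving the cocycle condition. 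A homotopy of cocycles induces an isomorphism of the associated bundles (this is standard; cf.\ the proof that $[X, BG]$ classifies bundles), so $E_\beta$ is isomorphic to the bundle of the constant cocycle, which is the trivial bundle $\SS^4 \times G$. I would write the proof along these lines, with the homotopy-of-cocycles step being the one requiring the most care to state precisely.
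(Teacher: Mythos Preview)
Your argument is correct, though it follows a somewhat different path from the paper's. The paper invokes \cite[Lemma 3.2]{RV} to reduce $E_\beta$ to a bundle with explicit clutching function $\varphi\co \SS^3 \cong C_1\cap(C_2\cup C_3)\to G$, then observes that the very definition of $\rho_{1,3}$ (built from $\rho_{1,2}$ and $\rho_{2,3}$ via the reflection $x_4\mapsto -x_4$) forces $\varphi(0,x_1,x_2,x_3,x_4)=\varphi(0,x_1,x_2,x_3,-x_4)$; hence $\varphi$ factors through the quotient $\SS^3/(\Z/2)\cong \D^3$ and is null-homotopic. Your approach instead deforms the entire cocycle: since $C_1\cap C_2$ and $C_2\cap C_3$ are contractible you may homotope $\rho_{1,2}$ and $\rho_{2,3}$ to constants, and since $\rho_{1,3}$ is \emph{defined} as a formula in $\rho_{1,2}$ and $\rho_{2,3}$ the same deformation carries $\rho_{1,3}$ along, with the cocycle condition on $C_1\cap C_2\cap C_3$ preserved throughout (because there the reflection and the retraction $r$ are both the identity). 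Both arguments exploit the same structural feature---that $\rho_{1,3}$ carries no independent data---but the paper reads off a $\Z/2$-symmetry of the clutching map, while you run a homotopy of cocycles and appeal to bundle homotopy invariance. One small correction: your displayed formula for $\rho_{1,3}$ omits the reflection $x_4\mapsto -x_4$ present in the actual definition; this does not affect your argument since on the triple intersection $x_4=0$ anyway.
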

\begin{proof}
By \cite[Lemma 3.2]{RV}, the principal $G$-bundle $E_\beta$ is isomorphic to the bundle obtained 
using the clutching function 
$\varphi\co C_{1}\cap (C_{2}\cup C_{3})\cong \SS^{3}\to G$ given by 
\begin{equation*}
\varphi(x)=\begin{cases}
\rho_{1,2}(x)\rho_{2,3}(r(x))\,, &\text{if } x\in C_{1}\cap C_{2}\,,\\
\rho_{1,3}(x)\,, &\text{if } x\in C_{1}\cap C_{3}\,.
\end{cases}
\end{equation*}					
By construction this function satisfies 
$\varphi(0,x_{1},x_{2},x_{3},x_{4})=\varphi(0,x_{1},x_{2},x_{3},-x_{4})$. 
This implies that $\varphi$ factors through $C_1\cap C_2\cong \D^{3}$, hence $\varphi$ is null-homotopic. 
Therefore, the principal $G$-bundle $E_\beta$ is trivial.
\end{proof}

Let $N_\ast(\mathcal{C})$ denote the {\v C}ech nerve of the closed cover $\mathcal{C}$ of $\SS^4$. 
The commutative cocycle $\{\rho_{i,j}\}$ defines a simplicial map 
$[f_\beta]_\ast \co N_{*}(\mathcal{C})\to [B_{com}G_{\BONE}]_{*}$ sending $x\in C_{i_0}\cap \dots \cap C_{i_n}$ 
to $(\rho_{i_0,i_1}(x),\dots,\rho_{i_{n-1},i_n}(x))\in \Hom(\Z^n,G)_{\BONE}$. Upon geometric realization 
we obtain a map  
\[
f_\beta:=|[f_\beta]_\ast|\co |N_\ast(\mathcal{C})|\to B_{com}G_{\BONE}\, .
\] 
We can choose open neighbourhoods $U_i\supset C_i$, $i=1,2,3$, such that for every choice of indices the 
inclusion $C_{i_0}\cap \dots \cap C_{i_n}\hookrightarrow U_{i_0}\cap \dots \cap U_{i_n}$ is a homotopy 
equivalence. Let $\mathcal{U}=\{U_1,U_2,U_3\}$ be the resulting open cover of $\SS^4$. The map of 
{\v C}ech nerves $N_\ast(\mathcal{C})\to N_\ast(\mathcal{U})$ is a levelwise homotopy equivalence of 
proper simplicial spaces, hence induces a homotopy equivalence 
$|N_\ast(\mathcal{C})|\simeq |N_\ast(\mathcal{U})|$. Since $\mathcal{U}$ is numerable, the natural map 
$|N_{*}(\mathcal{U})|\to \SS^4$ is a homotopy equivalence by \cite[Proposition 4.1]{Segal1}. We 
conclude that $|N_\ast(\mathcal{C})|\to \SS^4$ is a homotopy equivalence. Let 
$\lambda\co \SS^4\to |N_\ast(\mathcal{C})|$ be a homotopy inverse. The argument of \cite[Lemma 3.3]{RV} 
shows that $f_\beta \lambda\co \SS^4\to B_{com}G_{\BONE}$ is a TC structure on $E_\beta$, i.e., that 
$if_\beta \lambda\co \SS^4\to BG$ classifies $E_\beta$. Since $E_\beta$ is trivial, by 
Lemma \ref{lem:trivialbundle}, $if_\beta \lambda$ is null homotopic. Thus, $f_\beta\lambda$ factors, 
up to homotopy, through the homotopy fiber $E_{com}G_{\BONE}$. \medskip

Let us construct another commutative cocycle on $\SS^4$, this time representing a generator of 
$\pi_4(BG)\cong \Z$. To this end, we cover $\SS^4$ by the contractible closed sets
\begin{align*}
D_{1}&=\{(x_{0},x_{1},x_{2},x_{3},x_{4})\in \SS^{4} \mid x_{0}\leqslant 0\}\,,\\
D_{2}&=\{(x_{0},x_{1},x_{2},x_{3},x_{4})\in \SS^{4} \mid x_{0}\geqslant 0\}\,.
\end{align*}
Observe that $D_1\cap D_2\cong \SS^3$. Let $\tau_{1,2}\co D_{1}\cap D_{2} \to G$ represent a generator 
of $\pi_3(G)$. To be concrete, we choose $\tau_{1,2}=\nu$, where $\nu\co SU(2)\to G$ is the map defined in 
Section \ref{sec:rolesu2}. Then $\tau_{1,2}$ defines trivially a commutative cocycle relative to the 
closed cover $\mathcal{D}:=\{D_1,D_2\}$. As before, the cocycle $\{\tau_{i,j}\}$ defines a simplicial 
map $[g_\nu]_\ast \co N_\ast(\mathcal{D})\to [B_{com}G_{\BONE}]_\ast$ and thus a map 
$g_\nu \co |N_\ast(\mathcal{D})|\to B_{com}G_{\BONE}$. Upon choosing a homotopy equivalence
 $\lambda'\co \SS^4\to |N_\ast(\mathcal{D})|$ one obtains a classifying map $ig_\nu\lambda'\co \SS^4\to BG$ 
for the bundle clutched by $\nu\co \SS^3\to G$. Since $\nu$ represents a generator of $\pi_3(G)$, 
the homotopy class of $ig_\nu \lambda'$ generates $\pi_4(BG)$.\medskip

From now on we tacitly identify $|N_\ast(\mathcal{C})|$ and $|N_\ast(\mathcal{D})|$ with $\SS^4$ and 
drop the homotopy equivalences $\lambda$ and $\lambda'$ from the notation.

\begin{theorem}
Let $[\beta]\in \pi_2(\Hom(\Z^2,G))$ and $[\nu]\in \pi_3(G)$ be generators. Then the TC structures 
$[f_\beta]$ and $[g_\nu]$ generate $\pi_4(B_{com}G_{\BONE})\cong \Z\oplus \Z$. In particular, 
$f_{\beta}$ lifts to a generator of $\pi_4(E_{com}G_{\BONE})\cong \Z$.
\end{theorem}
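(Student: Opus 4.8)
The plan is to combine the split short exact sequence \eqref{eq:sespibcom}, in the form valid for $B_{com}G_{\BONE}$ and $E_{com}G_{\BONE}$,
\[
1\to \pi_4(E_{com}G_{\BONE})\to \pi_4(B_{com}G_{\BONE})\xrightarrow{i_\ast}\pi_4(BG)\to 1\,,
\]
with the two facts established just before the statement: $i_\ast[g_\nu]$ is the class of the bundle clutched by $\nu\co \SS^3\to G$, hence a generator of $\pi_4(BG)\cong\pi_3(G)\cong\Z$; and $i_\ast[f_\beta]=0$ since $E_\beta$ is trivial (Lemma \ref{lem:trivialbundle}), so $[f_\beta]$ lifts to some $[\tilde f_\beta]\in\pi_4(E_{com}G_{\BONE})\cong\Z$ (Corollary \ref{cor:pi4}). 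Granting that $[\tilde f_\beta]$ is a \emph{generator} of $\pi_4(E_{com}G_{\BONE})$, the theorem follows at once: the sequence is split with infinite cyclic ends, so $\{[f_\beta],[g_\nu]\}$ is a $\Z$-basis of $\pi_4(B_{com}G_{\BONE})\cong\Z\oplus\Z$, and the last assertion of the theorem is exactly that $[\tilde f_\beta]$ generates.

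So the work is to show $[\tilde f_\beta]$ generates $\pi_4(E_{com}G_{\BONE})$; by the Hurewicz theorem ($E_{com}G_{\BONE}$ is $3$-connected) it suffices to show $\tilde f_{\beta\ast}[\SS^4]$ generates $H_4(E_{com}G_{\BONE};\Z)$. I would return to the spectral sequence of the simplicial space $[E_{com}G_{\BONE}]_\ast$ used in the proof of Corollary \ref{cor:pi4}: its total degree $4$ is concentrated in bidegree $(2,2)$, with $H_4(E_{com}G_{\BONE};\Z)\cong E^\infty_{2,2}\cong E^2_{2,2}\cong H_2(\Hom(\Z^2,G)\times G;\Z)\cong H_2(\Hom(\Z^2,G);\Z)$, which by Theorem \ref{thm:mainpairs} and Hurewicz is $\pi_2(\Hom(\Z^2,G))\cong\Z$, generated by $[\beta]$. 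On the source side, $\SS^4$ is identified with $|N_\ast(\mathcal C)|$, and the analogous spectral sequence of $N_\ast(\mathcal C)$ again has total degree $4$ concentrated in bidegree $(2,2)$, where $[\SS^4]$ is detected by the fundamental class of the triple intersection $C_1\cap C_2\cap C_3\cong\SS^2$. Choosing a trivialization of $E_\beta$ (which exists by Lemma \ref{lem:trivialbundle}) upgrades the simplicial map $[f_\beta]_\ast$ to a simplicial map $N_\ast(\mathcal C)\to[E_{com}G_{\BONE}]_\ast$ whose restriction to $C_1\cap C_2\cap C_3\cong\SS^2$ has the form $x\mapsto\big((\beta_1(x),\beta_2(x)),g(x)\big)$ for some $g\co\SS^2\to G$; since $H_2(G;\Z)=0$, on $H_2$ this map is just $\beta_\ast$. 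Naturality of the spectral sequence then forces $\tilde f_{\beta\ast}[\SS^4]\in E^\infty_{2,2}\cong H_4(E_{com}G_{\BONE};\Z)$ to equal $\beta_\ast[\SS^2]$, a generator, which finishes the proof.

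The main obstacle will be the spectral-sequence bookkeeping: one must check that for $|N_\ast(\mathcal C)|\simeq\SS^4$ the total degree $4$ line really is concentrated in bidegree $(2,2)$ with the fundamental class sitting there as a generator of $H_2(C_1\cap C_2\cap C_3)$, and that a trivialization of $E_\beta$ does produce a genuine \emph{simplicial} lift of $[f_\beta]_\ast$ restricting to $\beta$ on the top stratum. A variant that avoids the latter point is to run the comparison entirely in the spectral sequence of $[B_{com}G_{\BONE}]_\ast$, where $[f_\beta]_\ast$ already lives: there the $\pi_4(BG)$-summand of $H_4(B_{com}G_{\BONE})$ sits in filtration $1$ (bidegree $(1,3)$, equal to $H_3(G)$), while the $\pi_4(E_{com}G_{\BONE})$-summand is the bidegree $(2,2)$ part; comparing with the spectral sequence of $BG$ (the simplicial map $[B_{com}G_{\BONE}]_\ast\to[BG]_\ast$ being the identity in simplicial degree $1$) shows $\ker(i_\ast)$ maps isomorphically onto $E^\infty_{2,2}\cong H_2(\Hom(\Z^2,G);\Z)$, and $f_{\beta\ast}[\SS^4]$ maps there to the generator $\beta_\ast[\SS^2]$.
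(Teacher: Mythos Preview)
Your proposal is correct, and your variant at the end is essentially what the paper does. The paper runs the spectral sequence comparison entirely on the $B_{com}G_{\BONE}$ side: it analyzes the simplicial spectral sequences for $N_\ast(\mathcal{C})$, $N_\ast(\mathcal{D})$, and $[B_{com}G_{\BONE}]_\ast$, identifies the degree-$4$ lines as concentrated in bidegrees $(2,2)$, $(1,3)$, and $(1,3)\oplus(2,2)$ respectively, and then reads off from the resulting diagram of extensions that $(f_\beta)_\ast$ hits $E^\infty_{2,2}\cong H_2(\Hom(\Z^2,G);\Z)$ via $\beta_\ast$ and $(g_\nu)_\ast$ hits $E^\infty_{1,3}\cong H_3(G;\Z)$ via $\nu_\ast$, both isomorphisms by choice. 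Thus $\{[f_\beta],[g_\nu]\}$ is a basis of $\pi_4(B_{com}G_{\BONE})$, and since $i_\ast[f_\beta]=0$ the lift to $\pi_4(E_{com}G_{\BONE})$ generates.

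The main difference from your primary approach is that the paper never constructs a simplicial lift $N_\ast(\mathcal{C})\to[E_{com}G_{\BONE}]_\ast$; it avoids that bookkeeping (which, as you note, requires checking that a trivialization of $E_\beta$ really yields compatible $G$-valued maps satisfying the translated bar-construction face relations) by arguing directly in $B_{com}G_{\BONE}$ and deducing the $E_{com}G_{\BONE}$ statement at the end from the split exact sequence. Your logical order is also reversed relative to the paper: you reduce the basis claim to the generator-of-the-kernel claim and then prove the latter, whereas the paper proves the basis claim first and deduces the generator claim. Both orders are fine, but the paper's ordering makes the simplicial lift unnecessary.
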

\begin{proof}
The proof is by comparison of the spectral sequences associated to the simplicial spaces 
$N_\ast(\mathcal{C})$, $N_\ast(\mathcal{D})$ and $[B_{com}G_{\BONE}]_\ast$. Let us first consider the 
spectral sequence
\[
{}_{\mathcal{C}}E^2_{p,q}=H_p(H_q(N_\ast(\mathcal{C});\Z)) \;\Longrightarrow\; H_{p+q}(|N_\ast(\mathcal{C})|;\Z)\,. 
\]
In each degree $N_\ast(\mathcal{C})$ is a disjoint union of contractible spaces and spaces 
homeomorphic to $\SS^2$. This readily shows that ${}_{\mathcal{C}}E^2_{p,q}=0$ unless $q=0,2$. 
In the case $q=0$ the chain complex $H_0(N_\ast(\mathcal{C});\Z)$ computes the homology of a $2$-simplex, 
hence ${}_{\mathcal{C}}E^2_{4,0}=0$. In the case $q=2$ we observe that 
$H_2(N_2(\mathcal{C});\Z)\cong H_2(C_1\cap C_2\cap C_3;\Z)\cong \Z$, hence ${}_{\mathcal{C}}E^2_{2,2}$ 
is a quotient of $\Z$. In fact, we must have ${}_{\mathcal{C}}E^2_{2,2}\cong \Z$, because 
$H_4(|N_\ast(\mathcal{C})|;\Z)\cong \Z$. For the same reason, ${}_{\mathcal{C}}E^2_{2,2}$ is not hit 
by any non-zero differential. Since ${}_{\mathcal{C}}E^2_{0,3}=0$, there is no non-zero differential 
leaving ${}_{\mathcal{C}}E^2_{2,2}$ either. We conclude that 
${}_{\mathcal{C}}E^\infty_{2,2}\cong H_2(C_1\cap C_2\cap C_3;\Z)\cong \Z$ is the only non-zero group in 
total degree $4$.

The analysis of the spectral sequence $\{{}_{\mathcal{D}}E^\ast_{p,q}\}$ calculating 
$H_\ast(|N_\ast(\mathcal{D})|;\Z)$ is very similar; one finds that 
${}_{\mathcal{D}}E_{1,3}^\infty\cong H_3(D_1\cap D_2;\Z)\cong \Z$ is the only non-zero group in total degree $4$.

Finally, we consider the spectral sequence
\[
E^2_{p,q}=H_p(H_q([B_{com}G_{\BONE}];\Z))\; \Longrightarrow \; H_{p+q}(B_{com}G_{\BONE};\Z)\, .
\]
Since $[B_{com}G_{\BONE}]_0$ is a one point space, we trivially have that $E^2_{0,q}=0$ for all $q>0$. 
Because $\Hom(\Z^n,G)_{\BONE}$ is path--connected and simply--connected for all $n\geqslant 0$, we further 
have that $E^2_{4,0}=E^2_{3,1}=0$. On the other hand, we find that $E_{1,3}^2$ is a quotient of 
$H_{3}(G;\Z)\cong \Z$, and likewise $E^2_{2,2}$ is a quotient of $H_2(\Hom(\Z^2,G);\Z)\cong \Z$ 
(Theorem \ref{thm:mainpairs}). In Corollary \ref{cor:pi4} we showed that 
$H_4(B_{com}G_{\BONE};\Z)\cong \Z\oplus \Z$; but this is possible only if $E^2_{1,3}\cong \Z$ and 
$E_{2,2}^2\cong \Z$ and none of these is hit by a non-zero differential. Furthermore, for degree 
reasons and because $E_{0,3}^2=0$, there are no non-zero differentials originating from either 
$E^2_{1,3}$ or $E^2_{2,2}$. Hence, $E^\infty_{1,3}\cong H_3(G;\Z)\cong \Z$ and 
$E^\infty_{2,2}\cong H_2(\Hom(\Z^2,G);\Z)\cong \Z$.

The simplicial maps $[f_\beta]_\ast\co N_\ast(\mathcal{C})\to [B_{com}G_{\BONE}]_\ast$ and 
$[g_\nu]_\ast\co N_\ast(\mathcal{D})\to [B_{com}G_{\BONE}]_\ast$ give rise to the following diagram 
of extensions:

\[
\xymatrix{
0 \ar[r] & 0 \ar[r] \ar[d] & H_{4}(|N_\ast(\mathcal{C})|;\Z) \ar[r] \ar[d]^-{(f_\beta)_\ast} & 
{}_{\mathcal{C}}E_{2,2}^{\infty}\cong \Z \ar[r] \ar[d]^-{\cong}_-{\beta_\ast} & 0 \\
0 \ar[r] & E_{1,3}^{\infty}\cong \Z  \ar[r] &  H_{4}(B_{com}G_{\BONE};\Z) \ar[r] & 
E_{2,2}^{\infty}\cong \Z \ar[r] & 0 \\
0 \ar[r] & {}_{\mathcal{D}}E_{1,3}^{\infty}\cong \Z \ar[r] \ar[u]_-{\cong}^-{\nu_\ast} & 
H_{4}(|N_\ast(\mathcal{D})|;\Z) \ar[r] \ar[u]_-{(g_\nu)_\ast} & 0 \ar[u] \ar[r] & 0
}
\]
The preceding discussion shows that the map ${}_{\mathcal{D}}E_{1,3}^\infty\to E^\infty_{1,3}$ 
can be identified with the map $(\nu)_\ast\co H_3(\SS^3;\Z)\to H_3(G;\Z)$, and 
${}_{\mathcal{C}}E^\infty_{2,2}\to E^\infty_{2,2}$ may be identified with 
$\beta_\ast\co H_2(\SS^2;\Z)\to H_2(\Hom(\Z^2,G);\Z)$; both are isomorphisms by choice. The 
commutative diagram together with the Hurewicz theorem imply that $[f_\beta]$ and $[g_\nu]$ 
generate $\pi_4(B_{com}G_{\BONE})$. Since $if_\beta$ is null homotopic, 
$[f_\beta]\in \Ker(i_\ast\co \pi_4(B_{com}G_{\BONE})\to \pi_4(BG))\cong \pi_4(E_{com}G_{\BONE})$ 
and it is clear that $[f_{\beta}]$ generates $\pi_4(E_{com}G_{\BONE})$.
\end{proof}

\subsection{An explicit generator of $\pi_2(\textnormal{Hom}(\Z^2,G))$} We finish our discussion by constructing a map
$\beta \co \SS^2\to \Hom(\Z^2,G)$ whose homotopy class generates the group 
$\pi_2(\Hom(\Z^2,G))\cong \Z$. In theory, this enables us to write down an explicit commutative 
cocycle on $\SS^4$ (relative to the closed cover $\{C_1, C_2,C_3\}$ described above)  which 
represents the generator of $\pi_4(E_{com}G_{\BONE})\cong \Z$.

By Theorem \ref{thm:su2represents} it suffices to construct $\beta$ in the case $G=SU(2)$; 
composition with the embedding $\nu\co SU(2)\to G$ yields a generator for general $G$. Let 
$T\leqslant SU(2)$ be the maximal torus 
consisting of all diagonal matrices and identify it with the unit circle $\SS^1\subseteq \C$. 
Under this identification the action of the Weyl group $W\cong \mathbb{Z}/2$ corresponds 
to complex conjugation on $\mathbb{S}^1$. Let $I=[0,1]$ denote the unit interval and let 
$\Delta=\{(s,t)\in I^2 \mid  s\leqslant t\}$. Our model for $\mathbb{S}^2$ will be the 
boundary of the prism $P=\Delta\times I$. Consider the continuous map
\begin{alignat*}{2}
r	& \co \Delta	&& \to (\mathbb{S}^1)^2 \subset \Hom(\Z^2,SU(2)) \\
	& (s,t)		&& \mapsto  (e^{2\pi i s}, e^{2\pi i t})\, ,
\end{alignat*}
whose image is the closure of a fundamental domain for the diagonal $\mathbb{Z}/2$--action on 
$(\mathbb{S}^1)^2$. The basic idea to construct the desired homotopy class is to choose a 
null homotopy of
\[
r|_{\partial \Delta}\co \partial \Delta \to \Hom(\Z^2,SU(2))\, ,
\]
which exists because $\Hom(\Z^2,SU(2))$ is simply--connected. Up to homotopy, this induces a map 
$\Delta /\partial \Delta\cong \mathbb{S}^2 \to \Hom(\Z^2,SU(2))$. However, some care must be 
taken as the choice of null homotopy will in general affect the resulting homotopy class.

Let $\rho\co (I,\partial I)\to (\mathbb{S}^1,1)$ represent a generator of $\pi_1(\mathbb{S}^1)$ 
and let $h \co (I,\partial I)\times I\to (SU(2),1)$ be \emph{any} fixed null homotopy of 
$i \rho$, where $i\co (\mathbb{S}^1,1)\hookrightarrow (SU(2),1)$ is the inclusion. 
Let $i_{1} \co SU(2)\to \Hom(\Z^2,SU(2))$ denote the inclusion of the first factor. 
Similarly, let $i_2$ denote the inclusion of the second factor. 
Let $d\co SU(2)\to \Hom(\Z^2,SU(2))$ be the diagonal map. 
We now extend $r$ to a continuous map
\[
\beta \co \partial P\to \Hom(\Z^2,SU(2))
\]
as follows: Define $\beta|_{\Delta\times \{0\}}:=r$ and let $\beta|_{\Delta\times \{1\}}$ be the 
constant map with value $(1,1)$. The boundary $\partial \Delta$ is the union of the three 
intervals
\[
\Delta_{01}=\{s=0\} \,, \quad \Delta_{02}=\{s=t\}\,, \quad \Delta_{12}=\{t=0\}\,,
\]
each of which is identified with $I$. Define 
$\beta|_{\Delta_{01}\times I}:=i_2 h$, $\beta|_{\Delta_{02}\times I}:=d h$, and 
$\beta|_{\Delta_{12}\times I}:=i_1 h$. By inspection, these maps can be glued together and 
define a continuous map $\beta\co \partial P \to \Hom(\Z^2,SU(2))$.

\begin{proposition}\label{prop:geometricgenerator}
The homotopy class of $\beta$ generates $\pi_2(\Hom(\Z^2,SU(2)))$.
\end{proposition}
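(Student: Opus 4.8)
The plan is to compute the image of $[\beta]$ under the composite of the Hurewicz isomorphism $\pi_2(\Hom(\Z^2,SU(2)))\xrightarrow{\cong} H_2(\Hom(\Z^2,SU(2));\Z)$ and the boundary isomorphism $\partial_\ast\colon H_3(SU(2)^2/\Hom(\Z^2,SU(2));\Z)\xrightarrow{\cong} H_2(\Hom(\Z^2,SU(2));\Z)$ from the proof of Theorem \ref{thm:su2represents}; equivalently I will chase through to $H_3(SU(2);\Z)\cong\Z$ via the commutator map $\gamma$ of that proof, which was shown to be an isomorphism on $H_3$. Concretely, $\beta\colon\partial P\to\Hom(\Z^2,SU(2))$ together with a cone on $P$ over the prism gives a map of the pair $(P,\partial P)\simeq (D^3,S^2)$ to $(SU(2)^2,\Hom(\Z^2,SU(2)))$; the relative fundamental class $[P,\partial P]$ maps to a class in $H_3(SU(2)^2,\Hom(\Z^2,SU(2));\Z)$ whose image under $\partial$ in $H_2(\Hom(\Z^2,SU(2));\Z)$ is (up to sign) the Hurewicz image of $[\beta]$. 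So the task reduces to showing this relative class is a generator, which I will detect by pushing forward along the commutator map $SU(2)^2\to SU(2)$ and showing the result generates $H_3(SU(2);\Z)\cong\pi_3(SU(2))$.

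First I would make the extension of $\beta$ over $P$ completely explicit: the null homotopy $h$ of $i\rho$ gives, on each of the three rectangular faces $\Delta_{ij}\times I$, a disk in $SU(2)$, and on $\Delta\times\{1\}$ the constant map $(1,1)$; filling the solid prism $P=\Delta\times I$ by any map $H\colon P\to SU(2)^2$ restricting to these boundary data is possible since $SU(2)^2$ is $2$-connected. Then the composite $SU(2)^2\xrightarrow{\text{comm}}SU(2)$ applied to $H$ gives a map $P\to SU(2)$ which is trivial on all of $\partial P$ except possibly where things fail to commute — but by construction $H|_{\partial P}=\beta$ lands in $\Hom(\Z^2,SU(2))$, so $\text{comm}\circ H|_{\partial P}$ is the constant map $1$. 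Hence $\text{comm}\circ H$ factors through $P/\partial P\cong S^3\to SU(2)$, and I claim this map has degree $\pm1$. To see this, note that near the edge $\{s=t=0\}\times I$ of the prism the map $\beta$ is $(i_1h,i_2h)$ composed appropriately, and the commutator $[e^{2\pi i s},e^{2\pi it}]$ as $(s,t)$ ranges over $\Delta$ traces out the "commutator blow-up" that is known (from \cite{AMcC}, as used in the proof of Theorem \ref{thm:su2represents}) to give the generator of $\pi_3(SU(2))$; the null homotopy $h$ on the boundary edges precisely records the clutching that turns $r|_{\partial\Delta}$ into a sphere, and the commutator of $h$ with itself along the diagonal edge $\Delta_{02}$ is trivial, so the only contribution to the degree comes from the interior of $\Delta$, where $(e^{2\pi is},e^{2\pi it})\mapsto[e^{2\pi is},e^{2\pi it}]$ has the standard degree-one behaviour.

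More carefully, I would identify the element of $H_2(\Hom(\Z^2,SU(2));\Z)$ represented by $[\beta]$ with $\partial_\ast$ of the image of $[P,\partial P]$, and then invoke the commutative square from the proof of Theorem \ref{thm:su2represents},
\[
\xymatrix{
H_3(P/\partial P;\Z)\cong\Z \ar[r] \ar[d] & H_3(SU(2);\Z)\cong\Z \ar[d]^-{=} \\
H_3(SU(2)^2/\Hom(\Z^2,SU(2));\Z) \ar[r]^-{\gamma_\ast}_-{\cong} & H_3(SU(2);\Z),
}
\]
to reduce the whole claim to: the top horizontal composite — which is $\text{comm}\circ H$ collapsed to $S^3$ — is an isomorphism. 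That in turn follows from the explicit computation above that this $S^3\to SU(2)$ has degree $\pm1$, together with $\gamma_\ast$ being an isomorphism (established in Theorem \ref{thm:su2represents}). Since $\gamma_\ast$ is an iso and the bottom row is an iso, the left vertical map is injective with image a direct summand, and degree $\pm1$ forces it to be an isomorphism onto a generator; chasing back, $[\beta]$ is a generator of $H_2(\Hom(\Z^2,SU(2));\Z)\cong\pi_2(\Hom(\Z^2,SU(2)))$.

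The main obstacle I expect is the explicit degree computation of $\text{comm}\circ H$ on $P/\partial P$: one must verify that the particular way the faces $\Delta_{ij}\times I$ are filled by $dh$, $i_1h$, $i_2h$ does not introduce a cancelling contribution to the degree. The key point to nail down is that the commutator map kills the diagonal face $\Delta_{02}\times I$ (since $[dh(u),dh(u')]=1$ always) and is also trivial on $\Delta_{01}\times I$ and $\Delta_{12}\times I$ for the same reason that $i_1$, $i_2$ have commuting images with... — wait, that is not quite right, so instead I will argue that on these three rectangular faces the composite $\text{comm}\circ\beta$ is constant $=1$ because $\beta$ maps them into $\Hom(\Z^2,SU(2))$ by construction; thus $\text{comm}\circ H$ really does descend to $S^3$, and its degree is computed entirely from the "interior" piece $r\colon\Delta\to(\SS^1)^2$, where $(s,t)\mapsto[e^{2\pi is},e^{2\pi it}]$ together with the collapsing $\partial\Delta$-data reproduces precisely the Samelson-product generator of $\pi_3(SU(2))$. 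This is the technical heart and where I would spend the care, but it is a finite, hands-on computation with $2\times2$ matrices.
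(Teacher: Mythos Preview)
Your approach is genuinely different from the paper's and is more complicated than necessary; more importantly, the degree computation you sketch contains an error that leaves a real gap.

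The paper's proof is short: by Theorem \ref{thm:mainpairs} the quotient map $\pi\co \Hom(\Z^2,SU(2))\to \Rep(\Z^2,SU(2))$ induces an isomorphism on $\pi_2$, so it suffices to show that $\pi\beta\co \partial P\to \Rep(\Z^2,SU(2))\cong \SS^2$ has degree $\pm 1$. This is immediate from local degrees: the restriction of $\pi\beta$ to the bottom face $\Delta\times\{0\}$ is $r$ followed by the projection $T^2\to T^2/W$, which maps the interior of $\Delta$ bijectively onto an open set $U\subset \SS^2$, while all other faces of $\partial P$ land in the complement of $U$.

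Your route via the commutator map $\gamma$ and Theorem \ref{thm:su2represents} is sound in outline, but the execution breaks down at the key step. You assert that the degree of $\textnormal{comm}\circ H\co P/\partial P\to SU(2)$ ``is computed entirely from the `interior' piece $r\co \Delta\to (\SS^1)^2$, where $(s,t)\mapsto [e^{2\pi is},e^{2\pi it}]$ has the standard degree-one behaviour.'' But $e^{2\pi is}$ and $e^{2\pi it}$ are diagonal matrices, so their commutator is identically $1$. Indeed $\textnormal{comm}\circ\beta$ is constant on \emph{all} of $\partial P$ (this is exactly why the map descends to $P/\partial P$), so the bottom face contributes nothing to the degree. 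The entire degree comes from what the filling $H$ does in the \emph{interior} of the solid prism $P$, which you have not specified or analyzed.

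One can salvage the approach by writing down an explicit filling, for example $H(s,t,u)=(h(s,u),h(t,u))$, and then computing the degree of $(s,t,u)\mapsto [h(s,u),h(t,u)]$ directly; this is essentially a Samelson-type computation and does come out to $\pm 1$. But that is a genuine calculation you have not carried out, and it is considerably more work than the paper's two-line local degree argument via $\Rep(\Z^2,SU(2))$.
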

\begin{proof}
By Theorem \ref{thm:mainpairs} it suffices to show that the composite map
\[
\partial P\xrightarrow{\beta} \Hom(\Z^2,SU(2)) \xrightarrow{\pi} \Rep(\Z^2,SU(2))
\]
represents a generator of $\pi_2(\Rep(\Z^2,SU(2)))$. Since $\Rep(\Z^2,SU(2)) \cong \mathbb{S}^2$, 
it is enough to verify that 
$\pi \beta$ has degree $\pm 1$. Let $\textnormal{int}(\Delta)$ denote the relative interior of the bottom face of $\partial P$, and let $U\subseteq \SS^2$ be the image of $\textnormal{int}(\Delta)$ under $\pi\beta$. Then $U$ is open and $\pi\beta$ maps $\partial P\backslash \textnormal{int}(\Delta)$ into $\SS^2\backslash U$.  Any $x\in U$ has a unique preimage under $\pi\beta$. By considering local degrees it follows that $\pi \beta$ has degree $\pm 1$.
\end{proof}

Let $\nu\co SU(2)\to G$ be the embedding corresponding to the highest root of $G$.

\begin{corollary}
The homotopy class of $(\nu\times \nu)\beta$ generates $\pi_2(\Hom(\Z^2,G))$.
\end{corollary}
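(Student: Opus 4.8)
The plan is to deduce this corollary directly from the two results that immediately precede it, namely Proposition~\ref{prop:geometricgenerator} and Theorem~\ref{thm:su2represents}. First I would recall that Proposition~\ref{prop:geometricgenerator} asserts that the homotopy class $[\beta]\in\pi_2(\Hom(\Z^2,SU(2)))$ is a generator of this infinite cyclic group. Next, the map $\nu\colon SU(2)\hookrightarrow G$ is a group homomorphism, so it induces a map of spaces of commuting pairs $\nu\times\nu\colon \Hom(\Z^2,SU(2))\to\Hom(\Z^2,G)$ fitting into the commutative square with the quotient maps to the representation spaces. On $\pi_2$ this is precisely the map $\nu_\ast\colon \pi_2(\Hom(\Z^2,SU(2)))\to\pi_2(\Hom(\Z^2,G))$ studied in Theorem~\ref{thm:su2represents}, which states that $\nu_\ast$ is an isomorphism.

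The short argument is then the following chain of equalities:
\[
(\nu\times\nu)_\ast[\beta]=\nu_\ast[\beta]\, ,
\]
and since $[\beta]$ generates $\pi_2(\Hom(\Z^2,SU(2)))\cong\Z$ and $\nu_\ast$ is an isomorphism of infinite cyclic groups, the image $\nu_\ast[\beta]$ is a generator of $\pi_2(\Hom(\Z^2,G))$. By Theorem~\ref{thm:mainpairs} the latter group is also isomorphic to $\Z$, so being a generator is unambiguous (up to sign). The only mild point to address is that the map on homotopy groups induced by $\nu\times\nu\colon\Hom(\Z^2,SU(2))\to\Hom(\Z^2,G)$ genuinely coincides with the map $\nu_\ast$ appearing in Theorem~\ref{thm:su2represents}; this is immediate from the definition of $\nu_\ast$ in Section~\ref{sec:rolesu2}, where $\nu_\ast$ is by construction the map induced on $\pi_2$ by $\nu\colon SU(2)\hookrightarrow G$ applied coordinatewise (equivalently, $(\nu\times\nu)|_{\Hom(\Z^2,SU(2))}$).

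Since everything reduces to composing a generator with an isomorphism, there is essentially no obstacle here; the corollary is a formal consequence of the two cited results. The proof is therefore a single short paragraph:

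\begin{proof}
By Theorem~\ref{thm:su2represents} the homomorphism $\nu_\ast\colon\pi_2(\Hom(\Z^2,SU(2)))\to\pi_2(\Hom(\Z^2,G))$ induced by $\nu\colon SU(2)\hookrightarrow G$ is an isomorphism, and by construction it agrees with the map induced on $\pi_2$ by $\nu\times\nu\colon\Hom(\Z^2,SU(2))\to\Hom(\Z^2,G)$. By Proposition~\ref{prop:geometricgenerator} the class $[\beta]$ generates $\pi_2(\Hom(\Z^2,SU(2)))\cong\Z$. Hence $(\nu\times\nu)_\ast[\beta]=\nu_\ast[\beta]$ generates $\pi_2(\Hom(\Z^2,G))\cong\Z$.
\end{proof}
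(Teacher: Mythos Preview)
Your proposal is correct and follows exactly the same approach as the paper, which simply states that the corollary is immediate from Proposition~\ref{prop:geometricgenerator} and Theorem~\ref{thm:su2represents}. Your version is just a more detailed unpacking of that one-line deduction.
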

\begin{proof}
This is immediate from Proposition \ref{prop:geometricgenerator} and Theorem \ref{thm:su2represents}.
\end{proof}

\begin{remark}
In \cite{AG,AGLT} a K-theory group was introduced, defined for a finite CW complex $X$ by 
$\tilde{K}_{com}(X):=[X,B_{com}U]$. In \cite[Proposition 5.2]{Simon} it was shown that the 
inclusion $SU(2)\to U$ induces an isomorphism $\pi_4(B_{com}SU(2))\cong \pi_4(B_{com}U)$, and 
$\tilde{K}_{com}(\SS^4)\cong \Z\oplus \Z$. Thus, the results of this section can be used to 
construct explicit commutative cocycles over $\SS^4$, relative to the closed covers 
$\mathcal{C}$ and $\mathcal{D}$, representing the two generators of $\tilde{K}_{com}(\SS^4)$.
\end{remark}

\section*{Funding}

JMG acknowledges the support provided by the 
Max Planck Institute for Mathematics and Minciencias through grant number  FP44842-013-2018 
of the Fondo Nacional de Financiamiento para la Ciencia, la Tecnolog\'ia y la Innovaci\'on. SG received funding from the European Union's Horizon 2020 research and innovation programme under the Marie Sklodowska-Curie grant agreement No. 846448. This project was also supported by the Danish National Research Foundation through the Copenhagen Centre for Geometry and Topology (DNRF151). AA acknowledges support from NSERC prior to October 2019.

\section*{Acknowledgements}

We thank Shrawan Kumar and Burt Totaro for their valuable feedback. We also thank the referee for their reading of the manuscript and their comments.

\end{document}